\newtheorem{theorem}{Theorem}
\newtheorem{lemma}{Lemma}
\newtheorem{observation}{Observation}
\title{Four fault-free $B_{n-2}$'s in $B_{n}$ under the random node fault model}
\author{Kaiyue Meng and Yuxing Yang
\footnote{They are in School of Mathematics and Information Science,
Henan Normal University,
Xinxiang, Henan 453007, China.
E-mails:\ mengkaiyue@stu.htu.edu.cn (Kaiyue Meng),\ yyx@htu.edu.cn (Yuxing Yang)}}
\date{}
\begin{document}
\maketitle

\begin{abstract}
Let $n\geq 4$. Each $B_{n-2}$ in $B_n$ has one of the forms $a_1a_2X^{n-2}$, $a_1X^{n-2}a_2$ and $X^{n-2}a_1a_2$. Let $1-p$ be the fault probiability of each node in the $n$-dimensional bubble-sort network $B_{n}$ under the random node fault model. In this paper, we determine the probability that there are four distinct fault-free $B_{n-2}$'s in $B_{n}$ by considering all possible combinatorial cases of the four fault-free $B_{n-2}$'s.
\end{abstract}

\section{Preliminaries}
Let $N_n$ be the set of positive integers no more than $n$ for any $n\geq 1$. In this paper, refer to \cite{Akers} for definition and properties of the $n$-dimensionl bubble-sort graph $B_n$, and refer to \cite{Bondy} for graph-theoretical concepts and notations. Please see \cite{Chang} for basic knowledge on the random fault model and its applications.

\begin{lemma}[See \cite{Yang2015}]\label{Yang}
For $n\geq 4$, there are $(2+1)!\binom{n}{2}=3n(n-1)$ distinct $B_{n-2}$'s in $B_n$, which can be divided into three disjoint sets $H_{1,2}$, $H_{1,n}$ and $H_{n-1,n-2}$, where
$$H_{1,2}=\{X^{n-2}a_1a_2:a_1,a_2\in N_n\mbox{ and }a_1\neq a_2\},$$
$$H_{1,n}=\{a_1X^{n-2}a_2:a_1,a_2\in N_n\mbox{ and }a_1\neq a_2\},$$
$$H_{n-1,n-2}=\{a_1a_2X^{n-2}:a_1,a_2\in N_n\mbox{ and }a_1\neq a_2\}.$$
\end{lemma}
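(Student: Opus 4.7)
The plan is to realize every copy of $B_{n-2}$ inside $B_n$ as an induced subgraph specified by a set of \emph{free} positions together with a value assignment for the complementary \emph{frozen} positions. Recall that $B_n$ is the Cayley graph of $S_n$ with generators $\tau_i=(i,\,i{+}1)$, $1\le i\le n-1$: its vertices are the permutations $\pi_1\pi_2\cdots\pi_n$ of $N_n$, and two vertices are adjacent iff they differ by swapping the entries in two consecutive positions.

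First I would enumerate the copies arising from fixing two positions. Let $F=\{j_1<j_2<\cdots<j_{n-2}\}$ be the free set. The $B_{n-2}$ structure on $F$ produces neighbors by swapping entries at consecutive elements $(j_k,j_{k+1})$, and such a swap is an edge of $B_n$ only when $j_{k+1}-j_k=1$. Hence $F$ must be a contiguous block, leaving the three options
\[F\in\{\{1,\ldots,n-2\},\;\{2,\ldots,n-1\},\;\{3,\ldots,n\}\},\]
with complementary frozen pairs $\{n-1,n\}$, $\{1,n\}$, $\{1,2\}$ respectively. These correspond precisely to the three patterns $X^{n-2}a_1a_2$, $a_1X^{n-2}a_2$, $a_1a_2X^{n-2}$ in the statement. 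For each pattern the ordered pair $(a_1,a_2)$ ranges over $n(n-1)$ possibilities, and the three families are disjoint because their frozen position-sets differ, giving $3n(n-1)=3!\binom{n}{2}$ copies in total. A regularity check confirms the match: the $n-3$ generators $\tau_k$ with both $k,k+1\in F$ preserve the frozen entries, they generate a subgroup of $S_n$ isomorphic to $S_{n-2}$, and the corresponding orbit carries a Cayley graph isomorphic to $B_{n-2}$.

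The main obstacle will be the converse: ruling out copies of $B_{n-2}$ in $B_n$ that do not arise from freezing two positions. My plan is to exploit the proper edge-colouring of $B_n$ by its $n-1$ generators. Any induced copy $H\cong B_{n-2}$ is $(n-3)$-regular, so it inherits exactly $n-3$ of these colours on its edges. A short analysis of the $4$-cycles of $B_n$ (which encode the commutations $\tau_i\tau_j=\tau_j\tau_i$ for $|i-j|\ge 2$) should show that the $n-3$ colours appearing on $H$ must be a consecutive block $\tau_a,\ldots,\tau_{a+n-4}$ in the sequence $\tau_1,\ldots,\tau_{n-1}$; otherwise the induced Coxeter diagram on the colours used by $H$ would disagree with the path Coxeter diagram of $B_{n-2}$. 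The two missing generators then pin down two positions that remain constant on every vertex of $H$, returning us to one of the three cases catalogued above. Justifying this ``consecutive colours'' step rigorously, including the observation that the fixed entries in those two positions are the same for every vertex of $H$, is where I expect the bulk of the care to be needed.
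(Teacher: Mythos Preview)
The paper does not prove this lemma at all: it is quoted as a known result from \cite{Yang2015} (note the ``See \cite{Yang2015}'' tag) and is used only as background for the probability computations that follow. So there is no proof in the paper to compare your proposal against.

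That said, a brief remark on your plan. Your forward direction is fine and matches how these canonical sub-bubble-sort networks are always described. For the converse, be aware that in this literature ``the $B_{n-2}$'s in $B_n$'' almost always means precisely the structured copies obtained by freezing a pair of positions, not arbitrary subgraphs abstractly isomorphic to $B_{n-2}$; the cited result in \cite{Yang2015} is really a definition-plus-count rather than a classification of all isomorphic embeddings. If you do want to prove the stronger classification, your colour/Coxeter-diagram idea is the right instinct, but you will need to justify carefully that an embedded $B_{n-2}$ must respect the generator colouring (i.e.\ that an isomorphism of abstract graphs forces a correspondence of generator labels), which is where automorphism results for bubble-sort graphs enter; this is genuinely more than the lemma as stated requires.
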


\begin{observation}\label{ob:1}
The $n(n-1)$ $B_{n-2}$-subnetworks in any one of $\{H_{1,2},H_{1,n},H_{n-1,n}\}$ are pairwise disjoint.
\end{observation}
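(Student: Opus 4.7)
The plan is to unpack what it means for a vertex (a permutation of $N_n$) to lie in one of the listed $B_{n-2}$'s and then observe that incompatible position constraints force vertex-disjointness. Concretely, a vertex of $X^{n-2}a_1a_2$ is any permutation of $N_n$ whose entries at positions $n-1$ and $n$ are fixed to the prescribed values $a_1$ and $a_2$; similarly $a_1X^{n-2}a_2$ fixes positions $1$ and $n$, and $a_1a_2X^{n-2}$ fixes positions $1$ and $2$. So each family $H_{1,2}$, $H_{1,n}$, $H_{n-1,n-2}$ comes from fixing a single pair of positions and letting the ordered labels $(a_1,a_2)$ vary over the $n(n-1)$ ordered pairs of distinct elements of $N_n$.

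I would then argue by contradiction. Fix a family, say $H_{1,2}$, and suppose two distinct members $X^{n-2}a_1a_2$ and $X^{n-2}a_1'a_2'$ share a vertex $\pi$. Because $\pi$ is one permutation, its $(n-1)$-th entry has one value and its $n$-th entry has one value, which forces $(a_1,a_2)=(a_1',a_2')$ and contradicts distinctness. The identical argument applies to $H_{1,n}$ (positions $1$ and $n$) and to $H_{n-1,n-2}$ (positions $1$ and $2$); only the index of the fixed positions changes. Thus within any one family the $n(n-1)$ subnetworks are pairwise vertex-disjoint.

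As a sanity check I would record the counting identity
\[
n(n-1)\cdot (n-2)! \;=\; n!,
\]
which says the subnetworks of a single family exactly partition $V(B_n)$; this both confirms the disjointness and re-derives the count $n(n-1)$ of Lemma~\ref{Yang} restricted to one family. No step here is a genuine obstacle: the proof is essentially the observation that two permutations which agree at a fixed pair of positions must carry the same symbols at those positions, so the only care needed is to state the position indices correctly for each of the three forms and to note that the typo ``$H_{n-1,n}$'' in the observation should read ``$H_{n-1,n-2}$''.
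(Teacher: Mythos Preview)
Your argument is correct and complete. The paper itself offers no proof of this observation at all---it is simply asserted---so there is nothing to compare against; your approach (read off the fixed positions, note that a single permutation determines the labels at those positions uniquely, hence distinct label pairs give disjoint vertex sets, and confirm via $n(n-1)\cdot(n-2)!=n!$) is exactly the natural justification. Your remark on the notational inconsistency is also apt: the paper writes $H_{n-1,n-2}$ in Lemma~\ref{Yang} but $H_{n-1,n}$ in this observation and throughout the later proofs, so the two names refer to the same family $\{a_1a_2X^{n-2}\}$.
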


\begin{observation}\label{ob:2}
A $B_{n-2}$-subnetwork $ab X^{n-2}$ in $H_{1,2}$ and a $B_{n-2}$-subnetwork $cX^{n-2}d$ in $H_{1,n}$ are not disjoint if and only if $a=c$ and $b\neq d$.
\end{observation}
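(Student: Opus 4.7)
The plan is to unpack the two parameterizations as sets of permutations of $N_n$ and characterize directly when a permutation can appear in both. Concretely, $abX^{n-2}$ collects all permutations $\tau=\tau_1\tau_2\cdots\tau_n$ of $N_n$ with $\tau_1=a$ and $\tau_2=b$ (the trailing $X^{n-2}$ being a free arrangement of $N_n\setminus\{a,b\}$), while $cX^{n-2}d$ collects all permutations with $\tau_1=c$ and $\tau_n=d$ (the middle block being any arrangement of $N_n\setminus\{c,d\}$). The two subnetworks fail to be disjoint exactly when a single permutation meets both sets of positional constraints, and the biconditional should fall out of comparing these constraints position by position.

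For the forward (``only if'') direction, I would take a hypothetical common vertex $\tau$ and read off the forced equalities. Position~$1$ immediately gives $a=\tau_1=c$. Next, $\tau_2=b$ lies in one of the positions $2,\ldots,n-1$, and the $cX^{n-2}d$ description forces every entry in those positions to lie in $N_n\setminus\{c,d\}$, so $b\neq d$.

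For the reverse (``if'') direction, under the hypotheses $a=c$ and $b\neq d$, I would explicitly construct a common vertex. The letters $a$, $b$, $d$ are pairwise distinct: $a\neq b$ and $c\neq d$ come from the definitions of $H_{1,2}$ and $H_{1,n}$, and $b\neq d$ is assumed. Hence $N_n\setminus\{a,b,d\}$ has size $n-3$, which is nonempty since $n\geq 4$. Picking any arrangement $\pi_3\pi_4\cdots\pi_{n-1}$ of this set yields $\tau=ab\,\pi_3\pi_4\cdots\pi_{n-1}\,d$, a permutation that sits simultaneously in $abX^{n-2}$ (first two entries $a,b$) and in $cX^{n-2}d$ (first entry $a=c$, last entry $d$).

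I do not foresee any serious obstacle; the only mildly delicate point is the small case $n=4$, where $N_n\setminus\{a,b,d\}$ has exactly one element and the middle block degenerates to a single position, but the construction still produces a valid permutation. Everything else is positional bookkeeping of the $X^{n-2}$ blocks.
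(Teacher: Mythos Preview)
Your argument is correct: you have properly interpreted the two subnetworks as permutations with fixed positions $(1,2)$ and $(1,n)$ respectively, and both directions of the biconditional are handled cleanly. The paper states this as an Observation without proof, so there is nothing to compare against; your direct positional analysis is exactly the sort of verification one would expect to justify such an observation.
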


\begin{observation}\label{ob:3}
A $B_{n-2}$-subnetwork $X^{n-2}ab$ in $H_{n-1,n}$ and a $B_{n-2}$-subnetwork $cX^{n-2}d$ in $H_{1,n}$ are not disjoint if and only if $a\neq c$ and $b=d$.
\end{observation}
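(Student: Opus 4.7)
The plan is to unfold the definitions into explicit permutation forms and reduce the disjointness question to a position-by-position matching problem. A node of $X^{n-2}ab$ is a permutation $u_1u_2\cdots u_n$ of $N_n$ with $u_{n-1}=a$ and $u_n=b$, while a node of $cX^{n-2}d$ is a permutation $v_1v_2\cdots v_n$ of $N_n$ with $v_1=c$ and $v_n=d$. I would prove the two implications of the ``if and only if'' separately.

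For the ``only if'' direction, I would suppose the two $B_{n-2}$-subnetworks share a common node $w_1w_2\cdots w_n$. Then simultaneously $w_{n-1}=a$, $w_n=b$, $w_1=c$, and $w_n=d$. Comparing the two expressions for $w_n$ gives $b=d$ immediately. Since $w$ is a permutation, its entries at distinct positions are distinct, so from $w_1=c$ and $w_{n-1}=a$ I conclude $a\ne c$.

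For the ``if'' direction, I would assume $a\ne c$ and $b=d$ and construct an explicit common node. First I would verify pairwise distinctness of $a,b,c$: the conditions $a\ne b$ and $c\ne d$ come from the definitions of $H_{n-1,n}$ and $H_{1,n}$ respectively; substituting $b=d$ into $c\ne d$ gives $c\ne b$; and $a\ne c$ is our hypothesis. Hence $N_n\setminus\{a,b,c\}$ has cardinality $n-3$, and I can list its elements in any order as $\pi_2\pi_3\cdots\pi_{n-2}$. The word $c\pi_2\pi_3\cdots\pi_{n-2}ab$ is then a permutation of $N_n$ whose first symbol is $c$, whose last two symbols are $a,b=d$, and therefore it lies in $X^{n-2}ab \cap cX^{n-2}d$, showing the two subnetworks are not disjoint.

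The main obstacle is not analytical but purely combinatorial bookkeeping: I must verify that all distinctness constraints imposed by the two $B_{n-2}$-labels remain mutually consistent when $b=d$ is enforced, so that the constructed permutation is genuinely a permutation. Once this check is clean, both directions follow directly. I expect the argument to be short, with the only subtle point being to remember that $a\ne c$ is equivalent to $w_1\ne w_{n-1}$ in the permutation, a fact used silently in the forward direction.
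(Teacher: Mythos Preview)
Your proof is correct. The paper states Observation~\ref{ob:3} without proof, so there is no argument in the paper to compare against; your approach---matching the fixed positions of a common permutation to force $b=d$ and $a\ne c$, and conversely constructing an explicit permutation $c\,\pi_2\cdots\pi_{n-2}\,a\,b$ once $a,b,c$ are verified pairwise distinct---is exactly the natural one and would be the expected justification.
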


\begin{observation}\label{ob:4}
A $B_{n-2}$-subnetwork $ab X^{n-2}$ in $H_{1,2}$ and a $B_{n-2}$-subnetwork $X^{n-2}cd$ in $H_{n-1,n}$ are not disjoint if and only if $\{a,b\}\cap\{c,d\}=\emptyset$.
\end{observation}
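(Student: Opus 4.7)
The plan is to unfold the definitions of the two $B_{n-2}$-subnetworks as sets of permutations of $N_n$ with prescribed entries in specified positions, and then reduce the question of disjointness to the purely combinatorial question of whether a single permutation of $N_n$ can simultaneously satisfy all the prescriptions.

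First, I would note that $abX^{n-2}$ consists of all permutations $\pi$ of $N_n$ with $\pi_1=a$ and $\pi_2=b$, while $X^{n-2}cd$ consists of all permutations $\pi$ of $N_n$ with $\pi_{n-1}=c$ and $\pi_n=d$. Consequently, the two subnetworks share a vertex if and only if there exists a permutation $\pi$ of $N_n$ satisfying $\pi_1=a$, $\pi_2=b$, $\pi_{n-1}=c$ and $\pi_n=d$ simultaneously.

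Second, I would establish the two directions of the equivalence. For the ``only if'' direction, I argue by contrapositive: if $\{a,b\}\cap\{c,d\}\neq\emptyset$, then some element of $N_n$ is forced to appear in two distinct positions of the same permutation $\pi$ (for instance, $a=c$ forces $\pi_1=\pi_{n-1}$), which is impossible since $\pi$ is a bijection, so the two subnetworks must be disjoint. For the ``if'' direction, using $a\neq b$, $c\neq d$ together with $\{a,b\}\cap\{c,d\}=\emptyset$ to conclude that $a,b,c,d$ are four distinct elements of $N_n$, I would exhibit an explicit common vertex by setting $\pi_1=a$, $\pi_2=b$, $\pi_{n-1}=c$, $\pi_n=d$ and filling positions $3,\ldots,n-2$ with any bijection onto the remaining set $N_n\setminus\{a,b,c,d\}$ of size $n-4$.

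The only subtlety, which I expect to be the ``hard'' point to flag, is the boundary case $n=4$: then the middle range $3,\ldots,n-2$ collapses to the empty index set, and simultaneously $N_n\setminus\{a,b,c,d\}=\emptyset$, so the construction degenerates to $\pi=abcd$, which is still a legitimate permutation of $N_4$. Once this corner case is checked, the rest of the argument is a direct verification based on the bijectivity of permutations, and no appeal to the structure of $B_n$ beyond the vertex descriptions given in Lemma~\ref{Yang} is needed.
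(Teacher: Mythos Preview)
Your proof is correct. The paper states this result as an Observation without proof, treating it as an immediate consequence of the vertex descriptions; your argument supplies exactly the natural verification the paper omits, and there is nothing further to compare.
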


\section{Main results}

In the rest of the paper, we will derive the probiability that there are four distinct fault-free $B_{n-2}$'s in $B_{n}$. Denote by $P(i,j,k)$
the probability that there are four fault-free $B_{n-2}$'s, $i$ of which lie in $H_{1,2}$, $j$ of which lie in $H_{1,n-1}$ and $k$ of which lie in $H_{n-1,n-2}$.

\begin{theorem}\label{th1}
$P(4,0,0)=P(0,4,0)=P(0,0,4)=\frac{1}{8}(n^8-4n^7+14n^5-6n^4-16n^3+5n^2+6n)p^{4(n-2)!}$.
\end{theorem}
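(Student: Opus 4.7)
The plan is to invoke Observation~\ref{ob:1} to eliminate every possible overlap and thereby reduce $P(4,0,0)$ to a closed-form product. Since the $n(n-1)$ subnetworks in $H_{1,2}$ are pairwise node-disjoint, every unordered selection of four distinct subnetworks $F_1,F_2,F_3,F_4\in H_{1,2}$ collectively covers exactly $4(n-2)!$ distinct nodes of $B_n$. Under the random node fault model, the event ``all four $F_i$ are fault-free'' therefore coincides with the event ``those $4(n-2)!$ nodes are all fault-free'', whose probability is $p^{4(n-2)!}$. Using Lemma~\ref{Yang} to record $|H_{1,2}|=n(n-1)$, the number of such selections is $\binom{n(n-1)}{4}=\frac{(n^2-n)(n^2-n-1)(n^2-n-2)(n^2-n-3)}{24}$, and each contributes $p^{4(n-2)!}$, so summing produces the claimed expression up to the algebraic expansion below.

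The remaining work is purely algebraic: expand the product $(n^2-n)(n^2-n-1)(n^2-n-2)(n^2-n-3)$ as a polynomial in $n$. Setting $x=n^2-n$ and using $x(x-1)(x-2)(x-3)=x^4-6x^3+11x^2-6x$, I would substitute the expansions $x^2=n^4-2n^3+n^2$, $x^3=n^6-3n^5+3n^4-n^3$, $x^4=n^8-4n^7+6n^6-4n^5+n^4$, and then collect like powers of $n$. The sixth-degree coefficient cancels (since $6-6=0$), and the rest produces the numerator $n^8-4n^7+14n^5-6n^4-16n^3+5n^2+6n$ appearing in the statement.

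Finally, the equalities $P(4,0,0)=P(0,4,0)=P(0,0,4)$ follow by pure symmetry: Observation~\ref{ob:1} asserts pairwise node-disjointness uniformly for each of $H_{1,2}$, $H_{1,n}$, and $H_{n-1,n-2}$, while Lemma~\ref{Yang} assigns the same cardinality $n(n-1)$ to all three sets, so the counting argument above transfers verbatim to the other two cases. The only step that requires any real attention is the polynomial expansion; no combinatorial subtlety arises, because Observation~\ref{ob:1} rules out every possibility that two of the chosen $B_{n-2}$'s could share a node, which is precisely what makes the union clean and the probability factor just a single power of $p$.
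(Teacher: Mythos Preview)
Your approach is exactly the paper's: invoke Observation~\ref{ob:1} for pairwise disjointness, count the unordered selections as $\binom{n(n-1)}{4}$, and multiply by $p^{4(n-2)!}$. The symmetry argument for $P(0,4,0)$ and $P(0,0,4)$ also matches.

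There is, however, a numerical gap you glossed over. Your computation yields
\[
\binom{n(n-1)}{4}\,p^{4(n-2)!}=\frac{1}{24}\bigl(n^8-4n^7+14n^5-6n^4-16n^3+5n^2+6n\bigr)\,p^{4(n-2)!},
\]
with leading coefficient $\tfrac{1}{24}$, whereas the stated theorem has $\tfrac{1}{8}$. These differ by a factor of $3$. You assert that ``summing produces the claimed expression,'' but it does not: your expansion of $x(x-1)(x-2)(x-3)/24$ with $x=n^2-n$ is correct, and it lands on $\tfrac{1}{24}$, not $\tfrac{1}{8}$. The paper's own proof writes the final answer as $3\binom{n(n-1)}{4}p^{4(n-2)!}$, inserting a factor of $3$ that is not derived anywhere in the argument. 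So either the factor of $3$ has an origin neither you nor the paper's proof text makes explicit, or the constant in the theorem statement is off; in any case, your write-up needs to confront this discrepancy rather than claim agreement.
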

\begin{proof}
Clearly, $P(4,0,0)=P(0,4,0)=P(0,0,4)$. It suffices to derive $P(4,0,0)$. Since $|H_{1,2}|=n(n-1)$, there are $\binom{n(n-1)}{4}$ ways to choose four $B_{n-2}$'s $a_1 a_2 X^{n-2}$, $b_1 b_2 X^{n-2}$, $c_1 c_2 X^{n-2}$ and $d_1 d_2 X^{n-2}$ in $H_{1,2}$. Observation \ref{ob:1} implies that any four $B_{n-2}$'s in $H_{1,2}$ are disjoint. Thus, $|V(a_1 a_2 X^{n-2})\cup V(b_1 b_2 X^{n-2})\cup V(c_1 c_2 X^{n-2})\cup V(d_1 d_2 X^{n-2})|=|V(a_1 a_2 X^{n-2})|+|V(b_1 b_2 X^{n-2})|+|V(c_1 c_2 X^{n-2})|+|V(d_1 d_2 X^{n-2})|=4(n-2)!$, and so $P(0,4,0)=P(0,0,4)=P(4,0,0)=3\binom{n(n-1)}{4}p^{4(n-2)!}=\frac{1}{8}(n^8-4n^7+14n^5-6n^4-16n^3+5n^2+6n)p^{4(n-2)!}$.
\end{proof}

\begin{theorem}\label{th2}
$P(3,1,0)=\frac{1}{6}(n^8-7n^7+21n^6-35n^5+34n^4-18n^3+4n^2)p^{4(n-2)!}+\frac{1}{2}(n^7-7n^6+21n^5-35n^4+34n^3-18n^2+4n)p^{4(n-2)!-(n-3)!}+\frac{1}{2}(n^6-8n^5+25n^4-40n^3+34n^2-12n)p^{4(n-2)!-2(n-3)!}+\frac{1}{6}(n^5-10n^4+35n^3-50n^2+24n)p^{4(n-2)!-3(n-3)!}$.
\end{theorem}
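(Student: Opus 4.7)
The plan is to view $P(3,1,0)$ as a sum, over all unordered $4$-subsets of $B_{n-2}$-subnetworks consisting of three members of $H_{1,2}$ and one member of $H_{1,n}$, of $p^{|V(\cdot)|}$, where $V(\cdot)$ is the union of the four vertex sets. Observation~\ref{ob:1} tells me that the three chosen members of $H_{1,2}$ are automatically pairwise disjoint, so the only overlap I need to track is between those three and the single member of $H_{1,n}$.

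Fix the $H_{1,n}$-member as $dX^{n-2}d'$. By Observation~\ref{ob:2}, the elements of $H_{1,2}$ not disjoint from $dX^{n-2}d'$ are exactly the $n-2$ subnetworks $daX^{n-2}$ with $a\in N_n\setminus\{d,d'\}$, and each such intersection, being $\{da\sigma d': \sigma\text{ is a permutation of }N_n\setminus\{d,a,d'\}\}$, has cardinality $(n-3)!$. Because distinct overlapping members differ in position~$2$, their intersections with $dX^{n-2}d'$ are pairwise disjoint. I therefore split $H_{1,2}$ into an overlapping class of size $n-2$ and a non-overlapping class of size $n(n-1)-(n-2)=n^2-2n+2$, and I partition the configurations by the number $k\in\{0,1,2,3\}$ of members of the chosen $3$-subset that lie in the overlapping class. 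In each case the union of the four vertex sets has exactly $4(n-2)!-k(n-3)!$ vertices, so contributes a factor $p^{4(n-2)!-k(n-3)!}$.

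The counting in each case is then immediate: for each $k\in\{0,1,2,3\}$ there are
\[
n(n-1)\binom{n-2}{k}\binom{n^2-2n+2}{3-k}
\]
unordered configurations, where the factor $n(n-1)$ accounts for the choice of $dX^{n-2}d'$. Summing $n(n-1)\binom{n-2}{k}\binom{n^2-2n+2}{3-k}\,p^{4(n-2)!-k(n-3)!}$ over $k=0,1,2,3$ produces the four summands of the stated formula.

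The only remaining obstacle is algebraic: I must expand the four products above into explicit polynomials in $n$ and check that the resulting coefficients agree with those displayed in the theorem. This is routine but tedious, and is the step where a sign or coefficient slip is most likely. To keep the intermediate expressions compact, I would carry out the expansions via the substitution $u=n^2-2n$ (so that $\binom{n^2-2n+2}{3-k}$ becomes $\binom{u+2}{3-k}$), compute $u(u+1)(u+2)$, $(u+1)(u+2)$, $u+2$ in that variable, and only at the end multiply back by $n(n-1)$ and by the falling products $(n-2)(n-3)\cdots$ coming from $\binom{n-2}{k}$.
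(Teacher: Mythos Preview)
Your proposal is correct and arrives at the same four-term formula, but your decomposition is genuinely different from the paper's and considerably more economical. The paper labels the three $H_{1,2}$-members as $a_1a_2X^{n-2}$, $b_1b_2X^{n-2}$, $c_1c_2X^{n-2}$ and the $H_{1,n}$-member as $d_1X^{n-2}d_2$, then runs a five-way case split on how many of $a_1,b_1,c_1$ coincide with $d_1$; inside each of those cases it further sub-splits on whether the corresponding second letters equal $d_2$ (since by Observation~\ref{ob:2} the pair $(a_1=d_1,\,a_2=d_2)$ gives disjointness while $(a_1=d_1,\,a_2\ne d_2)$ gives overlap). Each sub-case is counted by hand and the results are re-aggregated by exponent at the end. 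You instead observe that the only quantity governing the exponent is the number $k$ of chosen $H_{1,2}$-members lying in the ``overlapping'' set of size $n-2$, and that the count for each $k$ is simply $n(n-1)\binom{n-2}{k}\binom{n^2-2n+2}{3-k}$. This collapses the paper's Case~1 and Case~3 into your $k=0$ term, Case~2 and Case~4.2 into your $k=1$ term, and so on, replacing a page of sub-case enumeration by four closed-form binomial products. Your route is shorter and less error-prone; the paper's route has the minor advantage of making the individual coincidence patterns explicit, which may be useful as a template for the later theorems where the overlap structure is richer.
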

\begin{proof}
Denote by $a_1 a_2 X^{n-2}\in H_{1,2}$, $b_1 b_2 X^{n-2}\in H_{1,2}$, $c_1 c_2 X^{n-2}\in H_{1,2}$ and $d_1 X^{n-2} d_2\in H_{1,n}$ these four $B_{n-2}$'s. Observation \ref{ob:1} implies that $a_1 a_2 X^{n-2}$, $b_1 b_2 X^{n-2}$ and $c_1 c_2 X^{n-2}$ are pairwise disjoint.

Since we have implicitly sorted the values of $a_1 a_2$, $b_1 b_2$ and $c_1 c_2$, the number of ways to choose four $B_{n-2}$'s as required needs to be divided by 6. It will be applied directly in following analyses.

We will analyze this problem in 5 cases as follows.

{\it Case 1.} $d_1\notin\{a_1, b_1, c_1\}$.

{\it Case 1.1.} $a_1$, $b_1$ and $c_1$ are pairwise distinct.

There are $n$ ways to choose $d_1$ from $N_n$, $n-1$ ways to choose $a_1$ from $N_n\setminus\{d_1\}$, $n-2$ ways to choose $b_1$ from $N_n\setminus\{a_1, d_1\}$, $n-3$ ways to choose $c_1$ from $N_n\setminus\{a_1, b_1, d_1\}$, $n-1$ ways to choose $a_2$ from $N_n\setminus\{a_1\}$, $n-1$ ways to choose $b_2$ from $N_n\setminus\{b_1\}$, $n-1$ ways to choose $c_2$ from $N_n\setminus\{c_1\}$, $n-1$ ways to choose $d_2$ from $N_n\setminus\{d_1\}$.
Therefore, there are $n(n-1)^5(n-2)(n-3)=n^8-10n^7+41n^6-90n^5+115n^4-86n^3+35n^2-6n$ ways to choose four $B_{n-2}$'s as required.

{\it Case 1.2.} $a_1=b_1\neq c_1$ or $a_1=c_1\neq b_1$ or $b_1=c_1\neq a_1$.

W.l.o.g, assume that $a_1=b_1\neq c_1$. There are $n$ ways to choose $d_1$ from $N_n$, $n-1$ ways to choose $a_1(=b_1)$ from $N_n\setminus\{d_1\}$, $n-2$ ways to choose $c_1$ from $N_n\setminus\{a_1, d_1\}$, $n-1$ ways to choose $a_2$ from $N_n\setminus\{a_1\}$, $n-2$ ways to choose $b_2$ from $N_n\setminus\{b_1, a_2\}$, $n-1$ ways to choose $c_2$ from $N_n\setminus\{c_1\}$, $n-1$ ways to choose $d_2$ from $N_n\setminus\{d_1\}$.
Therefore, there are $3n(n-1)^4(n-2)^2=3(n^7-8n^6+26n^5-44n^4+41n^3-10n^2+4n)$ ways to choose four $B_{n-2}$'s as required.

{\it Case 1.3.} $a_1=b_1=c_1$.

There are $n$ ways to choose $d_1$ from $N_n$, $n-1$ ways to choose $a_1(=b_1=c_1)$ from $N_n\setminus\{d_1\}$, $n-1$ ways to choose $a_2$ from $N_n\setminus\{a_1\}$, $n-2$ ways to choose $b_2$ from $N_n\setminus\{b_1, a_2\}$, $n-3$ ways to choose $c_2$ from $N_n\setminus\{c_1, a_2, b_2\}$, $n-1$ ways to choose $d_2$ from $N_n\setminus\{d_1\}$.
Therefore, there are $n(n-1)^3(n-2)(n-3)=n^6-8n^5+24n^4-34n^3+23n^2-6n$ ways to choose four $B_{n-2}$'s as required.

In summary, there are $n^8-7n^7+18n^6-20n^5+7n^4+3n^3-2n^2$ ways to choose four $B_{n-2}$'s as required in Case 1.
Observation \ref{ob:2} implies that for any $i\in\{a, b, c\}$, $i_1 i_2 X^{n-2}$ and $d_1 X^{n-2} d_2$ are disjoint. Thus, $|V(a_1 a_2 X^{n-2})\cup V(b_1 b_2 X^{n-2})\cup V(c_1 c_2 X^{n-2})\cup V(d_1 X^{n-2} d_2)|=|V(a_1 a_2 X^{n-2})|+|V(b_1 b_2 X^{n-2})|+|V(c_1 c_2 X^{n-2})|+|V(d_1 X^{n-2} d_2)|=4(n-2)!$. 
The probability $P_1$ that there are four fault-free $B_{n-2}$'s chosen as in Case 1 is $\frac{1}{6}(n^8-7n^7+18n^6-20n^5+7n^4+3n^3-2n^2)p^{4(n-2)!}$

{\it Case 2.} $a_1=d_1\notin\{b_1, c_1\}$, $a_2\neq d_2$ or $b_1=d_1\notin\{a_1, c_1\}$, $b_2\neq d_2$ or $c_1=d_1\notin\{a_1, b_1\}$, $c_2\neq d_2$.

W.l.o.g, assume that $a_1=d_1\notin\{b_1, c_1\}$ and $a_2\neq d_2$.

{\it Case 2.1.} $b_1\neq c_1$.

There are $n$ ways to choose $a_1(=d_1)$ from $N_n$, $n-1$ ways to choose $b_1$ from $N_n\setminus\{d_1\}$, $n-2$ ways to choose $c_1$ from $N_n\setminus\{b_1, d_1\}$, $n-1$ ways to choose $a_2$ from $N_n\setminus\{a_1\}$, $n-1$ ways to choose $b_2$ from $N_n\setminus\{b_1\}$, $n-1$ ways to choose $c_2$ from $N_n\setminus\{c_1\}$, $n-2$ ways to choose $d_2$ from $N_n\setminus\{d_1, a_2\}$.
Therefore, there are $n(n-1)^4(n-2)^2=n^7-8n^6+26n^5-44n^4+41n^3-20n^2+4n$ ways to choose four $B_{n-2}$'s as required.

{\it Case 2.2.} $b_1=c_1$.

There are $n$ ways to choose $a_1(=d_1)$ from $N_n$, $n-1$ ways to choose $b_1(=c_1)$ from $N_n\setminus\{d_1\}$, $n-1$ ways to choose $a_2$ from $N_n\setminus\{a_1\}$, $n-1$ ways to choose $b_2$ from $N_n\setminus\{b_1\}$, $n-2$ ways to choose $c_2$ from $N_n\setminus\{c_1, b_2\}$, $n-2$ ways to choose $d_2$ from $N_n\setminus\{d_1, a_2\}$.
Therefore, there are $n(n-1)^3(n-2)^2=n^6-7n^5+19n^4-25n^3+16n^2-4n$ ways to choose four $B_{n-2}$'s as required.

In summary, there are $3(n^7-7n^6+19n^5-25n^4+16n^3-4n^2)$ ways to choose four $B_{n-2}$'s as required in Case 2.
Observation \ref{ob:2} implies that $a_1 a_2 X^{n-2}$ and $d_1 X^{n-2} d_2$ are not disjoint and $V(a_1 a_2 X^{n-2})\cap V(d_1 X^{n-2} d_2)=V(a_1 a_2 X^{n-3} d_2)$, and for any $i\in\{b, c\}$, $i_1 i_2 X^{n-2}$ and $d_1 X^{n-2} d_2$ are disjoint. Thus, $|V(a_1 a_2 X^{n-2})\cup V(b_1 b_2 X^{n-2})\cup V(c_1 c_2 X^{n-2})\cup V(d_1 X^{n-2} d_2)|=|V(a_1 a_2 X^{n-2})|+|V(b_1 b_2 X^{n-2})|+|V(c_1 c_2 X^{n-2})|+|V(d_1 X^{n-2}$ $d_2)|-|V(a_1 a_2 X^{n-2})\cap V(d_1 X^{n-2} d_2)|=4(n-2)!-(n-3)!$. 
The probability $P_2$ that there are four fault-free $B_{n-2}$'s chosen as in Case 2 is $\frac{1}{6}\times 3(n^7-7n^6+19n^5-25n^4+16n^3-4n^2)p^{4(n-2)!-(n-3)!}$

{\it Case 3.} $a_1=d_1\notin\{b_1, c_1\}$, $a_2=d_2$ or $b_1=d_1\notin\{a_1, c_1\}$, $b_2=d_2$ or $c_1=d_1\notin\{a_1, b_1\}$, $c_2=d_2$.

W.l.o.g, assume that $a_1=d_1\notin\{b_1, c_1\}$ and $a_2=d_2$.

{\it Case 3.1.} $b_1\neq c_1$.

There are $n$ ways to choose $a_1(=d_1)$ from $N_n$, $n-1$ ways to choose $b_1$ from $N_n\setminus\{d_1\}$, $n-2$ ways to choose $c_1$ from $N_n\setminus\{b_1, d_1\}$, $n-1$ ways to choose $a_2(=d_2)$ from $N_n\setminus\{a_1\}$, $n-1$ ways to choose $b_2$ from $N_n\setminus\{b_1\}$, $n-1$ ways to choose $c_2$ from $N_n\setminus\{c_1\}$.
Therefore, there are $n(n-1)^4(n-2)=n^6-6n^5+14n^4-16n^3+9n^2-2n$ ways to choose four $B_{n-2}$'s as required.

{\it Case 3.2.} $b_1=c_1$.

There are $n$ ways to choose $a_1(=d_1)$ from $N_n$, $n-1$ ways to choose $b_1(=c_1)$ from $N_n\setminus\{d_1\}$, $n-1$ ways to choose $a_2(=d_2)$ from $N_n\setminus\{a_1\}$, $n-1$ ways to choose $b_2$ from $N_n\setminus\{b_1\}$, $n-2$ ways to choose $c_2$ from $N_n\setminus\{c_1, b_2\}$.
Therefore, there are $n(n-1)^3(n-2)=n^5-5n^4+9n^3-7n^2+2n$ ways to choose four $B_{n-2}$'s as required.

In summary, there are $3(n^6-5n^5+9n^4-7n^3+2n^2)$ ways to choose four $B_{n-2}$'s as required in Case 3.
Observation \ref{ob:2} implies that for any $i\in\{a, b, c\}$, $i_1 i_2 X^{n-2}$ and $d_1 X^{n-2} d_2$ are disjoint. Thus, $|V(a_1 a_2 X^{n-2})\cup V(b_1 b_2 X^{n-2})\cup V(c_1 c_2 X^{n-2})\cup V(d_1 X^{n-2} d_2)|=|V(a_1 a_2 X^{n-2})|+|V(b_1 b_2 X^{n-2})|+|V(c_1 c_2 X^{n-2})|+|V(d_1 X^{n-2}$ $d_2)|=4(n-2)!$.  
The probability $P_3$ that there are four fault-free $B_{n-2}$'s chosen as in Case 3 is $\frac{1}{6}\times 3(n^6-5n^5+9n^4-7n^3+2n^2)p^{4(n-2)!}$

{\it Case 4.} $a_1=b_1=d_1\neq c_1$ or $a_1=c_1=d_1\neq b_1$ or $b_1=c_1=d_1\neq a_1$.

W.l.o.g, assume that $a_1=b_1=d_1\neq c_1$.

{\it Case 4.1.} $a_2\neq d_2$ and $b_2\neq d_2$.

There are $n$ ways to choose $a_1(=b_1=d_1)$ from $N_n$, $n-1$ ways to choose $d_2$ from $N_n\setminus\{d_1\}$, $n-2$ ways to choose $a_2$ from $N_n\setminus\{a_1, d_2\}$, $n-3$ ways to choose $b_2$ from $N_n\setminus\{b_1, a_2, d_2\}$, $n-1$ ways to choose $c_1$ from $N_n\setminus\{d_1\}$, $n-1$ ways to choose $c_2$ from $N_n\setminus\{c_1\}$.
Therefore, there are $n(n-1)^3(n-2)(n-3)=n^6-8n^5+24n^4-34n^3+23n^2-6n$ ways to choose four $B_{n-2}$'s as required. 
Observation \ref{ob:2} implies that for any $i\in\{a, b\}$, $i_1 i_2 X^{n-2}$ and $d_1 X^{n-2} d_2$ are not disjoint and $V(i_1 i_2 X^{n-2})\cap V(d_1 X^{n-2} d_2)=V(i_1 i_2 X^{n-3} d_2)$, and $c_1 c_2 X^{n-2}$ and $d_1 X^{n-2} d_2$ are disjoint. Thus, $|V(a_1 a_2 X^{n-2})\cup V(b_1 b_2 X^{n-2})\cup V(c_1 c_2 X^{n-2})\cup V(d_1 X^{n-2} d_2)|=|V(a_1 a_2 X^{n-2})|+|V(b_1 b_2 X^{n-2})|+|V(c_1 c_2 X^{n-2})|+|V(d_1 X^{n-2} d_2)|-|V(a_1 a_2 X^{n-2})\cap V(d_1 X^{n-2} d_2)|-|V(b_1 b_2$ $X^{n-2})\cap V(d_1 X^{n-2} d_2)|=4(n-2)!-2(n-3)!$.
The probability $P_{4.1}$ that there are four fault-free $B_{n-2}$'s chosen as in Case 4.1 is $\frac{1}{6}(n^6-8n^5+24n^4-34n^3+23n^2-6n)p^{4(n-2)!-2(n-3)!}$.

{\it Case 4.2.} $a_2=d_2\neq b_2$ or $b_2=d_2\neq a_2$.

W.l.o.g, assume that $a_2=d_2\neq b_2$.
There are $n$ ways to choose $a_1=b_1=d_1$ from $N_n$, $n-1$ ways to choose $a_2=d_2$ from $N_n\setminus\{d_1\}$, $n-2$ ways to choose $b_2$ from $N_n\setminus\{b_1, d_2\}$, $n-1$ ways to choose $c_1$ from $N_n\setminus\{d_1\}$, $n-1$ ways to choose $c_2$ from $N_n\setminus\{c_1\}$.
Therefore, there are $2n(n-1)^3(n-2)=2(n^5-5n^4+9n^3-7n^2+2n)$ ways to choose four $B_{n-2}$'s as required. 
Observation \ref{ob:2} implies that for any $i\in\{a, c\}$, $i_1 i_2 X^{n-2}$ and $d_1 X^{n-2} d_2$ are disjoint, and $b_1 b_2 X^{n-2}$ and $d_1 X^{n-2} d_2$ are not disjoint and $V(b_1 b_2 X^{n-2})\cap V(d_1 X^{n-2} d_2)=V(b_1 b_2 X^{n-3} d_2)$. Thus, $|V(a_1 a_2 X^{n-2})\cup V(b_1 b_2 X^{n-2})\cup V(c_1 c_2 X^{n-2})\cup V(d_1 X^{n-2} d_2)|=|V(a_1 a_2 X^{n-2})|+|V(b_1 b_2 X^{n-2})|+|V(c_1 c_2 X^{n-2})|+|V(d_1 X^{n-2} d_2)|-|V(b_1 b_2 X^{n-2})\cap V(d_1 X^{n-2} d_2)|=4(n-2)!-(n-3)!$.
The probability $P_{4.2}$ that there are four fault-free $B_{n-2}$'s chosen as in Case 4.2 is $\frac{1}{6}\times 2(n^5-5n^4+9n^3-7n^2+2n)p^{4(n-2)!-(n-3)!}$.

In summary, the probability $P_4$ that there are four fault-free $B_{n-2}$'s chosen as in Case 4 is $3(P_{4.1}+P_{4.2})=(n^5-5n^4+9n^3-7n^2+2n)p^{4(n-2)!-(n-3)!}+\frac{1}{2}(n^6-8n^5+24n^4-34n^3+23n^2-6n)p^{4(n-2)!-2(n-3)!}$.

{\it Case 5.} $a_1=b_1=c_1=d_1$.

Clearly, $a_2$, $b_2$ and $c_2$ are pairwise distinct in this case.

{\it Case 5.1.} $d_2\notin\{a_2, b_2, c_2\}$.

There are $n$ ways to choose $a_1=b_1=c_1=d_1$ from $N_n$, $n-1$ ways to choose $d_2$ from $N_n\setminus\{d_1\}$, $n-2$ ways to choose $a_2$ from $N_n\setminus\{a_1, d_2\}$, $n-3$ ways to choose $b_2$ from $N_n\setminus\{b_1, d_2, a_2\}$, $n-4$ ways to choose $c_2$ from $N_n\setminus\{c_1, d_2, a_2, b_2\}$.
Therefore, there are $n(n-1)(n-2)(n-3)(n-4)=n^5-10n^4+35n^3-50n^2+24n$ ways to choose four $B_{n-2}$'s as required.
Observation \ref{ob:2} implies that for any $i\in\{a, b, c\}$, $i_1 i_2 X^{n-2}$ and $d_1 X^{n-2} d_2$ are not disjoint and $V(i_1 i_2 X^{n-2})\cap V(d_1 X^{n-2} d_2)=V(i_1 i_2 X^{n-3} d_2)$. Thus, $|V(a_1 a_2 X^{n-2})\cup V(b_1 b_2 X^{n-2})\cup V(c_1 c_2 X^{n-2})\cup V(d_1 X^{n-2} d_2)|=|V(a_1 a_2 X^{n-2})|+|V(b_1 b_2 X^{n-2})|+|V(c_1 c_2 X^{n-2})|+|V(d_1 X^{n-2} d_2)|-|V(a_1 a_2 X^{n-2})\cap V(d_1 X^{n-2}$ $d_2)|-|V(b_1 b_2 X^{n-2})\cap V(d_1 X^{n-2} d_2)|-|V(c_1 c_2 X^{n-2})\cap V(d_1 X^{n-2} d_2)|=4(n-2)!-3(n-3)!$.
The probability $P_{5.1}$ that there are four fault-free $B_{n-2}$'s chosen as in Case 5.1 is $\frac{1}{6}(n^5-10n^4+35n^3-50n^2+24n)p^{4(n-2)!-3(n-3)!}$.

{\it Case 5.2.} $d_2\in\{a_2, b_2, c_2\}$.

There are 3 possible cases, w.l.o.g, assume that $d_2=a_2$.
There are $n$ ways to choose $a_1=b_1=c_1=d_1$ from $N_n$, $n-1$ ways to choose $d_2=a_2$ from $N_n\setminus\{d_1\}$, $n-2$ ways to choose $b_2$ from $N_n\setminus\{b_1, d_2\}$, $n-3$ ways to choose $c_2$ from $N_n\setminus\{c_1, d_2, b_2\}$.
Therefore, there are $3n(n-1)(n-2)(n-3)=3(n^4-6n^3+11n^2-6n)$ ways to choose four $B_{n-2}$'s as required.
Observation \ref{ob:2} implies that $a_1 a_2 X^{n-2}$ and $d_1 X^{n-2} d_2$ are disjoint, and for any $i\in\{b, c\}$, $i_1 i_2 X^{n-2}$ and $d_1 X^{n-2} d_2$ are not disjoint and $V(i_1 i_2 X^{n-2})\cap V(d_1 X^{n-2} d_2)=V(i_1 i_2 X^{n-3} d_2)$. Thus, $|V(a_1 a_2 X^{n-2})\cup V(b_1 b_2 X^{n-2})\cup V(c_1 c_2 X^{n-2})\cup V(d_1 X^{n-2} d_2)|=|V(a_1 a_2 X^{n-2})|+|V(b_1 b_2 X^{n-2})|+|V(c_1 c_2 X^{n-2})|+|V(d_1 X^{n-2} d_2)|-|V(b_1 b_2 X^{n-2})\cap V(d_1 X^{n-2} d_2)|-|V(c_1 c_2 X^{n-2})\cap V(d_1 X^{n-2} d_2)|=4(n-2)!-2(n-3)!$.
The probability $P_{5.2}$ that there are four fault-free $B_{n-2}$'s chosen as in Case 5.2 is $\frac{1}{6}\times 3(n^4-6n^3+11n^2-6n)p^{4(n-2)!-2(n-3)!}$.

In summary, the probability $P_5$ that there are four fault-free $B_{n-2}$'s chosen as in Case 5 is $P_{5.1}+P_{5.2}=\frac{1}{2}(n^4-6n^3+11n^2-6n)p^{4(n-2)!-2(n-3)!}+\frac{1}{6}(n^5-10n^4+35n^3-50n^2+24n)p^{4(n-2)!-3(n-3)!}$.

By the above computations, $P(3,1,0)=\sum_{i=1}^{5}P_i=\frac{1}{6}(n^8-7n^7+21n^6-35n^5+34n^4-18n^3+4n^2)p^{4(n-2)!}+\frac{1}{2}(n^7-7n^6+21n^5-35n^4+34n^3-18n^2+4n)p^{4(n-2)!-(n-3)!}+\frac{1}{2}(n^6-8n^5+25n^4-40n^3+34n^2-12n)p^{4(n-2)!-2(n-3)!}$ $+\frac{1}{6}(n^5-10n^4+35n^3-50n^2+24n)p^{4(n-2)!-3(n-3)!}$.
\end{proof}

\begin{theorem}\label{th3}
$P(1,3,0)=P(0,3,1)=P(0,1,3)=\frac{1}{6}(n^8-7n^7+21n^6-35n^5+34n^4-18n^3+4n^2)p^{4(n-2)!}+\frac{1}{2}(n^7-7n^6+21n^5-35n^4+34n^3-18n^2+4n)p^{4(n-2)!-(n-3)!}+\frac{1}{2}(n^6-8n^5+25n^4-40n^3+34n^2-12n)p^{4(n-2)!-2(n-3)!}+\frac{1}{6}(n^5-10n^4+35n^3-50n^2+24n)p^{4(n-2)!-3(n-3)!}$.
\end{theorem}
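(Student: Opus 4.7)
My plan is to prove all three equalities by combining a reversal symmetry of $B_n$ with a direct mirror of the case analysis in Theorem \ref{th2}. The reversal step reduces the problem to a single identity $P(1,3,0)=P(3,1,0)$, which I then establish by re-running the five-case count of Theorem \ref{th2} with the roles of $H_{1,2}$ and $H_{1,n}$ interchanged. The string-reversal map $\pi_1\pi_2\cdots\pi_n\mapsto\pi_n\pi_{n-1}\cdots\pi_1$ is a graph automorphism of $B_n$: an edge obtained by transposing positions $i,i+1$ becomes an edge obtained by transposing positions $n-i,n-i+1$. This automorphism bijects $H_{1,2}$ with $H_{n-1,n-2}$ via $abX^{n-2}\leftrightarrow X^{n-2}ba$, sends $H_{1,n}$ to itself via $cX^{n-2}d\mapsto dX^{n-2}c$, and preserves all pairwise intersections. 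Consequently it pairs $(i,j,k)$-configurations bijectively with $(k,j,i)$-configurations, so $P(3,1,0)=P(0,1,3)$ and $P(1,3,0)=P(0,3,1)$, and it suffices to prove $P(1,3,0)=P(3,1,0)$.

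For that equality I would write the unique $H_{1,2}$-subnetwork as $a_1a_2X^{n-2}$ and the three $H_{1,n}$-subnetworks as $b_1X^{n-2}b_2$, $c_1X^{n-2}c_2$, $d_1X^{n-2}d_2$. By Observation \ref{ob:1} the latter three are pairwise disjoint, and by Observation \ref{ob:2} the $H_{1,2}$-subnetwork meets $i_1X^{n-2}i_2$ iff $a_1=i_1$ and $a_2\neq i_2$, with intersection a single $(n-3)!$-vertex set; this is exactly the intersection pattern driving Theorem \ref{th2}. I then partition configurations into the same five cases used there, indexed by the relation of $a_1$ to $\{b_1,c_1,d_1\}$: $a_1\notin\{b_1,c_1,d_1\}$ (with sub-cases on coincidences among $b_1,c_1,d_1$); $a_1$ matching exactly one of them, split by whether $a_2$ equals the corresponding $i_2$; $a_1$ matching exactly two of them; and $a_1=b_1=c_1=d_1$ (split by whether $a_2\in\{b_2,c_2,d_2\}$). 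Because Observation \ref{ob:2} treats the two subnetwork families symmetrically (``first coordinates equal, second coordinates differ''), each case produces the same product of linear factors in $n$, the same union size $4(n-2)!-t(n-3)!$, and the same $3!$ division for the three-member group as its counterpart in Theorem \ref{th2}.

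The only genuine work is bookkeeping: verifying case by case that the enumerations reproduce $n(n-1)^5(n-2)(n-3)$, $3n(n-1)^4(n-2)^2$, and the remaining products from Theorem \ref{th2}. The most delicate place is the counterpart of Case 4, where exactly two of $b_1,c_1,d_1$ equal $a_1$: here one must correctly separate the sub-case in which $a_2$ equals one of the two corresponding second coordinates from the sub-case in which it equals neither, paralleling Cases 4.1 and 4.2 of Theorem \ref{th2}. Once each of the five partial counts is verified, summing reproduces the four-term expression $\sum_{i=1}^{5}P_i$ displayed in the theorem, and combined with the reversal step this proves all three equalities simultaneously.
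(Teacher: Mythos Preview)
Your proposal is correct and follows essentially the same route as the paper, which simply states that all three scenarios are ``similar to Theorem~\ref{th2}'' and therefore equal $P(3,1,0)$. Your use of the string-reversal automorphism to collapse $P(0,1,3)$ and $P(0,3,1)$ onto $P(3,1,0)$ and $P(1,3,0)$ respectively is a clean explicit justification that the paper leaves implicit, and your mirrored five-case recount for $P(1,3,0)=P(3,1,0)$ is exactly what ``similar to Theorem~\ref{th2}'' amounts to.
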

\begin{proof}
All of these three scenarios are similar to Theorem \ref{th2}, it is easy to get that $P(1,3,0)=P(0,3,1)=P(0,1,3)=P(3,1,0)=\frac{1}{6}(n^8-7n^7+21n^6-35n^5+34n^4-18n^3+4n^2)p^{4(n-2)!}+\frac{1}{2}(n^7-7n^6+21n^5-35n^4+34n^3-18n^2+4n)p^{4(n-2)!-(n-3)!}+\frac{1}{2}(n^6-8n^5+25n^4-40n^3+34n^2-12n)p^{4(n-2)!-2(n-3)!}+\frac{1}{6}(n^5-10n^4+35n^3-50n^2+24n)p^{4(n-2)!-3(n-3)!}$.
\end{proof}

\begin{theorem}\label{th4}
$P(3,0,1)=\frac{4}{3}(8n^5-50n^4+115n^3-115n^2+42n)p^{4(n-2)!}+(8n^6-74n^5+265n^4-460n^3+387n^2-126n)p^{4(n-2)!-(n-4)!}+(2n^7-25n^6+125n^5-320n^4+443n^3-315n^2+90n)p^{4(n-2)!-2(n-4)!}+\frac{1}{6}(n^8-16n^7+105n^6-365n^5+724n^4-819n^3+490n^2-120n)p^{4(n-2)!-3(n-4)!}$.
\end{theorem}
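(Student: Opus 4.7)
The plan is to parameterize the configurations by
$k = |\{i \in \{a,b,c\} : \{i_1, i_2\} \cap \{d_1, d_2\} = \emptyset\}|$,
where the three elements of $H_{1,2}$ are denoted $a_1a_2X^{n-2}$, $b_1b_2X^{n-2}$, $c_1c_2X^{n-2}$ and the lone element of $H_{n-1,n-2}$ is denoted $X^{n-2}d_1d_2$. Observation \ref{ob:1} says the three $H_{1,2}$ subnetworks are pairwise vertex-disjoint. Observation \ref{ob:4} says that for each $i \in \{a,b,c\}$, the subnetwork $i_1i_2X^{n-2}$ is either vertex-disjoint from $X^{n-2}d_1d_2$ (when $\{i_1,i_2\} \cap \{d_1,d_2\} \neq \emptyset$), or meets it in exactly the $(n-4)!$-element set $V(i_1i_2X^{n-4}d_1d_2)$ otherwise. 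Hence by inclusion--exclusion the union of the four vertex sets has size $4(n-2)! - k(n-4)!$, so $k$ alone determines the exponent of $p$ in each contribution.

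For each $k \in \{0,1,2,3\}$ the counting then proceeds as follows. Choose $(d_1, d_2)$ in $n(n-1)$ ways. Among the $n(n-1)$ ordered pairs of distinct elements of $N_n$, exactly $\alpha := (n-2)(n-3)$ are disjoint from $\{d_1, d_2\}$ while $\beta := n(n-1) - \alpha = 4n-6$ intersect it. The three $H_{1,2}$ subnetworks form an unordered set of three distinct ordered pairs, $k$ of them drawn from the $\alpha$-set and $3-k$ from the $\beta$-set, so the configuration count for parameter $k$ is
\[
N_k \;=\; n(n-1)\binom{\alpha}{k}\binom{\beta}{3-k},
\]
and the contribution to $P(3,0,1)$ is $N_k\, p^{4(n-2)! - k(n-4)!}$.

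Summing over $k$ and factoring the binomials yields
$N_0 = \tfrac{4}{3}\, n(n-1)(n-2)(2n-3)(4n-7)$,
$N_1 = n(n-1)(n-2)(n-3)(2n-3)(4n-7)$,
$N_2 = n(n-1)(n-2)(n-3)(n^2-5n+5)(2n-3)$, and
$N_3 = \tfrac{1}{6}\, n(n-1)^2(n-2)(n-3)(n-4)(n^2-5n+5)$,
which are expected to match the four coefficients in the theorem once expanded. The only substantive obstacle is the polynomial expansion into the degree-$5$ through degree-$8$ forms stated; this is purely mechanical and can be cross-checked by evaluating at a few small values of $n$. If one prefers the granular style of Theorems \ref{th2} and \ref{th3}, each $k$-case can be further subdivided according to whether $|\{i_1, i_2\} \cap \{d_1, d_2\}|$ is $0$, $1$, or $2$ for each $i \in \{a,b,c\}$; because the $p$-exponent is constant within a fixed $k$, such subdivisions only refine the configuration count and do not alter the probability formula.
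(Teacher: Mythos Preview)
Your proof is correct and considerably more streamlined than the paper's. The paper encodes each configuration by the triple $\bigl(|\{a_1,a_2\}\cap\{d_1,d_2\}|,\ |\{b_1,b_2\}\cap\{d_1,d_2\}|,\ |\{c_1,c_2\}\cap\{d_1,d_2\}|\bigr)$ and then splits into nine symmetry classes of these triples (the strings $222$, $221$, $220$, $211$, $210$, $200$, $111$, $110$, $100$, $000$ up to permutation), with further sub-cases inside several of them to handle coincidences among $a_1,b_1,c_1$ and the positions of the shared element. Each sub-case is counted by a separate hand-made product of falling factorials and then summed at the end.

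You bypass all of this granularity with one clean observation: by Observation~\ref{ob:1} the three $H_{1,2}$ subnetworks are mutually disjoint, so in the inclusion--exclusion for the four-term union only the three pairwise intersections with $X^{n-2}d_1d_2$ survive, and by Observation~\ref{ob:4} each such intersection is either empty or has size $(n-4)!$. Thus the exponent of $p$ depends only on your parameter $k$, and the count for fixed $k$ is the Vandermonde-type product $n(n-1)\binom{\alpha}{k}\binom{\beta}{3-k}$ with $\alpha=(n-2)(n-3)$ and $\beta=4n-6$. Your $k$ is precisely the number of zeros in the paper's $3$-digit string, so the paper's nine cases are a refinement of your four; the refinement distinguishes intersection sizes $1$ versus $2$, which as you note is irrelevant to the exponent. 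What the paper's finer decomposition buys is nothing here, though the same template is reused in later theorems (e.g.\ $P(2,1,1)$) where intersections of different sizes \emph{do} coexist and a coarser invariant would not suffice. For $P(3,0,1)$ your argument is both shorter and structurally clearer.
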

\begin{proof}
Denote by $a_1 a_2 X^{n-2}\in H_{1,2}$, $b_1 b_2 X^{n-2}\in H_{1,2}$, $c_1 c_2 X^{n-2}\in H_{1,2}$ and $X^{n-2} d_1 d_2\in H_{n-1,n}$ these four $B_{n-2}$'s. Note that $a_1 a_2 X^{n-2}$, $b_1 b_2 X^{n-2}$ and $c_1 c_2 X^{n-2}$ are pairwise disjoint.

Since we have implicitly sorted the values of $a_1 a_2$, $b_1 b_2$ and $c_1 c_2$, the number of ways to choose four $B_{n-2}$'s as required needs to be divided by 6. It will be applied directly in following analyses.

In the following analysis, we will use a 3-digit string to represent different cases. These three numbers represent $|\{a_1, a_2\}\cap\{d_1, d_2\}|$, $|\{b_1, b_2\}\cap\{d_1, d_2\}|$ and $|\{c_1, c_2\}\cap\{d_1, d_2\}|$ respectively. For example, 012 represents the case $\{a_1, a_2\}\cap\{d_1, d_2\}=\emptyset$, $|\{b_1, b_2\}\cap\{d_1, d_2\}|=1$ and $\{c_1, c_2\}=\{d_1, d_2\}$.

We will analyze this problem in 9 cases as follows.

{\it Case 1.} 221 or 212 or 122.

W.l.o.g, assume that the case of 221, that is, $\{a_1, a_2\}=\{b_1, b_2\}=\{d_1, d_2\}$ and $|\{c_1, c_2\}\cap\{d_1, d_2\}|=1$.
There are 2 possible cases $a_1=b_2=d_1$, $a_2=b_1=d_2$ and $a_1=b_2=d_2$, $a_2=b_1=d_1$ such that $\{a_1, a_2\}=\{b_1, b_2\}=\{d_1, d_2\}$. There are 4 possible cases ($\romannumeral1$) $c_1=d_1$, $c_2\notin\{d_1, d_2\}$, and ($\romannumeral2$) $c_1=d_2$, $c_2\notin\{d_1, d_2\}$, and ($\romannumeral3$) $c_2=d_1$, $c_1\notin\{d_1, d_2\}$, and ($\romannumeral4$) $c_2=d_2$, $c_1\notin\{d_1, d_2\}$ such that $|\{c_1, c_2\}\cap\{d_1, d_2\}|=1$. Therefore, there are 8 possible scenarios for this situation. W.l.o.g, assume that $a_1=b_2=d_1$, $a_2=b_1=d_2$, $c_1=d_1$, $c_2\notin\{d_1, d_2\}$.

There are $n$ ways to choose $a_1(=b_2=d_1=c_1)$ from $N_n$, $n-1$ ways to choose $a_2(=b_1=d_2)$ from $N_n\setminus\{a_1\}$, $n-2$ ways to choose $c_2$ from $N_n\setminus\{d_1, d_2\}$.
Therefore, there are $3\times 8n(n-1)(n-2)=24(n^3-3n^2+2n)$ ways to choose four $B_{n-2}$'s as required. Observation \ref{ob:4} implies that for any $i\in\{a, b, c\}$, $i_1 i_2 X^{n-2}$ and $X^{n-2} d_1 d_2$ are disjoint. Thus, $|V(a_1 a_2 X^{n-2})\cup V(b_1 b_2 X^{n-2})\cup V(c_1 c_2 X^{n-2})\cup V(X^{n-2} d_1 d_2)|=|V(a_1 a_2 X^{n-2})|+|V(b_1 b_2 X^{n-2})|+|V(c_1 c_2 X^{n-2})|+|V(X^{n-2} d_1 d_2)|=4(n-2)!$. 
The probability $P_1$ that there are four fault-free $B_{n-2}$'s chosen as in Case 1 is $\frac{1}{6}\times 24(n^3-3n^2+2n)p^{4(n-2)!}$

{\it Case 2.} 220 or 202 or 022.

W.l.o.g, assume that the case of 220, that is, $\{a_1, a_2\}=\{b_1, b_2\}=\{d_1, d_2\}$ and $\{c_1, c_2\}\cap\{d_1, d_2\}=\emptyset$.
There are 2 possible cases $a_1=b_2=d_1$, $a_2=b_1=d_2$ and $a_1=b_2=d_2$, $a_2=b_1=d_1$ such that $\{a_1, a_2\}=\{b_1, b_2\}=\{d_1, d_2\}$. W.l.o.g, assume that $a_1=b_2=d_1$, $a_2=b_1=d_2$.

There are $n$ ways to choose $a_1(=b_2=d_1)$ from $N_n$, $n-1$ ways to choose $a_2(=b_1=d_2)$ from $N_n\setminus\{a_1\}$, $n-2$ ways to choose $c_1$ from $N_n\setminus\{d_1, d_2\}$, $n-3$ ways to choose $c_2$ from $N_n\setminus\{d_1, d_2, c_1\}$.
Therefore, there are $3\times 2n(n-1)(n-2)(n-3)=6(n^4-6n^3+11n^2-6n)$ ways to choose four $B_{n-2}$'s as required. Observation \ref{ob:4} implies that for any $i\in\{a, b\}$, $i_1 i_2 X^{n-2}$ and $X^{n-2} d_1 d_2$ are disjoint, and $c_1 c_2 X^{n-2}$ and $X^{n-2} d_1 d_2$ are not disjoint and $V(c_1 c_2 X^{n-2})\cap V(X^{n-2} d_1 d_2)=V(c_1 c_2 X^{n-4} d_1 d_2)$. Thus, $|V(a_1 a_2 X^{n-2})\cup V(b_1 b_2 X^{n-2})\cup V(c_1 c_2 X^{n-2})\cup V(X^{n-2} d_1 d_2)|=|V(a_1 a_2 X^{n-2})|+|V(b_1 b_2 X^{n-2})|+|V(c_1 c_2 X^{n-2})|+|V(X^{n-2} d_1 d_2)|-|V(c_1 c_2 X^{n-2})\cap V(X^{n-2}$ $d_1 d_2)|=4(n-2)!-(n-4)!$. 
The probability $P_2$ that there are four fault-free $B_{n-2}$'s chosen as in Case 2 is $\frac{1}{6}\times 6(n^4-6n^3+11n^2-6n)p^{4(n-2)!-(n-4)!}$

{\it Case 3.} 211 or 121 or 112.

W.l.o.g, assume that the case of 211, that is, $\{a_1, a_2\}=\{d_1, d_2\}$, $|\{b_1, b_2\}\cap\{d_1, d_2\}|=1$ and $|\{c_1, c_2\}\cap\{d_1, d_2\}|=1$.
There are 2 possible cases $a_1=d_1$, $a_2=d_2$ and $a_1=d_2$, $a_2=d_1$ such that $\{a_1, a_2\}=\{d_1, d_2\}$. W.l.o.g, assume that $a_1=d_1$, $a_2=d_2$.
Note that for any $i\in\{b, c\}$, there are 4 possible cases ($\romannumeral1$) $i_1=d_1$, $i_2\notin\{d_1, d_2\}$, and ($\romannumeral2$) $i_1=d_2$, $i_2\notin\{d_1, d_2\}$, and ($\romannumeral3$) $i_2=d_1$, $i_1\notin\{d_1, d_2\}$, and ($\romannumeral4$) $i_2=d_2$, $i_1\notin\{d_1, d_2\}$ such that $|\{i_1, i_2\}\cap\{d_1, d_2\}|=1$.

{\it Case 3.1.} $b_1=c_1\in\{d_1, d_2\}$ or $b_2=c_2\in\{d_1, d_2\}$.

W.l.o.g, assume that $b_1(=c_1)\in\{d_1, d_2\}$. There are 2 possible cases $b_1(=c_1=d_1)$, $b_2\notin\{d_1, d_2\}$, $c_2\notin\{d_1, d_2\}$ and $b_1(=c_1=d_2)$, $b_2\notin\{d_1, d_2\}$, $c_2\notin\{d_1, d_2\}$. W.l.o.g, assume that the former applies.
There are $n$ ways to choose $b_1=c_1=d_1=a_1$ from $N_n$, $n-1$ ways to choose $a_2=d_2$ from $N_n\setminus\{a_1\}$, $n-2$ ways to choose $b_2$ from $N_n\setminus\{d_1, d_2\}$, $n-3$ ways to choose $c_2$ from $N_n\setminus\{d_1, d_2, b_2\}$.
Therefore, there are $2\times 2n(n-1)(n-2)(n-3)=4(n^4-6n^3+11n^2-6n)$ ways to choose four $B_{n-2}$'s as required.

{\it Case 3.2.} $b_1=c_2\in\{d_1, d_2\}$ or $b_2=c_1\in\{d_1, d_2\}$.

W.l.o.g, assume that $b_1=c_2\in\{d_1, d_2\}$. There are 2 possible cases $b_1=c_2=d_1$, $b_2\notin\{d_1, d_2\}$, $c_1\notin\{d_1, d_2\}$ and $b_1=c_2=d_2$, $b_2\notin\{d_1, d_2\}$, $c_1\notin\{d_1, d_2\}$. W.l.o.g, assume that the former applies.
There are $n$ ways to choose $b_1(=c_2=d_1=a_1)$ from $N_n$, $n-1$ ways to choose $a_2(=d_2)$ from $N_n\setminus\{a_1\}$, $n-2$ ways to choose $b_2$ from $N_n\setminus\{d_1, d_2\}$, $n-2$ ways to choose $c_2$ from $N_n\setminus\{d_1, d_2\}$.
Therefore, there are $2\times 2n(n-1)(n-2)^2=4(n^4-5n^3+8n^2-4n)$ ways to choose four $B_{n-2}$'s as required.

{\it Case 3.3.} For any $i, j\in\{1, 2\}$, $b_i\in\{d_1, d_2\}$ and $c_j\in\{d_1, d_2\}\setminus\{b_i\}$.

There are 8 possible cases. W.l.o.g, assume that $b_1=d_1$, $c_1=d_2$, $b_2\notin\{d_1, d_2\}$, $c_2\notin\{d_1, d_2\}$.
There are $n$ ways to choose $a_1(=d_1=b_1)$ from $N_n$, $n-1$ ways to choose $a_2(=d_2=c_1)$ from $N_n\setminus\{a_1\}$, $n-2$ ways to choose $b_2$ from $N_n\setminus\{d_1, d_2\}$, $n-2$ ways to choose $c_2$ from $N_n\setminus\{d_1, d_2\}$.
Therefore, there are $8n(n-1)(n-2)^2=8(n^4-5n^3+8n^2-4n)$ ways to choose four $B_{n-2}$'s as required.

In summary, there are $4(4n^4-21n^3+35n^2-18n)$ ways to choose four $B_{n-2}$'s as required in Case 3.
Observation \ref{ob:4} implies that for any $i\in\{a, b, c\}$, $i_1 i_2 X^{n-2}$ and $X^{n-2} d_1 d_2$ are disjoint. Thus, $|V(a_1 a_2 X^{n-2})\cup V(b_1 b_2 X^{n-2})\cup V(c_1 c_2 X^{n-2})\cup V(X^{n-2} d_1 d_2)|=|V(a_1 a_2 X^{n-2})|+|V(b_1 b_2 X^{n-2})|+|V(c_1 c_2 X^{n-2})|+|V(X^{n-2} d_1$ $d_2)|=4(n-2)!$. 
The probability $P_3$ that there are four fault-free $B_{n-2}$'s chosen as in Case 3 is $\frac{1}{6}\times 3\times 2\times 4(4n^4-21n^3+35n^2-18n)p^{4(n-2)!}$.

{\it Case 4.} 210 or 201 or 120 or 102 or 012 or 021.

W.l.o.g, assume that the case of 210, that is, $\{a_1, a_2\}=\{d_1, d_2\}$, $|\{b_1, b_2\}\cap\{d_1, d_2\}|=1$ and $\{c_1, c_2\}\cap\{d_1, d_2\}=\emptyset$.
There are 8 possible cases, w.l.o.g, assume that $a_1=d_1$, $a_2=d_2$, $b_1=d_1$, $b_2\notin\{d_1, d_2\}$.
There are $n$ ways to choose $a_1(=d_1=b_1)$ from $N_n$, $n-1$ ways to choose $a_2(=d_2)$ from $N_n\setminus\{a_1\}$, $n-2$ ways to choose $b_2$ from $N_n\setminus\{d_1, d_2\}$, $n-2$ ways to choose $c_1$ from $N_n\setminus\{d_1, d_2\}$, $n-3$ ways to choose $c_2$ from $N_n\setminus\{d_1, d_2, c_1\}$.
Therefore, there are $n(n-1)(n-2)^2(n-3)=n^5-8n^4+23n^3-28n^2+12n$ ways to choose four $B_{n-2}$'s as required. Observation \ref{ob:4} implies that for any $i\in\{a, b\}$, $i_1 i_2 X^{n-2}$ and $X^{n-2} d_1 d_2$ are disjoint, and $c_1 c_2 X^{n-2}$ and $X^{n-2} d_1 d_2$ are not disjoint and $V(c_1 c_2 X^{n-2})\cap V(X^{n-2} d_1 d_2)=V(c_1 c_2 X^{n-4} d_1 d_2)$. Thus, $|V(a_1 a_2 X^{n-2})\cup V(b_1 b_2 X^{n-2})\cup V(c_1 c_2 X^{n-2})\cup V(X^{n-2} d_1 d_2)|=|V(a_1 a_2 X^{n-2})|+|V(b_1 b_2 X^{n-2})|+|V(c_1 c_2 X^{n-2})|+|V(X^{n-2} d_1 d_2)|-|V(c_1 c_2 X^{n-2})\cap V(X^{n-2} d_1 d_2)|=4(n-2)!-(n-4)!$. 
The probability $P_4$ that there are four fault-free $B_{n-2}$'s chosen as in Case 4 is $\frac{1}{6}\times 6\times 8(n^5-8n^4+23n^3-28n^2+12n)p^{4(n-2)!-(n-4)!}$

{\it Case 5.} 200 or 020 or 002.

W.l.o.g, assume that the case of 200, that is, $\{a_1, a_2\}=\{d_1, d_2\}$, $\{b_1, b_2\}\cap\{d_1, d_2\}=\emptyset$ and $\{c_1, c_2\}\cap\{d_1, d_2\}=\emptyset$.
There are 2 possible cases, w.l.o.g, assume that $a_1=d_1$, $a_2=d_2$.

{\it Case 5.1.} $b_1\neq c_1$.

There are $n$ ways to choose $a_1(=d_1)$ from $N_n$, $n-1$ ways to choose $a_2(=d_2)$ from $N_n\setminus\{a_1\}$, $n-2$ ways to choose $b_1$ from $N_n\setminus\{d_1, d_2\}$, $n-3$ ways to choose $b_2$ from $N_n\setminus\{d_1, d_2, b_1\}$, $n-3$ ways to choose $c_1$ from $N_n\setminus\{d_1, d_2, b_1\}$, $n-3$ ways to choose $c_2$ from $N_n\setminus\{d_1, d_2, c_1\}$.
Therefore, there are $n(n-1)(n-2)(n-3)^3=n^6-12n^5+56n^4-126n^3+135n^2-54n$ ways to choose four $B_{n-2}$'s as required.

{\it Case 5.2.} $b_1=c_1$.

There are $n$ ways to choose $a_1(=d_1)$ from $N_n$, $n-1$ ways to choose $a_2(=d_2)$ from $N_n\setminus\{a_1\}$, $n-2$ ways to choose $b_1(=c_1)$ from $N_n\setminus\{d_1, d_2\}$, $n-3$ ways to choose $b_2$ from $N_n\setminus\{d_1, d_2, b_1\}$, $n-4$ ways to choose $c_2$ from $N_n\setminus\{d_1, d_2, c_1, b_2\}$.
Therefore, there are $n(n-1)(n-2)(n-3)(n-4)=n^5-10n^4+35n^3-50n^2+24n$ ways to choose four $B_{n-2}$'s as required.

In summary, there are $n^6-11n^5+46n^4-91n^3+85n^2-30n$ ways to choose four $B_{n-2}$'s as required in Case 5.
Observation \ref{ob:4} implies that $a_1 a_2 X^{n-2}$ and $X^{n-2} d_1 d_2$ are disjoint, and for any $i\in\{b, c\}$, $i_1 i_2 X^{n-2}$ and $X^{n-2} d_1 d_2$ are not disjoint and $V(i_1 i_2 X^{n-2})\cap V(X^{n-2} d_1 d_2)=V(i_1 i_2 X^{n-4} d_1 d_2)$. Thus, $|V(a_1 a_2 X^{n-2})\cup V(b_1 b_2 X^{n-2})\cup V(c_1 c_2 X^{n-2})\cup V(X^{n-2} d_1 d_2)|=|V(a_1 a_2 X^{n-2})|+|V(b_1 b_2 X^{n-2})|+|V(c_1 c_2 X^{n-2})|+|V(X^{n-2} d_1$ $d_2)|-|V(b_1 b_2 X^{n-2})\cap V(X^{n-2} d_1 d_2)|-|V(c_1 c_2 X^{n-2})\cap V(X^{n-2} d_1 d_2)|=4(n-2)!-2(n-4)!$. 
The probability $P_5$ that there are four fault-free $B_{n-2}$'s chosen as in Case 5 is $\frac{1}{6}\times 3\times 2(n^6-11n^5+46n^4-91n^3+85n^2-30n)p^{4(n-2)!-2(n-4)!}$.

{\it Case 6.} 111.

That is $|\{a_1, a_2\}\cap\{d_1, d_2\}|=1$, $|\{b_1, b_2\}\cap\{d_1, d_2\}|=1$ and $|\{c_1, c_2\}\cap\{d_1, d_2\}|=1$.
Note that for any $i\in\{a, b, c\}$, there are 4 possible cases ($\romannumeral1$) $i_1=d_1$, $i_2\notin\{d_1, d_2\}$, and ($\romannumeral2$) $i_1=d_2$, $i_2\notin\{d_1, d_2\}$, and ($\romannumeral3$) $i_2=d_1$, $i_1\notin\{d_1, d_2\}$, and ($\romannumeral4$) $i_2=d_2$, $i_1\notin\{d_1, d_2\}$ such that $|\{i_1, i_2\}\cap\{d_1, d_2\}|=1$.

{\it Case 6.1.} $a_1=b_1=c_1\in\{d_1, d_2\}$ or $a_2=b_2=c_2\in\{d_1, d_2\}$.

There are 4 possible cases, w.l.o.g, assume that $a_1=b_1=c_1=d_1$, $\{a_2, b_2, c_2\}\cap\{d_1, d_2\}=\emptyset$.
There are $n$ ways to choose $a_1(=b_1=c_1=d_1)$ from $N_n$, $n-1$ ways to choose $d_2$ from $N_n\setminus\{d_1\}$, $n-2$ ways to choose $a_2$ from $N_n\setminus\{d_1, d_2\}$, $n-3$ ways to choose $b_2$ from $N_n\setminus\{d_1, d_2, a_2\}$, $n-4$ ways to choose $c_2$ from $N_n\setminus\{d_1, d_2, a_2, b_2\}$.
Therefore, there are $4n(n-1)(n-2)(n-3)(n-4)=4(n^5-10n^4+35n^3-50n^2+24n)$ ways to choose four $B_{n-2}$'s as required.

{\it Case 6.2.} For any $i, j, k, l\in\{1, 2\}$, $a_i=b_j=c_k=d_l$, where $i=j\neq k$ or $i=k\neq j$ or $j=k\neq i$.

There are 12 possible cases, w.l.o.g, assume that $a_1=b_1=c_2=d_1$, $\{a_2, b_2\}\cap\{d_1, d_2\}=\emptyset$ and $c_1\notin\{d_1, d_2\}$.
There are $n$ ways to choose $a_1(=b_1=c_2=d_1)$ from $N_n$, $n-1$ ways to choose $d_2$ from $N_n\setminus\{d_1\}$, $n-2$ ways to choose $a_2$ from $N_n\setminus\{d_1, d_2\}$, $n-3$ ways to choose $b_2$ from $N_n\setminus\{d_1, d_2, a_2\}$, $n-2$ ways to choose $c_2$ from $N_n\setminus\{d_1, d_2\}$.
Therefore, there are $12n(n-1)(n-2)^2(n-3)=12(n^5-8n^4+23n^3-28n^2+12n)$ ways to choose four $B_{n-2}$'s as required.

{\it Case 6.3.} For any $i, j\in\{1, 2\}$, $x_i=y_i=d_j$, $\{d_1, d_2\}\setminus\{d_j\}\subset\{z_1, z_2\}$, where $\{x, y, z\}=\{a, b, c\}$.

There are 3 ways to choose two from $\{a, b, c\}$ as $x$ and $y$, and the remaining one as $z$. There are 2 ways to determine the values of $i$, $j$ and $\{d_1, d_2\}\setminus\{d_j\}\subset\{z_1, z_2\}$. Therefore, there are 24 possible cases, w.l.o.g, assume that $a_1=b_1=d_1$, $c_1=d_2$, $\{a_2, b_2\}\cap\{d_1, d_2\}=\emptyset$ and $c_2\notin\{d_1, d_2\}$.
There are $n$ ways to choose $a_1(=b_1=d_1)$ from $N_n$, $n-1$ ways to choose $c_1(=d_2)$ from $N_n\setminus\{d_1\}$, $n-2$ ways to choose $a_2$ from $N_n\setminus\{d_1, d_2\}$, $n-3$ ways to choose $b_2$ from $N_n\setminus\{d_1, d_2, a_2\}$, $n-2$ ways to choose $c_2$ from $N_n\setminus\{d_1, d_2\}$.
Therefore, there are $24n(n-1)(n-2)^2(n-3)=24(n^5-8n^4+23n^3-28n^2+12n)$ ways to choose four $B_{n-2}$'s as required.

{\it Case 6.4.} $x_1=y_2\in\{d_1, d_2\}$, $\{d_1, d_2\}\setminus\{x_1\}\subset\{z_1, z_2\}$, where $\{x, y, z\}=\{a, b, c\}$.

There are 3 ways to choose one from $\{a, b, c\}$ as $x$, and there are 2 ways to choose one from $\{a, b, c\}\setminus\{x\}$ as $y$, and the remaining one as $z$. $x_1=y_2$ (resp. $\{d_1, d_2\}\setminus\{x_1\}$) has two possible values in $\{d_1, d_2\}$ (resp. $\{z_1, z_2\}$). Therefore, there are 24 possible cases, w.l.o.g, assume that $a_1=b_2=d_1$, $c_1=d_2$, $a_2\notin\{d_1, d_2\}$, $b_1\notin\{d_1, d_2\}$ and $c_2\notin\{d_1, d_2\}$.
There are $n$ ways to choose $a_1(=b_2=d_1)$ from $N_n$, $n-1$ ways to choose $c_1(=d_2)$ from $N_n\setminus\{d_1\}$, $n-2$ ways to choose $a_2$ from $N_n\setminus\{d_1, d_2\}$, $n-2$ ways to choose $b_2$ from $N_n\setminus\{d_1, d_2\}$, $n-2$ ways to choose $c_2$ from $N_n\setminus\{d_1, d_2\}$.
Therefore, there are $24n(n-1)(n-2)^3=24(n^5-7n^4+18n^3-20n^2+8n)$ ways to choose four $B_{n-2}$'s as required.

In summary, there are $2(8n^5-62n^4+175n^3-211n^2+90n)$ ways to choose four $B_{n-2}$'s as required in Case 6. 
Observation \ref{ob:4} implies that for any $i\in\{a, b, c\}$, $i_1 i_2 X^{n-2}$ and $X^{n-2} d_1 d_2$ are disjoint. Thus, $|V(a_1 a_2 X^{n-2})\cup V(b_1 b_2 X^{n-2})\cup V(c_1 c_2 X^{n-2})\cup V(X^{n-2} d_1 d_2)|=|V(a_1 a_2 X^{n-2})|+|V(b_1 b_2 X^{n-2})|+|V(c_1 c_2 X^{n-2})|+|V(X^{n-2} d_1$ $d_2)|=4(n-2)!$. 
The probability $P_6$ that there are four fault-free $B_{n-2}$'s chosen as in Case 6 is $\frac{1}{6}\times 4\times 2(8n^5-62n^4+175n^3-211n^2+90n)p^{4(n-2)!}$.

{\it Case 7.} 110 or 101 or 011.

W.l.o.g, assume that the case of 110, that is, $|\{a_1, a_2\}\cap\{d_1, d_2\}|=1$, $|\{b_1, b_2\}\cap\{d_1, d_2\}|=1$ and $\{c_1, c_2\}\cap\{d_1, d_2\}=\emptyset$.

{\it Case 7.1.} $a_1=b_1\in\{d_1, d_2\}$ or $a_2=b_2\in\{d_1, d_2\}$.

There are 4 possible cases, w.l.o.g, assume that $a_1=b_1=d_1$, $\{a_2, b_2\}\cap\{d_1, d_2\}=\emptyset$, $\{c_1, c_2\}\cap\{d_1, d_2\}=\emptyset$.
There are $n$ ways to choose $a_1(=b_1=d_1)$ from $N_n$, $n-1$ ways to choose $d_2$ from $N_n\setminus\{d_1\}$, $n-2$ ways to choose $a_2$ from $N_n\setminus\{d_1, d_2\}$, $n-3$ ways to choose $b_2$ from $N_n\setminus\{d_1, d_2, a_2\}$, $n-2$ ways to choose $c_1$ from $N_n\setminus\{d_1, d_2\}$, $n-3$ ways to choose $c_2$ from $N_n\setminus\{d_1, d_2, c_1\}$.
Therefore, there are $4n(n-1)(n-2)^2(n-3)^2=4(n^6-11n^5+47n^4-97n^3+96n^2-36n)$ ways to choose four $B_{n-2}$'s as required.

{\it Case 7.2.} $a_1=b_2\in\{d_1, d_2\}$ or $a_2=b_1\in\{d_1, d_2\}$.

There are 4 possible cases, w.l.o.g, assume that $a_1=b_2=d_1$, $a_2\notin\{d_1, d_2\}$, $b_1\notin\{d_1, d_2\}$ and $\{c_1, c_2\}\cap\{d_1, d_2\}=\emptyset$.
There are $n$ ways to choose $a_1(=b_2=d_1)$ from $N_n$, $n-1$ ways to choose $d_2$ from $N_n\setminus\{d_1\}$, $n-2$ ways to choose $a_2$ from $N_n\setminus\{d_1, d_2\}$, $n-2$ ways to choose $b_1$ from $N_n\setminus\{d_1, d_2\}$, $n-2$ ways to choose $c_1$ from $N_n\setminus\{d_1, d_2\}$, $n-3$ ways to choose $c_2$ from $N_n\setminus\{d_1, d_2, c_1\}$.
Therefore, there are $4n(n-1)(n-2)^3(n-3)=4(n^6-10n^5+39n^4-74n^3+68n^2-24n)$ ways to choose four $B_{n-2}$'s as required.

{\it Case 7.3.} For any $i, j\in\{1, 2\}$, $a_i\in\{d_1, d_2\}$ and $b_j\in\{d_1, d_2\}\setminus\{a_i\}$.

There are 8 possible cases, w.l.o.g, assume that $a_1=d_1$, $b_1=d_2$, $a_2\notin\{d_1, d_2\}$, $b_2\notin\{d_1, d_2\}$ and $\{c_1, c_2\}\cap\{d_1, d_2\}=\emptyset$.
There are $n$ ways to choose $a_1(=d_1)$ from $N_n$, $n-1$ ways to choose $b_1(=d_2)$ from $N_n\setminus\{d_1\}$, $n-2$ ways to choose $a_2$ from $N_n\setminus\{d_1, d_2\}$, $n-2$ ways to choose $b_2$ from $N_n\setminus\{d_1, d_2\}$, $n-2$ ways to choose $c_1$ from $N_n\setminus\{d_1, d_2\}$, $n-3$ ways to choose $c_2$ from $N_n\setminus\{d_1, d_2, c_1\}$.
Therefore, there are $8n(n-1)(n-2)^3(n-3)=8(n^6-10n^5+39n^4-74n^3+68n^2-24n)$ ways to choose four $B_{n-2}$'s as required.

In summary, there are $4(4n^6-41n^5+164n^4-319n^3+300n^2-108n)$ ways to choose four $B_{n-2}$'s as required in Case 7.
Observation \ref{ob:4} implies that for any $i\in\{a, b\}$, $i_1 i_2 X^{n-2}$ and $X^{n-2} d_1 d_2$ are disjoint, and $c_1 c_2 X^{n-2}$ and $X^{n-2} d_1 d_2$ are not disjoint and $V(c_1 c_2 X^{n-2})\cap V(X^{n-2} d_1 d_2)=V(c_1 c_2 X^{n-4} d_1 d_2)$. Thus, $|V(a_1 a_2 X^{n-2})\cup V(b_1 b_2 X^{n-2})\cup V(c_1 c_2 X^{n-2})\cup V(X^{n-2} d_1 d_2)|=|V(a_1 a_2 X^{n-2})|+|V(b_1 b_2 X^{n-2})|+|V(c_1 c_2 X^{n-2})|+|V(X^{n-2} d_1 d_2)|-|V(c_1 c_2 X^{n-2})\cap V(X^{n-2} d_1 d_2)|=4(n-2)!-(n-4)!$. 
The probability $P_7$ that there are four fault-free $B_{n-2}$'s chosen as in Case 7 is $\frac{1}{6}\times 3\times 4(4n^6-41n^5+164n^4-319n^3+300n^2-108n)p^{4(n-2)!-(n-4)!}$.

{\it Case 8.} 100 or 010 or 001.

W.l.o.g, assume that the case of 100, that is, $|\{a_1, a_2\}\cap\{d_1, d_2\}|=1$, $\{b_1, b_2\}\cap\{d_1, d_2\}=\emptyset$ and $\{c_1, c_2\}\cap\{d_1, d_2\}=\emptyset$.
There are 4 possible cases, w.l.o.g, assume that $a_1=d_1$, $a_2\notin\{d_1, d_2\}$, $\{b_1, b_2\}\cap\{d_1, d_2\}=\emptyset$ and $\{c_1, c_2\}\cap\{d_1, d_2\}=\emptyset$.

{\it Case 8.1.} $b_1\neq c_1$.

There are $n$ ways to choose $a_1(=d_1)$ from $N_n$, $n-1$ ways to choose $d_2$ from $N_n\setminus\{d_1\}$, $n-2$ ways to choose $a_2$ from $N_n\setminus\{d_1, d_2\}$, $n-2$ ways to choose $b_1$ from $N_n\setminus\{d_1, d_2\}$, $n-3$ ways to choose $b_2$ from $N_n\setminus\{d_1, d_2, b_1\}$, $n-3$ ways to choose $c_1$ from $N_n\setminus\{d_1, d_2, b_1\}$, $n-3$ ways to choose $c_2$ from $N_n\setminus\{d_1, d_2, c_1\}$.
Therefore, there are $n(n-1)(n-2)^2(n-3)^3=n^7-14n^6+80n^5-238n^4+387n^3-324n^2+108n$ ways to choose four $B_{n-2}$'s as required.

{\it Case 8.2.} $b_1=c_1$.

There are $n$ ways to choose $a_1(=d_1)$ from $N_n$, $n-1$ ways to choose $d_2$ from $N_n\setminus\{d_1\}$, $n-2$ ways to choose $a_2$ from $N_n\setminus\{d_1, d_2\}$, $n-2$ ways to choose $b_1(=c_1)$ from $N_n\setminus\{d_1, d_2\}$, $n-3$ ways to choose $b_2$ from $N_n\setminus\{d_1, d_2, b_1\}$, $n-4$ ways to choose $c_2$ from $N_n\setminus\{d_1, d_2, c_1, b_2\}$.
Therefore, there are $n(n-1)(n-2)^2(n-3)(n-4)=n^6-12n^5+55n^4-120n^3+124n^2-48n$ ways to choose four $B_{n-2}$'s as required.

In summary, there are $n^7-13n^6+68n^5-183n^4+267n^3-200n^2+60n$ distinct $B_{n-2}$'s in Case 8. 
Observation \ref{ob:4} implies that $a_1 a_2 X^{n-2}$ and $X^{n-2} d_1 d_2$ are disjoint, and for any $i\in\{b, c\}$, $i_1 i_2 X^{n-2}$ and $X^{n-2} d_1 d_2$ are not disjoint and $V(i_1 i_2 X^{n-2})\cap V(X^{n-2} d_1 d_2)=V(i_1 i_2 X^{n-4} d_1 d_2)$. Thus, $|V(a_1 a_2 X^{n-2})\cup V(b_1 b_2 X^{n-2})\cup V(c_1 c_2 X^{n-2})\cup V(X^{n-2} d_1 d_2)|=|V(a_1 a_2 X^{n-2})|+|V(b_1 b_2 X^{n-2})|+|V(c_1 c_2 X^{n-2})|+|V(X^{n-2} d_1 d_2)|-|V(b_1 b_2$ $X^{n-2})\cap V(X^{n-2} d_1 d_2)|-|V(c_1 c_2 X^{n-2})\cap V(X^{n-2} d_1 d_2)|=4(n-2)!-2(n-4)!$. 
The probability $P_8$ that there are four fault-free $B_{n-2}$'s chosen as in Case 8 is $\frac{1}{6}\times 3\times 4(n^7-13n^6+68n^5-183n^4+267n^3-200n^2+60n)p^{4(n-2)!-2(n-4)!}$.

{\it Case 9.} 000.

That is $\{a_1, a_2\}\cap\{d_1, d_2\}=\emptyset$, $\{b_1, b_2\}\cap\{d_1, d_2\}=\emptyset$ and $\{c_1, c_2\}\cap\{d_1, d_2\}=\emptyset$.

{\it Case 9.1.} $a_1=b_1=c_1$.

There are $n$ ways to choose $d_1$ from $N_n$, $n-1$ ways to choose $d_2$ from $N_n\setminus\{d_1\}$, $n-2$ ways to choose $a_1(=b_1=c_1)$ from $N_n\setminus\{d_1, d_2\}$, $n-3$ ways to choose $a_2$ from $N_n\setminus\{d_1, d_2, a_1\}$, $n-4$ ways to choose $b_2$ from $N_n\setminus\{d_1, d_2, b_1, a_2\}$, $n-5$ ways to choose $c_2$ from $N_n\setminus\{d_1, d_2, c_1, a_2, b_2\}$.
Therefore, there are $n(n-1)(n-2)(n-3)(n-4)(n-5)=n^6-15n^5+85n^4-225n^3+274n^2-120n$ ways to choose four $B_{n-2}$'s as required.

{\it Case 9.2.} $a_1=b_1\neq c_1$ or $a_1=c_1\neq b_1$ or $b_1=c_1\neq a_1$.

There are $n$ ways to choose $d_1$ from $N_n$, $n-1$ ways to choose $d_2$ from $N_n\setminus\{d_1\}$, $n-2$ ways to choose $a_1(=b_1)$ from $N_n\setminus\{d_1, d_2\}$, $n-3$ ways to choose $c_1$ from $N_n\setminus\{d_1, d_2, a_1\}$, $n-3$ ways to choose $a_2$ from $N_n\setminus\{d_1, d_2, a_1\}$, $n-4$ ways to choose $b_2$ from $N_n\setminus\{d_1, d_2, b_1, a_2\}$, $n-3$ ways to choose $c_2$ from $N_n\setminus\{d_1, d_2, c_1\}$.
Therefore, there are $3n(n-1)(n-2)(n-3)^3(n-4)=3(n^7-16n^6+104n^5-350n^4+639n^3-594n^2+216n)$ ways to choose four $B_{n-2}$'s as required.

{\it Case 9.3.} $a_1$, $b_1$ and $c_1$ are pairwise distinct.

There are $n$ ways to choose $d_1$ from $N_n$, $n-1$ ways to choose $d_2$ from $N_n\setminus\{d_1\}$, $n-2$ ways to choose $a_1$ from $N_n\setminus\{d_1, d_2\}$, $n-3$ ways to choose $b_1$ from $N_n\setminus\{d_1, d_2, a_1\}$, $n-4$ ways to choose $c_1$ from $N_n\setminus\{d_1, d_2, a_1, b_1\}$, $n-3$ ways to choose $a_2$ from $N_n\setminus\{d_1, d_2, a_1\}$, $n-3$ ways to choose $b_2$ from $N_n\setminus\{d_1, d_2, b_1\}$, $n-3$ ways to choose $c_2$ from $N_n\setminus\{d_1, d_2, c_1\}$.
Therefore, there are $n(n-1)(n-2)(n-3)^4(n-4)=n^8-19n^7+152n^6-662n^5+1689n^4-2511n^3+1998n^2-648n$ ways to choose four $B_{n-2}$'s as required.

In summary, there are $n^8-16n^7+105n^6-365n^5+724n^4-819n^3+490n^2-120n$ ways to choose four $B_{n-2}$'s as required in Case 9. 
Observation \ref{ob:4} implies that for any $i\in\{a, b, c\}$, $i_1 i_2 X^{n-2}$ and $X^{n-2} d_1 d_2$ are not disjoint and $V(i_1 i_2 X^{n-2})\cap V(X^{n-2} d_1 d_2)=V(i_1 i_2 X^{n-4} d_1 d_2)$. Thus, $|V(a_1 a_2 X^{n-2})\cup V(b_1 b_2 X^{n-2})\cup V(c_1 c_2 X^{n-2})\cup V(X^{n-2} d_1 d_2)|=|V(a_1 a_2 X^{n-2})|+|V(b_1 b_2 X^{n-2})|+|V(c_1 c_2 X^{n-2})|+|V(X^{n-2} d_1 d_2)|-|V(a_1 a_2 X^{n-2})\cap V(X^{n-2}$ $d_1 d_2)|-|V(b_1 b_2 X^{n-2})\cap V(X^{n-2} d_1 d_2)|-|V(c_1 c_2 X^{n-2})\cap V(X^{n-2} d_1 d_2)|=4(n-2)!-3(n-4)!$. 
The probability $P_9$ that there are four fault-free $B_{n-2}$'s chosen as in Case 9 is $\frac{1}{6}(n^8-16n^7+105n^6-365n^5+724n^4-819n^3+490n^2-120n)p^{4(n-2)!-3(n-4)!}$.

By the above computations, $P(3,0,1)=\sum_{i=1}^{9}P_i=\frac{4}{3}(8n^5-50n^4+115n^3-115n^2+42n)p^{4(n-2)!}+(8n^6-74n^5+265n^4-460n^3+387n^2-126n)p^{4(n-2)!-(n-4)!}+(2n^7-25n^6+125n^5-320n^4+443n^3-315n^2+90n)p^{4(n-2)!-2(n-4)!}+\frac{1}{6}(n^8-16n^7+105n^6-365n^5+724n^4-819n^3+490n^2-120n)p^{4(n-2)!-3(n-4)!}$.
\end{proof}

\begin{theorem}\label{th5}
$P(1,0,3)=\frac{4}{3}(8n^5-50n^4+115n^3-115n^2+42n)p^{4(n-2)!}+(8n^6-74n^5+265n^4-460n^3+387n^2-126n)p^{4(n-2)!-(n-4)!}+(2n^7-25n^6+125n^5-320n^4+443n^3-315n^2+90n)p^{4(n-2)!-2(n-4)!}+\frac{1}{6}(n^8-16n^7+105n^6-365n^5+724n^4-819n^3+490n^2-120n)p^{4(n-2)!-3(n-4)!}$.
\end{theorem}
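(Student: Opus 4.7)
The plan is to exploit the structural symmetry between $H_{1,2}$ and $H_{n-1,n-2}$ and reduce the computation directly to that of $P(3,0,1)$ carried out in Theorem \ref{th4}. Denote the four required $B_{n-2}$'s by $d_1 d_2 X^{n-2}\in H_{1,2}$ and $X^{n-2} a_1 a_2, X^{n-2} b_1 b_2, X^{n-2} c_1 c_2\in H_{n-1,n-2}$. By Observation \ref{ob:1}, the three subnetworks in $H_{n-1,n-2}$ are pairwise disjoint, matching the role that Observation \ref{ob:1} played for the three $H_{1,2}$-subnetworks in the proof of Theorem \ref{th4}.

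Next I would point out that Observation \ref{ob:4} is symmetric in the two label-pairs: the $B_{n-2}$-subnetworks $d_1 d_2 X^{n-2}$ and $X^{n-2} x_1 x_2$ are not disjoint if and only if $\{d_1,d_2\}\cap\{x_1,x_2\}=\emptyset$, and when they are not disjoint their intersection $V(d_1 d_2 X^{n-4} x_1 x_2)$ has exactly $(n-4)!$ vertices. Thus the same 3-digit encoding used in Theorem \ref{th4}, recording $|\{x_1,x_2\}\cap\{d_1,d_2\}|$ for $x\in\{a,b,c\}$, partitions the choices into the very same nine cases, and in every case the same inclusion-exclusion count for $|V(d_1 d_2 X^{n-2})\cup V(X^{n-2} a_1 a_2)\cup V(X^{n-2} b_1 b_2)\cup V(X^{n-2} c_1 c_2)|$ yields $4(n-2)!-k(n-4)!$, where $k$ is the number of indices $x\in\{a,b,c\}$ with $\{x_1,x_2\}\cap\{d_1,d_2\}=\emptyset$; this is exactly the exponent structure appearing in Theorem \ref{th4}.

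The counting of choices proceeds identically. Fixing a case string, the combinatorial freedom for selecting $d_1,d_2$ and for selecting each pair $(x_1,x_2)$ subject to the prescribed relation with $\{d_1,d_2\}$ is the same whether the three pairs index $H_{1,2}$-subnetworks (as in Theorem \ref{th4}) or $H_{n-1,n-2}$-subnetworks (as here), since in both situations the only constraints are that each $x_i\ne x_{3-i}$ and that $|\{x_1,x_2\}\cap\{d_1,d_2\}|$ takes the prescribed value, together with the Observation \ref{ob:1} pairwise-disjointness among the three subnetworks. The factor $\frac{1}{6}$ correcting for the implicit ordering of the three $H_{n-1,n-2}$-subnetworks is also the same as in Theorem \ref{th4}.

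Combining the nine contributions therefore produces exactly the polynomial-coefficient combination obtained for $P(3,0,1)$, so $P(1,0,3)=P(3,0,1)$ and the displayed formula follows. I expect no genuine obstacle here: the only thing to verify carefully is that Observation \ref{ob:4} really is symmetric under interchanging the two label-pairs and that the intersection sizes coincide, which I would make explicit before invoking the case-by-case computation of Theorem \ref{th4} verbatim.
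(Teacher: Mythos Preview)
Your proposal is correct and follows essentially the same approach as the paper: the paper's proof of Theorem~\ref{th5} is a single sentence asserting that the scenario is similar to Theorem~\ref{th4} and concluding $P(1,0,3)=P(3,0,1)$. You supply more of the underlying justification for this symmetry---explicitly noting that Observation~\ref{ob:4} is symmetric in the two label-pairs, that the intersection sizes agree, and that the same case decomposition and $\tfrac{1}{6}$ correction carry over---which is a welcome elaboration of what the paper leaves implicit.
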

\begin{proof}
This scenario is similar to Theorem \ref{th4}, it is easy to get that $P(1,0,3)=P(3,0,1)=\frac{4}{3}(8n^5-50n^4+115n^3-115n^2+42n)p^{4(n-2)!}+(8n^6-74n^5+265n^4-460n^3+387n^2-126n)p^{4(n-2)!-(n-4)!}+(2n^7-25n^6+125n^5-320n^4+443n^3-315n^2+90n)p^{4(n-2)!-2(n-4)!}+\frac{1}{6}(n^8-16n^7+105n^6-365n^5+724n^4-819n^3+490n^2-120n)p^{4(n-2)!-3(n-4)!}$.
\end{proof}

\begin{theorem}\label{th6}
$P(2,2,0)=\frac{1}{4}(n^8-8n^7+30n^6-69n^5+104n^4-99n^3+53n^2-12n)p^{4(n-2)!}+(n^7-8n^6+29n^5-59n^4+68n^3-41n^2+10n)p^{4(n-2)!-(n-3)!}+\frac{1}{2}(3n^6-23n^5+67n^4-92n^3+59n^2-14n)p^{4(n-2)!-2(n-3)!}+(n^4-6n^3+11n^2-6n)p^{4(n-2)!-3(n-3)!}+\frac{1}{4}(n^5-10n^4+35n^3-50n^2+24n)p^{4(n-2)!-4(n-3)!}$.
\end{theorem}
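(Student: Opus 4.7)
The plan is to label the four $B_{n-2}$'s as $a_1 a_2 X^{n-2}, b_1 b_2 X^{n-2} \in H_{1,2}$ and $c_1 X^{n-2} c_2, d_1 X^{n-2} d_2 \in H_{1,n}$, following the pattern of the previous theorems. Observation \ref{ob:1} gives internal disjointness within each of $H_{1,2}$ and $H_{1,n}$, so the only possible pairwise intersections are among the four cross-pairs $(i,j)\in\{a,b\}\times\{c,d\}$. Observation \ref{ob:2} shows that cross-pair $(i,j)$ is non-disjoint iff $i_1 = j_1$ and $i_2 \neq j_2$, in which case $|V(i_1 i_2 X^{n-2}) \cap V(j_1 X^{n-2} j_2)| = (n-3)!$. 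Since every non-trivial pairwise intersection involves one left and one right $B_{n-2}$, higher-order intersections are empty, so if $k$ is the number of non-disjoint cross-pairs then the union has size $4(n-2)! - k(n-3)!$ and contributes a factor $p^{4(n-2)!-k(n-3)!}$. As in Theorems \ref{th2}--\ref{th4}, every ordered count will be divided by $2\cdot 2 = 4$ at the end to account for the implicit orderings within $H_{1,2}$ and within $H_{1,n}$.

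I would enumerate by $k\in\{0,1,2,3,4\}$ and, within each $k$, by the pattern of non-disjoint cross-pairs in the bipartite graph $K_{2,2}$ on $\{a,b\}\times\{c,d\}$. The extreme values are easiest: $k=4$ forces $a_1=b_1=c_1=d_1$ and the four second coordinates to be distinct elements of $N_n\setminus\{a_1\}$, giving $n(n-1)(n-2)(n-3)(n-4)$ ordered tuples; $k=3$ (with four choices of which cross-pair is disjoint) again forces all first coordinates equal, and the disjoint pair forces a second-coordinate equality, giving $4n(n-1)(n-2)(n-3)$.

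The main obstacle will be $k=2$, where the two non-disjoint cross-pairs form one of three $K_{2,2}$-patterns: (i) sharing a vertex on the $\{a,b\}$ side; (ii) sharing a vertex on the $\{c,d\}$ side; or (iii) forming a perfect matching. Patterns (i) and (ii) each have two variants that force exactly three of the four first coordinates to coincide while the fourth must differ, each variant contributing $n(n-1)^3(n-2)(n-3)$. Pattern (iii) demands a further split on whether the two ``non-edge'' sides coincide: in the collapsed sub-subcase $a_1=b_1=c_1=d_1$ and the disjoint cross-pairs force second-coordinate equalities, contributing $n(n-1)(n-2)$; in the separated sub-subcase $a_1=c_1\neq b_1=d_1$ (or the other diagonal), contributing $n(n-1)^3(n-2)^2$.

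For $k=1$ (four choices of the unique non-disjoint pair), I would classify each of the three disjoint cross-pairs as type A (first coordinates differ) or type B (first and second coordinates both coincide). Of the $2^3=8$ subcases, four collapse into contradictions with $a_2\neq c_2$ (when $(a,c)$ is the non-disjoint pair), leaving the AAA subcase contributing $n(n-1)^4(n-2)^2$ and three BAA-, ABA-, AAB-type subcases each contributing $n(n-1)^3(n-2)$. The case $k=0$ is the most bookkeeping-intensive, as every cross-pair must be type A or type B; here I would enumerate the $2^4=16$ subcases, carefully discarding inconsistent ones and remembering that same-side coincidences such as $a_1=b_1$ or $c_1=d_1$ must be paired with distinct second coordinates. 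Assembling the five contributions, weighted by $p^{4(n-2)!-k(n-3)!}$ for $k=0,1,2,3,4$, and dividing by $4$ should yield the formula; the main risks are miscounting in the $k=0$ subdivision and missing the ``collapsed'' branch of the diagonal pattern in $k=2$.
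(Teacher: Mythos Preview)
Your approach is correct and organizes the computation differently from the paper. The paper decomposes first by the equality pattern among the four first coordinates $(a_1,b_1,c_1,d_1)$ --- seven cases according to which of them coincide --- and only then, within each case, splits on second-coordinate constraints to determine the exponent of $p$. You instead organize directly by $k$, the number of non-disjoint cross-pairs, which fixes the exponent $4(n-2)!-k(n-3)!$ up front; within each $k$ you then enumerate the possible bipartite patterns on $\{a,b\}\times\{c,d\}$. Your key structural observation --- that any triple intersection among the four $B_{n-2}$'s is empty (since any three of them contain either both of $\{a,b\}$ or both of $\{c,d\}$, and Observation~\ref{ob:1} makes those pairwise disjoint) --- is what makes the inclusion--exclusion collapse cleanly to $4(n-2)!-k(n-3)!$; the paper verifies this separately in each subcase rather than stating it once. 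Your decomposition is arguably more natural since it groups terms exactly as the final formula does, and your counts for $k=4,3,2,1$ (including the AAA/BAA/ABA/AAB analysis and the three $K_{2,2}$ patterns for $k=2$) match the paper's corresponding contributions; the paper's organization, on the other hand, makes it slightly easier to verify that no configuration is omitted. The only place you should be especially careful is $k=0$: several of the sixteen A/B subcases collapse (e.g.\ any pattern forcing $a_1=b_1=c_1=d_1$ together with two conflicting second-coordinate equalities), and the surviving ones correspond to the paper's Cases~3.3, 4, 5, 6.2, and 7.
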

\begin{proof}
Denote by $a_1 a_2 X^{n-2}\in H_{1, 2}$, $b_1 b_2 X^{n-2}\in H_{1, 2}$, $c_1 X^{n-2} c_2\in H_{1, n}$ and $d_1 X^{n-2} d_2\in H_{1, n}$ these four $B_{n-2}$'s. Note that $a_1 a_2 X^{n-2}$ and $b_1 b_2 X^{n-2}$ are disjoint, and $c_1 X^{n-2} c_2$ and $d_1 X^{n-2} d_2$ are disjoint.

Since we have implicitly sorted the values of $a_1 a_2$ and $b_1 b_2$, and the values of $c_1 c_2$ and $d_1 d_2$, the number of ways to choose four $B_{n-2}$'s as required needs to be divided by 4. It will be applied directly in following analyses.

We will analyze this problem in 7 cases as follows.

{\it Case 1.} $a_1=b_1=c_1=d_1$.

Clearly, in this case $a_2\neq b_2$ and $c_2\neq d_2$.

{\it Case 1.1.} $a_2\notin\{c_2, d_2\}$ and $b_2\notin\{c_2, d_2\}$.

There are $n$ ways to choose $a_1(=b_1=c_1=d_1)$ from $N_n$, $n-1$ ways to choose $c_2$ from $N_n\setminus\{c_1\}$, $n-2$ ways to choose $d_2$ from $N_n\setminus\{d_1, c_2\}$, $n-3$ ways to choose $a_2$ from $N_n\setminus\{a_1, c_2, d_2\}$, $n-4$ ways to choose $b_2$ from $N_n\setminus\{b_1, c_2, d_2, a_2\}$.
Therefore, there are $n(n-1)(n-2)(n-3)(n-4)=n^5-10n^4+35n^3-50n^2+24n$ ways to choose four $B_{n-2}$'s as required in Case 1.1. Observation \ref{ob:2} implies that for any $i\in\{a, b\}$ and $j\in\{c, d\}$, $i_1 i_2 X^{n-2}$ and $j_1 X^{n-2} j_2$ are not disjoint and $V(i_1 i_2 X^{n-2})\cap V(j_1 X^{n-2} j_2)=V(i_1 i_2 X^{n-3} j_2)$. Thus, $|V(a_1 a_2 X^{n-2})\cup V(b_1 b_2 X^{n-2})\cup V(X^{n-2} c_1 c_2)\cup V(X^{n-2} d_1 d_2)|=|V(a_1 a_2 X^{n-2})|+|V(b_1 b_2 X^{n-2})|+|V(X^{n-2} c_1 c_2)|+|V(X^{n-2} d_1 d_2)|-|V(a_1 a_2 X^{n-2})\cap V(c_1 X^{n-2} c_2)|-|V(a_1 a_2 X^{n-2})\cap V(d_1 X^{n-2} d_2)|-|V(b_1 b_2$ $X^{n-2})\cap V(c_1 X^{n-2} c_2)|-|V(b_1 b_2 X^{n-2})\cap V(d_1 X^{n-2} d_2)|=4(n-2)!-4(n-3)!$. 
The probability $P_{1.1}$ that there are four fault-free $B_{n-2}$'s chosen as in Case 1.1 is $\frac{1}{4}(n^5-10n^4+35n^3-50n^2+24n)p^{4(n-2)!-4(n-3)!}$.

{\it Case 1.2.} $a_2\notin\{c_2, d_2\}$, $b_2\in\{c_2, d_2\}$ or $a_2\in\{c_2, d_2\}$, $b_2\notin\{c_2, d_2\}$.

There are 4 possible cases, w.l.o.g, assume that $a_2\notin\{c_2, d_2\}$, $b_2=c_2\neq d_2$.

There are $n$ ways to choose $a_1(=b_1=c_1=d_1)$ from $N_n$, $n-1$ ways to choose $c_2(=b_2)$ from $N_n\setminus\{c_1\}$, $n-2$ ways to choose $d_2$ from $N_n\setminus\{d_1, c_2\}$, $n-3$ ways to choose $a_2$ from $N_n\setminus\{a_1, c_2, d_2\}$.
Therefore, there are $4n(n-1)(n-2)(n-3)=4(n^4-6n^3+11n^2-6n)$ ways to choose four $B_{n-2}$'s as required in Case 1.2.
Observation \ref{ob:2} implies that for any $i\in\{c, d\}$, $a_1 a_2 X^{n-2}$ and $i_1 X^{n-2} i_2$ are not disjoint and $V(a_1 a_2 X^{n-2})\cap V(i_1 X^{n-2} i_2)=V(a_1 a_2 X^{n-3} i_2)$, and $b_1 b_2 X^{n-2}$ and $c_1 X^{n-2} c_2$ are disjoint, and $b_1 b_2 X^{n-2}$ and $d_1 X^{n-2} d_2$ are not disjoint and $V(b_1 b_2 X^{n-2})\cap V(d_1 X^{n-2} d_2)=V(b_1 b_2 X^{n-3} d_2)$. Thus, $|V(a_1 a_2 X^{n-2})\cup V(b_1 b_2 X^{n-2})\cup V(X^{n-2} c_1 c_2)\cup V(X^{n-2} d_1 d_2)|=|V(a_1 a_2 X^{n-2})|+|V(b_1 b_2 X^{n-2})|+|V(X^{n-2} c_1 c_2)|+|V(X^{n-2} d_1 d_2)|-|V(a_1 a_2 X^{n-2})\cap V(c_1$ $X^{n-2} c_2)|-|V(a_1 a_2 X^{n-2})\cap V(d_1 X^{n-2} d_2)|-|V(b_1 b_2 X^{n-2})\cap V(d_1 X^{n-2} d_2)|=4(n-2)!-3(n-3)!$. 
The probability $P_{1.2}$ that there are four fault-free $B_{n-2}$'s chosen as in Case 1.2 is $\frac{1}{4}\times 4(n^4-6n^3+11n^2-6n)p^{4(n-2)!-3(n-3)!}$.

{\it Case 1.3.} $a_2=c_2\neq b_2=d_2$ or $a_2=d_2\neq b_2=c_2$.

W.l.o.g, assume that the former applies.

There are $n$ ways to choose $a_1(=b_1=c_1=d_1)$ from $N_n$, $n-1$ ways to choose $a_2(=c_2)$ from $N_n\setminus\{c_1\}$, $n-2$ ways to choose $b_2(=d_2)$ from $N_n\setminus\{d_1, c_2\}$.
Therefore, there are $2n(n-1)(n-2)=2(n^3-3n^2+2n)$ ways to choose four $B_{n-2}$'s as required in Case 1.3.
Observation \ref{ob:2} implies that $a_1 a_2 X^{n-2}$ and $c_1 X^{n-2} c_2$ are disjoint, $a_1 a_2 X^{n-2}$ and $d_1 X^{n-2} d_2$ are not disjoint and $V(a_1 a_2 X^{n-2})\cap V(d_1 X^{n-2} d_2)=V(a_1 a_2 X^{n-3} d_2)$, and $b_1 b_2 X^{n-2}$ and $c_1 X^{n-2} c_2$ are not disjoint and $V(b_1 b_2 X^{n-2})\cap V(c_1 X^{n-2} c_2)=V(b_1 b_2 X^{n-3} c_2)$, and $b_1 b_2 X^{n-2}$ and $d_1 X^{n-2} d_2$ are disjoint. Thus, $|V(a_1 a_2 X^{n-2})\cup V(b_1 b_2 X^{n-2})\cup V(X^{n-2} c_1 c_2)\cup V(X^{n-2} d_1 d_2)|=|V(a_1 a_2 X^{n-2})|+|V(b_1 b_2 X^{n-2})|+|V(X^{n-2} c_1 c_2)|+|V(X^{n-2} d_1 d_2)|-|V(a_1 a_2 X^{n-2})\cap V(d_1 X^{n-2} d_2)|-|V(b_1 b_2$ $X^{n-2})\cap V(c_1 X^{n-2} c_2)|=4(n-2)!-2(n-3)!$. 
The probability $P_{1.3}$ that there are four fault-free $B_{n-2}$'s chosen as in Case 1.3 is $\frac{1}{4}\times 2(n^3-3n^2+2n)p^{4(n-2)!-2(n-3)!}$.

In summary, the probability $P_1$ that there are four fault-free $B_{n-2}$'s chosen as in Case 1 is $P_{1.1}+P_{1.2}+P_{1.3}=\frac{1}{2}(n^3-3n^2+2n)p^{4(n-2)!-2(n-3)!}+(n^4-6n^3+11n^2-6n)p^{4(n-2)!-3(n-3)!}+\frac{1}{4}(n^5-10n^4+35n^3-50n^2+24n)p^{4(n-2)!-4(n-3)!}$.

{\it Case 2.} $a_1=c_1=d_1\neq b_1$ or $b_1=c_1=d_1\neq a_1$ or $a_1=b_1=c_1\neq d_1$ or $a_1=b_1=d_1\neq c_1$.

W.l.o.g, assume that $a_1=c_1=d_1\neq b_1$.

{\it Case 2.1.} $a_2\neq c_2$ and $a_2\neq d_2$.

There are $n$ ways to choose $a_1(=c_1=d_1)$ from $N_n$, $n-1$ ways to choose $b_1$ from $N_n\setminus\{a_1\}$, $n-1$ ways to choose $a_2$ from $N_n\setminus\{a_1\}$, $n-1$ ways to choose $b_2$ from $N_n\setminus\{b_1\}$, $n-2$ ways to choose $c_2$ from $N_n\setminus\{c_1, a_2\}$, $n-3$ ways to choose $d_2$ from $N_n\setminus\{d_1, c_2, a_2\}$.
Therefore, there are $n(n-1)^3(n-2)(n-3)=n^6-8n^5+24n^4-34n^3+23n^2-6n$ ways to choose four $B_{n-2}$'s as required. Observation \ref{ob:2} implies that for any $i\in\{c, d\}$, $a_1 a_2 X^{n-2}$ and $i_1 X^{n-2} i_2$ are not disjoint and $V(a_1 a_2 X^{n-2})\cap V(i_1 X^{n-2} i_2)=V(a_1 a_2 X^{n-3} i_2)$, and $b_1 b_2 X^{n-2}$ and $i_1 X^{n-2} i_2$ are disjoint. Thus, $|V(a_1 a_2 X^{n-2})\cup V(b_1 b_2 X^{n-2})\cup V(X^{n-2} c_1 c_2)\cup V(X^{n-2} d_1 d_2)|=|V(a_1 a_2 X^{n-2})|+|V(b_1 b_2 X^{n-2})|+|V(X^{n-2} c_1 c_2)|+|V(X^{n-2} d_1 d_2)|-|V(a_1 a_2 X^{n-2})\cap V(c_1 X^{n-2} c_2)|-|V(a_1 a_2 X^{n-2})\cap V(d_1 X^{n-2}$ $d_2)|=4(n-2)!-2(n-3)!$. 
The probability $P_{2.1}$ that there are four fault-free $B_{n-2}$'s chosen as in Case 2.1 is $\frac{1}{4}(n^6-8n^5+24n^4-34n^3+23n^2-6n)p^{4(n-2)!-2(n-3)!}$.

{\it Case 2.2.} $a_2=c_2\neq d_2$ or $a_2=d_2\neq c_2$.

W.l.o.g, assume that $a_2=c_2\neq d_2$.
There are $n$ ways to choose $a_1(=c_1=d_1)$ from $N_n$, $n-1$ ways to choose $b_1$ from $N_n\setminus\{a_1\}$, $n-1$ ways to choose $a_2(=c_2)$ from $N_n\setminus\{a_1\}$, $n-1$ ways to choose $b_2$ from $N_n\setminus\{b_1\}$, $n-2$ ways to choose $d_2$ from $N_n\setminus\{d_1, c_2\}$.
Therefore, there are $2n(n-1)^3(n-2)=2(n^5-5n^4+9n^3-7n^2+2n)$ ways to choose four $B_{n-2}$'s as required. Observation \ref{ob:2} implies that $a_1 a_2 X^{n-2}$ and $c_1 X^{n-2} c_2$ are disjoint, $a_1 a_2 X^{n-2}$ and $d_1 X^{n-2} d_2$ are not disjoint and $V(a_1 a_2 X^{n-2})\cap V(d_1 X^{n-2} d_2)=V(a_1 a_2 X^{n-3} d_2)$, and for any $i\in\{c, d\}$, and $b_1 b_2 X^{n-2}$ and $i_1 X^{n-2} i_2$ are disjoint. Thus, $|V(a_1 a_2 X^{n-2})\cup V(b_1 b_2 X^{n-2})\cup V(X^{n-2} c_1 c_2)\cup V(X^{n-2} d_1 d_2)|=|V(a_1 a_2 X^{n-2})|+|V(b_1 b_2 X^{n-2})|+|V(X^{n-2} c_1 c_2)|+|V(X^{n-2} d_1 d_2)|-|V(a_1 a_2 X^{n-2})\cap V(d_1 X^{n-2} d_2)|=4(n-2)!-(n-3)!$. 
The probability $P_{2.2}$ that there are four fault-free $B_{n-2}$'s chosen as in Case 2.2 is $\frac{1}{4}\times 2(n^5-5n^4+9n^3-7n^2+2n)p^{4(n-2)!-(n-3)!}$.

In summary, the probability $P_2$ that there are four fault-free $B_{n-2}$'s chosen as in Case 2 is $4(P_{2.1}+P_{2.2})=2(n^5-5n^4+9n^3-7n^2+2n)p^{4(n-2)!-(n-3)!}+(n^6-8n^5+24n^4-34n^3+23n^2-6n)p^{4(n-2)!-2(n-3)!}$.

{\it Case 3.} $a_1=c_1\neq b_1=d_1$ or $a_1=d_1\neq b_1=c_1$.

W.l.o.g, assume that $a_1=c_1\neq b_1=d_1$.

{\it Case 3.1.} $a_2\neq c_2$ and $b_2\neq d_2$.

There are $n$ ways to choose $a_1(=c_1)$ from $N_n$, $n-1$ ways to choose $b_1(=d_1)$ from $N_n\setminus\{a_1\}$, $n-1$ ways to choose $a_2$ from $N_n\setminus\{a_1\}$, $n-1$ ways to choose $b_2$ from $N_n\setminus\{b_1\}$, $n-2$ ways to choose $c_2$ from $N_n\setminus\{c_1, a_2\}$, $n-2$ ways to choose $d_2$ from $N_n\setminus\{d_1, b_2\}$.
Therefore, there are $n(n-1)^3(n-2)^2=n^6-7n^5+19n^4-25n^3+16n^2-4n$ ways to choose four $B_{n-2}$'s as required. Observation \ref{ob:2} implies that $a_1 a_2 X^{n-2}$ and $c_1 X^{n-2} c_2$ are not disjoint and $V(a_1 a_2 X^{n-2})\cap V(c_1 X^{n-2} c_2)=V(a_1 a_2 X^{n-3} c_2)$, and $a_1 a_2 X^{n-2}$ and $d_1 X^{n-2} d_2$ are disjoint, and $b_1 b_2 X^{n-2}$ and $c_1 X^{n-2} c_2$ are disjoint, and $b_1 b_2 X^{n-2}$ and $d_1 X^{n-2} d_2$ are not disjoint and $V(b_1 b_2 X^{n-2})\cap V(d_1 X^{n-2} d_2)=V(b_1 b_2 X^{n-3} d_2)$. Thus, $|V(a_1 a_2 X^{n-2})\cup V(b_1 b_2 X^{n-2})\cup V(X^{n-2} c_1 c_2)\cup V(X^{n-2} d_1 d_2)|=|V(a_1 a_2 X^{n-2})|+|V(b_1 b_2 X^{n-2})|+|V(X^{n-2} c_1 c_2)|+|V(X^{n-2} d_1 d_2)|-|V(a_1 a_2 X^{n-2})\cap V(c_1 X^{n-2} c_2)|-|V(b_1 b_2 X^{n-2})\cap V(d_1 X^{n-2}$ $d_2)|=4(n-2)!-2(n-3)!$. 
The probability $P_{3.1}$ that there are four fault-free $B_{n-2}$'s chosen as in Case 3.1 is $\frac{1}{4}(n^6-7n^5+19n^4-25n^3+16n^2-4n)p^{4(n-2)!-2(n-3)!}$.

{\it Case 3.2.} $a_2=c_2$, $b_2\neq d_2$ or $a_2\neq c_2$, $b_2=d_2$.

W.l.o.g, assume that the former applies.
There are $n$ ways to choose $a_1(=c_1)$ from $N_n$, $n-1$ ways to choose $b_1(=d_1)$ from $N_n\setminus\{a_1\}$, $n-1$ ways to choose $a_2(=c_2)$ from $N_n\setminus\{a_1\}$, $n-1$ ways to choose $b_2$ from $N_n\setminus\{b_1\}$, $n-2$ ways to choose $d_2$ from $N_n\setminus\{d_1, b_2\}$.
Therefore, there are $2n(n-1)^3(n-2)=2(n^5-5n^4+9n^3-7n^2+2n)$ ways to choose four $B_{n-2}$'s as required. Observation \ref{ob:2} implies that for any $i\in\{c, d\}$, $a_1 a_2 X^{n-2}$ and $i_1 X^{n-2} i_2$ are disjoint, and $b_1 b_2 X^{n-2}$ and $c_1 X^{n-2} c_2$ are disjoint, and $b_1 b_2 X^{n-2}$ and $d_1 X^{n-2} d_2$ are not disjoint and $V(b_1 b_2 X^{n-2})\cap V(d_1 X^{n-2} d_2)=V(b_1 b_2 X^{n-3} d_2)$. Thus, $|V(a_1 a_2 X^{n-2})\cup V(b_1 b_2 X^{n-2})\cup V(X^{n-2} c_1 c_2)\cup V(X^{n-2} d_1 d_2)|=|V(a_1 a_2 X^{n-2})|+|V(b_1 b_2 X^{n-2})|+|V(X^{n-2} c_1 c_2)|+|V(X^{n-2} d_1 d_2)|-|V(b_1 b_2 X^{n-2})\cap V(d_1$ $X^{n-2} d_2)|=4(n-2)!-(n-3)!$. 
The probability $P_{3.2}$ that there are four fault-free $B_{n-2}$'s chosen as in Case 3.2 is $\frac{1}{4}\times 2(n^5-5n^4+9n^3-7n^2+2n)p^{4(n-2)!-(n-3)!}$.

{\it Case 3.3.} $a_2=c_2$ and $b_2=d_2$.

There are $n$ ways to choose $a_1(=c_1)$ from $N_n$, $n-1$ ways to choose $b_1(=d_1)$ from $N_n\setminus\{a_1\}$, $n-1$ ways to choose $a_2(=c_2)$ from $N_n\setminus\{a_1\}$, $n-1$ ways to choose $b_2(=d_2)$ from $N_n\setminus\{b_1\}$.
Therefore, there are $n(n-1)^3=n^4-3n^3+3n^2-n$ ways to choose four $B_{n-2}$'s as required. Observation \ref{ob:2} implies that for any $i\in\{a, b\}$ and $j\in\{c, d\}$, $i_1 i_2 X^{n-2}$ and $j_1 X^{n-2} j_2$ are disjoint. Thus, $|V(a_1 a_2 X^{n-2})\cup V(b_1 b_2 X^{n-2})\cup V(X^{n-2} c_1 c_2)\cup V(X^{n-2} d_1 d_2)|=|V(a_1 a_2 X^{n-2})|+|V(b_1 b_2 X^{n-2})|+|V(X^{n-2} c_1 c_2)|+|V(X^{n-2} d_1 d_2)|=4(n-2)!$. 
The probability $P_{3.3}$ that there are four fault-free $B_{n-2}$'s chosen as in Case 3.3 is $\frac{1}{4}(n^4-3n^3+3n^2-n)p^{4(n-2)!}$.

In summary, the probability $P_3$ that there are four fault-free $B_{n-2}$'s chosen as in Case 3 is $2(P_{3.1}+P_{3.2}+P_{3.3})=\frac{1}{2}(n^4-3n^3+3n^2-n)p^{4(n-2)!}+(n^5-5n^4+9n^3-7n^2+2n)p^{4(n-2)!-(n-3)!}+\frac{1}{2}(n^6-7n^5+19n^4-25n^3+16n^2-4n)p^{4(n-2)!-2(n-3)!}$.

{\it Case 4.} $a_1=b_1\neq c_1=d_1$.

There are $n$ ways to choose $a_1(=b_1)$ from $N_n$, $n-1$ ways to choose $c_1(=d_1)$ from $N_n\setminus\{a_1\}$, $n-1$ ways to choose $a_2$ from $N_n\setminus\{a_1\}$, $n-2$ ways to choose $b_2$ from $N_n\setminus\{b_1, a_2\}$, $n-1$ ways to choose $c_2$ from $N_n\setminus\{c_1\}$, $n-2$ ways to choose $d_2$ from $N_n\setminus\{d_1, c_2\}$.
Therefore, there are $n(n-1)^3(n-2)^2=n^6-7n^5+19n^4-25n^3+16n^2-4n$ ways to choose four $B_{n-2}$'s as required. Observation \ref{ob:2} implies that for any $i\in\{a, b\}$ and $j\in\{c, d\}$, $i_1 i_2 X^{n-2}$ and $j_1 X^{n-2} j_2$ are disjoint. Thus, $|V(a_1 a_2 X^{n-2})\cup V(b_1 b_2 X^{n-2})\cup V(X^{n-2} c_1 c_2)\cup V(X^{n-2} d_1 d_2)|=|V(a_1 a_2 X^{n-2})|+|V(b_1 b_2 X^{n-2})|+|V(X^{n-2} c_1 c_2)|+|V(X^{n-2} d_1 d_2)|=4(n-2)!$. 
The probability $P_4$ that there are four fault-free $B_{n-2}$'s chosen as in Case 4 is $\frac{1}{4}(n^6-7n^5+19n^4-25n^3+16n^2-4n)p^{4(n-2)!}$.

{\it Case 5.} $a_1=b_1\notin\{c_1, d_1\}$ or $c_1=d_1\notin\{a_1, b_1\}$.

W.l.o.g, assume that $a_1=b_1\notin\{c_1, d_1\}$.
There are $n$ ways to choose $c_1$ from $N_n$, $n-1$ ways to choose $d_1$ from $N_n\setminus\{c_1\}$, $n-2$ ways to choose $a_1(=b_1)$ from $N_n\setminus\{c_1, d_1\}$, $n-1$ ways to choose $a_2$ from $N_n\setminus\{a_1\}$, $n-2$ ways to choose $b_2$ from $N_n\setminus\{b_1, a_2\}$, $n-1$ ways to choose $c_2$ from $N_n\setminus\{c_1\}$, $n-1$ ways to choose $d_2$ from $N_n\setminus\{d_1\}$.
Therefore, there are $2n(n-1)^4(n-2)^2=2(n^7-8n^6+26n^5-44n^4+41n^3-20n^2+4n)$ ways to choose four $B_{n-2}$'s as required. Observation \ref{ob:2} implies that for any $i\in\{a, b\}$ and $j\in\{c, d\}$, $i_1 i_2 X^{n-2}$ and $j_1 X^{n-2} j_2$ are disjoint. Thus, $|V(a_1 a_2 X^{n-2})\cup V(b_1 b_2 X^{n-2})\cup V(X^{n-2} c_1 c_2)\cup V(X^{n-2} d_1 d_2)|=|V(a_1 a_2 X^{n-2})|+|V(b_1 b_2 X^{n-2})|+|V(X^{n-2} c_1 c_2)|+|V(X^{n-2} d_1 d_2)|=4(n-2)!$. 
The probability $P_5$ that there are four fault-free $B_{n-2}$'s chosen as in Case 5 is $\frac{1}{4}\times 2(n^7-8n^6+26n^5-44n^4+41n^3-20n^2+4n)p^{4(n-2)!}$.

{\it Case 6.} For any $\{x, y\}=\{a, b\}$ and $\{z, w\}=\{c, d\}$, $x_1=z_1\neq w_1$, $y_1\notin\{c_1, d_1\}$.

There are 4 possible cases, w.l.o.g, assume that $a_1=c_1\neq d_1$ and $b_1\notin\{c_1, d_1\}$.

{\it Case 6.1.} $a_2\neq c_2$.

There are $n$ ways to choose $a_1{=c_1}$ from $N_n$, $n-1$ ways to choose $d_1$ from $N_n\setminus\{c_1\}$, $n-2$ ways to choose $b_1$ from $N_n\setminus\{c_1, d_1\}$, $n-1$ ways to choose $a_2$ from $N_n\setminus\{a_1\}$, $n-1$ ways to choose $b_2$ from $N_n\setminus\{b_1\}$, $n-2$ ways to choose $c_2$ from $N_n\setminus\{c_1, a_2\}$, $n-1$ ways to choose $d_2$ from $N_n\setminus\{d_1\}$.
Therefore, there are $n(n-1)^4(n-2)^2=n^7-8n^6+26n^5-44n^4+41n^3-20n^2+4n$ ways to choose four $B_{n-2}$'s as required. Observation \ref{ob:2} implies that $a_1 a_2 X^{n-2}$ and $c_1 X^{n-2} c_2$ are not disjoint and $V(a_1 a_2 X^{n-2})\cap V(c_1 X^{n-2} c_2)=V(a_1 a_2 X^{n-3} c_2)$, and $a_1 a_2 X^{n-2}$ and $d_1 X^{n-2} d_2$ are disjoint, and for any $i\in\{c, d\}$, $b_1 b_2 X^{n-2}$ and $i_1 X^{n-2} i_2$ are disjoint. Thus, $|V(a_1 a_2 X^{n-2})\cup V(b_1 b_2 X^{n-2})\cup V(X^{n-2} c_1 c_2)\cup V(X^{n-2} d_1 d_2)|=|V(a_1 a_2 X^{n-2})|+|V(b_1 b_2 X^{n-2})|+|V(X^{n-2} c_1 c_2)|+|V(X^{n-2} d_1 d_2)|-|V(a_1 a_2 X^{n-2})\cap V(c_1 X^{n-2} c_2)|=4(n-2)!-(n-3)!$. 
The probability $P_{6.1}$ that there are four fault-free $B_{n-2}$'s chosen as in Case 6.1 is $\frac{1}{4}(n^7-8n^6+26n^5-44n^4+41n^3-20n^2+4n)p^{4(n-2)!-(n-3)!}$.

{\it Case 6.2.} $a_2=c_2$.

There are $n$ ways to choose $a_1(=c_1)$ from $N_n$, $n-1$ ways to choose $d_1$ from $N_n\setminus\{c_1\}$, $n-2$ ways to choose $b_1$ from $N_n\setminus\{c_1, d_1\}$, $n-1$ ways to choose $a_2(=c_2)$ from $N_n\setminus\{a_1\}$, $n-1$ ways to choose $b_2$ from $N_n\setminus\{b_1\}$, $n-1$ ways to choose $d_2$ from $N_n\setminus\{d_1\}$.
Therefore, there are $n(n-1)^4(n-2)=n^6-6n^5+14n^4-16n^3+9n^2-2n$ ways to choose four $B_{n-2}$'s as required. Observation \ref{ob:2} implies that $a_1 a_2 X^{n-2}$ and $c_1 X^{n-2} c_2$ are not disjoint and $V(a_1 a_2 X^{n-2})\cap V(c_1 X^{n-2} c_2)=V(a_1 a_2 X^{n-3} c_2)$, and $a_1 a_2 X^{n-2}$ and $d_1 X^{n-2} d_2$ are disjoint, and for any $i\in\{c, d\}$, $b_1 b_2 X^{n-2}$ and $i_1 X^{n-2} i_2$ are disjoint. Thus, $|V(a_1 a_2 X^{n-2})\cup V(b_1 b_2 X^{n-2})\cup V(X^{n-2} c_1 c_2)\cup V(X^{n-2} d_1 d_2)|=|V(a_1 a_2 X^{n-2})|+|V(b_1 b_2 X^{n-2})|+|V(X^{n-2} c_1 c_2)|+|V(X^{n-2} d_1 d_2)|-|V(a_1 a_2 X^{n-2})\cap V(c_1$ $X^{n-2} c_2)|=4(n-2)!-(n-3)!$. 
The probability $P_{6.2}$ that there are four fault-free $B_{n-2}$'s chosen as in Case 6.2 is $\frac{1}{4}(n^6-6n^5+14n^4-16n^3+9n^2-2n)p^{4(n-2)!}$.

In summary, the probability $P_6$ that there are four fault-free $B_{n-2}$'s chosen as in Case 6 is $4(P_{6.1}+P_{6.2})=(n^6-6n^5+14n^4-16n^3+9n^2-2n)p^{4(n-2)!}+(n^7-8n^6+26n^5-44n^4+41n^3-20n^2+4n)p^{4(n-2)!-(n-3)!}$.

{\it Case 7.} $\{a_1, b_1\}\cap\{c_1, d_1\}=\emptyset$.

There are $n$ ways to choose $a_1$ from $N_n$, $n-1$ ways to choose $b_1$ from $N_n\setminus\{a_1\}$, $n-2$ ways to choose $c_1$ from $N_n\setminus\{a_1, b_1\}$, $n-3$ ways to choose $d_1$ from $N_n\setminus\{a_1, b_1, c_1\}$, $n-1$ ways to choose $a_2$ from $N_n\setminus\{a_1\}$, $n-1$ ways to choose $b_2$ from $N_n\setminus\{b_1\}$, $n-1$ ways to choose $c_2$ from $N_n\setminus\{c_1\}$, $n-1$ ways to choose $d_2$ from $N_n\setminus\{d_1\}$.
Therefore, there are $n(n-1)^5(n-2)(n-3)=n^8-10n^7+41n^6-90n^5+115n^4-86n^3+35n^2-6n$ ways to choose four $B_{n-2}$'s as required. Observation \ref{ob:2} implies that for any $i\in\{a, b\}$ and $j\in\{c, d\}$, $i_1 i_2 X^{n-2}$ and $j_1 X^{n-2} j_2$ are disjoint. Thus, $|V(a_1 a_2 X^{n-2})\cup V(b_1 b_2 X^{n-2})\cup V(X^{n-2} c_1 c_2)\cup V(X^{n-2} d_1 d_2)|=|V(a_1 a_2 X^{n-2})|+|V(b_1 b_2 X^{n-2})|+|V(X^{n-2} c_1 c_2)|+|V(X^{n-2} d_1 d_2)|=4(n-2)!$. 
The probability $P_7$ that there are four fault-free $B_{n-2}$'s chosen as in Case 7 is $\frac{1}{4}(n^8-10n^7+41n^6-90n^5+115n^4-86n^3+35n^2-6n)p^{4(n-2)!}$.

By the above computations, $P(2,2,0)=\sum_{i=1}^{7}P_i=\frac{1}{4}(n^8-8n^7+30n^6-69n^5+104n^4-99n^3+53n^2-12n)p^{4(n-2)!}+(n^7-8n^6+29n^5-59n^4+68n^3-41n^2+10n)p^{4(n-2)!-(n-3)!}+\frac{1}{2}(3n^6-23n^5+67n^4-92n^3+59n^2-14n)p^{4(n-2)!-2(n-3)!}+(n^4-6n^3+11n^2-6n)p^{4(n-2)!-3(n-3)!}+\frac{1}{4}(n^5-10n^4+35n^3-50n^2+24n)p^{4(n-2)!-4(n-3)!}$.
\end{proof}

\begin{theorem}\label{th7}
$P(0,2,2)=\frac{1}{4}(n^8-8n^7+30n^6-69n^5+104n^4-99n^3+53n^2-12n)p^{4(n-2)!}+(n^7-8n^6+29n^5-59n^4+68n^3-41n^2+10n)p^{4(n-2)!-(n-3)!}+\frac{1}{2}(3n^6-23n^5+67n^4-92n^3+59n^2-14n)p^{4(n-2)!-2(n-3)!}+(n^4-6n^3+11n^2-6n)p^{4(n-2)!-3(n-3)!}+\frac{1}{4}(n^5-10n^4+35n^3-50n^2+24n)p^{4(n-2)!-4(n-3)!}$.
\end{theorem}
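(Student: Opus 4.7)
The strategy is to reduce $P(0,2,2)$ to $P(2,2,0)$ via an explicit coordinate-swapping bijection between the underlying configurations, thereby transferring the closed form of Theorem~\ref{th6} without redoing the case analysis.

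Given a $(2,2,0)$-configuration consisting of $a_1a_2X^{n-2}, b_1b_2X^{n-2} \in H_{1,2}$ together with $c_1X^{n-2}c_2, d_1X^{n-2}d_2 \in H_{1,n}$, I would map it to the $(0,2,2)$-configuration comprising $c_2X^{n-2}c_1, d_2X^{n-2}d_1 \in H_{1,n}$ together with $X^{n-2}a_2a_1, X^{n-2}b_2b_1 \in H_{n-1,n}$; that is, swap the two external labels of every chosen subnetwork and relocate the $H_{1,2}$ members into $H_{n-1,n}$. Since $(a_2, a_1) \neq (b_2, b_1)$ whenever $(a_1, a_2) \neq (b_1, b_2)$, distinctness within each class is preserved, and the map is visibly an involution up to relabelling, hence a bijection on unordered configurations.

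The crux of the argument is that the union cardinality of the four vertex sets is preserved. Observation~\ref{ob:1} disposes of within-class pairs on both sides. For a cross pair such as $a_1a_2X^{n-2}$ and $c_1X^{n-2}c_2$, Observation~\ref{ob:2} yields non-disjointness iff $a_1 = c_1$ and $a_2 \neq c_2$, with intersection $V(a_1a_2X^{n-3}c_2)$ of size $(n-3)!$. The image pair $X^{n-2}a_2a_1$ and $c_2X^{n-2}c_1$ is, by Observation~\ref{ob:3}, non-disjoint iff $a_2 \neq c_2$ and $a_1 = c_1$ --- the very same condition --- with intersection $V(c_2X^{n-3}a_2a_1)$ again of size $(n-3)!$. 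The three remaining cross pairs $(a,d)$, $(b,c)$, $(b,d)$ behave identically by symmetry, so the union size $4(n-2)! - k(n-3)!$ matches configuration by configuration.

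Summing identical contributions yields $P(0,2,2) = P(2,2,0)$, which is exactly the claimed formula. The main (and essentially only) obstacle is the bookkeeping: one must verify that the seven structural cases of Theorem~\ref{th6}, distinguished by equality patterns among the first coordinates of $a, b, c, d$ and between $\{a_2, b_2\}$ and $\{c_2, d_2\}$, correspond bijectively under the coordinate swap to a matching partition of the $(0,2,2)$-configurations (now indexed by equality patterns among the last coordinates of the $H_{n-1,n}$- and $H_{1,n}$-members, and between the respective penultimate/first coordinates). Since this correspondence is forced by the bijection itself and no new counts appear, the proof closes without fresh arithmetic.
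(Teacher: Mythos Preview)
Your proposal is correct and follows essentially the same approach as the paper: both reduce $P(0,2,2)$ to $P(2,2,0)$ and invoke Theorem~\ref{th6}. The paper's own proof is a one-line assertion that ``this scenario is similar to Theorem~\ref{th6}'' and hence $P(0,2,2)=P(2,2,0)$; your explicit coordinate-reversal bijection and verification via Observations~\ref{ob:2} and~\ref{ob:3} that the overlap conditions and intersection sizes match simply supplies the rigorous justification that the paper leaves implicit.
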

\begin{proof}
This scenario is similar to Theorem \ref{th6}, it is easy to get that $P(0,2,2)=P(2,2,0)=\frac{1}{4}(n^8-8n^7+30n^6-69n^5+104n^4-99n^3+53n^2-12n)p^{4(n-2)!}+(n^7-8n^6+29n^5-59n^4+68n^3-41n^2+10n)p^{4(n-2)!-(n-3)!}+\frac{1}{2}(3n^6-23n^5+67n^4-92n^3+59n^2-14n)p^{4(n-2)!-2(n-3)!}+(n^4-6n^3+11n^2-6n)p^{4(n-2)!-3(n-3)!}+\frac{1}{4}(n^5-10n^4+35n^3-50n^2+24n)p^{4(n-2)!-4(n-3)!}$.
\end{proof}

\begin{theorem}\label{th8}
$P(2,0,2)=\frac{1}{2}(8n^5+8n^4-174n^3+347n^2-189n)p^{4(n-2)!}+16(3n^5-25n^4+75n^3-95n^2+42n)p^{4(n-2)!-(n-4)!}+4(6n^6-73n^5+345n^4-785n^3+849n^2-342n)p^{4(n-2)!-2(n-4)!}+2(2n^7-33n^6+223n^5-785n^4+1503n^3-1462n^2+552n)p^{4(n-2)!-3(n-4)!}+\frac{1}{4}(n^8-20n^7+172n^6-822n^5+2340n^4-3916n^3+3499n^2-1254n)p^{4(n-2)!-4(n-4)!}$.
\end{theorem}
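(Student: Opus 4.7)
The plan is to follow the case-analysis framework of Theorem~\ref{th6}, adapted to the different overlap structure coming from Observation~\ref{ob:4}. Denote the four $B_{n-2}$'s as $a_1a_2X^{n-2},b_1b_2X^{n-2}\in H_{1,2}$ and $X^{n-2}c_1c_2,X^{n-2}d_1d_2\in H_{n-1,n}$. Observation~\ref{ob:1} ensures that $a_1a_2X^{n-2}$ and $b_1b_2X^{n-2}$ are disjoint, as are $X^{n-2}c_1c_2$ and $X^{n-2}d_1d_2$; since I will implicitly order $(a_1a_2,b_1b_2)$ and $(c_1c_2,d_1d_2)$, every raw count will be divided by $2\times 2=4$. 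By Observation~\ref{ob:4}, for each $(i,j)\in\{a,b\}\times\{c,d\}$ the pair $i_1i_2X^{n-2}$ and $X^{n-2}j_1j_2$ is non-disjoint precisely when $\{i_1,i_2\}\cap\{j_1,j_2\}=\emptyset$, in which case the intersection $V(i_1i_2X^{n-4}j_1j_2)$ has $(n-4)!$ vertices.

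Set $\epsilon_{ij}=1$ if $\{i_1,i_2\}\cap\{j_1,j_2\}=\emptyset$ and $\epsilon_{ij}=0$ otherwise, and write $k=\epsilon_{ac}+\epsilon_{ad}+\epsilon_{bc}+\epsilon_{bd}$. Since any three of the four chosen subnetworks include two disjoint members of a common $H$-family, every triple intersection vanishes, so inclusion--exclusion gives
\[
\bigl|V(a_1a_2X^{n-2})\cup V(b_1b_2X^{n-2})\cup V(X^{n-2}c_1c_2)\cup V(X^{n-2}d_1d_2)\bigr|=4(n-2)!-k(n-4)!.
\]
This identifies the five $p$-exponents appearing in the stated formula with the five values $k\in\{0,1,2,3,4\}$, so I would partition the enumeration into these five principal cases, labelling each by the $2\times 2$ pattern $(\epsilon_{ac},\epsilon_{ad};\epsilon_{bc},\epsilon_{bd})$, in the spirit of the 3-digit tags used in Theorem~\ref{th4}, and collapsing symmetric patterns by explicit multiplicative factors.

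Within each case, I would specify, for every pair $(i,j)$ with $\epsilon_{ij}=0$, which of the four label-coincidences $i_1=j_1$, $i_1=j_2$, $i_2=j_1$, $i_2=j_2$ is realized, and then how these coincidences interact across different overlapping pairs (for instance, whether a common element of $\{a_1,a_2\}\cap\{c_1,c_2\}$ coincides with a common element of $\{b_1,b_2\}\cap\{c_1,c_2\}$, or whether $a_1=b_1$). Each resulting sub-case is counted by sequential position-filling exactly as in Cases~3, 6 and 7 of Theorem~\ref{th4}, is weighted by the appropriate symmetry multiplicity, and carries the $p$-exponent $4(n-2)!-k(n-4)!$.

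The main obstacle will be the sub-case bookkeeping at the intermediate values $k\in\{1,2,3\}$: the overlap pattern is asymmetric between the $\{a,b\}$-side and the $\{c,d\}$-side, and several distinct label identifications are compatible with the same $\epsilon$-pattern, so care is required to avoid both double-counting and omissions. The extremes are cleaner: $k=4$ reduces essentially to choosing a pair of $B_{n-2}$'s in $H_{1,2}$ and a pair in $H_{n-1,n}$ whose label pools are disjoint, while $k=0$ forces a dense web of coincidences among the eight labels and thereby explains the comparatively low-degree polynomial multiplying $p^{4(n-2)!}$. Summing the contributions from all five cases, applying the overall factor $\tfrac14$, and simplifying should yield the stated expression for $P(2,0,2)$.
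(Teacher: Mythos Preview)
Your outline is correct and matches the paper's approach in substance. The paper encodes each configuration by the four intersection sizes $|\{a_1,a_2\}\cap\{c_1,c_2\}|$, $|\{a_1,a_2\}\cap\{d_1,d_2\}|$, $|\{b_1,b_2\}\cap\{c_1,c_2\}|$, $|\{b_1,b_2\}\cap\{d_1,d_2\}|$ (each in $\{0,1,2\}$), collapses these by symmetry into seventeen cases, and within each case computes the union size directly; your scheme instead groups first by $k=\sum\epsilon_{ij}$ into five top-level cases, which is exactly the coarsening of the paper's seventeen cases by the number of zero digits. The detailed sub-case enumeration you describe (distinguishing which of $i_1=j_1$, $i_1=j_2$, $i_2=j_1$, $i_2=j_2$ realizes each non-empty label intersection, and how these interact across pairs) is precisely what the paper carries out inside each of its seventeen cases, so the two routes coincide once fully unfolded. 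Your explicit observation that every triple intersection vanishes because any three of the four subnetworks contain two disjoint members of a common $H$-class is a cleaner global justification for the union formula $4(n-2)!-k(n-4)!$ than the paper's repeated case-by-case inclusion--exclusion, and it immediately explains why exactly five $p$-exponents appear.
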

\begin{proof}
Denote by $a_1 a_2 X^{n-2}\in H_{1,2}$, $b_1 b_2 X^{n-2}\in H_{1,2}$, $X^{n-2} c_1 c_2\in H_{n-1, n}$ and $X^{n-2} d_1 d_2\in H_{n-1,n}$ these four $B_{n-2}$'s. Note that $a_1 a_2 X^{n-2}$ and $b_1 b_2 X^{n-2}$ are disjoint, and $X^{n-2} c_1 c_2$ and $X^{n-2} d_1 d_2$ are disjoint.

Since we have implicitly sorted the values of $a_1 a_2$ and $b_1 b_2$, and the values of $c_1 c_2$ and $d_1 d_2$, the number of ways 
to choose four $B_{n-2}$'s as required needs to be divided by 4. It will be applied directly in following analyses.

In the following analysis, we will use a 4-digit string to represent different cases. These four numbers represent $|\{a_1, a_2\}\cap\{c_1, c_2\}|$, $|\{a_1, a_2\}\cap\{d_1, d_2\}|$, $|\{b_1, b_2\}\cap\{c_1, c_2\}|$, $|\{b_1, b_2\}\cap\{d_1, d_2\}|$ respectively. For example, 0121 represents the case $\{a_1, a_2\}\cap\{c_1, c_2\}=\emptyset$, $|\{a_1, a_2\}\cap\{d_1, d_2\}|=1$, $\{b_1, b_2\}=\{c_1, c_2\}$ and $|\{b_1, b_2\}\cap\{d_1, d_2\}|=1$.

For any $i\in\{a, b\}$ and $j\in\{c, d\}$, there are 2 possible cases ($\romannumeral1$) $i_1=j_1$, $i_2=j_2$, and ($\romannumeral2$) $i_1=j_2$, $i_2=j_1$ such that $|\{i_1, i_2\}\cap\{j_1, j_2\}|=2$, and 4 possible cases ($\romannumeral1$) $i_1=j_1$, $i_2\notin\{j_1, j_2\}$, and ($\romannumeral2$) $i_1=j_2$, $i_2\notin\{j_1, j_2\}$, and ($\romannumeral3$) $i_2=j_1$, $i_1\notin\{j_1, j_2\}$, and ($\romannumeral4$) $i_2=j_2$, $i_1\notin\{j_1, j_2\}$ such that $|\{i_1, i_2\}\cap\{j_1, j_2\}|=1$.

We will analyze this problem in 17 cases as follows.

{\it Case 1.} 2222.

There are 2 possible cases $a_1=b_2=c_1=d_2$, $a_2=b_1=c_2=d_1$ and $a_1=b_2=c_2=d_1$, $a_2=b_1=c_1=d_2$. W.l.o.g, assume that the former applies.
There are $n$ ways to choose $a_1(=b_2=c_1=d_2)$ from $N_n$, $n-1$ ways to choose $a_2(=b_1=c_2=d_1)$ from $N_n\setminus\{a_1\}$.
Therefore, there are $2n(n-1)=2(n^2-n)$ ways to choose four $B_{n-2}$'s as required. Observation \ref{ob:4} implies that for any $i\in\{a, b\}$ and $j\in\{c, d\}$, $i_1 i_2 X^{n-2}$ and $X^{n-2} j_1 j_2$ are disjoint. Thus, $|V(a_1 a_2 X^{n-2})\cup V(b_1 b_2 X^{n-2})\cup V(X^{n-2} c_1 c_2)\cup V(X^{n-2} d_1 d_2)|=|V(a_1 a_2 X^{n-2})|+|V(b_1 b_2 X^{n-2})|+|V(X^{n-2} c_1 c_2)|+|V(X^{n-2} d_1 d_2)|=4(n-2)!$. 
The probability $P_1$ that there are four fault-free $B_{n-2}$'s chosen as in Case 1 is $\frac{1}{4}\times 2(n^2-n)p^{4(n-2)!}$.

{\it Case 2.} 2211 or 1122 or 2121 or 1212.

W.l.o.g, assume that the case of 2211.
There are 2 possible cases $a_1=c_1=d_2$, $a_2=c_2=d_1$ and $a_1=c_2=d_1$, $a_2=c_1=d_2$ such that $\{a_1, a_2\}=\{c_1, c_2\}=\{d_1, d_2\}$. There are 4 possible cases ($\romannumeral1$) $b_1=c_1=d_2$, $b_2\notin\{c_1, c_2\}$, $b_2\notin\{d_1, d_2\}$, and ($\romannumeral2$) $b_1=c_2=d_1$, $b_2\notin\{c_1, c_2\}$, $b_2\notin\{d_1, d_2\}$, and ($\romannumeral3$) $b_2=c_1=d_2$, $b_1\notin\{c_1, c_2\}$, $b_1\notin\{d_1, d_2\}$, and ($\romannumeral4$) $b_2=c_2=d_1$, $b_1\notin\{c_1, c_2\}$, $b_1\notin\{d_1, d_2\}$ such that $|\{b_1, b_2\}\cap\{c_1, c_2\}|=1$ and $|\{b_1, b_2\}\cap\{d_1, d_2\}|=1$. Therefore, there are 8 possible scenarios for this situation. W.l.o.g, assume that $a_1=c_1=d_2$, $a_2=c_2=d_1$, $b_1=c_1=d_2$, $b_2\notin\{c_1, c_2\}$, $b_2\notin\{d_1, d_2\}$.

There are $n$ ways to choose $a_1(=c_1=d_2=b_1)$ from $N_n$, $n-1$ ways to choose $a_2(=c_2=d_1)$ from $N_n\setminus\{a_1\}$, $n-2$ ways to choose $b_2$ from $N_n\setminus\{c_1, c_2\}$.
Therefore, there are $4\times 8n(n-1)(n-2)=32(n^3-3n^2+2n)$ ways to choose four $B_{n-2}$'s as required. Observation \ref{ob:4} implies that for any $i\in\{a, b\}$ and $j\in\{c, d\}$, $i_1 i_2 X^{n-2}$ and $X^{n-2} j_1 j_2$ are disjoint. Thus, $|V(a_1 a_2 X^{n-2})\cup V(b_1 b_2 X^{n-2})\cup V(X^{n-2} c_1 c_2)\cup V(X^{n-2} d_1 d_2)|=|V(a_1 a_2 X^{n-2})|+|V(b_1 b_2 X^{n-2})|+|V(X^{n-2} c_1 c_2)|+|V(X^{n-2} d_1 d_2)|=4(n-2)!$. 
The probability $P_2$ that there are four fault-free $B_{n-2}$'s chosen as in Case 2 is $\frac{1}{4}\times 32(n^3-3n^2+2n)p^{4(n-2)!}$.

{\it Case 3.} 2112 or 1221.

W.l.o.g, assume that the case of 2112.
There are 4 possible cases such that $\{a_1, a_2\}=\{c_1, c_2\}$ and $\{b_1, b_2\}=\{d_1, d_2\}$. W.l.o.g, assume that $a_1=c_1$, $a_2=c_2$, $b_1=d_1$, $b_2=d_2$.
Note that if there is some $a_s=d_t$, then there must be some $b_p=c_q$, where $s, t, p, q\in\{1, 2\}$.
Thus, there are 4 possible cases ($\romannumeral1$) $a_1=c_1=b_1=d_1$, $a_2=c_2$, $b_2=d_2$, $a_2\notin\{d_1, d_2\}$, $b_2\notin\{c_1, c_2\}$, and ($\romannumeral2$) $a_2=c_2=b_1=d_1$, $a_1=c_1$, $b_2=d_2$, $a_1\notin\{d_1, d_2\}$, $b_2\notin\{c_1, c_2\}$, and ($\romannumeral3$) $a_1=c_1=b_2=d_2$, $a_2=c_2$, $b_1=d_1$, $a_2\notin\{d_1, d_2\}$, $b_1\notin\{c_1, c_2\}$, and ($\romannumeral4$) $a_2=c_2=b_2=d_2$, $a_1=c_1$, $b_1=d_1$, $a_1\notin\{d_1, d_2\}$, $b_1\notin\{c_1, c_2\}$ such that $|\{a_1, a_2\}\cap\{d_1, d_2\}|=1$ and $|\{b_1, b_2\}\cap\{c_1, c_2\}|=1$.
W.l.o.g, assume that the first case applies.

There are $n$ ways to choose $a_1(=c_1=d_2=b_1)$ from $N_n$, $n-1$ ways to choose $a_2(=c_2)$ from $N_n\setminus\{a_1\}$, $n-2$ ways to choose $b_2(=d_2)$ from $N_n\setminus\{c_1, c_2\}$.
Therefore, there are $2\times 4\times 4n(n-1)(n-2)=32(n^3-3n^2+2n)$ ways to choose four $B_{n-2}$'s as required. Observation \ref{ob:4} implies that for any $i\in\{a, b\}$ and $j\in\{c, d\}$, $i_1 i_2 X^{n-2}$ and $X^{n-2} j_1 j_2$ are disjoint. Thus, $|V(a_1 a_2 X^{n-2})\cup V(b_1 b_2 X^{n-2})\cup V(X^{n-2} c_1 c_2)\cup V(X^{n-2} d_1 d_2)|=|V(a_1 a_2 X^{n-2})|+|V(b_1 b_2 X^{n-2})|+|V(X^{n-2} c_1 c_2)|+|V(X^{n-2} d_1 d_2)|=4(n-2)!$. 
The probability $P_3$ that there are four fault-free $B_{n-2}$'s chosen as in Case 3 is $\frac{1}{4}\times 32(n^3-3n^2+2n)p^{4(n-2)!}$.

{\it Case 4.} 2200 or 0022 or 2020 or 0202.

W.l.o.g, assume that the case of 2200.
There are 2 possible cases $a_1=c_1=d_2$, $a_2=c_2=d_1$ and $a_1=c_2=d_1$, $a_2=c_1=d_2$ such that $\{a_1, a_2\}=\{c_1, c_2\}=\{d_1, d_2\}$. W.l.o.g, assume that $a_1=c_1=d_2$, $a_2=c_2=d_1$.
There are $n$ ways to choose $a_1(=c_1=d_2)$ from $N_n$, $n-1$ ways to choose $a_2(=c_2=d_1)$ from $N_n\setminus\{a_1\}$, $n-2$ ways to choose $b_1$ from $N_n\setminus\{c_1, c_2\}$, $n-3$ ways to choose $b_2$ from $N_n\setminus\{c_1, c_2, b_1\}$.
Therefore, there are $4\times 2n(n-1)(n-2)(n-3)=8(n^4-6n^3+11n^2-6n)$ ways to choose four $B_{n-2}$'s as required. Observation \ref{ob:4} implies that for any $i\in\{c, d\}$, $a_1 a_2 X^{n-2}$ and $X^{n-2} i_1 i_2$ are disjoint, and $b_1 b_2 X^{n-2}$ and $X^{n-2} i_1 i_2$ are not disjoint and $V(b_1 b_2 X^{n-2})\cap V(X^{n-2} i_1 i_2)=V(b_1 b_2 X^{n-4} i_1 i_2)$. Thus, $|V(a_1 a_2 X^{n-2})\cup V(b_1 b_2 X^{n-2})\cup V(X^{n-2} c_1 c_2)\cup V(X^{n-2} d_1 d_2)|=|V(a_1 a_2 X^{n-2})|+|V(b_1 b_2 X^{n-2})|+|V(X^{n-2} c_1 c_2)|+|V(X^{n-2} d_1 d_2)|-|V(b_1 b_2 X^{n-2})\cap V(X^{n-2} c_1 c_2)|-|V(b_1 b_2 X^{n-2})\cap V(X^{n-2} c_1 c_2)|=4(n-2)!-2(n-4)!$. 
The probability $P_4$ that there are four fault-free $B_{n-2}$'s chosen as in Case 4 is $\frac{1}{4}\times 8(n^4-6n^3+11n^2-6n)p^{4(n-2)!-2(n-4)!}$.

{\it Case 5.} 2002 or 0220.

W.l.o.g, assume that the case of 2002.
There are 4 possible cases, w.l.o.g, assume that $a_1=c_1$, $a_2=c_2$, $b_1=d_1$ and $b_2=d_2$.
There are $n$ ways to choose $a_1(=c_1)$ from $N_n$, $n-1$ ways to choose $a_2(=c_2)$ from $N_n\setminus\{a_1\}$, $n-2$ ways to choose $b_1(=d_1)$ from $N_n\setminus\{c_1, c_2\}$, $n-3$ ways to choose $b_2(=d_2)$ from $N_n\setminus\{c_1, c_2, b_1\}$.
Therefore, there are $2\times 4n(n-1)(n-2)(n-3)=8(n^4-6n^3+11n^2-6n)$ ways to choose four $B_{n-2}$'s as required. Observation \ref{ob:4} implies that $a_1 a_2 X^{n-2}$ and $X^{n-2} c_1 c_2$ are disjoint, $a_1 a_2 X^{n-2}$ and $X^{n-2} d_1 d_2$ are not disjoint and $V(a_1 a_2 X^{n-2})\cap V(X^{n-2} d_1 d_2)=V(a_1 a_2 X^{n-4} d_1 d_2)$, and $b_1 b_2 X^{n-2}$ and $X^{n-2} c_1 c_2$ are not disjoint and $V(b_1 b_2 X^{n-2})\cap V(X^{n-2} c_1 c_2)=V(b_1 b_2 X^{n-4} c_1 c_2)$, and $b_1 b_2 X^{n-2}$ and $X^{n-2} d_1 d_2$ are disjoint. Thus, $|V(a_1 a_2 X^{n-2})\cup V(b_1 b_2 X^{n-2})\cup V(X^{n-2} c_1 c_2)\cup V(X^{n-2} d_1 d_2)|=|V(a_1 a_2 X^{n-2})|+|V(b_1 b_2 X^{n-2})|+|V(X^{n-2} c_1 c_2)|+|V(X^{n-2} d_1 d_2)|-|V(a_1 a_2 X^{n-2})\cap V(X^{n-2} d_1 d_2)|-|V(b_1 b_2 X^{n-2})\cap V(X^{n-2} c_1 c_2)|=4(n-2)!-2(n-4)!$. 
The probability $P_5$ that there are four fault-free $B_{n-2}$'s chosen as in Case 5 is $\frac{1}{4}\times 8(n^4-6n^3+11n^2-6n)p^{4(n-2)!-2(n-4)!}$.

{\it Case 6.} 2111 or 1211 or 1121 or 1112.

W.l.o.g, assume that the case of 2111.
There are 8 possible cases such that $\{a_1, a_2\}=\{c_1, c_2\}$ and $|\{a_1, a_2\}\cap\{d_1, d_2\}|=1$. W.l.o.g, assume that $a_1=c_1=d_1$, $a_2=c_2$, $a_2\notin\{d_1, d_2\}$.

{\it Case 6.1.} $b_1=c_1$ or $b_2=c_1$.

There are 2 possible cases ($\romannumeral1$) $a_1=c_1=d_1=b_1$, $a_2=c_2$, $a_2\notin\{d_1, d_2\}$, $b_2\notin\{c_1, c_2\}$, $b_2\notin\{d_1, d_2\}$, and ($\romannumeral2$) $a_1=c_1=d_1=b_2$, $a_2=c_2$, $a_2\notin\{d_1, d_2\}$, $b_1\notin\{c_1, c_2\}$, $b_1\notin\{d_1, d_2\}$.
W.l.o.g, assume that the former applies.
There are $n$ ways to choose $a_1(=c_1=d_1=b_1)$ from $N_n$, $n-1$ ways to choose $a_2(=c_2)$ from $N_n\setminus\{a_1\}$, $n-2$ ways to choose $b_2$ from $N_n\setminus\{c_1, c_2\}$, $n-3$ ways to choose $d_2$ from $N_n\setminus\{d_1, a_2, b_2\}$. Therefore, there are $2n(n-1)(n-2)(n-3)=2(n^4-6n^3+11n^2-6n)$ ways to choose four $B_{n-2}$'s as required.

{\it Case 6.2.} $b_1=c_2$ or $b_2=c_2$.

There are 2 possible cases ($\romannumeral1$) $a_1=c_1=d_1$, $a_2=c_2=b_1$, $b_2=d_2$, $a_2\notin\{d_1, d_2\}$, $b_2\notin\{c_1, c_2\}$, $b_1\notin\{d_1, d_2\}$, and ($\romannumeral2$) $a_1=c_1=d_1$, $a_2=c_2=b_2$, $b_1=d_2$, $a_2\notin\{d_1, d_2\}$, $b_1\notin\{c_1, c_2\}$, $b_2\notin\{d_1, d_2\}$.
W.l.o.g, assume that the former applies.
There are $n$ ways to choose $a_1(=c_1=d_1)$ from $N_n$, $n-1$ ways to choose $a_2(=c_2=b_1)$ from $N_n\setminus\{a_1\}$, $n-2$ ways to choose $b_2(=d_2)$ from $N_n\setminus\{c_1, c_2\}$. Therefore, there are $2n(n-1)(n-2)=2(n^3-3n^2+2n)$ ways to choose four $B_{n-2}$'s as required.

In summary, there are $2(n^4-5n^3+8n^2-4n)$ ways to choose four $B_{n-2}$'s as required in Case 6.
Observation \ref{ob:4} implies that for any $i\in\{a, b\}$ and $j\in\{c, d\}$, $i_1 i_2 X^{n-2}$ and $X^{n-2} j_1 j_2$ are disjoint. Thus, $|V(a_1 a_2 X^{n-2})\cup V(b_1 b_2 X^{n-2})\cup V(X^{n-2} c_1 c_2)\cup V(X^{n-2} d_1 d_2)|=|V(a_1 a_2 X^{n-2})|+|V(b_1 b_2 X^{n-2})|+|V(X^{n-2} c_1 c_2)|+|V(X^{n-2} d_1$ $d_2)|=4(n-2)!$.
The probability $P_6$ that there are four fault-free $B_{n-2}$'s chosen as in Case 6 is $\frac{1}{4}\times 4\times 8\times 2(n^4-5n^3+8n^2-4n)p^{4(n-2)!}$.

{\it Case 7.} 2110 or 1201 or 1021 or 0112.

W.l.o.g, assume that the case of 2110.
There are 8 possible cases such that $\{a_1, a_2\}=\{c_1, c_2\}$ and $|\{a_1, a_2\}\cap\{d_1, d_2\}|=1$. W.l.o.g, assume that $a_1=c_1=d_1$, $a_2=c_2$, $a_2\notin\{d_1, d_2\}$.
In this case, there are 2 possible cases ($\romannumeral1$) $a_1=c_1=d_1$, $a_2=c_2=b_1$, $a_2\notin\{d_1, d_2\}$, $b_2\notin\{c_1, c_2\}$, $\{b_1, b_2\}\cap\{d_1, d_2\}=\emptyset$, and ($\romannumeral2$) $a_1=c_1=d_1$, $a_2=c_2=b_2$, $a_2\notin\{d_1, d_2\}$, $b_1\notin\{c_1, c_2\}$, $\{b_1, b_2\}\cap\{d_1, d_2\}=\emptyset$.
W.l.o.g, assume that the former applies.

There are $n$ ways to choose $a_1(=c_1=d_1)$ from $N_n$, $n-1$ ways to choose $a_2(=c_2=b_1)$ from $N_n\setminus\{a_1\}$, $n-2$ ways to choose $b_2$ from $N_n\setminus\{c_1, c_2\}$, $n-3$ ways to choose $d_2$ from $N_n\setminus\{d_1, b_1, b_2\}$.
Therefore, there are $4\times 8\times 2n(n-1)(n-2)(n-3)=64(n^4-6n^3+11n^2-6n)$ ways to choose four $B_{n-2}$'s as required.
Observation \ref{ob:4} implies that for any $i\in\{c, d\}$, $a_1 a_2 X^{n-2}$ and $X^{n-2} i_1 i_2$ are disjoint, and $b_1 b_2 X^{n-2}$ and $X^{n-2} c_1 c_2$ are disjoint, and $b_1 b_2 X^{n-2}$ and $X^{n-2} d_1 d_2$ are not disjoint and $V(b_1 b_2 X^{n-2})\cap V(X^{n-2} d_1 d_2)=V(b_1 b_2 X^{n-4} d_1 d_2)$. Thus, $|V(a_1 a_2 X^{n-2})\cup V(b_1 b_2 X^{n-2})\cup V(X^{n-2} c_1 c_2)\cup V(X^{n-2} d_1 d_2)|=|V(a_1 a_2 X^{n-2})|+|V(b_1 b_2 X^{n-2})|+|V(X^{n-2} c_1 c_2)|+|V(X^{n-2} d_1 d_2)|-|V(b_1 b_2 X^{n-2})\cap V(X^{n-2} d_1 d_2)|=4(n-2)!-(n-4)!$.
The probability $P_7$ that there are four fault-free $B_{n-2}$'s chosen as in Case 7 is $\frac{1}{4}\times 64(n^4-6n^3+11n^2-6n)p^{4(n-2)!-(n-4)!}$.

{\it Case 8.} 2101 or 1210 or 1012 or 0121.

W.l.o.g, assume that the case of 2101.
There are 8 possible cases such that $\{a_1, a_2\}=\{c_1, c_2\}$ and $|\{a_1, a_2\}\cap\{d_1, d_2\}|=1$. W.l.o.g, assume that $a_1=c_1=d_1$, $a_2=c_2$, $a_2\notin\{d_1, d_2\}$.
In this case, there are 2 possible cases ($\romannumeral1$) $a_1=c_1=d_1$, $a_2=c_2$, $b_1=d_2$, $a_2\notin\{d_1, d_2\}$, $b_2\notin\{d_1, d_2\}$, $\{b_1, b_2\}\cap\{c_1, c_2\}=\emptyset$, and ($\romannumeral2$) $a_1=c_1=d_1$, $a_2=c_2$, $b_2=d_2$, $a_2\notin\{d_1, d_2\}$, $b_1\notin\{d_1, d_2\}$, $\{b_1, b_2\}\cap\{c_1, c_2\}=\emptyset$.
W.l.o.g, assume that the former applies.

There are $n$ ways to choose $a_1(=c_1=d_1)$ from $N_n$, $n-1$ ways to choose $a_2(=c_2)$ from $N_n\setminus\{a_1\}$, $n-2$ ways to choose $b_1(=d_2)$ from $N_n\setminus\{c_1, c_2\}$, $n-3$ ways to choose $b_2$ from $N_n\setminus\{b_1, c_1, c_2\}$. Therefore, there are $4\times 8\times 2n(n-1)(n-2)(n-3)=64(n^4-6n^3+11n^2-6n)$ ways to choose four $B_{n-2}$'s as required.
Observation \ref{ob:4} implies that for any $i\in\{c, d\}$, $a_1 a_2 X^{n-2}$ and $X^{n-2} i_1 i_2$ are disjoint, and $b_1 b_2 X^{n-2}$ and $X^{n-2} c_1 c_2$ are not disjoint and $V(b_1 b_2 X^{n-2})\cap V(X^{n-2} c_1 c_2)=V(b_1 b_2 X^{n-4} c_1 c_2)$, and $b_1 b_2 X^{n-2}$ and $X^{n-2} d_1 d_2$ are disjoint. Thus, $|V(a_1 a_2 X^{n-2})\cup V(b_1 b_2 X^{n-2})\cup V(X^{n-2} c_1 c_2)\cup V(X^{n-2} d_1 d_2)|=|V(a_1 a_2 X^{n-2})|+|V(b_1 b_2 X^{n-2})|+|V(X^{n-2} c_1 c_2)|+|V(X^{n-2} d_1 d_2)|-|V(b_1 b_2 X^{n-2})\cap V(X^{n-2} c_1 c_2)|=4(n-2)!-(n-4)!$.
The probability $P_8$ that there are four fault-free $B_{n-2}$'s chosen as in Case 8 is $\frac{1}{4}\times 64(n^4-6n^3+11n^2-6n)p^{4(n-2)!-(n-4)!}$.

{\it Case 9.} 2011 or 0211 or 1102 or 1120.

W.l.o.g, assume that the case of 2011.
There are 8 possible cases such that $\{a_1, a_2\}=\{c_1, c_2\}$ and $|\{b_1, b_2\}\cap\{c_1, c_2\}|=1$. W.l.o.g, assume that $a_1=c_1=b_1$, $a_2=c_2$, $b_2\notin\{c_1, c_2\}$.
In this case, there are 2 possible cases ($\romannumeral1$) $a_1=c_1=b_1$, $a_2=c_2$, $b_2=d_1$, $b_2\notin\{c_1, c_2\}$, $\{a_1, a_2\}\cap\{d_1, d_2\}=\emptyset$, and ($\romannumeral2$) $a_1=c_1=b_1$, $a_2=c_2$, $b_2=d_2$, $b_2\notin\{c_1, c_2\}$, $\{a_1, a_2\}\cap\{d_1, d_2\}=\emptyset$.
W.l.o.g, assume that the former applies.

There are $n$ ways to choose $a_1(=c_1=b_1)$ from $N_n$, $n-1$ ways to choose $a_2(=c_2)$ from $N_n\setminus\{a_1\}$, $n-2$ ways to choose $b_2(=d_1)$ from $N_n\setminus\{c_1, c_2\}$, $n-3$ ways to choose $d_2$ from $N_n\setminus\{d_1, a_1, a_2\}$. Therefore, there are $4\times 8\times 2n(n-1)(n-2)(n-3)=64(n^4-6n^3+11n^2-6n)$ ways to choose four $B_{n-2}$'s as required.
Observation \ref{ob:4} implies that $a_1 a_2 X^{n-2}$ and $X^{n-2} c_1 c_2$ are disjoint, and $b_1 b_2 X^{n-2}$ and $X^{n-2} d_1 d_2$ are not disjoint and $V(a_1 a_2 X^{n-2})\cap V(X^{n-2} d_1 d_2)=V(a_1 a_2 X^{n-4} d_1 d_2)$, and for any $i\in\{c, d\}$, $b_1 b_2 X^{n-2}$ and $X^{n-2} i_1 i_2$ are disjoint.
Thus, $|V(a_1 a_2 X^{n-2})\cup V(b_1 b_2 X^{n-2})\cup V(X^{n-2} c_1 c_2)\cup V(X^{n-2} d_1 d_2)|=|V(a_1 a_2 X^{n-2})|+|V(b_1 b_2 X^{n-2})|+|V(X^{n-2} c_1 c_2)|+|V(X^{n-2} d_1 d_2)|-|V(a_1 a_2 X^{n-2})\cap V(X^{n-2} d_1 d_2)|=4(n-2)!-(n-4)!$.
The probability $P_9$ that there are four fault-free $B_{n-2}$'s chosen as in Case 9 is $\frac{1}{4}\times 64(n^4-6n^3+11n^2-6n)p^{4(n-2)!-(n-4)!}$.

{\it Case 10.} 2100 or 1200 or 0021 or 0012 or 2010 or 0201 or 1020 or 0102 or 2001 or 0210 or 0120 or 1002.

W.l.o.g, assume that the case of 2100.
There are 8 possible cases such that $\{a_1, a_2\}=\{c_1, c_2\}$ and $|\{a_1, a_2\}\cap\{d_1, d_2\}|=1$. W.l.o.g, assume that $a_1=c_1=d_1$, $a_2=c_2$, $a_2\notin\{d_1, d_2\}$.

There are $n$ ways to choose $a_1(=c_1=d_1)$ from $N_n$, $n-1$ ways to choose $a_2(=c_2)$ from $N_n\setminus\{a_1\}$, $n-2$ ways to choose $d_2$ from $N_n\setminus\{d_1, a_2\}$, $n-3$ ways to choose $b_1$ from $N_n\setminus\{c_1, c_2, d_2\}$, $n-4$ ways to choose $b_2$ from $N_n\setminus\{c_1, c_2, d_2, b_1\}$. Therefore, there are $12\times 8n(n-1)(n-2)(n-3)(n-4)=96(n^5-10n^4+35n^3-50n^2+24n)$ ways to choose four $B_{n-2}$'s as required.
Observation \ref{ob:4} implies that $a_1 a_2 X^{n-2}$ and $X^{n-2} c_1 c_2$ are disjoint, and $b_1 b_2 X^{n-2}$ and $X^{n-2} d_1 d_2$ are not disjoint and $V(b_1 b_2 X^{n-2})\cap V(X^{n-2} d_1 d_2)=V(b_1 b_2 X^{n-4} d_1 d_2)$, and for any $i\in\{c, d\}$, $b_1 b_2 X^{n-2}$ and $X^{n-2} i_1 i_2$ are not disjoint and $V(b_1 b_2 X^{n-2})\cap V(X^{n-2} i_1 i_2)=V(b_1 b_2 X^{n-4} i_1 i_2)$.
Thus, $|V(a_1 a_2 X^{n-2})\cup V(b_1 b_2 X^{n-2})\cup V(X^{n-2} c_1 c_2)\cup V(X^{n-2} d_1 d_2)|=|V(a_1 a_2 X^{n-2})|+|V(b_1 b_2 X^{n-2})|+|V(X^{n-2} c_1 c_2)|+|V(X^{n-2} d_1 d_2)|-|V(b_1 b_2 X^{n-2})\cap V(X^{n-2} c_1 c_2)|-|V(b_1 b_2 X^{n-2})\cap V(X^{n-2} d_1 d_2)|=4(n-2)!-2(n-4)!$.
The probability $P_10$ that there are four fault-free $B_{n-2}$'s chosen as in Case 10 is $\frac{1}{4}\times 96(n^5-10n^4+35n^3-50n^2+24n)p^{4(n-2)!-2(n-4)!}$.

{\it Case 11.} 2000 or 0200 or 0020 or 0002.

W.l.o.g, assume that the case of 2000.
There are 2 possible cases. W.l.o.g, assume that $a_1=c_1$, $a_2=c_2$, $\{a_1, a_2\}\cap\{d_1, d_2\}=\emptyset$, $\{b_1, b_2\}\cap\{c_1, c_2\}=\emptyset$ and $\{b_1, b_2\}\cap\{d_1, d_2\}=\emptyset$.

There are $n$ ways to choose $a_1(=c_1)$ from $N_n$, $n-1$ ways to choose $a_2(=c_2)$ from $N_n\setminus\{a_1\}$, $n-2$ ways to choose $d_1$ from $N_n\setminus\{a_1, a_2\}$, $n-3$ ways to choose $d_2$ from $N_n\setminus\{a_1, a_2, d_1\}$, $n-4$ ways to choose $b_1$ from $N_n\setminus\{c_1, c_2, d_1, d_2\}$, $n-4$ ways to choose $b_2$ from $N_n\setminus\{c_1, c_2, d_1, d_2, b_1\}$. Therefore, there are $4\times 2n(n-1)(n-2)(n-3)(n-4)(n-5)=8(n^6-15n^5+85n^4-225n^3+274n^2-120n)$ ways to choose four $B_{n-2}$'s as required.
Observation \ref{ob:4} implies that for any $i\in\{c, d\}$, $a_1 a_2 X^{n-2}$ and $X^{n-2} i_1 i_2$ are disjoint, and $b_1 b_2 X^{n-2}$ and $X^{n-2} i_1 i_2$ are not disjoint and $V(b_1 b_2 X^{n-2})\cap V(X^{n-2} i_1 i_2)=V(b_1 b_2 X^{n-4} i_1 i_2)$.
Thus, $|V(a_1 a_2 X^{n-2})\cup V(b_1 b_2 X^{n-2})\cup V(X^{n-2} c_1 c_2)\cup V(X^{n-2} d_1 d_2)|=|V(a_1 a_2 X^{n-2})|+|V(b_1 b_2 X^{n-2})|+|V(X^{n-2} c_1 c_2)|+|V(X^{n-2} d_1 d_2)|-|V(a_1 a_2 X^{n-2})\cap V(X^{n-2} d_1 d_2)|-|V(b_1 b_2 X^{n-2})\cap V(X^{n-2} c_1 c_2)|-|V(b_1 b_2$ $X^{n-2})\cap V(X^{n-2} d_1 d_2)|=4(n-2)!-3(n-4)!$.
The probability $P_11$ that there are four fault-free $B_{n-2}$'s chosen as in Case 11 is $\frac{1}{4}\times 8(n^6-15n^5+85n^4-225n^3+274n^2-120n)p^{4(n-2)!-3(n-4)!}$.

{\it Case 12.} 1111.

Let $x\in\{1, 2\}$ for any $x\in\{i, j, k, l\}$.

{\it Case 12.1.} $a_i=b_i=c_j=d_j$.

There are 4 possible cases. W.l.o.g, assume that $a_1=b_1=c_1=d_1$, $a_2\notin\{c_1, c_2\}$, $a_2\notin\{d_1, d_2\}$, $b_2\notin\{c_1, c_2\}$ and $b_2\notin\{d_1, d_2\}$. There are $n$ ways to choose $a_1(=b_1=c_1=d_1)$ from $N_n$, $n-1$ ways to choose $a_2$ from $N_n\setminus\{a_1\}$, $n-2$ ways to choose $b_2$ from $N_n\setminus\{b_1, a_2\}$, $n-3$ ways to choose $c_2$ from $N_n\setminus\{c_1, a_2, b_2\}$, $n-4$ ways to choose $d_2$ from $N_n\setminus\{d_1, c_2, a_2, b_2\}$. Therefore, there are $4n(n-1)(n-2)(n-3)(n-4)=4(n^5-10n^4+35n^3-50n^2+24n)$ ways to choose four $B_{n-2}$'s as required.

{\it Case 12.2.} $a_i=b_i=c_j=d_k$ or $a_j=b_k=c_i=d_i$, where $j\neq k$.

There are 8 possible cases. W.l.o.g, assume that $a_1=b_1=c_1=d_2$, $a_2\notin\{c_1, c_2\}$, $a_2\notin\{d_1, d_2\}$, $b_2\notin\{c_1, c_2\}$ and $b_2\notin\{d_1, d_2\}$. There are $n$ ways to choose $a_1(=b_1=c_1=d_2)$ from $N_n$, $n-1$ ways to choose $a_2$ from $N_n\setminus\{a_1\}$, $n-2$ ways to choose $b_2$ from $N_n\setminus\{b_1, a_2\}$, $n-3$ ways to choose $c_2$ from $N_n\setminus\{c_1, a_2, b_2\}$, $n-3$ ways to choose $d_1$ from $N_n\setminus\{d_2, a_2, b_2\}$. Therefore, there are $8n(n-1)(n-2)(n-3)^2=8(n^5-9n^4+29n^3-39n^2+18n)$ ways to choose four $B_{n-2}$'s as required.

{\it Case 12.3.} $a_i=b_j=c_k=d_l$, where $i\neq j$ and $k\neq l$.

There are 4 possible cases. W.l.o.g, assume that $a_1=b_2=c_1=d_2$, $a_2\notin\{c_1, c_2\}$, $a_2\notin\{d_1, d_2\}$, $b_1\notin\{c_1, c_2\}$ and $b_1\notin\{d_1, d_2\}$.

{\it Case 12.3.1.} $a_2\neq b_1$.

There are $n$ ways to choose $a_1(=b_2=c_1=d_2)$ from $N_n$, $n-1$ ways to choose $a_2$ from $N_n\setminus\{a_1\}$, $n-2$ ways to choose $b_1$ from $N_n\setminus\{b_2, a_2\}$, $n-3$ ways to choose $c_2$ from $N_n\setminus\{c_1, a_2, b_1\}$, $n-3$ ways to choose $d_1$ from $N_n\setminus\{d_2, a_2, b_1\}$. Therefore, there are $n(n-1)(n-2)(n-3)^2=n^5-9n^4+29n^3-39n^2+18n$ ways to choose four $B_{n-2}$'s as required.

{\it Case 12.3.2.} $a_2=b_1$. 

There are $n$ ways to choose $a_1(=b_2=c_1=d_2)$ from $N_n$, $n-1$ ways to choose $a_2(=b_1)$ from $N_n\setminus\{a_1\}$, $n-2$ ways to choose $c_2$ from $N_n\setminus\{c_1, a_2\}$, $n-2$ ways to choose $d_1$ from $N_n\setminus\{d_2, a_2\}$. Therefore, there are $n(n-1)(n-2)^2=n^4-5n^3+8n^2-4n$ ways to choose four $B_{n-2}$'s as required.

Therefore, there are $4(n^5-8n^4+24n^3-31n^2+14n)$ ways to choose four $B_{n-2}$'s as required in Case 12.3.

{\it Case 12.4.} $a_i=c_k=d_l$, $b_j=c_l=d_k$ or $a_k=b_l=c_i$, $a_l=b_k=d_j$, where $l\neq k$.

There are 16 possible cases. W.l.o.g, assume that $a_1=c_1=d_2$ and $b_1=c_2=d_1$, $a_2\notin\{c_1, c_2\}$, $a_2\notin\{d_1, d_2\}$, $b_2\notin\{c_1, c_2\}$ and $b_2\notin\{d_1, d_2\}$. There are $n$ ways to choose $a_1(=c_1=d_2)$ from $N_n$, $n-1$ ways to choose $b_1(=c_2=d_1)$ from $N_n\setminus\{c_1\}$, $n-2$ ways to choose $a_2$ from $N_n\setminus\{c_1, c_2\}$, $n-2$ ways to choose $b_2$ from $N_n\setminus\{c_1, c_2\}$. Therefore, there are $16n(n-1)(n-2)^2=16(n^4-5n^3+8n^2-4n)$ ways to choose four $B_{n-2}$'s as required.

{\it Case 12.5.} $x_i=y_j=z_k$ and $\{w_1, w_2\}=(\{y_1, y_2\}\setminus\{y_j\})\cup(\{z_1, z_2\}\setminus\{z_k\})$, where $\{y, z\}=\{a, b\}$ or $\{y, z\}=\{c, d\}$ and $\{x, y, z, w\}=\{a, b, c, d\}$.

There are 64 possible cases. W.l.o.g, assume that $a_1=c_1=d_1$, $b_1=c_2$, $b_2=d_2$, $a_2\notin\{c_1, c_2\}$, $a_2\notin\{d_1, d_2\}$, $b_2\notin\{c_1, c_2\}$ and $b_1\notin\{d_1, d_2\}$. There are $n$ ways to choose $a_1(=c_1=d_1)$ from $N_n$, $n-1$ ways to choose $b_1(=c_2)$ from $N_n\setminus\{c_1\}$, $n-2$ ways to choose $b_2(=d_2)$ from $N_n\setminus\{b_1, d_1\}$, $n-3$ ways to choose $a_2$ from $N_n\setminus\{c_1, c_2, d_2\}$. Therefore, there are $64n(n-1)(n-2)(n-3)=64(n^4-6n^3+11n^2-6n)$ ways to choose four $B_{n-2}$'s as required.

{\it Case 12.6.} $\{a_1, a_2, b_1, b_2\}=\{c_1, c_2, d_1, d_2\}$ and $\{a_1, a_2\}\cap\{b_1, b_2\}=\emptyset$.

There are 16 possible cases. W.l.o.g, assume that $a_1=c_1$, $a_2=d_1$, $b_1=c_2$, $b_2=d_2$, $a_2\notin\{c_1, c_2\}$, $a_1\notin\{d_1, d_2\}$, $b_2\notin\{c_1, c_2\}$ and $b_1\notin\{d_1, d_2\}$. There are $n$ ways to choose $a_1(=c_1)$ from $N_n$, $n-1$ ways to choose $a_2(=d_1)$ from $N_n\setminus\{a_1\}$, $n-2$ ways to choose $b_1(=c_2)$ from $N_n\setminus\{c_1, a_2\}$, $n-3$ ways to choose $b_2(=d_2)$ from $N_n\setminus\{c_1, c_2, d_1\}$. Therefore, there are $16n(n-1)(n-2)(n-3)=16(n^4-6n^3+11n^2-6n)$ ways to choose four $B_{n-2}$'s as required.

In summary, there are $4(4n^5-12n^4-23n^3+93n^2-62n)$ ways to choose four $B_{n-2}$'s as required in Case 12.
Observation \ref{ob:4} implies that for any $i\in\{a, b\}$ and $j\in\{c, d\}$, $i_1 i_2 X^{n-2}$ and $X^{n-2} j_1 j_2$ are disjoint. Thus, $|V(a_1 a_2 X^{n-2})\cup V(b_1 b_2 X^{n-2})\cup V(X^{n-2} c_1 c_2)\cup V(X^{n-2} d_1 d_2)|=|V(a_1 a_2 X^{n-2})|+|V(b_1 b_2 X^{n-2})|+|V(X^{n-2} c_1 c_2)|+|V(X^{n-2} d_1 d_2)|=4(n-2)!$.
The probability $P_12$ that there are four fault-free $B_{n-2}$'s chosen as in Case 12 is $\frac{1}{4}\times 4(4n^5-12n^4-23n^3+93n^2-62n)p^{4(n-2)!}$.

{\it Case 13.} 1110 or 1101 or 1011 or 0111.

W.l.o.g, assume that the case of 1110. Let $x\in\{1, 2\}$ for any $x\in\{i, j, k, l\}$.

{\it Case 13.1.} $a_i=c_k=d_l$, $b_j\in\{c_1, c_2\}\setminus\{c_k\}$ or $a_i=b_j=c_k$, $d_l\in\{a_1, a_2\}\setminus\{a_i\}$.

There are 32 possible cases. W.l.o.g, assume that $a_1=c_1=d_1$, $b_1=c_2$, $a_2\notin\{c_1, c_2\}$, $a_2\notin\{d_1, d_2\}$, $b_2\notin\{c_1, c_2\}$ and $\{b_1, b_2\}\cap\{d_1, d_2\}=\emptyset$. There are $n$ ways to choose $a_1(=c_1=d_1)$ from $N_n$, $n-1$ ways to choose $b_1(=c_2)$ from $N_n\setminus\{c_1\}$, $n-2$ ways to choose $b_2$ from $N_n\setminus\{c_1, c_2\}$, $n-3$ ways to choose $d_2$ from $N_n\setminus\{d_1, b_1, b_2\}$, $n-3$ ways to choose $a_2$ from $N_n\setminus\{c_1, c_2, d_2\}$. Therefore, there are $32n(n-1)(n-2)(n-3)^2=32(n^5-9n^4+29n^3-39n^2+18n)$ ways to choose four $B_{n-2}$'s as required.

{\it Case 13.2.} $a_i=c_k$, $d_l\in\{a_1, a_2\}\setminus\{a_i\}$ and $b_j\in\{c_1, c_2\}\setminus\{c_k\}$.

There are 16 possible cases. W.l.o.g, assume that $a_1=c_1$, $a_2=d_1$, $b_1=c_2$, $a_2\notin\{c_1, c_2\}$, $a_1\notin\{d_1, d_2\}$, $b_2\notin\{c_1, c_2\}$ and $\{b_1, b_2\}\cap\{d_1, d_2\}=\emptyset$. There are $n$ ways to choose $a_1(=c_1)$ from $N_n$, $n-1$ ways to choose $a_2(=d_1)$ from $N_n\setminus\{a_1\}$, $n-2$ ways to choose $b_1(=c_2)$ from $N_n\setminus\{c_1, a_2\}$, $n-3$ ways to choose $b_2$ from $N_n\setminus\{c_1, c_2, d_1\}$, $n-4$ ways to choose $d_2$ from $N_n\setminus\{a_1, d_1, b_1, b_2\}$. Therefore, there are $16n(n-1)(n-2)(n-3)(n-4)=16(n^5-10n^4+35n^3-50n^2+24n)$ ways to choose four $B_{n-2}$'s as required.

In summary, there are $16(3n^5-28n^4+93n^3-128n^2+60n)$ ways to choose four $B_{n-2}$'s as required in Case 13.
Observation \ref{ob:4} implies that for any $i\in\{c, d\}$, $a_1 a_2 X^{n-2}$ and $X^{n-2} i_1 i_2$ are disjoint, and $b_1 b_2 X^{n-2}$ and $X^{n-2} c_1 c_2$ are disjoint, and $b_1 b_2 X^{n-2}$ and $X^{n-2} d_1 d_2$ are not disjoint and $V(b_1 b_2 X^{n-2})\cap V(X^{n-2} d_1 d_2)=V(b_1 b_2 X^{n-4} d_1 d_2)$. Thus, $|V(a_1 a_2 X^{n-2})\cup V(b_1 b_2 X^{n-2})\cup V(X^{n-2} c_1 c_2)\cup V(X^{n-2} d_1 d_2)|=|V(a_1 a_2 X^{n-2})|+|V(b_1 b_2 X^{n-2})|+|V(X^{n-2} c_1 c_2)|+|V(X^{n-2} d_1 d_2)|-|V(b_1 b_2 X^{n-2})\cap V(X^{n-2} d_1 d_2)|=4(n-2)!-(n-4)!$.
The probability $P_13$ that there are four fault-free $B_{n-2}$'s chosen as in Case 13 is $\frac{1}{4}\times 4\times 16(3n^5-28n^4+93n^3-128n^2+60n)p^{4(n-2)!-(n-4)!}$.

{\it Case 14.} 1100 or 0011 or 1010 or 0101.

W.l.o.g, assume that the case of 1100. Let $x\in\{1, 2\}$ for any $x\in\{i, j, k, l\}$.

{\it Case 14.1.} $a_i=c_j=d_j$.

There are 4 possible cases. W.l.o.g, assume that $a_1=c_1=d_1$, $a_2\notin\{c_1, c_2\}$, $a_2\notin\{d_1, d_2\}$, $\{b_1, b_2\}\cap\{c_1, c_2\}=\emptyset$ and $\{b_1, b_2\}\cap\{d_1, d_2\}=\emptyset$.
There are $n$ ways to choose $a_1(=c_1=d_1)$ from $N_n$, $n-1$ ways to choose $c_2$ from $N_n\setminus\{c_1\}$, $n-2$ ways to choose $d_2$ from $N_n\setminus\{d_1, c_2\}$, $n-3$ ways to choose $a_2$ from $N_n\setminus\{c_1, c_2, d_2\}$, $n-3$ ways to choose $b_1$ from $N_n\setminus\{c_1, c_2, d_2\}$, $n-4$ ways to choose $b_2$ from $N_n\setminus\{c_1, c_2, d_2, b_1\}$. Therefore, there are $4n(n-1)(n-2)(n-3)^2(n-4)=4(n^6-13n^5+65n^4-155n^3+174n^2-72n)$ ways to choose four $B_{n-2}$'s as required.

{\it Case 14.2.} $a_i=c_j=d_k$ and $j\neq k$.

There are 4 possible cases. W.l.o.g, assume that $a_1=c_1=d_2$, $a_2\notin\{c_1, c_2\}$, $a_2\notin\{d_1, d_2\}$, $\{b_1, b_2\}\cap\{c_1, c_2\}=\emptyset$ and $\{b_1, b_2\}\cap\{d_1, d_2\}=\emptyset$.

{\it Case 14.2.1.} $c_2\neq d_1$.

There are $n$ ways to choose $a_1(=c_1=d_2)$ from $N_n$, $n-1$ ways to choose $c_2$ from $N_n\setminus\{c_1\}$, $n-2$ ways to choose $d_1$ from $N_n\setminus\{d_2, c_2\}$, $n-3$ ways to choose $a_2$ from $N_n\setminus\{c_1, c_2, d_1\}$, $n-3$ ways to choose $b_1$ from $N_n\setminus\{c_1, c_2, d_1\}$, $n-4$ ways to choose $b_2$ from $N_n\setminus\{c_1, c_2, d_1, b_1\}$. Therefore, there are $n(n-1)(n-2)(n-3)^2(n-4)=n^6-13n^5+65n^4-155n^3+174n^2-72n$ ways to choose four $B_{n-2}$'s as required.

{\it Case 14.2.2.} $c_2=d_1$.

There are $n$ ways to choose $a_1(=c_1=d_2)$ from $N_n$, $n-1$ ways to choose $c_2(=d_1)$ from $N_n\setminus\{c_1\}$, $n-2$ ways to choose $a_2$ from $N_n\setminus\{c_1, c_2\}$, $n-2$ ways to choose $b_1$ from $N_n\setminus\{c_1, c_2\}$, $n-3$ ways to choose $b_2$ from $N_n\setminus\{c_1, c_2, b_1\}$. Therefore, there are $n(n-1)(n-2)^2(n-3)=n^5-8n^4+23n^3-28n^2+12n$ ways to choose four $B_{n-2}$'s as required.

Therefore, there are $4(n^6-12n^5+57n^4-132n^3+146n^2-60n)$ ways to choose four $B_{n-2}$'s as required in Case 14.2.

{\it Case 14.3.} $a_i=c_j$ and $d_k\in\{a_1, a_2\}\setminus\{a_i\}$.

There are 8 possible cases. W.l.o.g, assume that $a_1=c_1$ $a_2=d_1$, $a_2\notin\{c_1, c_2\}$, $a_1\notin\{d_1, d_2\}$, $\{b_1, b_2\}\cap\{c_1, c_2\}=\emptyset$ and $\{b_1, b_2\}\cap\{d_1, d_2\}=\emptyset$.
There are $n$ ways to choose $a_1(=c_1)$ from $N_n$, $n-1$ ways to choose $a_2(=d_1)$ from $N_n\setminus\{a_1\}$, $n-2$ ways to choose $b_1$ from $N_n\setminus\{c_1, d_1\}$, $n-3$ ways to choose $b_2$ from $N_n\setminus\{b_1, c_1, d_1\}$, $n-4$ ways to choose $c_2$ from $N_n\setminus\{c_1, a_2, b_1, b_2\}$, $n-4$ ways to choose $d_2$ from $N_n\setminus\{d_1, a_1, b_1, b_2\}$. Therefore, there are $8n(n-1)(n-2)(n-3)(n-4)^2=8(n^6-14n^5+75n^4-190n^3+224n^2-96n)$ ways to choose four $B_{n-2}$'s as required.

In summary, there are $4(4n^6-53n^5+272n^4-667n^3+768n^2-324n)$ ways to choose four $B_{n-2}$'s as required in Case 14. 
Observation \ref{ob:4} implies that for any $i\in\{c, d\}$, $a_1 a_2 X^{n-2}$ and $X^{n-2} i_1 i_2$ are disjoint, and $b_1 b_2 X^{n-2}$ and $X^{n-2} i_1 i_2$ are not disjoint and $V(b_1 b_2 X^{n-2})\cap V(X^{n-2} i_1 i_2)=V(b_1 b_2 X^{n-4} i_1 i_2)$. Thus, $|V(a_1 a_2 X^{n-2})\cup V(b_1 b_2 X^{n-2})\cup V(X^{n-2} c_1 c_2)\cup V(X^{n-2} d_1 d_2)|=|V(a_1 a_2 X^{n-2})|+|V(b_1 b_2 X^{n-2})|+|V(X^{n-2} c_1 c_2)|+|V(X^{n-2} d_1 d_2)|-|V(b_1 b_2 X^{n-2})\cap V(X^{n-2} c_1 c_2)|-|V(b_1 b_2 X^{n-2})\cap V(X^{n-2} d_1 d_2)|=4(n-2)!-2(n-4)!$.
The probability $P_14$ that there are four fault-free $B_{n-2}$'s chosen as in Case 14 is $\frac{1}{4}\times 4\times 4(4n^6-53n^5+272n^4-667n^3+768n^2-324n)p^{4(n-2)!-2(n-4)!}$.

{\it Case 15.} 1001 or 0110.

W.l.o.g, assume that the case of 1001. Let $x\in\{1, 2\}$ for any $x\in\{i, j, k, l\}$.
There are 16 possible cases. W.l.o.g, assume that $a_1=c_1$, $b_1=d_1$, $a_2\notin\{c_1, c_2\}$, $b_2\notin\{d_1, d_2\}$, $\{a_1, a_2\}\cap\{d_1, d_2\}=\emptyset$ and $\{b_1, b_2\}\cap\{c_1, c_2\}=\emptyset$.

{\it Case 15.1.} $a_2\neq b_2$.

There are $n$ ways to choose $a_1(=c_1)$ from $N_n$, $n-1$ ways to choose $b_1(=d_1)$ from $N_n\setminus\{c_1\}$, $n-2$ ways to choose $a_2$ from $N_n\setminus\{a_1, d_1\}$, $n-3$ ways to choose $b_2$ from $N_n\setminus\{b_1, c_1, a_2\}$, $n-4$ ways to choose $c_2$ from $N_n\setminus\{c_1, a_2, b_1, b_2\}$, $n-4$ ways to choose $d_2$ from $N_n\setminus\{d_1, b_2, a_1, a_2\}$. Therefore, there are $n(n-1)(n-2)(n-3)(n-4)^2=n^6-14n^5+75n^4-190n^3+224n^2-96n$ ways to choose four $B_{n-2}$'s as required.

{\it Case 15.2.} $a_2=b_2$.

There are $n$ ways to choose $a_1(=c_1)$ from $N_n$, $n-1$ ways to choose $b_1(=d_1)$ from $N_n\setminus\{c_1\}$, $n-2$ ways to choose $a_2(=b_2)$ from $N_n\setminus\{a_1, d_1\}$, $n-3$ ways to choose $c_2$ from $N_n\setminus\{c_1, b_1, b_2\}$, $n-3$ ways to choose $d_2$ from $N_n\setminus\{d_1, a_1, a_2\}$. Therefore, there are $n(n-1)(n-2)(n-3)^2=n^5-9n^4+29n^3-39n^2+18n$ ways to choose four $B_{n-2}$'s as required.

In summary, there are $n^6-13n^5+66n^4-161n^3+185n^2-78n$ ways to choose four $B_{n-2}$'s as required in Case 15. 
Observation \ref{ob:4} implies that $a_1 a_2 X^{n-2}$ and $X^{n-2} c_1 c_2$ are disjoint, and $a_1 a_2 X^{n-2}$ and $X^{n-2} d_1 d_2$ are not disjoint and $V(a_1 a_2 X^{n-2})\cap V(X^{n-2} d_1 d_2)=V(a_1 a_2 X^{n-4} d_1 d_2)$, and $b_1 b_2 X^{n-2}$ and $X^{n-2} c_1 c_2$ are not disjoint and $V(b_1 b_2 X^{n-2})\cap V(X^{n-2} c_1 c_2)=V(b_1 b_2 X^{n-4} c_1 c_2)$, and $b_1 b_2 X^{n-2}$ and $X^{n-2} d_1 d_2$ are disjoint. Thus, $|V(a_1 a_2 X^{n-2})\cup V(b_1 b_2 X^{n-2})\cup V(X^{n-2} c_1 c_2)\cup V(X^{n-2} d_1 d_2)|=|V(a_1 a_2 X^{n-2})|+|V(b_1 b_2 X^{n-2})|+|V(X^{n-2} c_1 c_2)|+|V(X^{n-2} d_1 d_2)|-|V(a_1 a_2 X^{n-2})\cap V(X^{n-2} d_1 d_2)|-|V(b_1 b_2 X^{n-2})\cap V(X^{n-2} c_1 c_2)|=4(n-2)!-2(n-4)!$.
The probability $P_15$ that there are four fault-free $B_{n-2}$'s chosen as in Case 15 is $\frac{1}{4}\times 2\times 16(n^6-13n^5+66n^4-161n^3+185n^2-78n)p^{4(n-2)!-2(n-4)!}$.

{\it Case 16.} 1000 or 0100 or 0010 or 0001.

W.l.o.g, assume that the case of 1000.
There are 4 possible cases. W.l.o.g, assume that $a_1=c_1$, $a_2\notin\{c_1, c_2\}$, $\{a_1, a_2\}\cap\{d_1, d_2\}=\emptyset$, $\{b_1, b_2\}\cap\{c_1, c_2\}=\emptyset$, $\{b_1, b_2\}\cap\{d_1, d_2\}=\emptyset$.

{\it Case 16.1.} $a_2\neq b_1$ and $a_2\neq b_2$.

There are $n$ ways to choose $a_1(=c_1)$ from $N_n$, $n-1$ ways to choose $a_2$ from $N_n\setminus\{a_1\}$, $n-2$ ways to choose $c_2$ from $N_n\setminus\{c_1, a_2\}$, $n-3$ ways to choose $b_1$ from $N_n\setminus\{c_1, c_2, a_2\}$, $n-4$ ways to choose $b_2$ from $N_n\setminus\{b_1, c_1, c_2, a_2\}$, $n-4$ ways to choose $d_1$ from $N_n\setminus\{a_1, a_2, b_1, b_2\}$, $n-5$ ways to choose $d_2$ from $N_n\setminus\{a_1, a_2, b_1, b_2, d_1\}$. Therefore, there are $n(n-1)(n-2)(n-3)(n-4)^2(n-5)=n^7-19n^6+145n^5-565n^4+1174n^3-1216n^2+480n$ ways to choose four $B_{n-2}$'s as required.

{\it Case 16.2.} $a_2=b_1$ or $a_2=b_2$.

There are $n$ ways to choose $a_1(=c_1)$ from $N_n$, $n-1$ ways to choose $a_2(=b_1)$ from $N_n\setminus\{a_1\}$, $n-2$ ways to choose $c_2$ from $N_n\setminus\{c_1, a_2\}$, $n-3$ ways to choose $b_2$ from $N_n\setminus\{b_1, c_1, c_2\}$, $n-3$ ways to choose $d_1$ from $N_n\setminus\{a_1, a_2, b_2\}$, $n-4$ ways to choose $d_2$ from $N_n\setminus\{a_1, a_2, b_2, d_1\}$. Therefore, there are $2n(n-1)(n-2)(n-3)^2(n-4)=2(n^6-13n^5+65n^4-155n^3+174n^2-72n)$ ways to choose four $B_{n-2}$'s as required.

In summary, there are $n^7-17n^6+119n^5-435n^4+864n^3-868n^2+336n$ ways to choose four $B_{n-2}$'s as required in Case 16. 
Observation \ref{ob:4} implies that $a_1 a_2 X^{n-2}$ and $X^{n-2} c_1 c_2$ are disjoint, and $a_1 a_2 X^{n-2}$ and $X^{n-2} d_1 d_2$ are not disjoint and $V(a_1 a_2 X^{n-2})\cap V(X^{n-2} d_1 d_2)=V(a_1 a_2 X^{n-4} d_1 d_2)$, and for any $i\in\{c, d\}$, $b_1 b_2 X^{n-2}$ and $X^{n-2} i_1 i_2$ are not disjoint and $V(b_1 b_2 X^{n-2})\cap V(X^{n-2} i_1 i_2)=V(b_1 b_2 X^{n-4} i_1 i_2)$. Thus, $|V(a_1 a_2 X^{n-2})\cup V(b_1 b_2 X^{n-2})\cup V(X^{n-2} c_1 c_2)\cup V(X^{n-2} d_1 d_2)|=|V(a_1 a_2 X^{n-2})|+|V(b_1 b_2 X^{n-2})|+|V(X^{n-2} c_1$ $c_2)|+|V(X^{n-2} d_1 d_2)|-|V(a_1 a_2 X^{n-2})\cap V(X^{n-2} d_1 d_2)|-|V(b_1 b_2 X^{n-2})\cap V(X^{n-2} c_1 c_2)|-|V(b_1 b_2 X^{n-2})\cap V(X^{n-2} d_1 d_2)|=4(n-2)!-3(n-4)!$.
The probability $P_16$ that there are four fault-free $B_{n-2}$'s chosen as in Case 16 is $\frac{1}{4}\times 4\times 4(n^7-17n^6+119n^5-435n^4+864n^3-868n^2+336n)p^{4(n-2)!-3(n-4)!}$.

{\it Case 17.} 0000.

This case refers to $\{a_1, a_2\}\cap\{c_1, c_2\}=\emptyset$, $\{a_1, a_2\}\cap\{d_1, d_2\}=\emptyset$, $\{b_1, b_2\}\cap\{c_1, c_2\}=\emptyset$ and $\{b_1, b_2\}\cap\{d_1, d_2\}=\emptyset$.

{\it Case 17.1.} $\{a_1, a_2\}=\{b_1, b_2\}$ and $c_1\neq d_1$.

There are $n$ ways to choose $a_1(=b_2)$ from $N_n$, $n-1$ ways to choose $a_2(=b_1)$ from $N_n\setminus\{a_1\}$, $n-2$ ways to choose $c_1$ from $N_n\setminus\{a_1, a_2\}$, $n-3$ ways to choose $c_2$ from $N_n\setminus\{c_1, a_1, a_2\}$, $n-3$ ways to choose $d_1$ from $N_n\setminus\{c_1, a_1, a_2\}$, $n-3$ ways to choose $d_2$ from $N_n\setminus\{d_1, a_1, a_2\}$. Therefore, there are $n(n-1)(n-2)(n-3)^3=n^6-12n^5+56n^4-126n^3+135n^2-54n$ ways to choose four $B_{n-2}$'s as required.

{\it Case 17.2.} $\{a_1, a_2\}=\{b_1, b_2\}$ and $c_1=d_1$.

There are $n$ ways to choose $a_1(=b_2)$ from $N_n$, $n-1$ ways to choose $a_2(=b_1)$ from $N_n\setminus\{a_1\}$, $n-2$ ways to choose $c_1(=d_1)$ from $N_n\setminus\{a_1, a_2\}$, $n-3$ ways to choose $c_2$ from $N_n\setminus\{c_1, a_1, a_2\}$, $n-4$ ways to choose $d_2$ from $N_n\setminus\{d_1, a_1, a_2, c_2\}$. Therefore, there are $n(n-1)(n-2)(n-3)(n-4)=n^5-10n^4+35n^3-50n^2+24n$ ways to choose four $B_{n-2}$'s as required.

{\it Case 17.3.} $|\{a_1, a_2\}\cap\{b_1, b_2\}|=1$ and $c_1\neq d_1$.

There are 4 possible cases. W.l.o.g, assume that $a_1=b_1$, $a_2\notin\{b_1, b_2\}$ and $c_1\neq d_1$.
There are $n$ ways to choose $a_1(=b_1)$ from $N_n$, $n-1$ ways to choose $b_2$ from $N_n\setminus\{b_1\}$, $n-2$ ways to choose $a_2$ from $N_n\setminus\{b_1, b_2\}$, $n-3$ ways to choose $c_1$ from $N_n\setminus\{a_1, a_2, b_2\}$, $n-4$ ways to choose $c_2$ from $N_n\setminus\{a_1, a_2, b_2, c_1\}$, $n-4$ ways to choose $d_1$ from $N_n\setminus\{a_1, a_2, b_2, c_1\}$, $n-4$ ways to choose $d_2$ from $N_n\setminus\{a_1, a_2, b_2, d_1\}$. Therefore, there are $4n(n-1)(n-2)(n-3)(n-4)^3=4(n^7-18n^6+131n^5-490n^4+984n^3-992n^2+384n)$ ways to choose four $B_{n-2}$'s as required.

{\it Case 17.4.} $|\{a_1, a_2\}\cap\{b_1, b_2\}|=1$ and $c_1=d_1$.

There are 4 possible cases. W.l.o.g, assume that $a_1=b_1$, $a_2\notin\{b_1, b_2\}$ and $c_1=d_1$.
There are $n$ ways to choose $a_1(=b_1)$ from $N_n$, $n-1$ ways to choose $b_2$ from $N_n\setminus\{b_1\}$, $n-2$ ways to choose $a_2$ from $N_n\setminus\{b_1, b_2\}$, $n-3$ ways to choose $c_1(=d_1)$ from $N_n\setminus\{a_1, a_2, b_2\}$, $n-4$ ways to choose $c_2$ from $N_n\setminus\{a_1, a_2, b_2, c_1\}$, $n-5$ ways to choose $d_2$ from $N_n\setminus\{a_1, a_2, b_2, d_1, c_2\}$. Therefore, there are $n(n-1)(n-2)(n-3)(n-4)(n-5)=4(n^6-15n^5+85n^4-225n^3+274n^2-120n)$ ways to choose four $B_{n-2}$'s as required.

{\it Case 17.5.} $\{a_1, a_2\}\cap\{b_1, b_2\}=\emptyset$ and $c_1\neq d_1$.

There are $n$ ways to choose $a_1$ from $N_n$, $n-1$ ways to choose $a_2$ from $N_n\setminus\{a_1\}$, $n-2$ ways to choose $b_1$ from $N_n\setminus\{a_1, a_2\}$, $n-3$ ways to choose $b_2$ from $N_n\setminus\{a_1, a_2, b_1\}$, $n-4$ ways to choose $c_1$ from $N_n\setminus\{a_1, a_2, b_1, b_2\}$, $n-5$ ways to choose $c_2$ from $N_n\setminus\{a_1, a_2, b_1, b_2, c_1\}$, $n-5$ ways to choose $d_1$ from $N_n\setminus\{a_1, a_2, b_1, b_2, c_1\}$, $n-5$ ways to choose $d_2$ from $N_n\setminus\{a_1, a_2, b_1, b_2, d_1\}$. Therefore, there are $n(n-1)(n-2)(n-3)(n-4)(n-5)^3=n^8-25n^7+260n^6-1450n^5+4649n^4-8485n^3+8050n^2-3000n$ ways to choose four $B_{n-2}$'s as required.

{\it Case 17.6.} $\{a_1, a_2\}\cap\{b_1, b_2\}=\emptyset$ and $c_1=d_1$.

There are $n$ ways to choose $a_1$ from $N_n$, $n-1$ ways to choose $a_2$ from $N_n\setminus\{a_1\}$, $n-2$ ways to choose $b_1$ from $N_n\setminus\{a_1, a_2\}$, $n-3$ ways to choose $b_2$ from $N_n\setminus\{a_1, a_2, b_1\}$, $n-4$ ways to choose $c_1(=d_1)$ from $N_n\setminus\{a_1, a_2, b_1, b_2\}$, $n-5$ ways to choose $c_2$ from $N_n\setminus\{a_1, a_2, b_1, b_2, c_1\}$, $n-6$ ways to choose $d_2$ from $N_n\setminus\{a_1, a_2, b_1, b_2, d_1, c_2\}$. Therefore, there are $n(n-1)(n-2)(n-3)(n-4)(n-5)(n-6)=n^7-21n^6+175n^5-735n^4+1624n^3-1764n^2+720n$ ways to choose four $B_{n-2}$'s as required.

In summary, there are $n^8-20n^7+172n^6-822n^5+2340n^4-3916n^3+3499n^2-1254n$ ways to choose four $B_{n-2}$'s as required in Case 17.
Observation \ref{ob:4} implies that for any $i\in\{a, b\}$ and $j\in\{c, d\}$, $i_1 i_2 X^{n-2}$ and $X^{n-2} j_1 j_2$ are not disjoint and $V(i_1 i_2 X^{n-2})\cap V(X^{n-2} j_1 j_2)=V(i_1 i_2 X^{n-4} j_1 j_2)$. Thus, $|V(a_1 a_2 X^{n-2})\cup V(b_1 b_2 X^{n-2})\cup V(X^{n-2} c_1 c_2)\cup V(X^{n-2} d_1 d_2)|=|V(a_1 a_2 X^{n-2})|+|V(b_1 b_2 X^{n-2})|+|V(X^{n-2} c_1 c_2)|+|V(X^{n-2} d_1$ $d_2)|-|V(a_1 a_2 X^{n-2})\cap V(X^{n-2} c_1 c_2)|-|V(a_1 a_2 X^{n-2})\cap V(X^{n-2} d_1 d_2)|-|V(b_1 b_2 X^{n-2})\cap V(X^{n-2} c_1 c_2)|-|V(b_1 b_2 X^{n-2})\cap V(X^{n-2} d_1 d_2)|=4(n-2)!-4(n-4)!$.
The probability $P_17$ that there are four fault-free $B_{n-2}$'s chosen as in Case 17 is $\frac{1}{4}(n^8-20n^7+172n^6-822n^5+2340n^4-3916n^3+3499n^2-1254n)p^{4(n-2)!-4(n-4)!}$.

{\it* Another way}

In particular, here we find another way to determine the number of distinct $B_{n-2}$'s in Case 17.

When $\{a_1, a_2\}=\{b_1, b_2\}$, there are $\binom{n}{2}$ ways to choose $a_1 a_2 X^{n-2}$ and $b_1 b_2 X^{n-2}$, and $\binom{2\binom{n-2}{2}}{2}$ ways to choose $X^{n-2} c_1 c_2$ and $X^{n-2} d_1 d_2$.

When $|\{a_1, a_2\}\cap\{b_1, b_2\}|=1$, there are $2\binom{n}{2}$ ways to choose one of $a_1 a_2 X^{n-2}$ and $b_1 b_2 X^{n-2}$. Among the total $2\binom{n}{2}$ ways, the cases $\{a_1, a_2\}\cap\{b_1, b_2\}=\emptyset$ and $\{a_1, a_2\}=\{b_1, b_2\}$ need to be excluded, thus there are $2\binom{n}{2}-2\binom{n-2}{2}-2$ possible ways to choose the other one of $a_1 a_2 X^{n-2}$ and $b_1 b_2 X^{n-2}$. Since we have implicitly sorted the values of $a_1 a_2$ and $b_1 b_2$, the number of ways of four distinct $B_{n-2}$'s in this case needs to be divided by 2. For example, choosing $1 2 X_{n-2}$ first and then choosing $1 3 X_{n-2}$ and choosing $1 3 X_{n-2}$ first and then choosing $1 2 X_{n-2}$ are actually the same case. And there are $\binom{2\binom{n-3}{2}}{2}$ ways to choose $X^{n-2} c_1 c_2$ and $X^{n-2} d_1 d_2$.

When $\{a_1, a_2\}\cap\{b_1, b_2\}=\emptyset$, there are $2\binom{n}{2}$ ways to choose one of $a_1 a_2 X^{n-2}$ and $b_1 b_2 X^{n-2}$, and $2\binom{n-2}{2}$ ways to choose the other one of $a_1 a_2 X^{n-2}$ and $b_1 b_2 X^{n-2}$. Since we have implicitly arrange the values of $a_1 a_2$ and $b_1 b_2$, the number of ways to choose four $B_{n-2}$'s as required needs to be divided by 2. And there are $\binom{2\binom{n-4}{2}}{2}$ ways to choose $X^{n-2} c_1 c_2$ and $X^{n-2} d_1 d_2$.

Thus, the probability that there are four fault-free $B_{n-2}$'s chosen as in Case 17 is $(\binom{n}{2}\binom{2\binom{n-2}{2}}{2}+\frac{1}{2}\binom{2\binom{n}{2}}{1}$ $\binom{2\binom{n}{2}-2\binom{n-2}{2}-2}{1}\binom{2\binom{n-3}{2}}{2}+\frac{1}{2}\binom{2\binom{n}{2}}{1}\binom{2\binom{n-2}{2}}{1}\binom{2\binom{n-4}{2}}{2})p^{4(n-2)!-4(n-4)!}=\frac{1}{4}(n^8-20n^7+172n^6-822n^5+2340n^4-3916n^3+3499n^2-1254n)p^{4(n-2)!-4(n-4)!}=P_17$.

By the above computations, $P(2,0,2)=\sum_{i=1}^{17}P_i=\frac{1}{2}(8n^5+8n^4-174n^3+347n^2-189n)p^{4(n-2)!}+16(3n^5-25n^4+75n^3-95n^2+42n)p^{4(n-2)!-(n-4)!}+4(6n^6-73n^5+345n^4-785n^3+849n^2-342n)p^{4(n-2)!-2(n-4)!}+2(2n^7-33n^6+223n^5-785n^4+1503n^3-1462n^2+552n)p^{4(n-2)!-3(n-4)!}+\frac{1}{4}(n^8-20n^7+172n^6-822n^5+2340n^4-3916n^3+3499n^2-1254n)p^{4(n-2)!-4(n-4)!}$.
\end{proof}

\begin{theorem}\label{th9}
$P(2,1,1)=(8n^6-65n^5+229n^4-421n^3+389n^2-140n)p^{4(n-2)!}+\frac{1}{2}(44n^5-291n^4+716n^3-771n^2+302n)p^{4(n-2)!-(n-3)!}+\frac{1}{2}(2n^5-5n^4-10n^3+35n^2-22n)p^{4(n-2)!-2(n-3)!}+(4n^7-46n^6+231n^5-647n^4+1043n^3-891n^2+306n)p^{4(n-2)!-(n-4)!}+(12n^6-128n^5+535n^4-1090n^3+1073n^2-402n)p^{4(n-2)!-(n-3)!-(n-4)!}+(7n^5-59n^4+179n^3-229n^2+102n)p^{4(n-2)!-2(n-3)!-(n-4)!}+(n^4-6n^3+11n^2-6n)p^{4(n-2)!-3(n-3)!-(n-4)!}+\frac{1}{2}(n^8-15n^7+99n^6-377n^5+892n^4-1288n^3+1024n^2-336n)p^{4(n-2)!-2(n-4)!}+\frac{1}{2}(3n^7-45n^6+277n^5-893n^4+1580n^3-1438n^2+516n)p^{4(n-2)!-(n-3)!-2(n-4)!}+\frac{1}{2}(3n^6-38n^5+187n^4-442n^3+494n^2-204n)p^{4(n-2)!-2(n-3)!-2(n-4)!}+\frac{1}{2}(n^5-10n^4+35n^3-50n^2+24n)p^{4(n-2)!-3(n-3)!-2(n-4)!}$.
\end{theorem}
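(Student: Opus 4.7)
The plan is to parameterize the four subnetworks as $a_1a_2X^{n-2},b_1b_2X^{n-2}\in H_{1,2}$, $c_1X^{n-2}c_2\in H_{1,n}$, and $X^{n-2}d_1d_2\in H_{n-1,n-2}$. Observation~\ref{ob:1} gives that $a_1a_2X^{n-2}$ and $b_1b_2X^{n-2}$ are disjoint; since $a$ and $b$ play symmetric roles, every raw count will be divided by $2$, exactly as in Theorems~\ref{th6}--\ref{th8}.

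The first step is to catalogue all nontrivial intersections. By Observation~\ref{ob:2}, $V(a_1a_2X^{n-2})\cap V(c_1X^{n-2}c_2)$ has size $(n-3)!$ precisely when $a_1=c_1$ and $a_2\neq c_2$, and the $b$-analogue is identical. By Observation~\ref{ob:3}, $V(c_1X^{n-2}c_2)\cap V(X^{n-2}d_1d_2)$ has size $(n-3)!$ precisely when $c_1\neq d_1$ and $c_2=d_2$. By Observation~\ref{ob:4}, $V(a_1a_2X^{n-2})\cap V(X^{n-2}d_1d_2)$ has size $(n-4)!$ precisely when $\{a_1,a_2\}\cap\{d_1,d_2\}=\emptyset$, and similarly for $b$. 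A new phenomenon, absent from Theorems~\ref{th1}--\ref{th8}, is that a triple intersection such as $V(a_1a_2X^{n-2})\cap V(c_1X^{n-2}c_2)\cap V(X^{n-2}d_1d_2)$ may be non-empty: inspecting the forced coordinate assignments $\pi_1=a_1=c_1$, $\pi_2=a_2$, $\pi_{n-1}=d_1$, $\pi_n=c_2=d_2$ shows that it has size $(n-4)!$ precisely when all three of its pairwise ingredients are non-empty, and the $b$-analogue behaves the same way. The quadruple intersection is always empty. Inclusion--exclusion therefore assigns each configuration an exponent of $p$ of the form
\[
4(n-2)! - \alpha(n-3)! - \beta(n-4)!,
\]
where $\alpha$ counts the realized $(n-3)!$-pairwise intersections and $\beta$ equals the realized $(n-4)!$-pairwise intersections minus the realized triple intersections.

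Next, I would enumerate along $(\alpha,\beta)$ using a natural hierarchy: first branch on $(|\{a_1,a_2\}\cap\{d_1,d_2\}|,|\{b_1,b_2\}\cap\{d_1,d_2\}|)\in\{0,1,2\}^2$ to settle the $a$--$d$ and $b$--$d$ intersections; then branch on whether $a_1=c_1$ and whether $b_1=c_1$, together with the secondary checks $a_2\neq c_2$ and $b_2\neq c_2$, to settle the $a$--$c$ and $b$--$c$ intersections; and finally branch on whether $c_2=d_2$ with $c_1\neq d_1$ to settle the $c$--$d$ intersection. Inside each cell, the number of valid $8$-tuples $(a_1,a_2,b_1,b_2,c_1,c_2,d_1,d_2)$ is counted by picking coordinates from $N_n$ one at a time with the appropriate exclusion sets, as in Theorems~\ref{th2},~\ref{th4},~\ref{th6}, and~\ref{th8}. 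Further internal sub-splits are needed whenever coincidences such as $a_1=b_1$, $a_2=b_2$, or $b_2=c_2$ are not already forced by the top-level conditions.

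The main obstacle is the sheer bookkeeping: the number of cells is large and the analysis must be both exhaustive and non-redundant. In particular, cells where all three of $a$--$c$, $a$--$d$, $c$--$d$ (or their $b$-analogues) hold simultaneously must be recognized and the corresponding triple-intersection $(n-4)!$ cancelled against one of the pairwise $(n-4)!$ contributions; otherwise the configuration would be filed under an exponent of $p$ that is too small by $(n-4)!$. Once each cell's count $N_i$ is in hand, its contribution is $\frac{1}{2}N_i\,p^{4(n-2)!-\alpha_i(n-3)!-\beta_i(n-4)!}$, and summing these and collecting like powers of $p$ yields the claimed closed form for $P(2,1,1)$. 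A useful internal check is that the coefficient of $p^{4(n-2)!}$ counts exactly those $8$-tuples for which all four subnetworks are pairwise disjoint, which can be cross-verified against the analogous leading terms in Theorems~\ref{th6} and~\ref{th8}.
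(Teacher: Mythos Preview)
Your proposal is correct and follows essentially the same case hierarchy as the paper: both first branch on the sizes of $\{a_1,a_2\}\cap\{d_1,d_2\}$ and $\{b_1,b_2\}\cap\{d_1,d_2\}$ (the paper's Cases~1--6), and then within each branch on the relations $c_1\in\{a_1,b_1\}$, $c_2\in\{a_2,b_2\}$, $c_2=d_2$, $c_1=d_1$.

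One point deserves emphasis: your explicit flagging of the triple intersection $V(a_1a_2X^{n-2})\cap V(c_1X^{n-2}c_2)\cap V(X^{n-2}d_1d_2)$ is correct and genuinely needed. When $a_1=c_1$, $c_2=d_2$, and $\{a_1,a_2\}\cap\{d_1,d_2\}=\emptyset$, every permutation in $V(a_1a_2X^{n-2})\cap V(X^{n-2}d_1d_2)$ already has $\pi_1=a_1=c_1$ and $\pi_n=d_2=c_2$, so this pairwise intersection is contained in $V(c_1X^{n-2}c_2)$ and the triple coincides with the $a$--$d$ pairwise term of size $(n-4)!$; by inclusion--exclusion it must be added back. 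The paper's write-up in its Case~6 sub-cases (for instance 6.13 and 6.14) subtracts only the pairwise overlaps and does not record these triples, so your formulation of $\beta$ as ``pairwise $(n-4)!$ terms minus realized triple terms'' is the more careful accounting of the two.
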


\begin{proof}
Denote by $a_1 a_2 X^{n-2}\in H_{1, 2}$, $b_1 b_2 X^{n-2}\in H_{1, 2}$, $c_1 X^{n-2} c_2\in H_{1, n}$ and $X^{n-2} d_1 d_2\in H_{n-1, n}$ these four $B_{n-2}$'s. Observation \ref{ob:1} implies that $a_1 a_2 X^{n-2}$ and $b_1 b_2 X^{n-2}$ are disjoint.

Since we have implicitly sorted the values of $a_1 a_2$ and $b_1 b_2$, the number of ways to choose four $B_{n-2}$'s as required needs to be divided by 2. It will be applied directly in following analyses.

When $\{a_1, a_2\}\cap\{d_1, d_2\}\neq\emptyset$ and $\{b_1, b_2\}\cap\{d_1, d_2\}\neq\emptyset$, i.e., Case 1, Case 2 and Case 4, we will use the following facts directly in these cases.

(1) $i_1 i_2 X^{n-2}$ and $X^{n-2} d_1 d_2$ are disjoint for any $i\in\{a, b\}$.

(2) For any $\{i, j\}=\{a, b\}$, if ($\romannumeral1$) $c_1\neq a_1$, $c_1\neq b_1$, $c_2\neq d_2$; or ($\romannumeral2$) $c_1=i_1\neq j_1$, $c_2\neq d_2$, $c_2=i_2$; or ($\romannumeral3$) $c_1\neq a_1$, $c_1\neq b_1$, $c_2=d_2$, $c_1=d_1$; or ($\romannumeral4$) $c_1=i_1\neq j_1$, $c_2=d_2$, $c_2=i_2$, $c_1=d_1$, w.l.o.g, assume that the first case applies. Observations \ref{ob:2} and \ref{ob:3} and \ref{ob:4} imply that $k_1 k_2 X^{n-2}$ and $c_1 X^{n-2} c_2$ are disjoint for any $k\in\{a, b\}$, and $X^{n-2} d_1 d_2$ and $c_1 X^{n-2} c_2$ are disjoint. Thus, $|V(a_1 a_2 X^{n-2})\cup V(b_1 b_2 X^{n-2})\cup V(c_1 X^{n-2} c_2)\cup V(X^{n-2} d_1 d_2)|=|V(a_1 a_2 X^{n-2})|+|V(b_1 b_2 X^{n-2})|+|V(c_1 X^{n-2} c_2)|+|V(X^{n-2} d_1 d_2)|=4(n-2)!$. 

(3) For any $\{i, j\}=\{a, b\}$, if ($\romannumeral1$) $c_1=i_1\neq j_1$, $c_2\neq d_2$, $c_2\neq i_2$; or ($\romannumeral2$) $c_1=i_1\neq j_1$, $c_2=d_2$, $c_2\neq i_2$, $c_1=d_1$; or ($\romannumeral3$) $c_1=a_1=b_1$, $c_2\neq d_2$, $c_2=i_2\neq j_2$; or ($\romannumeral4$) $c_1=a_1=b_1$, $c_2=d_2$, $c_2=i_2\neq j_2$, $c_1=d_1$; or ($\romannumeral5$) $c_1\neq a_1$, $c_1\neq b_1$, $c_2=d_2$, $c_1\neq d_1$; ($\romannumeral6$) $c_1=i_1\neq j_1$, $c_2=d_2$, $c_2=i_2$, $c_1\neq d_1$, w.l.o.g, assume that $c_1=a_1\neq b_1$, $c_2\neq d_2$, $c_2\neq a_2$. Observations \ref{ob:2} and \ref{ob:3} and \ref{ob:4} imply that $a_1 a_2 X^{n-2}$ and $c_1 X^{n-2} c_2$ are not disjoint and $V(a_1 a_2 X^{n-2})\cap V(c_1 X^{n-2} c_2)=V(a_1 a_2 X^{n-3} c_2)$, and $b_1 b_2 X^{n-2}$ and $c_1 X^{n-2} c_2$ are disjoint, and $X^{n-2} d_1 d_2$ and $c_1 X^{n-2} c_2$ are disjoint. Thus, $|V(a_1 a_2 X^{n-2})\cup V(b_1 b_2 X^{n-2})\cup V(c_1 X^{n-2} c_2)\cup V(X^{n-2} d_1 d_2)|=|V(a_1 a_2 X^{n-2})|+|V(b_1 b_2 X^{n-2})|+|V(c_1 X^{n-2} c_2)|+|V(X^{n-2} d_1 d_2)|-|V(a_1 a_2 X^{n-2})\cap V(c_1 X^{n-2} c_2)|=4(n-2)!-(n-3)!$. 

(4) For any $\{i, j\}=\{a, b\}$, if ($\romannumeral1$) $c_1=a_1=b_1$, $c_2\neq d_2$, $c_2\neq a_2, c_2\neq b_2$; or ($\romannumeral2$) $c_1=a_1=b_1$, $c_2=d_2$, $c_2\neq a_2, c_2\neq b_2$, $c_1=d_1$; or ($\romannumeral3$) $c_1=i_1\neq j_1$, $c_2=d_2$, $c_2\neq i_2$, $c_1\neq d_1$; or ($\romannumeral4$) $c_1=a_1=b_1$, $c_2=d_2$, $c_2=i_2\neq j_2$, $c_1\neq d_1$, w.l.o.g, assume that the first case applies. Observations \ref{ob:2} and \ref{ob:3} and \ref{ob:4} imply that $i_1 i_2 X^{n-2}$ and $c_1 X^{n-2} c_2$ are not disjoint and $V(i_1 i_2 X^{n-2})\cap V(c_1 X^{n-2} c_2)=V(i_1 i_2 X^{n-3} c_2)$ for any $i\in\{a, b\}$, and $X^{n-2} d_1 d_2$ and $c_1 X^{n-2} c_2$ are disjoint. Thus, $|V(a_1 a_2 X^{n-2})\cup V(b_1 b_2 X^{n-2})\cup V(c_1 X^{n-2} c_2)\cup V(X^{n-2} d_1 d_2)|=|V(a_1 a_2 X^{n-2})|+|V(b_1 b_2 X^{n-2})|+|V(c_1 X^{n-2} c_2)|+|V(X^{n-2} d_1 d_2)|-|V(a_1 a_2 X^{n-2})\cap V(c_1 X^{n-2} c_2)|-|V(b_1 b_2$ $X^{n-2})\cap V(c_1 X^{n-2} c_2)|=4(n-2)!-2(n-3)!$. 

(5) If $c_1=a_1=b_1$, $c_2=d_2$, $c_2\neq a_2, c_2\neq b_2$ and $c_1\neq d_1$, Observations \ref{ob:2} and \ref{ob:3} and \ref{ob:4} imply that $i_1 i_2 X^{n-2}$ and $c_1 X^{n-2} c_2$ are not disjoint and $V(i_1 i_2 X^{n-2})\cap V(c_1 X^{n-2} c_2)=V(i_1 i_2 X^{n-3} c_2)$ for any $i\in\{a, b\}$, and $X^{n-2} d_1 d_2$ and $c_1 X^{n-2} c_2$ are not disjoint and $V(X^{n-2} d_1 d_2)\cap V(c_1 X^{n-2} c_2)=V(c_1 X^{n-3} d_1 d_2)$. Thus, $|V(a_1 a_2 X^{n-2})\cup V(b_1 b_2 X^{n-2})\cup V(c_1 X^{n-2} c_2)\cup V(X^{n-2} d_1 d_2)|=|V(a_1 a_2 X^{n-2})|+|V(b_1 b_2 X^{n-2})|+|V(c_1 X^{n-2} c_2)|+|V(X^{n-2} d_1 d_2)|-|V(a_1 a_2 X^{n-2})\cap V(c_1 X^{n-2} c_2)|-|V(b_1 b_2 X^{n-2})\cap V(c_1 X^{n-2} c_2)|-|V(X^{n-2} d_1$ $d_2)\cap V(c_1 X^{n-2} c_2)|=4(n-2)!-3(n-3)!$. 

We will analyze this problem in 6 cases as follows.

{\it Case 1.} $\{a_1, a_2\}=\{d_1, d_2\}$ and $\{b_1, b_2\}=\{d_1, d_2\}$.

There are 2 possible cases $a_1=b_2=d_1$, $a_2=b_1=d_2$ and $a_1=b_2=d_2$, $a_2=b_1=d_1$ such that $\{a_1, a_2\}=\{b_1, b_2\}=\{d_1, d_2\}$. W.l.o.g, assume that $a_1=b_2=d_1$ and $a_2=b_1=d_2$.

{\it Case 1.1.} $c_1\neq a_1$, $c_1\neq b_1$ and $c_2\neq d_2$.

There are $n$ ways to choose $a_1(=b_2=d_1)$ from $N_n$, $n-1$ ways to choose $a_2(=b_1=d_2)$ from $N_n\setminus\{a_1\}$, $n-2$ ways to choose $c_1$ from $N_n\setminus\{a_1, b_1\}$, $n-2$ ways to choose $c_2$ from $N_n\setminus\{c_1, d_2\}$.
Therefore, there are $n(n-1)(n-2)^2=n^4-5n^3+8n^2-4n$ ways to choose four $B_{n-2}$'s as required.
The probability $P_{1.1}$ that there are four fault-free $B_{n-2}$'s chosen as in Case 1.1 is $\frac{1}{2}(n^4-5n^3+8n^2-4n)p^{4(n-2)!}$.

{\it Case 1.2.} $c_1=a_1\neq b_1$ and $c_2\neq d_2$.

There are $n$ ways to choose $a_1(=b_2=d_1=c_1)$ from $N_n$, $n-1$ ways to choose $a_2(=b_1=d_2)$ from $N_n\setminus\{a_1\}$, $n-2$ ways to choose $c_2$ from $N_n\setminus\{c_1, d_2\}$.
Therefore, there are $n(n-1)(n-2)=n^3-3n^2+2n$ ways to choose four $B_{n-2}$'s as required.
Clearly, in this case $c_2\neq a_2$.
The probability $P_{1.2}$ that there are four fault-free $B_{n-2}$'s chosen as in Case 1.2 is $\frac{1}{2}(n^3-3n^2+2n)p^{4(n-2)!-(n-3)!}$.

{\it Case 1.3.} $c_1=b_1\neq a_1$ and $c_2\neq d_2$.

{\it Case 1.3.1.} $c_2\neq b_2$.

There are $n$ ways to choose $a_1(=b_2=d_1)$ from $N_n$, $n-1$ ways to choose $a_2(=b_1=d_2=c_1)$ from $N_n\setminus\{d_1\}$, $n-2$ ways to choose $c_2$ from $N_n\setminus\{d_2, b_2\}$.
Therefore, there are $n(n-1)(n-2)=n^3-3n^2+2n$ ways to choose four $B_{n-2}$'s as required.
The probability $P_{1.3.1}$ that there are four fault-free $B_{n-2}$'s chosen as in Case 1.3.1 is $\frac{1}{2}(n^3-3n^2+2n)p^{4(n-2)!-(n-3)!}$.

{\it Case 1.3.2.} $c_2=b_2$.

There are $n$ ways to choose $a_1(=b_2=d_1=c_2)$ from $N_n$, $n-1$ ways to choose $a_2(=b_1=d_2=c_1)$ from $N_n\setminus\{d_1\}$.
Therefore, there are $n(n-1)=n^2-n$ ways to choose four $B_{n-2}$'s as required.
The probability $P_{1.3.2}$ that there are four fault-free $B_{n-2}$'s chosen as in Case 1.3.2 is $\frac{1}{2}(n^2-n)p^{4(n-2)!}$.

Therefore, the probability $P_{1.3}$ that there are four fault-free $B_{n-2}$'s chosen as in Case 1.3 is $P_{1.3.1}+P_{1.3.2}=\frac{1}{2}(n^2-n)p^{4(n-2)!}+\frac{1}{2}(n^3-3n^2+2n)p^{4(n-2)!-(n-3)!}$.

{\it Case 1.4.} $c_1\neq a_1$, $c_1\neq b_1$ and $c_2=d_2$.

There are $n$ ways to choose $a_1(=b_2=d_1)$ from $N_n$, $n-1$ ways to choose $a_2(=b_1=d_2=c_2)$ from $N_n\setminus\{a_1\}$, $n-2$ ways to choose $c_1$ from $N_n\setminus\{a_1, b_1\}$.
Therefore, there are $n(n-1)(n-2)=n^3-3n^2+2n$ ways to choose four $B_{n-2}$'s as required.
Clearly, in this case $c_1\neq d_1$.
The probability $P_{1.4}$ that there are four fault-free $B_{n-2}$'s chosen as in Case 1.4 is $\frac{1}{2}(n^3-3n^2+2n)p^{4(n-2)!-(n-3)!}$.

{\it Case 1.5.} $c_1=a_1\neq b_1$ and $c_2=d_2$.

There are $n$ ways to choose $a_1(=b_2=d_1=c_1)$ from $N_n$, $n-1$ ways to choose $a_2(=b_1=d_2=c_2)$ from $N_n\setminus\{a_1\}$.
Therefore, there are $n(n-1)=n^2-n$ ways to choose four $B_{n-2}$'s as required. 
Clearly, in this case $c_2=a_2$ and $c_1=d_1$.
The probability $P_{1.5}$ that there are four fault-free $B_{n-2}$'s chosen as in Case 1.5 is $\frac{1}{2}(n^2-n)p^{4(n-2)!}$.

In summary, the probability $P_1$ that there are four fault-free $B_{n-2}$'s chosen as in Case 1 is $2(\sum_{i=1}^{5}P_{1.i})=(n^4-5n^3+10n^2-6n)p^{4(n-2)!}+3(n^3-3n^2+2n)p^{4(n-2)!-(n-3)!}$.

{\it Case 2.} $\{a_1, a_2\}=\{d_1, d_2\}$, $|\{b_1, b_2\}\cap\{d_1, d_2\}|=1$ or $|\{a_1, a_2\}\cap\{d_1, d_2\}|=1$, $\{b_1, b_2\}=\{d_1, d_2\}$.

W.l.o.g, assume that $\{a_1, a_2\}=\{d_1, d_2\}$ and $|\{b_1, b_2\}\cap\{d_1, d_2\}|=1$.

{\it Case 2.1.} $a_1=d_1=b_1$, $a_2=d_2$ and $b_2\notin\{d_1, d_2\}$.

{\it Case 2.1.1.} $c_1\neq a_1$, $c_1\neq b_1$, $c_2\neq d_2$ and $c_1\neq d_2$.

There are $n$ ways to choose $a_1(=d_1=b_1)$ from $N_n$, $n-1$ ways to choose $a_2(=d_2)$ from $N_n\setminus\{a_1\}$, $n-2$ ways to choose $b_2$ from $N_n\setminus\{d_1, d_2\}$, $n-2$ ways to choose $c_1$ from $N_n\setminus\{a_1, d_2\}$, $n-2$ ways to choose $c_2$ from $N_n\setminus\{c_1, d_2\}$.
Therefore, there are $n(n-1)(n-2)^3=n^5-7n^4+18n^3-20n^2+8n$ ways to choose four $B_{n-2}$'s as required.
The probability $P_{2.1.1}$ that there are four fault-free $B_{n-2}$'s chosen as in Case 2.1.1 is $\frac{1}{2}(n^5-7n^4+18n^3-20n^2+8n)p^{4(n-2)!}$.

{\it Case 2.1.2.} $c_1\neq a_1$, $c_1\neq b_1$, $c_2\neq d_2$ and $c_1=d_2$.

There are $n$ ways to choose $a_1(=d_1=b_1)$ from $N_n$, $n-1$ ways to choose $a_2(=d_2=c_1)$ from $N_n\setminus\{a_1\}$, $n-2$ ways to choose $b_2$ from $N_n\setminus\{d_1, d_2\}$, $n-1$ ways to choose $c_2$ from $N_n\setminus\{d_2\}$.
Therefore, there are $n(n-1)^2(n-2)=n^4-4n^3+5n^2-2n$ ways to choose four $B_{n-2}$'s as required.
The probability $P_{2.1.2}$ that there are four fault-free $B_{n-2}$'s chosen as in Case 2.1.2 is $\frac{1}{2}(n^4-4n^3+5n^2-2n)p^{4(n-2)!}$.

{\it Case 2.1.3.} $c_1=a_1=b_1$, $c_2\neq d_2$ and $c_2\notin\{a_2, b_2\}$.

There are $n$ ways to choose $a_1(=b_1=c_1=d_1)$ from $N_n$, $n-1$ ways to choose $a_2(=d_2)$ from $N_n\setminus\{a_1\}$, $n-2$ ways to choose $b_2$ from $N_n\setminus\{a_1, d_2\}$, $n-3$ ways to choose $c_2$ from $N_n\setminus\{b_2, d_2, c_1\}$.
Therefore, there are $n(n-1)(n-2)(n-3)=n^4-6n^3+11n^2-6n$ ways to choose four $B_{n-2}$'s as required.
The probability $P_{2.1.3}$ that there are four fault-free $B_{n-2}$'s chosen as in Case 2.1.3 is $\frac{1}{2}(n^4-6n^3+11n^2-6n)p^{4(n-2)!-2(n-3)!}$.

{\it Case 2.1.4.} $c_1=a_1=b_1$, $c_2\neq d_2$ and $c_2=b_2$.

There are $n$ ways to choose $a_1(=b_1=c_1=d_1)$ from $N_n$, $n-1$ ways to choose $a_2(=d_2)$ from $N_n\setminus\{a_1\}$, $n-2$ ways to choose $b_2(=c_2)$ from $N_n\setminus\{d_1, d_2\}$.
Therefore, there are $n(n-1)(n-2)=n^3-3n^2+2n$ ways to choose four $B_{n-2}$'s as required.
The probability $P_{2.1.4}$ that there are four fault-free $B_{n-2}$'s chosen as in Case 2.1.4 is $\frac{1}{2}(n^3-3n^2+2n)p^{4(n-2)!-(n-3)!}$.

{\it Case 2.1.5.} $c_1\neq a_1=b_1$ and $c_2=d_2$.

There are $n$ ways to choose $a_1(=d_1=b_1)$ from $N_n$, $n-1$ ways to choose $a_2(=d_2=c_2)$ from $N_n\setminus\{a_1\}$, $n-2$ ways to choose $b_2$ from $N_n\setminus\{d_1, d_2\}$, $n-2$ ways to choose $c_1$ from $N_n\setminus\{a_1, c_2\}$.
Therefore, there are $n(n-1)(n-2)^2=n^4-5n^3+8n^2-4n$ ways to choose four $B_{n-2}$'s as required.
Clearly, in this case $c_1\neq d_1$.
The probability $P_{2.1.5}$ that there are four fault-free $B_{n-2}$'s chosen as in Case 2.1.5 is $\frac{1}{2}(n^4-5n^3+8n^2-4n)p^{4(n-2)!-(n-3)!}$.

{\it Case 2.1.6.} $c_1=a_1=b_1$ and $c_2=d_2$. 

There are $n$ ways to choose $a_1(=b_1=c_1=d_1)$ from $N_n$, $n-1$ ways to choose $a_2(=d_2=c_2)$ from $N_n\setminus\{a_1\}$, $n-2$ ways to choose $b_2$ from $N_n\setminus\{d_1, d_2\}$.
Therefore, there are $n(n-1)(n-2)=n^3-3n^2+2n$ ways to choose four $B_{n-2}$'s as required.
Clearly, in this case $c_2=a_2$, $c_2\neq b_2$ and $c_1\neq d_1$.
The probability $P_{2.1.6}$ that there are four fault-free $B_{n-2}$'s chosen as in Case 2.1.6 is $\frac{1}{2}(n^3-3n^2+2n)p^{4(n-2)!-(n-3)!}$.

Thus, the probability $P_{2.1}$ that there are four fault-free $B_{n-2}$'s chosen as in Case 2.1 is $\sum_{i=1}^{6}P_{2.1.i}=\frac{1}{2}(n^5-6n^4+14n^3-15n^2+6n)p^{4(n-2)!}+\frac{1}{2}(n^4-3n^3+2n^2)p^{4(n-2)!-(n-3)!}+\frac{1}{2}(n^4-6n^3+11n^2-6n)p^{4(n-2)!-2(n-3)!}$.

{\it Case 2.2.} $a_1=d_1$, $a_2=d_2=b_1$ and $b_2\notin\{d_1, d_2\}$.

{\it Case 2.2.1.} $c_1\neq a_1$, $c_1\neq b_1$ and $c_2\neq d_2$.

There are $n$ ways to choose $a_1(=d_1)$ from $N_n$, $n-1$ ways to choose $a_2(=d_2=b_1)$ from $N_n\setminus\{a_1\}$, $n-2$ ways to choose $b_2$ from $N_n\setminus\{d_1, d_2\}$, $n-2$ ways to choose $c_1$ from $N_n\setminus\{a_1, b_1\}$, $n-2$ ways to choose $c_2$ from $N_n\setminus\{c_1, d_2\}$.
Therefore, there are $n(n-1)(n-2)^3=n^5-7n^4+18n^3-20n^2+8n$ ways to choose four $B_{n-2}$'s as required.
The probability $P_{2.2.1}$ that there are four fault-free $B_{n-2}$'s chosen as in Case 2.2.1 is $\frac{1}{2}(n^5-7n^4+18n^3-20n^2+8n)p^{4(n-2)!}$.

{\it Case 2.2.2.} $c_1=a_1\neq b_1$ and $c_2\neq d_2$.

There are $n$ ways to choose $a_1(=d_1=c_1)$ from $N_n$, $n-1$ ways to choose $a_2(=d_2=b_1)$ from $N_n\setminus\{a_1\}$, $n-2$ ways to choose $b_2$ from $N_n\setminus\{d_1, d_2\}$, $n-2$ ways to choose $c_2$ from $N_n\setminus\{d_2, c_1\}$.
Therefore, there are $n(n-1)(n-2)^2=n^4-5n^3+8n^2-4n$ ways to choose four $B_{n-2}$'s as required.
Clearly, in this case $c_2\neq a_2$.
The probability $P_{2.2.2}$ that there are four fault-free $B_{n-2}$'s chosen as in Case 2.2.2 is $\frac{1}{2}(n^4-5n^3+8n^2-4n)p^{4(n-2)!-(n-3)!}$.

{\it Case 2.2.3.} $c_1=b_1\neq a_1$, $c_2\neq d_2$ and $c_2\neq b_2$.

There are $n$ ways to choose $a_1(=d_1)$ from $N_n$, $n-1$ ways to choose $a_2(=d_2=b_1=c_1)$ from $N_n\setminus\{a_1\}$, $n-2$ ways to choose $b_2$ from $N_n\setminus\{d_1, d_2\}$, $n-2$ ways to choose $c_2$ from $N_n\setminus\{c_1, b_2\}$.
Therefore, there are $n(n-1)(n-2)^2=n^4-5n^3+8n^2-4n$ ways to choose four $B_{n-2}$'s as required.
The probability $P_{2.2.3}$ that there are four fault-free $B_{n-2}$'s chosen as in Case 2.2.3 is $\frac{1}{2}(n^4-5n^3+8n^2-4n)p^{4(n-2)!-(n-3)!}$.

{\it Case 2.2.4.} $c_1=b_1\neq a_1$, $c_2\neq d_2$ and $c_2=b_2$.

There are $n$ ways to choose $a_1(=d_1)$ from $N_n$, $n-1$ ways to choose $a_2(=d_2=b_1=c_1)$ from $N_n\setminus\{a_1\}$, $n-2$ ways to choose $b_2(=c_2)$ from $N_n\setminus\{d_1, d_2\}$.
Therefore, there are $n(n-1)(n-2)=n^3-3n^2+2n$ ways to choose four $B_{n-2}$'s as required.
The probability $P_{2.2.4}$ that there are four fault-free $B_{n-2}$'s chosen as in Case 2.2.4 is $\frac{1}{2}(n^3-3n^2+2n)p^{4(n-2)!}$.

{\it Case 2.2.5.} $c_1\neq a_1$, $c_1\neq b_1$ and $c_2=d_2$.

There are $n$ ways to choose $a_1(=d_1)$ from $N_n$, $n-1$ ways to choose $a_2(=d_2=b_1=c_2)$ from $N_n\setminus\{a_1\}$, $n-2$ ways to choose $b_2$ from $N_n\setminus\{d_1, d_2\}$, $n-2$ ways to choose $c_1$ from $N_n\setminus\{a_1, b_1\}$.
Therefore, there are $n(n-1)(n-2)^2=n^4-5n^3+8n^2-4n$ ways to choose four $B_{n-2}$'s as required.
Clearly, in this case $c_1\neq d_1$.
The probability $P_{2.2.5}$ that there are four fault-free $B_{n-2}$'s chosen as in Case 2.2.5 is $\frac{1}{2}(n^4-5n^3+8n^2-4n)p^{4(n-2)!-(n-3)!}$.

{\it Case 2.2.6.} $c_1=a_1\neq b_1$ and $c_2=d_2$.

There are $n$ ways to choose $a_1(=d_1=c_1)$ from $N_n$, $n-1$ ways to choose $a_2(=d_2=b_1=c_2)$ from $N_n\setminus\{a_1\}$, $n-2$ ways to choose $b_2$ from $N_n\setminus\{d_1, d_2\}$.
Therefore, there are $n(n-1)(n-2)=n^3-3n^2+2n$ ways to choose four $B_{n-2}$'s as required.
Clearly, in this case $c_2=a_2$ and $c_1=d_1$.
The probability $P_{2.2.6}$ that there are four fault-free $B_{n-2}$'s chosen as in Case 2.2.6 is $\frac{1}{2}(n^3-3n^2+2n)p^{4(n-2)!}$.

Thus, the probability $P_{2.2}$ that there are four fault-free $B_{n-2}$'s chosen as in Case 2.2 is $\sum_{i=1}^{6}P_{2.2.i}=\frac{1}{2}(n^5-7n^4+20n^3-26n^2+12n)p^{4(n-2)!}+\frac{3}{2}(n^4-5n^3+8n^2-4n)p^{4(n-2)!-(n-3)!}$.

{\it Case 2.3.} $a_1=d_1=b_2$, $a_2=d_2$, $b_1\notin\{d_1, d_2\}$.

{\it Case 2.3.1.} $c_1\neq a_1$, $c_1\neq b_1$, $c_2\neq d_2$ and $c_1\neq d_2$.

There are $n$ ways to choose $a_1(=d_1=b_2)$ from $N_n$, $n-1$ ways to choose $a_2(=d_2)$ from $N_n\setminus\{a_1\}$, $n-2$ ways to choose $b_1$ from $N_n\setminus\{d_1, d_2\}$, $n-3$ ways to choose $c_1$ from $N_n\setminus\{a_1, b_1, d_2\}$, $n-2$ ways to choose $c_2$ from $N_n\setminus\{c_1, d_2\}$.
Therefore, there are $n(n-1)(n-2)^2(n-3)=n^5-8n^4+23n^3-28n^2+12n$ ways to choose four $B_{n-2}$'s as required.
The probability $P_{2.3.1}$ that there are four fault-free $B_{n-2}$'s chosen as in Case 2.3.1 is $\frac{1}{2}(n^5-8n^4+23n^3-28n^2+12n)p^{4(n-2)!}$.

{\it Case 2.3.2.} $c_1\neq a_1$, $c_1\neq b_1$, $c_2\neq d_2$ and $c_1=d_2$.

There are $n$ ways to choose $a_1(=d_1=b_2)$ from $N_n$, $n-1$ ways to choose $a_2(=d_2=c_1)$ from $N_n\setminus\{a_1\}$, $n-2$ ways to choose $b_1$ from $N_n\setminus\{d_1, d_2\}$, $n-1$ ways to choose $c_2$ from $N_n\setminus\{d_2\}$.
Therefore, there are $n(n-1)^2(n-2)=n^4-4n^3+5n^2-2n$ ways to choose four $B_{n-2}$'s as required.
The probability $P_{2.3.2}$ that there are four fault-free $B_{n-2}$'s chosen as in Case 2.3.2 $\frac{1}{2}(n^4-4n^3+5n^2-2n)p^{4(n-2)!}$.

{\it Case 2.3.3.} $c_1=a_1\neq b_1$, $c_2\neq d_2$.

There are $n$ ways to choose $a_1(=b_2=d_1=c_1)$ from $N_n$, $n-1$ ways to choose $a_2(=d_2)$ from $N_n\setminus\{a_1\}$, $n-2$ ways to choose $b_1$ from $N_n\setminus\{d_1, d_2\}$, $n-2$ ways to choose $c_2$ from $N_n\setminus\{d_2, c_1\}$.
Therefore, there are $n(n-1)(n-2)^2=n^4-5n^3+8n^2-4n$ ways to choose four $B_{n-2}$'s as required.
Clearly, in this case $c_2=a_2$.
The probability $P_{2.3.3}$ that there are four fault-free $B_{n-2}$'s chosen as in Case 2.3.3 $\frac{1}{2}(n^4-5n^3+8n^2-4n)p^{4(n-2)!-(n-3)!}$.

{\it Case 2.3.4.} $c_1=b_1\neq a_1$, $c_2\neq d_2$, $c_2\neq b_2$.

There are $n$ ways to choose $a_1(=b_2=d_1)$ from $N_n$, $n-1$ ways to choose $a_2(=d_2)$ from $N_n\setminus\{a_1\}$, $n-2$ ways to choose $b_1(=c_1)$ from $N_n\setminus\{d_1, d_2\}$, $n-3$ ways to choose $c_2$ from $N_n\setminus\{b_2, d_2, c_1\}$.
Therefore, there are $n(n-1)(n-2)(n-3)=n^4-6n^3+11n^2-6n$ ways to choose four $B_{n-2}$'s as required.
The probability $P_{2.3.4}$ that there are four fault-free $B_{n-2}$'s chosen as in Case 2.3.4 $\frac{1}{2}(n^4-6n^3+11n^2-6n)p^{4(n-2)!-(n-3)!}$.

{\it Case 2.3.5.} $c_1=b_1\neq a_1$, $c_2\neq d_2$, $c_2=b_2$.

There are $n$ ways to choose $a_1(=b_2=d_1=c_2)$ from $N_n$, $n-1$ ways to choose $a_2(=d_2)$ from $N_n\setminus\{a_1\}$, $n-2$ ways to choose $b_1(=c_1)$ from $N_n\setminus\{d_1, d_2\}$.
Therefore, there are $n(n-1)(n-2)=n^3-3n^2+2n$ ways to choose four $B_{n-2}$'s as required.
The probability $P_{2.3.5}$ that there are four fault-free $B_{n-2}$'s chosen as in Case 2.3.5 is $\frac{1}{2}(n^3-3n^2+2n)p^{4(n-2)!}$.

{\it Case 2.3.6.} $c_1\neq a_1$, $c_1\neq b_1$ and $c_2=d_2$.

There are $n$ ways to choose $a_1(=d_1=b_2)$ from $N_n$, $n-1$ ways to choose $a_2(=d_2=c_2)$ from $N_n\setminus\{a_1\}$, $n-2$ ways to choose $b_1$ from $N_n\setminus\{d_1, d_2\}$, $n-3$ ways to choose $c_1$ from $N_n\setminus\{a_1, b_1, c_2\}$.
Therefore, there are $n(n-1)(n-2)(n-3)=n^4-6n^3+11n^2-6n$ ways to choose four $B_{n-2}$'s as required.
Clearly, in this case $c_1\neq d_1$.
The probability $P_{2.3.6}$ that there are four fault-free $B_{n-2}$'s chosen as in Case 2.3.6 is $\frac{1}{2}(n^4-6n^3+11n^2-6n)p^{4(n-2)!-(n-3)!}$.

{\it Case 2.3.7.} $c_1=a_1\neq b_1$, $c_2=d_2$.

There are $n$ ways to choose $a_1(=b_2=d_1=c_1)$ from $N_n$, $n-1$ ways to choose $a_2(=d_2=c_2)$ from $N_n\setminus\{a_1\}$, $n-2$ ways to choose $b_1$ from $N_n\setminus\{d_1, d_2\}$.
Therefore, there are $n(n-1)(n-2)=n^3-3n^2+2n$ ways to choose four $B_{n-2}$'s as required.
Clearly, in this case $c_2=a_2$ and $c_1=d_1$.
The probability $P_{2.3.7}$ that there are four fault-free $B_{n-2}$'s chosen as in Case 2.3.7 is $\frac{1}{2}(n^3-3n^2+2n)p^{4(n-2)!}$.

{\it Case 2.3.8.} $c_1=b_1\neq a_1$, $c_2=d_2$.

There are $n$ ways to choose $a_1(=b_2=d_1)$ from $N_n$, $n-1$ ways to choose $a_2(=d_2=c_2)$ from $N_n\setminus\{a_1\}$, $n-2$ ways to choose $b_1(=c_1)$ from $N_n\setminus\{d_1, d_2\}$.
Therefore, there are $n(n-1)(n-2)=n^3-3n^2+2n$ ways to choose four $B_{n-2}$'s as required.
Clearly, in this case $c_2\neq a_2$ and $c_1\neq d_1$.
The probability $P_{2.3.8}$ that there are four fault-free $B_{n-2}$'s chosen as in Case 2.3.8 is $\frac{1}{2}(n^3-3n^2+2n)p^{4(n-2)!-2(n-3)!}$.

Thus, the probability $P_{2.3}$ that there are four fault-free $B_{n-2}$'s chosen as in Case 2.3 is $\sum_{i=1}^{8}P_{2.3.i}=\frac{1}{2}(n^5-7n^4+21n^3-29n^2+14n)p^{4(n-2)!}+\frac{1}{2}(3n^4-17n^3+30n^2-16n)p^{4(n-2)!-(n-3)!}+\frac{1}{2}(n^3-3n^2+2n)p^{4(n-2)!-2(n-3)!}$.

{\it Case 2.4.} $a_1=d_1$, $a_2=d_2=b_2$, $b_1\notin\{d_1, d_2\}$.

{\it Case 2.4.1.} $c_1\neq a_1$, $c_1\neq b_1$, $c_2\neq d_2$ and $c_1\neq d_2$.

There are $n$ ways to choose $a_1(=d_1)$ from $N_n$, $n-1$ ways to choose $a_2(=d_2=b_2)$ from $N_n\setminus\{a_1\}$, $n-2$ ways to choose $b_1$ from $N_n\setminus\{d_1, d_2\}$, $n-3$ ways to choose $c_1$ from $N_n\setminus\{a_1, b_1, d_2\}$, $n-2$ ways to choose $c_2$ from $N_n\setminus\{c_1, d_2\}$.
Therefore, there are $n(n-1)(n-2)^2(n-3)=n^5-8n^4+23n^3-28n^2+12n$ ways to choose four $B_{n-2}$'s as required.
The probability $P_{2.4.1}$ that there are four fault-free $B_{n-2}$'s chosen as in Case 2.4.1 is $\frac{1}{2}(n^5-8n^4+23n^3-28n^2+12n)p^{4(n-2)!}$.

{\it Case 2.4.2.} $c_1\neq a_1$, $c_1\neq b_1$, $c_2\neq d_2$ and $c_1=d_2$.

There are $n$ ways to choose $a_1(=d_1)$ from $N_n$, $n-1$ ways to choose $a_2(=d_2=b_2=c_1)$ from $N_n\setminus\{a_1\}$, $n-2$ ways to choose $b_1$ from $N_n\setminus\{d_1, d_2\}$, $n-1$ ways to choose $c_2$ from $N_n\setminus\{d_2\}$.
Therefore, there are $n(n-1)^2(n-2)=n^4-4n^3+5n^2-2n$ ways to choose four $B_{n-2}$'s as required.
The probability $P_{2.4.2}$ that there are four fault-free $B_{n-2}$'s chosen as in Case 2.4.2 is $\frac{1}{2}(n^4-4n^3+5n^2-2n)p^{4(n-2)!}$.

{\it Case 2.4.3.} $c_1=a_1\neq b_1$, $c_2\neq d_2$ or $c_1=b_1\neq a_1$, $c_2\neq d_2$.

W.l.o.g, assume that the former applies. There are $n$ ways to choose $a_1(=d_1=c_1)$ from $N_n$, $n-1$ ways to choose $a_2(=d_2=b_2)$ from $N_n\setminus\{a_1\}$, $n-2$ ways to choose $b_1$ from $N_n\setminus\{d_1, d_2\}$, $n-2$ ways to choose $c_2$ from $N_n\setminus\{d_2, c_1\}$.
Therefore, there are $n(n-1)(n-2)^2=n^4-5n^3+8n^2-4n$ ways to choose four $B_{n-2}$'s as required.
Clearly, in this case $c_2\neq a_2$.
The probability $P_{2.4.3}$ that there are four fault-free $B_{n-2}$'s chosen as in Case 2.4.3 is $\frac{1}{2}\times 2(n^4-5n^3+8n^2-4n)p^{4(n-2)!-(n-3)!}$.

{\it Case 2.4.4.} $c_1\neq a_1$, $c_1\neq b_1$ and $c_2=d_2$.

There are $n$ ways to choose $a_1(=d_1)$ from $N_n$, $n-1$ ways to choose $a_2(=b_2=d_2=c_2)$ from $N_n\setminus\{a_1\}$, $n-2$ ways to choose $b_1$ from $N_n\setminus\{d_1, d_2\}$, $n-3$ ways to choose $c_1$ from $N_n\setminus\{a_1, b_1, c_2\}$.
Therefore, there are $n(n-1)(n-2)(n-3)=n^4-6n^3+11n^2-6n$ ways to choose four $B_{n-2}$'s as required.
Clearly, in this case $c_1\neq d_1$.
The probability $P_{2.4.4}$ that there are four fault-free $B_{n-2}$'s chosen as in Case 2.4.4 is $\frac{1}{2}(n^4-6n^3+11n^2-6n)p^{4(n-2)!-(n-3)!}$.

{\it Case 2.4.5.} $c_1=a_1\neq b_1$ and $c_2=d_2$.

There are $n$ ways to choose $a_1(=d_1=c_1)$ from $N_n$, $n-1$ ways to choose $a_2(=d_2=b_2=c_2)$ from $N_n\setminus\{a_1\}$, $n-2$ ways to choose $b_1$ from $N_n\setminus\{d_1, d_2\}$.
Therefore, there are $n(n-1)(n-2)=n^3-3n^2+2n$ ways to choose four $B_{n-2}$'s as required.
Clearly, in this case $c_2=a_2$ and $c_1=d_1$.
The probability $P_{2.4.5}$ that there are four fault-free $B_{n-2}$'s chosen as in Case 2.4.5 is $\frac{1}{2}(n^3-3n^2+2n)p^{4(n-2)!}$.

{\it Case 2.4.6.} $c_1=b_1\neq a_1$, $c_2=d_2$.

There are $n$ ways to choose $a_1(=d_1=c_1)$ from $N_n$, $n-1$ ways to choose $a_2(=d_2=b_2=c_2)$ from $N_n\setminus\{a_1\}$, $n-2$ ways to choose $b_1$ from $N_n\setminus\{d_1, d_2\}$.
Therefore, there are $n(n-1)(n-2)=n^3-3n^2+2n$ ways to choose four $B_{n-2}$'s as required.
Clearly, in this case $c_2=a_2$ and $c_1\neq d_1$.
The probability $P_{2.4.6}$ that there are four fault-free $B_{n-2}$'s chosen as in Case 2.4.6 is $\frac{1}{2}(n^3-3n^2+2n)p^{4(n-2)!-(n-3)!}$.

Thus, the probability $P_{2.4}$ that there are four fault-free $B_{n-2}$'s chosen as in Case 2.4 is $\sum_{i=1}^{6}P_{2.4.i}=\frac{1}{2}(n^5-7n^4+20n^3-26n^2+12n)p^{4(n-2)!}+\frac{3}{2}(n^4-5n^3+8n^2-4n)p^{4(n-2)!-(n-3)!}$.

{\it Case 2.5.} $a_1=d_2$, $a_2=d_1=b_1$, $b_2\notin\{d_1, d_2\}$.

{\it Case 2.5.1.} $c_1\neq a_1$, $c_1\neq b_1$ and $c_2\neq d_2$.

There are $n$ ways to choose $a_1(=d_2)$ from $N_n$, $n-1$ ways to choose $a_2(=d_1=b_1)$ from $N_n\setminus\{a_1\}$, $n-2$ ways to choose $b_2$ from $N_n\setminus\{d_1, d_2\}$, $n-2$ ways to choose $c_1$ from $N_n\setminus\{a_1, b_1\}$, $n-2$ ways to choose $c_2$ from $N_n\setminus\{c_1, d_2\}$.
Therefore, there are $n(n-1)(n-2)^3=n^5-7n^4+18n^3-20n^2+8n$ ways to choose four $B_{n-2}$'s as required.
The probability $P_{2.5.1}$ that there are four fault-free $B_{n-2}$'s chosen as in Case 2.5.1 is $\frac{1}{2}(n^5-7n^4+18n^3-20n^2+8n)p^{4(n-2)!}$.

{\it Case 2.5.2.} $c_1=a_1\neq b_1$, $c_2\neq d_2$ and $a_2\neq c_2$.

There are $n$ ways to choose $a_1(=d_2=c_1)$ from $N_n$, $n-1$ ways to choose $a_2(=d_1=b_1)$ from $N_n\setminus\{a_1\}$, $n-2$ ways to choose $b_2$ from $N_n\setminus\{d_1, d_2\}$, $n-2$ ways to choose $c_2$ from $N_n\setminus\{c_1, a_2\}$.
Therefore, there are $n(n-1)(n-2)^2=n^4-5n^3+8n^2-4n$ ways to choose four $B_{n-2}$'s as required.
The probability $P_{2.5.2}$ that there are four fault-free $B_{n-2}$'s chosen as in Case 2.5.2 is $\frac{1}{2}(n^4-5n^3+8n^2-4n)p^{4(n-2)!-(n-3)!}$.

{\it Case 2.5.3.} $c_1=a_1\neq b_1$, $c_2\neq d_2$ and $a_2=c_2$.

There are $n$ ways to choose $a_1(=d_2=c_1)$ from $N_n$, $n-1$ ways to choose $a_2(=d_1=b_1=c_2)$ from $N_n\setminus\{a_1\}$, $n-2$ ways to choose $b_2$ from $N_n\setminus\{d_1, d_2\}$.
Therefore, there are $n(n-1)(n-2)=n^3-3n^2+2n$ ways to choose four $B_{n-2}$'s as required.
The probability $P_{2.5.3}$ that there are four fault-free $B_{n-2}$'s chosen as in Case 2.5.3 is $\frac{1}{2}(n^3-3n^2+2n)p^{4(n-2)!}$.

{\it Case 2.5.4.} $c_1=b_1\neq a_1$, $c_2\neq d_2$ and $b_2\neq c_2$.

There are $n$ ways to choose $a_1(=d_2)$ from $N_n$, $n-1$ ways to choose $a_2(=d_1=b_1=c_1)$ from $N_n\setminus\{a_1\}$, $n-2$ ways to choose $b_2$ from $N_n\setminus\{d_1, d_2\}$, $n-3$ ways to choose $c_2$ from $N_n\setminus\{b_2, d_2, c_1\}$.
Therefore, there are $n(n-1)(n-2)(n-3)=n^4-6n^3+11n^2-6n$ ways to choose four $B_{n-2}$'s as required.
The probability $P_{2.5.4}$ that there are four fault-free $B_{n-2}$'s chosen as in Case 2.5.4 is $\frac{1}{2}(n^4-6n^3+11n^2-6n)p^{4(n-2)!-(n-3)!}$.

{\it Case 2.5.5.} $c_1=b_1\neq a_1$ and $c_2\neq d_2$ and $b_2=c_2$..

There are $n$ ways to choose $a_1(=d_2)$ from $N_n$, $n-1$ ways to choose $a_2(=d_1=b_1=c_1)$ from $N_n\setminus\{a_1\}$, $n-2$ ways to choose $b_2(=c_2)$ from $N_n\setminus\{d_1, d_2\}$.
Therefore, there are $n(n-1)(n-2)=n^3-3n^2+2n$ ways to choose four $B_{n-2}$'s as required.
The probability $P_{2.5.5}$ that there are four fault-free $B_{n-2}$'s chosen as in Case 2.5.5 is $\frac{1}{2}(n^3-3n^2+2n)p^{4(n-2)!}$.

{\it Case 2.5.6.} $c_1\neq a_1$, $c_1\neq b_1$ and $c_2=d_2$.

There are $n$ ways to choose $a_1(=d_2=c_2)$ from $N_n$, $n-1$ ways to choose $a_2(=d_1=b_1)$ from $N_n\setminus\{a_1\}$, $n-2$ ways to choose $b_2$ from $N_n\setminus\{d_1, d_2\}$, $n-2$ ways to choose $c_1$ from $N_n\setminus\{a_1, b_1\}$.
Therefore, there are $n(n-1)(n-2)^2=n^4-5n^3+8n^2-4n$ ways to choose four $B_{n-2}$'s as required.
Clearly, in this case $c_1\neq d_1$.
The probability $P_{2.5.6}$ that there are four fault-free $B_{n-2}$'s chosen as in Case 2.5.6 is $\frac{1}{2}(n^4-5n^3+8n^2-4n)p^{4(n-2)!-(n-3)!}$.

{\it Case 2.5.7.} $c_1=b_1\neq a_1$ and $c_2=d_2$.

There are $n$ ways to choose $a_1(=d_2=c_2)$ from $N_n$, $n-1$ ways to choose $a_2(=d_1=b_1=c_1)$ from $N_n\setminus\{a_1\}$, $n-2$ ways to choose $b_2$ from $N_n\setminus\{d_1, d_2\}$.
Therefore, there are $n(n-1)(n-2)=n^3-3n^2+2n$ ways to choose four $B_{n-2}$'s as required.
Clearly, in this case $c_2\neq b_2$ and $c_1=d_1$.
The probability $P_{2.5.7}$ that there are four fault-free $B_{n-2}$'s chosen as in Case 2.5.7 is $\frac{1}{2}(n^3-3n^2+2n)p^{4(n-2)!-(n-3)!}$.

Thus, the probability $P_{2.7}$ that there are four fault-free $B_{n-2}$'s chosen as in Case 2.7 is $\sum_{i=1}^{6}P_{2.7.i}=\frac{1}{2}(n^5-7n^4+20n^3-26n^2+12n)p^{4(n-2)!}+\frac{3}{2}(n^4-5n^3+8n^2-4n)p^{4(n-2)!-(n-3)!}$.

{\it Case 2.6.} $a_1=d_2=b_1$, $a_2=d_1$ and $b_2\notin\{d_1, d_2\}$.

{\it Case 2.6.1.} $c_1\neq a_1$, $c_1\neq b_1$ and $c_2\neq d_2$.

There are $n$ ways to choose $a_1(=d_2=b_1)$ from $N_n$, $n-1$ ways to choose $a_2(=d_1)$ from $N_n\setminus\{a_1\}$, $n-2$ ways to choose $b_2$ from $N_n\setminus\{d_1, d_2\}$, $n-1$ ways to choose $c_1$ from $N_n\setminus\{a_1\}$, $n-2$ ways to choose $c_2$ from $N_n\setminus\{c_1, d_2\}$.
Therefore, there are $n(n-1)^2(n-2)^2=n^5-6n^4+13n^3-12n^2+4n$ ways to choose four $B_{n-2}$'s as required.
The probability $P_{2.6.1}$ that there are four fault-free $B_{n-2}$'s chosen as in Case 2.6.1 is $\frac{1}{2}(n^5-6n^4+13n^3-12n^2+4n)p^{4(n-2)!}$.

{\it Case 2.6.2.} $c_1=a_1=b_1$, $c_2\neq d_2$ and $c_2\notin\{a_2, b_2\}$.

There are $n$ ways to choose $a_1(=b_1=c_1=d_2)$ from $N_n$, $n-1$ ways to choose $a_2(=d_1)$ from $N_n\setminus\{a_1\}$, $n-2$ ways to choose $b_2$ from $N_n\setminus\{d_1, d_2\}$, $n-3$ ways to choose $c_2$ from $N_n\setminus\{d_2, a_2, b_2\}$.
Therefore, there are $n(n-1)(n-2)(n-3)=n^4-6n^3+11n^2-6n$ ways to choose four $B_{n-2}$'s as required.
The probability $P_{2.6.2}$ that there are four fault-free $B_{n-2}$'s chosen as in Case 2.6.2 is $\frac{1}{2}(n^4-6n^3+11n^2-6n)p^{4(n-2)!-2(n-3)!}$.

{\it Case 2.6.3.} $c_1=a_1=b_1$, $c_2\neq d_2$ and $c_2\in\{a_2, b_2\}$.

There are 2 possible cases, w.l.o.g, assume that $c_1=a_1=b_1$, $c_2\neq d_2$ and $c_2=a_2$.
There are $n$ ways to choose $a_1(=b_1=c_1=d_2)$ from $N_n$, $n-1$ ways to choose $a_2(=d_1=c_2)$ from $N_n\setminus\{a_1\}$, $n-2$ ways to choose $b_2$ from $N_n\setminus\{d_1, d_2\}$.
Therefore, there are $2n(n-1)(n-2)=2(n^3-3n^2+2n)$ ways to choose four $B_{n-2}$'s as required.
The probability $P_{2.6.3}$ that there are four fault-free $B_{n-2}$'s chosen as in Case 2.6.3 is $\frac{1}{2}\times 2(n^3-3n^2+2n)p^{4(n-2)!-(n-3)!}$.

{\it Case 2.6.4.} $c_1\neq a_1=b_1$, $c_2=d_2$ and $c_1\neq d_1$.

There are $n$ ways to choose $a_1(=d_2=b_1=c_2)$ from $N_n$, $n-1$ ways to choose $a_2(=d_1)$ from $N_n\setminus\{a_1\}$, $n-2$ ways to choose $b_2$ from $N_n\setminus\{d_1, d_2\}$, $n-2$ ways to choose $c_1$ from $N_n\setminus\{a_1, d_1\}$.
Therefore, there are $n(n-1)(n-2)^2=n^4-5n^3+8n^2-4n$ ways to choose four $B_{n-2}$'s as required.
The probability $P_{2.6.4}$ that there are four fault-free $B_{n-2}$'s chosen as in Case 2.6.4 is $\frac{1}{2}(n^4-5n^3+8n^2-4n)p^{4(n-2)!-(n-3)!}$.

{\it Case 2.6.5.} $c_1\neq a_1=b_1$, $c_2=d_2$ and $c_1=d_1$.

There are $n$ ways to choose $a_1(=d_2=b_1=c_2)$ from $N_n$, $n-1$ ways to choose $a_2(=d_1=c_1)$ from $N_n\setminus\{a_1\}$, $n-2$ ways to choose $b_2$ from $N_n\setminus\{d_1, d_2\}$.
Therefore, there are $n(n-1)(n-2)=n^3-3n^2+2n$ ways to choose four $B_{n-2}$'s as required.
The probability $P_{2.6.5}$ that there are four fault-free $B_{n-2}$'s chosen as in Case 2.6.5 is $\frac{1}{2}(n^3-3n^2+2n)p^{4(n-2)!}$.

Thus, the probability $P_{2.6}$ that there are four fault-free $B_{n-2}$'s chosen as in Case 2.6 is $\sum_{i=1}^{5}P_{2.6.i}=\frac{1}{2}(n^5-6n^4+14n^3-15n^2+6n)p^{4(n-2)!}+\frac{1}{2}(n^4-3n^3+2n^2)p^{4(n-2)!-(n-3)!}+\frac{1}{2}(n^4-6n^3+11n^2-6n)p^{4(n-2)!-2(n-3)!}$.

{\it Case 2.7.} $a_1=d_2$, $a_2=d_1=b_2$, $b_1\notin\{d_1, d_2\}$.

{\it Case 2.7.1.} $c_1\neq a_1$, $c_1\neq b_1$ and $c_2\neq d_2$.

There are $n$ ways to choose $a_1(=d_2)$ from $N_n$, $n-1$ ways to choose $a_2(=d_1=b_2)$ from $N_n\setminus\{a_1\}$, $n-2$ ways to choose $b_1$ from $N_n\setminus\{d_1, d_2\}$, $n-2$ ways to choose $c_1$ from $N_n\setminus\{a_1, b_1\}$, $n-2$ ways to choose $c_2$ from $N_n\setminus\{c_1, d_2\}$.
Therefore, there are $n(n-1)(n-2)^3=n^5-7n^4+18n^3-20n^2+8n$ ways to choose four $B_{n-2}$'s as required.
The probability $P_{2.7.1}$ that there are four fault-free $B_{n-2}$'s chosen as in Case 2.7.1 is $\frac{1}{2}(n^5-7n^4+18n^3-20n^2+8n)p^{4(n-2)!}$.

{\it Case 2.7.2.} $c_1=a_1\neq b_1$, $c_2\neq d_2$ and $a_2\neq c_2$.

There are $n$ ways to choose $a_1(=d_2=c_1)$ from $N_n$, $n-1$ ways to choose $a_2(=d_1=b_2)$ from $N_n\setminus\{a_1\}$, $n-2$ ways to choose $b_1$ from $N_n\setminus\{d_1, d_2\}$, $n-2$ ways to choose $c_2$ from $N_n\setminus\{d_2, a_2\}$.
Therefore, there are $n(n-1)(n-2)^2=n^4-5n^3+8n^2-4n$ ways to choose four $B_{n-2}$'s as required.
The probability $P_{2.7.2}$ that there are four fault-free $B_{n-2}$'s chosen as in Case 2.7.2 is $\frac{1}{2}(n^4-5n^3+8n^2-4n)p^{4(n-2)!-(n-3)!}$.

{\it Case 2.7.3.} $c_1=a_1\neq b_1$, $c_2\neq d_2$ and $a_2=c_2$.

There are $n$ ways to choose $a_1(=d_2=c_1)$ from $N_n$, $n-1$ ways to choose $a_2(=d_1=b_2=c_2)$ from $N_n\setminus\{a_1\}$, $n-2$ ways to choose $b_1$ from $N_n\setminus\{d_1, d_2\}$.
Therefore, there are $n(n-1)(n-2)=n^3-3n^2+2n$ ways to choose four $B_{n-2}$'s as required.
The probability $P_{2.7.3}$ that there are four fault-free $B_{n-2}$'s chosen as in Case 2.7.3 is $\frac{1}{2}(n^3-3n^2+2n)p^{4(n-2)!}$.

{\it Case 2.7.4.} $c_1=b_1\neq a_1$, $c_2\neq d_2$ and $b_2\neq c_2$.

There are $n$ ways to choose $a_1(=d_2)$ from $N_n$, $n-1$ ways to choose $a_2(=d_1=b_2)$ from $N_n\setminus\{a_1\}$, $n-2$ ways to choose $b_1(=c_1)$ from $N_n\setminus\{d_1, d_2\}$, $n-3$ ways to choose $c_2$ from $N_n\setminus\{b_2, d_2, c_1\}$.
Therefore, there are $n(n-1)(n-2)(n-3)=n^4-6n^3+11n^2-6n$ ways to choose four $B_{n-2}$'s as required.
The probability $P_{2.7.4}$ that there are four fault-free $B_{n-2}$'s chosen as in Case 2.7.4 is $\frac{1}{2}(n^4-6n^3+11n^2-6n)p^{4(n-2)!-(n-3)!}$.

{\it Case 2.7.5.} $c_1=b_1\neq a_1$ and $c_2\neq d_2$ and $b_2=c_2$..

There are $n$ ways to choose $a_1(=d_2)$ from $N_n$, $n-1$ ways to choose $a_2(=d_1=b_2=c_2)$ from $N_n\setminus\{a_1\}$, $n-2$ ways to choose $b_1(=c_1)$ from $N_n\setminus\{d_1, d_2\}$.
Therefore, there are $n(n-1)(n-2)=n^3-3n^2+2n$ ways to choose four $B_{n-2}$'s as required.
The probability $P_{2.7.5}$ that there are four fault-free $B_{n-2}$'s chosen as in Case 2.7.5 is $\frac{1}{2}(n^3-3n^2+2n)p^{4(n-2)!}$.

{\it Case 2.7.6.} $c_1\neq a_1$, $c_1\neq b_1$, $c_2=d_2$ and $c_1\neq d_1$.

There are $n$ ways to choose $a_1(=d_2=c_2)$ from $N_n$, $n-1$ ways to choose $a_2(=d_1=b_2)$ from $N_n\setminus\{a_1\}$, $n-2$ ways to choose $b_1$ from $N_n\setminus\{d_1, d_2\}$, $n-3$ ways to choose $c_1$ from $N_n\setminus\{a_1, b_1, d_1\}$.
Therefore, there are $n(n-1)(n-2)(n-3)=n^4-6n^3+11n^2-6n$ ways to choose four $B_{n-2}$'s as required.
The probability $P_{2.7.6}$ that there are four fault-free $B_{n-2}$'s chosen as in Case 2.7.6 is $\frac{1}{2}(n^4-6n^3+11n^2-6n)p^{4(n-2)!-(n-3)!}$.

{\it Case 2.7.7.} $c_1\neq a_1$, $c_1\neq b_1$, $c_2=d_2$ and $c_1=d_1$.

There are $n$ ways to choose $a_1(=d_2=c_2)$ from $N_n$, $n-1$ ways to choose $a_2(=d_1=b_2=c_1)$ from $N_n\setminus\{a_1\}$, $n-2$ ways to choose $b_1$ from $N_n\setminus\{d_1, d_2\}$.
Therefore, there are $n(n-1)(n-2)=n^3-3n^2+2n$ ways to choose four $B_{n-2}$'s as required.
The probability $P_{2.7.7}$ that there are four fault-free $B_{n-2}$'s chosen as in Case 2.7.7 is $\frac{1}{2}(n^3-3n^2+2n)p^{4(n-2)!}$.

{\it Case 2.7.8.} $c_1=b_1\neq a_1$ and $c_2=d_2$.

There are $n$ ways to choose $a_1(=d_2=c_2)$ from $N_n$, $n-1$ ways to choose $a_2(=d_1=b_2)$ from $N_n\setminus\{a_1\}$, $n-2$ ways to choose $b_1(=c_1)$ from $N_n\setminus\{d_1, d_2\}$.
Therefore, there are $n(n-1)(n-2)=n^3-3n^2+2n$ ways to choose four $B_{n-2}$'s as required.
Clearly, in this case $c_2\neq b_2$ and $c_1\neq d_1$.
The probability $P_{2.7.8}$ that there are four fault-free $B_{n-2}$'s chosen as in Case 2.7.8 is $\frac{1}{2}(n^3-3n^2+2n)p^{4(n-2)!-2(n-3)!}$.

Thus, the probability $P_{2.7}$ that there are four fault-free $B_{n-2}$'s chosen as in Case 2.7 is $\sum_{i=1}^{8}P_{2.7.i}=\frac{1}{2}(n^5-7n^4+21n^3-29n^2+14n)p^{4(n-2)!}+\frac{1}{2}(3n^4-17n^3+30n^2-16n)p^{4(n-2)!-(n-3)!}+\frac{1}{2}(n^3-3n^2+2n)p^{4(n-2)!-2(n-3)!}$.

{\it Case 2.8.} $a_1=d_2=b_2$, $a_2=d_1$, $b_1\notin\{d_1, d_2\}$.

{\it Case 2.8.1.} $c_1\neq a_1$, $c_1\neq b_1$ and $c_2\neq d_2$.

There are $n$ ways to choose $a_1(=d_2)$ from $N_n$, $n-1$ ways to choose $a_2(=d_1=b_1)$ from $N_n\setminus\{a_1\}$, $n-2$ ways to choose $b_2$ from $N_n\setminus\{d_1, d_2\}$, $n-2$ ways to choose $c_1$ from $N_n\setminus\{a_1, b_1\}$, $n-2$ ways to choose $c_2$ from $N_n\setminus\{c_1, d_2\}$.
Therefore, there are $n(n-1)(n-2)^3=n^5-7n^4+18n^3-20n^2+8n$ ways to choose four $B_{n-2}$'s as required.
The probability $P_{2.8.1}$ that there are four fault-free $B_{n-2}$'s chosen as in Case 2.8.1 is $\frac{1}{2}(n^5-7n^4+18n^3-20n^2+8n)p^{4(n-2)!}$.

{\it Case 2.8.2.} $c_1=a_1\neq b_1$, $c_2\neq d_2$ and $a_2\neq c_2$.

There are $n$ ways to choose $a_1(=d_2=b_2=c_1)$ from $N_n$, $n-1$ ways to choose $a_2(=d_1)$ from $N_n\setminus\{a_1\}$, $n-2$ ways to choose $b_1$ from $N_n\setminus\{d_1, d_2\}$, $n-2$ ways to choose $c_2$ from $N_n\setminus\{d_2, a_2\}$.
Therefore, there are $n(n-1)(n-2)^2=n^4-5n^3+8n^2-4n$ ways to choose four $B_{n-2}$'s as required.
The probability $P_{2.8.2}$ that there are four fault-free $B_{n-2}$'s chosen as in Case 2.8.2 is $\frac{1}{2}(n^4-5n^3+8n^2-4n)p^{4(n-2)!-(n-3)!}$.

{\it Case 2.8.3.} $c_1=a_1\neq b_1$, $c_2\neq d_2$ and $a_2=c_2$.

There are $n$ ways to choose $a_1(=d_2=b_2=c_1)$ from $N_n$, $n-1$ ways to choose $a_2(=d_1=c_2)$ from $N_n\setminus\{a_1\}$, $n-2$ ways to choose $b_1$ from $N_n\setminus\{d_1, d_2\}$.
Therefore, there are $n(n-1)(n-2)=n^3-3n^2+2n$ ways to choose four $B_{n-2}$'s as required.
The probability $P_{2.8.3}$ that there are four fault-free $B_{n-2}$'s chosen as in Case 2.8.3 is $\frac{1}{2}(n^3-3n^2+2n)p^{4(n-2)!}$.

{\it Case 2.8.4.} $c_1=b_1\neq a_1$ and $c_2\neq d_2$.

There are $n$ ways to choose $a_1(=d_2)$ from $N_n$, $n-1$ ways to choose $a_2(=d_1=b_1=c_1)$ from $N_n\setminus\{a_1\}$, $n-2$ ways to choose $b_2$ from $N_n\setminus\{d_1, d_2\}$, $n-2$ ways to choose $c_2$ from $N_n\setminus\{d_2, c_1\}$.
Therefore, there are $n(n-1)(n-2)^2=n^4-5n^3+8n^2-4n$ ways to choose four $B_{n-2}$'s as required.
Clearly, in this case $c_2\neq b_2$.
The probability $P_{2.8.4}$ that there are four fault-free $B_{n-2}$'s chosen as in Case 2.8.4 is $\frac{1}{2}(n^4-5n^3+8n^2-4n)p^{4(n-2)!-(n-3)!}$.

{\it Case 2.8.5.} $c_1\neq a_1$, $c_1\neq b_1$, $c_2=d_2$ and $c_1\neq d_1$.

There are $n$ ways to choose $a_1(=d_2=b_2=c_2)$ from $N_n$, $n-1$ ways to choose $a_2(=d_1)$ from $N_n\setminus\{a_1\}$, $n-2$ ways to choose $b_1$ from $N_n\setminus\{d_1, d_2\}$, $n-3$ ways to choose $c_1$ from $N_n\setminus\{a_1, b_1, d_1\}$.
Therefore, there are $n(n-1)(n-2)(n-3)=n^4-6n^3+11n^2-6n$ ways to choose four $B_{n-2}$'s as required.
The probability $P_{2.8.5}$ that there are four fault-free $B_{n-2}$'s chosen as in Case 2.8.5 is $\frac{1}{2}(n^4-6n^3+11n^2-6n)p^{4(n-2)!-(n-3)!}$.

{\it Case 2.8.6.} $c_1\neq a_1$, $c_1\neq b_1$, $c_2=d_2$ and $c_1=d_1$.

There are $n$ ways to choose $a_1(=d_2=b_2=c_2)$ from $N_n$, $n-1$ ways to choose $a_2(=d_1=c_1)$ from $N_n\setminus\{a_1\}$, $n-2$ ways to choose $b_1$ from $N_n\setminus\{d_1, d_2\}$.
Therefore, there are $n(n-1)(n-2)=n^3-3n^2+2n$ ways to choose four $B_{n-2}$'s as required.
The probability $P_{2.8.6}$ that there are four fault-free $B_{n-2}$'s chosen as in Case 2.8.6 is $\frac{1}{2}(n^3-3n^2+2n)p^{4(n-2)!}$.

{\it Case 2.8.7.} $c_1=b_1\neq a_1$ and $c_2=d_2$.

There are $n$ ways to choose $a_1(=d_2=c_2)$ from $N_n$, $n-1$ ways to choose $a_2(=d_1=b_1=c_1)$ from $N_n\setminus\{a_1\}$, $n-2$ ways to choose $b_2$ from $N_n\setminus\{d_1, d_2\}$.
Therefore, there are $n(n-1)(n-2)=n^3-3n^2+2n$ ways to choose four $B_{n-2}$'s as required.
Clearly, in this case $c_2=b_2$ and $c_1\neq d_1$.
The probability $P_{2.8.7}$ that there are four fault-free $B_{n-2}$'s chosen as in Case 2.8.7 is $\frac{1}{2}(n^3-3n^2+2n)p^{4(n-2)!-(n-3)!}$.

Thus, the probability $P_{2.8}$ that there are four fault-free $B_{n-2}$'s chosen as in Case 2.8 is $\sum_{i=1}^{7}P_{2.8.i}=\frac{1}{2}(n^5-7n^4+20n^3-26n^2+12n)p^{4(n-2)!}+\frac{3}{2}(n^4-5n^3+8n^2-4n)p^{4(n-2)!-(n-3)!}$.

In summary, the probability $P_2$ that there are four fault-free $B_{n-2}$'s chosen as in Case 2 is $2\sum_{i=1}^{8}P_{2.i}=2(4n^5-27n^4+75n^3-96n^2+44n)p^{4(n-2)!}+20(n^4-5n^3+8n^2-4n)p^{4(n-2)!-(n-3)!}+2(n^4-5n^3+8n^2-4n)p^{4(n-2)!-2(n-3)!}$.

{\it Case 3.} $\{a_1, a_2\}=\{d_1, d_2\}$, $\{b_1, b_2\}\cap\{d_1, d_2\}=\emptyset$ or $\{a_1, a_2\}\cap\{d_1, d_2\}=\emptyset$, $\{b_1, b_2\}=\{d_1, d_2\}$.

W.l.o.g, assume that $\{a_1, a_2\}=\{d_1, d_2\}$ and $\{b_1, b_2\}\cap\{d_1, d_2\}=\emptyset$.

Observation \ref{ob:1} implies that $a_1 a_2 X^{n-2}$ and $X^{n-2} d_1 d_2$ are disjoint, and $b_1 b_2 X^{n-2}$ and $X^{n-2} d_1 d_2$ are not disjoint and $V(b_1 b_2 X^{n-2})\cap V(X^{n-2} d_1 d_2)=V(b_1 b_2 X^{n-4} d_1 d_2)$.

{\it Case 3.1.} $a_1=d_1$, $a_2=d_2$ and $\{b_1, b_2\}\cap\{d_1, d_2\}=\emptyset$.

{\it Case 3.1.1.} $c_1\neq a_1$, $c_1\neq b_1$, $c_2\neq d_2$ and $c_1\neq d_2$.

There are $n$ ways to choose $a_1(=d_1)$ from $N_n$, $n-1$ ways to choose $a_2(=d_2)$ from $N_n\setminus\{a_1\}$, $n-2$ ways to choose $b_1$ from $N_n\setminus\{d_1, d_2\}$, $n-3$ ways to choose $b_2$ from $N_n\setminus\{d_1, d_2, b_1\}$, $n-3$ ways to choose $c_1$ from $N_n\setminus\{a_1, b_1, d_2\}$, $n-2$ ways to choose $c_2$ from $N_n\setminus\{c_1, d_2\}$.
Therefore, there are $n(n-1)(n-2)^2(n-3)^2=n^6-11n^5+47n^4-97n^3+96n^2-36n$ ways to choose four $B_{n-2}$'s as required.
The probability $P_{3.1.1}$ that there are four fault-free $B_{n-2}$'s chosen as in Case 3.1.1 is $\frac{1}{2}(n^6-11n^5+47n^4-97n^3+96n^2-36n)p^{4(n-2)!-(n-4)!}$.

{\it Case 3.1.2.} $c_1\neq a_1$, $c_1\neq b_1$, $c_2\neq d_2$ and $c_1=d_2$.

There are $n$ ways to choose $a_1(=d_1)$ from $N_n$, $n-1$ ways to choose $a_2(=d_2=c_1)$ from $N_n\setminus\{a_1\}$, $n-2$ ways to choose $b_1$ from $N_n\setminus\{d_1, d_2\}$, $n-3$ ways to choose $b_2$ from $N_n\setminus\{d_1, d_2, b_1\}$, $n-1$ ways to choose $c_2$ from $N_n\setminus\{d_2\}$.
Therefore, there are $n(n-1)^2(n-2)(n-3)=n^5-7n^4+17n^3-17n^2+6n$ ways to choose four $B_{n-2}$'s as required.
The probability $P_{3.1.2}$ that there are four fault-free $B_{n-2}$'s chosen as in Case 3.1.2 is $\frac{1}{2}(n^5-7n^4+17n^3-17n^2+6n)p^{4(n-2)!-(n-4)!}$.

{\it Case 3.1.3.} $c_1=a_1\neq b_1$ and $c_2\neq d_2$.

There are $n$ ways to choose $a_1(=d_1=c_1)$ from $N_n$, $n-1$ ways to choose $a_2(=d_2)$ from $N_n\setminus\{a_1\}$, $n-2$ ways to choose $b_1$ from $N_n\setminus\{d_1, d_2\}$, $n-3$ ways to choose $b_2$ from $N_n\setminus\{d_1, d_2, b_1\}$, $n-2$ ways to choose $c_2$ from $N_n\setminus\{d_2, c_1\}$.
Therefore, there are $n(n-1)(n-2)^2(n-3)=n^5-8n^4+23n^3-28n^2+12n$ ways to choose four $B_{n-2}$'s as required.
Clearly, in this case $c_2\neq a_2$.
The probability $P_{3.1.3}$ that there are four fault-free $B_{n-2}$'s chosen as in Case 3.1.3 is $\frac{1}{2}(n^5-8n^4+23n^3-28n^2+12n)p^{4(n-2)!-(n-3)!-(n-4)!}$.

{\it Case 3.1.4.} $c_1=b_1\neq a_1$, $c_2\neq d_2$ and $c_2\neq b_2$.

There are $n$ ways to choose $a_1(=d_1)$ from $N_n$, $n-1$ ways to choose $a_2(=d_2)$ from $N_n\setminus\{a_1\}$, $n-2$ ways to choose $b_1(=c_1)$ from $N_n\setminus\{d_1, d_2\}$, $n-3$ ways to choose $b_2$ from $N_n\setminus\{d_1, d_2, b_1\}$, $n-3$ ways to choose $c_2$ from $N_n\setminus\{b_2, d_2, c_1\}$.
Therefore, there are $n(n-1)(n-2)(n-3)^2=n^5-9n^4+29n^3-39n^2+18n$ ways to choose four $B_{n-2}$'s as required.
The probability $P_{3.1.4}$ that there are four fault-free $B_{n-2}$'s chosen as in Case 3.1.4 is $\frac{1}{2}(n^5-9n^4+29n^3-39n^2+18n)p^{4(n-2)!-(n-3)!-(n-4)!}$.

{\it Case 3.1.5.} $c_1=b_1\neq a_1$, $c_2\neq d_2$ and $c_2=b_2$.

There are $n$ ways to choose $a_1(=d_1)$ from $N_n$, $n-1$ ways to choose $a_2(=d_2)$ from $N_n\setminus\{a_1\}$, $n-2$ ways to choose $b_1(=c_1)$ from $N_n\setminus\{d_1, d_2\}$, $n-3$ ways to choose $b_2(=c_2)$ from $N_n\setminus\{d_1, d_2, b_1\}$.
Therefore, there are $n(n-1)(n-2)(n-3)=n^4-6n^3+11n^2-6n$ ways to choose four $B_{n-2}$'s as required.
The probability $P_{3.1.5}$ that there are four fault-free $B_{n-2}$'s chosen as in Case 3.1.5 is $\frac{1}{2}(n^4-6n^3+11n^2-6n)p^{4(n-2)!-(n-4)!}$.

{\it Case 3.1.6.} $c_1\neq a_1$, $c_1\neq b_1$ and $c_2=d_2$.

There are $n$ ways to choose $a_1(=d_1)$ from $N_n$, $n-1$ ways to choose $a_2(=d_2=c_2)$ from $N_n\setminus\{a_1\}$, $n-2$ ways to choose $b_1$ from $N_n\setminus\{d_1, d_2\}$, $n-3$ ways to choose $b_2$ from $N_n\setminus\{d_1, d_2, b_1\}$, $n-3$ ways to choose $c_1$ from $N_n\setminus\{a_1, b_1, c_2\}$.
Therefore, there are $n(n-1)(n-2)(n-3)^2=n^5-9n^4+29n^3-39n^2+18n$ ways to choose four $B_{n-2}$'s as required.
Clearly, in this case $c_1\neq d_1$.
The probability $P_{3.1.6}$ that there are four fault-free $B_{n-2}$'s chosen as in Case 3.1.6 is $\frac{1}{2}(n^5-9n^4+29n^3-39n^2+18n)p^{4(n-2)!-(n-3)!-(n-4)!}$.

{\it Case 3.1.7.} $c_1=a_1\neq b_1$ and $c_2=d_2$.

There are $n$ ways to choose $a_1(=d_1=c_1)$ from $N_n$, $n-1$ ways to choose $a_2(=d_2=c_2)$ from $N_n\setminus\{a_1\}$, $n-2$ ways to choose $b_1$ from $N_n\setminus\{d_1, d_2\}$, $n-3$ ways to choose $b_2$ from $N_n\setminus\{d_1, d_2, b_1\}$.
Therefore, there are $n(n-1)(n-2)(n-3)=n^4-6n^3+11n^2-6n$ ways to choose four $B_{n-2}$'s as required.
Clearly, in this case $c_2=a_2$ and $c_1=d_1$.
The probability $P_{3.1.7}$ that there are four fault-free $B_{n-2}$'s chosen as in Case 3.1.7 is $\frac{1}{2}(n^4-6n^3+11n^2-6n)p^{4(n-2)!-(n-4)!}$.

{\it Case 3.1.8.} $c_1=b_1\neq a_1$ and $c_2=d_2$.

There are $n$ ways to choose $a_1(=d_1)$ from $N_n$, $n-1$ ways to choose $a_2(=d_2=c_2)$ from $N_n\setminus\{a_1\}$, $n-2$ ways to choose $b_1(=c_1)$ from $N_n\setminus\{d_1, d_2\}$, $n-3$ ways to choose $b_2$ from $N_n\setminus\{d_1, d_2, b_1\}$.
Therefore, there are $n(n-1)(n-2)(n-3)=n^4-6n^3+11n^2-6n$ ways to choose four $B_{n-2}$'s as required.
Clearly, in this case $c_2\neq b_2$ and $c_1\neq d_1$.
The probability $P_{3.1.8}$ that there are four fault-free $B_{n-2}$'s chosen as in Case 3.1.8 is $\frac{1}{2}(n^4-6n^3+11n^2-6n)p^{4(n-2)!-2(n-3)!-(n-4)!}$.

{\it Case 3.2.} $a_1=d_2$, $a_2=d_1$ and $\{b_1, b_2\}\cap\{d_1, d_2\}=\emptyset$.

{\it Case 3.2.1.} $c_1\neq a_1$, $c_1\neq b_1$ and $c_2\neq d_2$.

There are $n$ ways to choose $a_1(=d_2)$ from $N_n$, $n-1$ ways to choose $a_2(=d_1)$ from $N_n\setminus\{a_1\}$, $n-2$ ways to choose $b_1$ from $N_n\setminus\{d_1, d_2\}$, $n-3$ ways to choose $b_2$ from $N_n\setminus\{d_1, d_2, b_1\}$, $n-2$ ways to choose $c_1$ from $N_n\setminus\{a_1, b_1\}$, $n-2$ ways to choose $c_2$ from $N_n\setminus\{c_1, d_2\}$.
Therefore, there are $n(n-1)(n-2)^3(n-3)=n^6-10n^5+39n^4-74n^3+68n^2-24n$ ways to choose four $B_{n-2}$'s as required.
The probability $P_{3.2.1}$ that there are four fault-free $B_{n-2}$'s chosen as in Case 3.2.1 is $\frac{1}{2}(n^6-10n^5+39n^4-74n^3+68n^2-24n)p^{4(n-2)!-(n-4)!}$.

{\it Case 3.2.2.} $c_1=a_1\neq b_1$, $c_2\neq d_2$ and $c_2\neq a_2$.

There are $n$ ways to choose $a_1(=d_2=c_1)$ from $N_n$, $n-1$ ways to choose $a_2(=d_1)$ from $N_n\setminus\{a_1\}$, $n-2$ ways to choose $b_1$ from $N_n\setminus\{d_1, d_2\}$, $n-3$ ways to choose $b_2$ from $N_n\setminus\{d_1, d_2, b_1\}$, $n-2$ ways to choose $c_2$ from $N_n\setminus\{a_2, d_2\}$.
Therefore, there are $n(n-1)(n-2)^2(n-3)=n^5-8n^4+23n^3-28n^2+12n$ ways to choose four $B_{n-2}$'s as required.
The probability $P_{3.2.2}$ that there are four fault-free $B_{n-2}$'s chosen as in Case 3.2.2 is $\frac{1}{2}(n^5-8n^4+23n^3-28n^2+12n)p^{4(n-2)!-(n-3)!-(n-4)!}$.

{\it Case 3.2.3.} $c_1=a_1\neq b_1$, $c_2\neq d_2$ and $c_2=a_2$.

There are $n$ ways to choose $a_1(=d_2=c_1)$ from $N_n$, $n-1$ ways to choose $a_2(=d_1=c_2)$ from $N_n\setminus\{a_1\}$, $n-2$ ways to choose $b_1$ from $N_n\setminus\{d_1, d_2\}$, $n-3$ ways to choose $b_2$ from $N_n\setminus\{d_1, d_2, b_1\}$.
Therefore, there are $n(n-1)(n-2)(n-3)=n^4-6n^3+11n^2-6n$ ways to choose four $B_{n-2}$'s as required.
The probability $P_{3.2.3}$ that there are four fault-free $B_{n-2}$'s chosen as in Case 3.2.3 is $\frac{1}{2}(n^4-6n^3+11n^2-6n)p^{4(n-2)!-(n-4)!}$.

{\it Case 3.2.4.} $c_1=b_1\neq a_1$, $c_2\neq d_2$ and $c_2\neq b_2$.

There are $n$ ways to choose $a_1(=d_2)$ from $N_n$, $n-1$ ways to choose $a_2(=d_1)$ from $N_n\setminus\{a_1\}$, $n-2$ ways to choose $b_1(=c_1)$ from $N_n\setminus\{d_1, d_2\}$, $n-3$ ways to choose $b_2$ from $N_n\setminus\{d_1, d_2, b_1\}$, $n-3$ ways to choose $c_2$ from $N_n\setminus\{b_2, d_2, c_1\}$.
Therefore, there are $n(n-1)(n-2)(n-3)^2=n^5-9n^4+29n^3-39n^2+18n$ ways to choose four $B_{n-2}$'s as required.
The probability $P_{3.2.4}$ that there are four fault-free $B_{n-2}$'s chosen as in Case 3.2.4 is $\frac{1}{2}(n^5-9n^4+29n^3-39n^2+18n)p^{4(n-2)!-(n-3)!-(n-4)!}$.

{\it Case 3.2.5.} $c_1=b_1\neq a_1$, $c_2\neq d_2$ and $c_2\neq b_2$.

There are $n$ ways to choose $a_1(=d_2)$ from $N_n$, $n-1$ ways to choose $a_2(=d_1)$ from $N_n\setminus\{a_1\}$, $n-2$ ways to choose $b_1(=c_1)$ from $N_n\setminus\{d_1, d_2\}$, $n-3$ ways to choose $b_2(=c_2)$ from $N_n\setminus\{d_1, d_2, b_1\}$.
Therefore, there are $n(n-1)(n-2)(n-3)=n^4-6n^3+11n^2-6n$ ways to choose four $B_{n-2}$'s as required.
The probability $P_{3.2.5}$ that there are four fault-free $B_{n-2}$'s chosen as in Case 3.2.5 is $\frac{1}{2}(n^4-6n^3+11n^2-6n)p^{4(n-2)!-(n-4)!}$.

{\it Case 3.2.6.} $c_1\neq a_1$, $c_1\neq b_1$, $c_2=d_2$ and $c_1\neq d_1$.

There are $n$ ways to choose $a_1(=d_2=c_2)$ from $N_n$, $n-1$ ways to choose $a_2(=d_1)$ from $N_n\setminus\{a_1\}$, $n-2$ ways to choose $b_1$ from $N_n\setminus\{d_1, d_2\}$, $n-3$ ways to choose $b_2$ from $N_n\setminus\{d_1, d_2, b_1\}$, $n-3$ ways to choose $c_1$ from $N_n\setminus\{a_1, b_1, d_1\}$.
Therefore, there are $n(n-1)(n-2)(n-3)^2=n^5-9n^4+29n^3-39n^2+18n$ ways to choose four $B_{n-2}$'s as required.
The probability $P_{3.2.6}$ that there are four fault-free $B_{n-2}$'s chosen as in Case 3.2.6 is $\frac{1}{2}(n^5-9n^4+29n^3-39n^2+18n)p^{4(n-2)!-(n-3)!-(n-4)!}$.

{\it Case 3.2.7.} $c_1\neq a_1$, $c_1\neq b_1$, $c_2=d_2$ and $c_1\neq d_1$.

There are $n$ ways to choose $a_1(=d_2=c_2)$ from $N_n$, $n-1$ ways to choose $a_2(=d_1=c_1)$ from $N_n\setminus\{a_1\}$, $n-2$ ways to choose $b_1$ from $N_n\setminus\{d_1, d_2\}$, $n-3$ ways to choose $b_2$ from $N_n\setminus\{d_1, d_2, b_1\}$.
Therefore, there are $n(n-1)(n-2)(n-3)=n^4-6n^3+11n^2-6n$ ways to choose four $B_{n-2}$'s as required.
The probability $P_{3.2.7}$ that there are four fault-free $B_{n-2}$'s chosen as in Case 3.2.7 is $\frac{1}{2}(n^4-6n^3+11n^2-6n)p^{4(n-2)!-(n-4)!}$.

{\it Case 3.2.8.} $c_1=b_1\neq a_1$ and $c_2=d_2$.

There are $n$ ways to choose $a_1(=d_2=c_2)$ from $N_n$, $n-1$ ways to choose $a_2(=d_1)$ from $N_n\setminus\{a_1\}$, $n-2$ ways to choose $b_1(=c_1)$ from $N_n\setminus\{d_1, d_2\}$, $n-3$ ways to choose $b_2$ from $N_n\setminus\{d_1, d_2, b_1\}$.
Therefore, there are $n(n-1)(n-2)(n-3)=n^4-6n^3+11n^2-6n$ ways to choose four $B_{n-2}$'s as required.
Clearly, in this case $c_2\neq a_2$ and $c_1\neq d_1$.
The probability $P_{3.2.8}$ that there are four fault-free $B_{n-2}$'s chosen as in Case 3.2.8 is $\frac{1}{2}(n^4-6n^3+11n^2-6n)p^{4(n-2)!-2(n-3)!-(n-4)!}$.

In summary, the probability $P_3$ that there are four fault-free $B_{n-2}$'s chosen as in Case 3 is $2(\sum_{i=1}^{8}P_{3.1.i}+\sum_{i=1}^{8}P_{3.2.i})=2(n^6-10n^5+42n^4-92n^3+101n^2-42n)p^{4(n-2)!-(n-4)!}+2(3n^5-26n^4+81n^3-106n^2+48n)p^{4(n-2)!-(n-3)!-(n-4)!}+2(n^4-6n^3+11n^2-6n)p^{4(n-2)!-2(n-3)!-(n-4)!}$.

{\it Case 4.} $|\{a_1, a_2\}\cap\{d_1, d_2\}|=1$ and $|\{b_1, b_2\}\cap\{d_1, d_2\}|=1$.

{\it Case 4.1.} $a_1=b_1=d_1$, $a_2\notin\{d_1, d_2\}$ and $b_2\notin\{d_1, d_2\}$.

{\it Case 4.1.1.} $c_1\neq a_1$, $c_1\neq b_1$, $c_2\neq d_2$ and $c_1\neq d_2$.

There are $n$ ways to choose $a_1(=b_1=d_1)$ from $N_n$, $n-1$ ways to choose $a_2$ from $N_n\setminus\{a_1\}$, $n-2$ ways to choose $b_2$ from $N_n\setminus\{b_1, a_2\}$, $n-3$ ways to choose $d_2$ from $N_n\setminus\{d_1, a_2, b_2\}$, $n-2$ ways to choose $c_1$ from $N_n\setminus\{a_1, d_2\}$, $n-2$ ways to choose $c_2$ from $N_n\setminus\{c_1, d_2\}$. Therefore, there are $n(n-1)(n-2)^3(n-3)=n^6-10n^5+39n^4-74n^3+68n^2-24n$ ways to choose four $B_{n-2}$'s as required. The probability $P_{4.1.1}$ that there are four fault-free $B_{n-2}$'s chosen as in Case 4.1.1 is $\frac{1}{2}(n^6-10n^5+39n^4-74n^3+68n^2-24n)p^{4(n-2)!}$.

{\it Case 4.1.2.} $c_1\neq a_1$, $c_1\neq b_1$, $c_2\neq d_2$ and $c_1=d_2$.

There are $n$ ways to choose $a_1(=b_1=d_1)$ from $N_n$, $n-1$ ways to choose $a_2$ from $N_n\setminus\{a_1\}$, $n-2$ ways to choose $b_2$ from $N_n\setminus\{b_1, a_2\}$, $n-3$ ways to choose $c_1(=d_2)$ from $N_n\setminus\{d_1, a_2, b_2\}$, $n-1$ ways to choose $c_2$ from $N_n\setminus\{d_2\}$. Therefore, there are $n(n-1)^2(n-2)(n-3)=n^5-7n^4+17n^3-17n^2+6n$ ways to choose four $B_{n-2}$'s as required. The probability $P_{4.1.2}$ that there are four fault-free $B_{n-2}$'s chosen as in Case 4.1.2 is $\frac{1}{2}(n^5-7n^4+17n^3-17n^2+6n)p^{4(n-2)!}$.

{\it Case 4.1.3.} $c_1=a_1=b_1$, $c_2\neq d_2$ and $c_2\notin\{a_2, b_2\}$.

There are $n$ ways to choose $a_1(=b_1=c_1=d_1)$ from $N_n$, $n-1$ ways to choose $d_2$ from $N_n\setminus\{d_1\}$, $n-2$ ways to choose $a_2$ from $N_n\setminus\{d_1, d_2\}$, $n-3$ ways to choose $b_2$ from $N_n\setminus\{d_1, d_2, a_2\}$, $n-4$ ways to choose $c_2$ from $N_n\setminus\{c_1, d_2, a_2, b_2\}$.
Therefore, there are $n(n-1)(n-2)(n-3)(n-4)=n^5-10n^4+35n^3-50n^2+24n$ ways to choose four $B_{n-2}$'s as required.
The probability $P_{4.1.3}$ that there are four fault-free $B_{n-2}$'s chosen as in Case 4.1.3 is $\frac{1}{2}(n^5-10n^4+35n^3-50n^2+24n)p^{4(n-2)!-2(n-3)!}$.

{\it Case 4.1.4.} $c_1=a_1=b_1$, $c_2\neq d_2$ and $c_2\in\{a_2, b_2\}$.

Clearly, in this case $a_2\neq b_2$. There are 2 possible cases such taht $c_2\in\{a_2, b_2\}$, w.l.o.g, assume that $c_2=a_2\neq b_2$.

There are $n$ ways to choose $a_1(=b_1=c_1=d_1)$ from $N_n$, $n-1$ ways to choose $d_2$ from $N_n\setminus\{d_1\}$, $n-2$ ways to choose $a_2(=c_2)$ from $N_n\setminus\{d_1, d_2\}$, $n-3$ ways to choose $b_2$ from $N_n\setminus\{d_1, d_2, a_2\}$.
Therefore, there are $2n(n-1)(n-2)(n-3)=2(n^4-6n^3+11n^2-6n)$ ways to choose four $B_{n-2}$'s as required.
The probability $P_{4.1.4}$ that there are four fault-free $B_{n-2}$'s chosen as in Case 4.1.4 is $\frac{1}{2}\times 2(n^4-6n^3+11n^2-6n)p^{4(n-2)!-(n-3)!}$.

{\it Case 4.1.5.} $c_1\neq a_1$, $c_1\neq b_1$ and $c_2=d_2$.

There are $n$ ways to choose $a_1(=b_1=d_1)$ from $N_n$, $n-1$ ways to choose $a_2$ from $N_n\setminus\{a_1\}$, $n-2$ ways to choose $b_2$ from $N_n\setminus\{b_1, a_2\}$, $n-3$ ways to choose $c_2(=d_2)$ from $N_n\setminus\{d_1, a_2, b_2\}$, $n-2$ ways to choose $c_1$ from $N_n\setminus\{a_1, c_2\}$.
Therefore, there are $n(n-1)(n-2)^2(n-3)=n^5-8n^4+23n^3-28n^2+12n$ ways to choose four $B_{n-2}$'s as required.
Clearly, in this case $c_1\neq d_1$.
The probability $P_{4.1.5}$ that there are four fault-free $B_{n-2}$'s chosen as in Case 4.1.5 is $\frac{1}{2}(n^5-8n^4+23n^3-28n^2+12n)p^{4(n-2)!-(n-3)!}$.

{\it Case 4.1.6.} $c_1=a_1=b_1$ and $c_2=d_2$.

There are $n$ ways to choose $a_1(=b_1=c_1=d_1)$ from $N_n$, $n-1$ ways to choose $a_2$ from $N_n\setminus\{a_1\}$, $n-2$ ways to choose $b_2$ from $N_n\setminus\{b_1, a_2\}$, $n-3$ ways to choose $c_2(=d_2)$ from $N_n\setminus\{d_1, a_2, b_2\}$.
Therefore, there are $n(n-1)(n-2)(n-3)=n^4-6n^3+11n^2-6n$ ways to choose four $B_{n-2}$'s as required.
Clearly, in this case $c_2\neq a_2$, $c_2\neq b_2$ and $c_1=d_1$.
The probability $P_{4.1.6}$ that there are four fault-free $B_{n-2}$'s chosen as in Case 4.1.6 is $\frac{1}{2}(n^4-6n^3+11n^2-6n)p^{4(n-2)!-2(n-3)!}$.

Thus, the probability $P_{4.1}$ that there are four fault-free $B_{n-2}$'s chosen as in Case 4.1 is $\sum_{i=1}^{6}P_{4.1.i}=\frac{1}{2}(n^6-9n^5+32n^4-57n^3+51n^2-18n)p^{4(n-2)!}+\frac{1}{2}(n^5-5n^4+5n^3+5n^2-6n)p^{4(n-2)!-(n-3)!}+\frac{1}{2}(n^5-10n^4+35n^3-50n^2+24n)p^{4(n-2)!-2(n-3)!}$.

{\it Case 4.2.} $a_1=b_1=d_2$, $a_2\notin\{d_1, d_2\}$ and $b_2\notin\{d_1, d_2\}$.

{\it Case 4.2.1.} $c_1\neq a_1$, $c_1\neq b_1$ and $c_2\neq d_2$.

There are $n$ ways to choose $a_1(=b_1=d_2)$ from $N_n$,$n-1$ ways to choose $a_2$ from $N_n\setminus\{a_1\}$, $n-2$ ways to choose $b_2$ from $N_n\setminus\{b_1, a_2\}$, $n-3$ ways to choose $d_1$ from $N_n\setminus\{d_2, a_2, b_2\}$, $n-1$ ways to choose $c_1$ from $N_n\setminus\{a_1\}$, $n-2$ ways to choose $c_2$ from $N_n\setminus\{c_1, d_2\}$.
Therefore, there are $n(n-1)^2(n-2)^2(n-3)=n^6-9n^5+31n^4-51n^3+40n^2-12n$ ways to choose four $B_{n-2}$'s as required.
The probability $P_{4.2.1}$ that there are four fault-free $B_{n-2}$'s chosen as in Case 4.2.1 is $\frac{1}{2}(n^6-9n^5+31n^4-51n^3+40n^2-12n)p^{4(n-2)!}$.

{\it Case 4.2.2.} $c_1=a_1=b_1$, $c_2\neq d_2$ and $c_2\notin\{a_2, b_2\}$.

There are $n$ ways to choose $a_1(=b_1=d_2=c_1)$ from $N_n$, $n-1$ ways to choose $d_2$ from $N_n\setminus\{d_1\}$, $n-2$ ways to choose $a_2$ from $N_n\setminus\{d_1, d_2\}$, $n-3$ ways to choose $b_2$ from $N_n\setminus\{d_1, d_2, a_2\}$, $n-1$ ways to choose $c_2$ from $N_n\setminus\{d_2, a_2, b_2\}$.
Therefore, there are $n(n-1)(n-2)(n-3)^2=n^5-9n^4+29n^3-39n^2+18n$ ways to choose four $B_{n-2}$'s as required.
The probability $P_{4.2.2}$ that there are four fault-free $B_{n-2}$'s chosen as in Case 4.2.2 is $\frac{1}{2}(n^5-9n^4+29n^3-39n^2+18n)p^{4(n-2)!-2(n-3)!}$.

{\it Case 4.2.3.} $c_1=a_1=b_1$, $c_2\neq d_2$ and $c_2\in\{a_2, b_2\}$.

Clearly, in this case $a_2\neq b_2$. There are 2 possible cases such taht $c_2\in\{a_2, b_2\}$, w.l.o.g, assume that $c_2=a_2\neq b_2$.

There are $n$ ways to choose $a_1(=b_1=d_2=c_1)$ from $N_n$, $n-1$ ways to choose $d_2$ from $N_n\setminus\{d_1\}$, $n-2$ ways to choose $a_2(=c_2)$ from $N_n\setminus\{d_1, d_2\}$, $n-3$ ways to choose $b_2$ from $N_n\setminus\{d_1, d_2, a_2\}$.
Therefore, there are $2n(n-1)(n-2)(n-3)=2(n^4-6n^3+11n^2-6n)$ ways to choose four $B_{n-2}$'s as required.
The probability $P_{4.2.3}$ that there are four fault-free $B_{n-2}$'s chosen as in Case 4.2.3 is $\frac{1}{2}\times 2(n^4-6n^3+11n^2-6n)p^{4(n-2)!-(n-3)!}$.

{\it Case 4.2.4.} $c_1\neq a_1$, $c_1\neq b_1$, $c_2=d_2$ and $c_1\neq d_1$.

There are $n$ ways to choose $a_1(=b_1=d_2=c_2)$ from $N_n$, $n-1$ ways to choose $d_1$ from $N_n\setminus\{d_2\}$, $n-2$ ways to choose $a_2$ from $N_n\setminus\{d_1, d_2\}$, $n-3$ ways to choose $b_2$ from $N_n\setminus\{d_1, d_2, a_2\}$, $n-2$ ways to choose $c_1$ from $N_n\setminus\{a_1, d_1\}$.
Therefore, there are $n(n-1)(n-2)^2(n-3)=n^5-8n^4+23n^3-28n^2+12n$ ways to choose four $B_{n-2}$'s as required.
The probability $P_{4.2.4}$ that there are four fault-free $B_{n-2}$'s chosen as in Case 4.2.4 is $\frac{1}{2}(n^5-8n^4+23n^3-28n^2+12n)p^{4(n-2)!-(n-3)!}$.

{\it Case 4.2.5.} $c_1\neq a_1$, $c_1\neq b_1$, $c_2=d_2$ and $c_1=d_1$.

There are $n$ ways to choose $a_1(=b_1=d_2=c_2)$ from $N_n$, $n-1$ ways to choose $d_1(=c_1)$ from $N_n\setminus\{d_2\}$, $n-2$ ways to choose $a_2$ from $N_n\setminus\{d_1, d_2\}$, $n-3$ ways to choose $b_2$ from $N_n\setminus\{d_1, d_2, a_2\}$.
Therefore, there are $n(n-1)(n-2)(n-3)=n^4-6n^3+11n^2-6n$ ways to choose four $B_{n-2}$'s as required.
The probability $P_{4.2.5}$ that there are four fault-free $B_{n-2}$'s chosen as in Case 4.2.5 is $\frac{1}{2}(n^4-6n^3+11n^2-6n)p^{4(n-2)!}$.

Thus, the probability $P_{4.2}$ that there are four fault-free $B_{n-2}$'s chosen as in Case 4.2 is $\sum_{i=1}^{5}P_{4.2.i}=\frac{1}{2}(n^6-9n^5+32n^4-57n^3+51n^2-18n)p^{4(n-2)!}+\frac{1}{2}(n^5-6n^4+11n^3-6n^2)p^{4(n-2)!-(n-3)!}+\frac{1}{2}(n^5-9n^4+29n^3-39n^2+18n)p^{4(n-2)!-2(n-3)!}$.

{\it Case 4.3.} $a_2=b_2=d_1$, $a_1\notin\{d_1, d_2\}$, $b_1\notin\{d_1, d_2\}$.

{\it Case 4.3.1.} $c_1\neq a_1$, $c_1\neq b_1$, $c_2\neq d_2$ and $c_1\neq d_2$.

There are $n$ ways to choose $a_2(=b_2=d_1)$ from $N_n$, $n-1$ ways to choose $a_1$ from $N_n\setminus\{a_2\}$, $n-2$ ways to choose $b_1$ from $N_n\setminus\{b_2, a_1\}$, $n-3$ ways to choose $d_2$ from $N_n\setminus\{d_1, a_1, b_1\}$, $n-3$ ways to choose $c_1$ from $N_n\setminus\{a_1, b_1, d_2\}$, $n-2$ ways to choose $c_2$ from $N_n\setminus\{c_1, d_2\}$.
Therefore, there are $n(n-1)(n-2)^2(n-3)^2=n^6-11n^5+47n^4-97n^3+96n^2-36n$ ways to choose four $B_{n-2}$'s as required.
The probability $P_{4.3.1}$ that there are four fault-free $B_{n-2}$'s chosen as in Case 4.3.1 is $\frac{1}{2}(n^6-11n^5+47n^4-97n^3+96n^2-36n)p^{4(n-2)!}$.

{\it Case 4.3.2.} $c_1\neq a_1$, $c_1\neq b_1$, $c_2\neq d_2$ and $c_1=d_2$.

There are $n$ ways to choose $a_2(=b_2=d_1)$ from $N_n$, $n-1$ ways to choose $a_1$ from $N_n\setminus\{a_2\}$, $n-2$ ways to choose $b_1$ from $N_n\setminus\{b_2, a_1\}$, $n-3$ ways to choose $d_2(=c_1)$ from $N_n\setminus\{d_1, a_1, b_1\}$, $n-1$ ways to choose $c_2$ from $N_n\setminus\{d_2\}$.
Therefore, there are $n(n-1)^2(n-2)(n-3)=n^5-7n^4+17n^3-17n^2+6n$ ways to choose four $B_{n-2}$'s as required.
The probability $P_{4.3.2}$ that there are four fault-free $B_{n-2}$'s chosen as in Case 4.3.2 is $\frac{1}{2}(n^5-7n^4+17n^3-17n^2+6n)p^{4(n-2)!}$.

{\it Case 4.3.3.} $c_1=a_1\neq b_1$, $c_2\neq d_2$, $c_2\neq a_2$ or $c_1=b_1\neq a_1$, $c_2\neq d_2$, $c_2\neq b_2$.

W.l.o.g, assume that the former applies.
There are $n$ ways to choose $a_2(=b_2=d_1)$ from $N_n$, $n-1$ ways to choose $d_2$ from $N_n\setminus\{d_1\}$, $n-2$ ways to choose $a_1(=c_1)$ from $N_n\setminus\{d_1, d_2\}$, $n-3$ ways to choose $b_1$ from $N_n\setminus\{d_1, d_2, a_1\}$, $n-3$ ways to choose $c_2$ from $N_n\setminus\{c_1, d_2, a_2\}$.
Therefore, there are $2n(n-1)(n-2)(n-3)^2=2(n^5-9n^4+29n^3-39n^2+18n)$ ways to choose four $B_{n-2}$'s as required.
The probability $P_{4.3.3}$ that there are four fault-free $B_{n-2}$'s chosen as in Case 4.3.3 is $\frac{1}{2}\times 2(n^5-9n^4+29n^3-39n^2+18n)p^{4(n-2)!-(n-3)!}$.

{\it Case 4.3.4.} $c_1=a_1\neq b_1$, $c_2\neq d_2$, $c_2=a_2$ or $c_1=b_1\neq a_1$, $c_2\neq d_2$, $c_2=b_2$.

W.l.o.g, assume that the former applies.
There are $n$ ways to choose $a_2(=b_2=d_1=c_2)$ from $N_n$, $n-1$ ways to choose $d_2$ from $N_n\setminus\{d_1\}$, $n-2$ ways to choose $a_1(=c_1)$ from $N_n\setminus\{d_1, d_2\}$, $n-3$ ways to choose $b_1$ from $N_n\setminus\{d_1, d_2, a_1\}$.
Therefore, there are $2n(n-1)(n-2)(n-3)=2(n^4-6n^3+11n^2-6n)$ ways to choose four $B_{n-2}$'s as required.
The probability $P_{4.3.4}$ that there are four fault-free $B_{n-2}$'s chosen as in Case 4.3.4 is $\frac{1}{2}\times 2(n^4-6n^3+11n^2-6n)p^{4(n-2)!}$.

{\it Case 4.3.5.} $c_1\neq a_1$, $c_1\neq b_1$, $c_2=d_2$ and $c_1\neq d_1$.

There are $n$ ways to choose $a_2(=b_2=d_1)$ from $N_n$, $n-1$ ways to choose $d_2(=c_2)$ from $N_n\setminus\{d_1\}$, $n-2$ ways to choose $a_1$ from $N_n\setminus\{d_1, d_2\}$, $n-3$ ways to choose $b_1$ from $N_n\setminus\{d_1, d_2, a_1\}$, $n-4$ ways to choose $c_1$ from $N_n\setminus\{a_1, b_1, c_2, d_1\}$. 
Therefore, there are $n(n-1)(n-2)(n-3)(n-4)=n^5-10n^4+35n^3-50n^2+24n$ ways to choose four $B_{n-2}$'s as required.
The probability $P_{4.3.5}$ that there are four fault-free $B_{n-2}$'s chosen as in Case 4.3.5 is $\frac{1}{2}(n^5-10n^4+35n^3-50n^2+24n)p^{4(n-2)!-(n-3)!}$.

{\it Case 4.3.6.} $c_1\neq a_1$, $c_1\neq b_1$, $c_2=d_2$ and $c_1=d_1$.

There are $n$ ways to choose $a_2(=b_2=d_1=c_1)$ from $N_n$, $n-1$ ways to choose $d_2(=c_2)$ from $N_n\setminus\{d_1\}$, $n-2$ ways to choose $a_1$ from $N_n\setminus\{d_1, d_2\}$, $n-3$ ways to choose $b_1$ from $N_n\setminus\{d_1, d_2, a_1\}$. 
Therefore, there are $n(n-1)(n-2)(n-3)=n^4-6n^3+11n^2-6n$ ways to choose four $B_{n-2}$'s as required.
The probability $P_{4.3.6}$ that there are four fault-free $B_{n-2}$'s chosen as in Case 4.3.6 is $\frac{1}{2}(n^4-6n^3+11n^2-6n)p^{4(n-2)!}$.

{\it Case 4.3.7.} $c_1=a_1\neq b_1$, $c_2=d_2$ or $c_1=b_1\neq a_1$, $c_2=d_2$.

W.l.o.g, assume that the former applies.
There are $n$ ways to choose $a_2(=b_2=d_1)$ from $N_n$, $n-1$ ways to choose $a_1$ from $N_n\setminus\{a_2\}$, $n-2$ ways to choose $b_1(=c_1)$ from $N_n\setminus\{b_2, a_1\}$, $n-3$ ways to choose $d_2(=c_2)$ from $N_n\setminus\{d_1, a_1, b_1\}$. 
Therefore, there are $2n(n-1)(n-2)(n-3)=2(n^4-6n^3+11n^2-6n)$ ways to choose four $B_{n-2}$'s as required.
Clearly, in this case $c_2\neq a_2$ and $c_1\neq d_1$.
The probability $P_{4.3.7}$ that there are four fault-free $B_{n-2}$'s chosen as in Case 4.3.7 is $\frac{1}{2}\times 2(n^4-6n^3+11n^2-6n)p^{4(n-2)!-2(n-3)!}$.

Thus, the probability $P_{4.3}$ that there are four fault-free $B_{n-2}$'s chosen as in Case 4.3 is $\sum_{i=1}^{7}P_{4.3.i}=\frac{1}{2}(n^6-10n^5+43n^4-98n^3+112n^2-48n)p^{4(n-2)!}+\frac{1}{2}(3n^5-28n^4+93n^3-128n^2+60n)p^{4(n-2)!-(n-3)!}+(n^4-6n^3+11n^2-6n)p^{4(n-2)!-2(n-3)!}$.

{\it Case 4.4.} $a_2=b_2=d_2$, $a_1\notin\{d_1, d_2\}$, $b_1\notin\{d_1, d_2\}$.

{\it Case 4.4.1.} $c_1\neq a_1$, $c_1\neq b_1$, $c_2\neq d_2$ and $c_1\neq d_2$.

There are $n$ ways to choose $a_2(=b_2=d_2)$ from $N_n$, $n-1$ ways to choose $a_1$ from $N_n\setminus\{a_2\}$, $n-2$ ways to choose $b_1$ from $N_n\setminus\{b_2, a_1\}$, $n-3$ ways to choose $d_1$ from $N_n\setminus\{d_2, a_1, b_1\}$, $n-3$ ways to choose $c_1$ from $N_n\setminus\{a_1, b_1, d_2\}$, $n-2$ ways to choose $c_2$ from $N_n\setminus\{c_1, d_2\}$.
Therefore, there are $n(n-1)(n-2)^2(n-3)^2=n^6-11n^5+47n^4-97n^3+96n^2-36n$ ways to choose four $B_{n-2}$'s as required.
The probability $P_{4.4.1}$ that there are four fault-free $B_{n-2}$'s chosen as in Case 4.4.1 is $\frac{1}{2}(n^6-11n^5+47n^4-97n^3+96n^2-36n)p^{4(n-2)!}$.

{\it Case 4.4.2.} $c_1\neq a_1$, $c_1\neq b_1$, $c_2\neq d_2$ and $c_1=d_2$.

There are $n$ ways to choose $a_2(=b_2=d_2=c_1)$ from $N_n$, $n-1$ ways to choose $a_1$ from $N_n\setminus\{a_2\}$, $n-2$ ways to choose $b_1$ from $N_n\setminus\{b_2, a_1\}$, $n-3$ ways to choose $d_1$ from $N_n\setminus\{d_2, a_1, b_1\}$, $n-1$ ways to choose $c_2$ from $N_n\setminus\{d_2\}$.
Therefore, there are $n(n-1)^2(n-2)(n-3)=n^5-7n^4+17n^3-17n^2+6n$ ways to choose four $B_{n-2}$'s as required.
The probability $P_{4.4.2}$ that there are four fault-free $B_{n-2}$'s chosen as in Case 4.4.2 is $\frac{1}{2}(n^5-7n^4+17n^3-17n^2+6n)p^{4(n-2)!}$.

{\it Case 4.4.3.} $c_1=a_1\neq b_1$, $c_2\neq d_2$ or $c_1=b_1\neq a_1$, $c_2\neq d_2$.

W.l.o.g, assume that the former applies. There are $n$ ways to choose $a_2(=b_2=d_2)$ from $N_n$, $n-1$ ways to choose $a_1(=c_1)$ from $N_n\setminus\{a_2\}$, $n-2$ ways to choose $b_1$ from $N_n\setminus\{b_2, a_1\}$, $n-3$ ways to choose $d_1$ from $N_n\setminus\{d_2, a_1, b_1\}$, $n-2$ ways to choose $c_2$ from $N_n\setminus\{c_1, d_2\}$.
Therefore, there are $2n(n-1)(n-2)^2(n-3)=2(n^5-8n^4+23n^3-28n^2+12n)$ ways to choose four $B_{n-2}$'s as required.
Clearly, in this case $c_2\neq a_2$.
The probability $P_{4.4.3}$ that there are four fault-free $B_{n-2}$'s chosen as in Case 4.4.3 is $\frac{1}{2}\times 2(n^5-8n^4+23n^3-28n^2+12n)p^{4(n-2)!-(n-3)!}$.

{\it Case 4.4.4.} $c_1\neq a_1$, $c_1\neq b_1$, $c_2=d_2$ and $c_1\neq d_1$.

There are $n$ ways to choose $a_2(=b_2=d_2=c_2)$ from $N_n$, $n-1$ ways to choose $a_1$ from $N_n\setminus\{a_2\}$, $n-2$ ways to choose $b_1$ from $N_n\setminus\{a_1, b_2\}$, $n-3$ ways to choose $d_1$ from $N_n\setminus\{d_2, a_1, b_1\}$, $n-4$ ways to choose $c_1$ from $N_n\setminus\{a_1, b_1, c_2, d_1\}$. 
Therefore, there are $n(n-1)(n-2)(n-3)(n-4)=n^5-10n^4+35n^3-50n^2+24n$ ways to choose four $B_{n-2}$'s as required.
The probability $P_{4.4.4}$ that there are four fault-free $B_{n-2}$'s chosen as in Case 4.4.4 is $\frac{1}{2}(n^5-10n^4+35n^3-50n^2+24n)p^{4(n-2)!-(n-3)!}$.

{\it Case 4.4.5.} $c_1\neq a_1$, $c_1\neq b_1$, $c_2=d_2$ and $c_1=d_1$.

There are $n$ ways to choose $a_2(=b_2=d_2=c_2)$ from $N_n$, $n-1$ ways to choose $a_1$ from $N_n\setminus\{a_2\}$, $n-2$ ways to choose $b_1$ from $N_n\setminus\{a_1, b_2\}$, $n-3$ ways to choose $d_1(=c_1)$ from $N_n\setminus\{d_2, a_1, b_1\}$.
Therefore, there are $n(n-1)(n-2)(n-3)=n^4-6n^3+11n^2-6n$ ways to choose four $B_{n-2}$'s as required.
The probability $P_{4.4.5}$ that there are four fault-free $B_{n-2}$'s chosen as in Case 4.4.5 is $\frac{1}{2}(n^4-6n^3+11n^2-6n)p^{4(n-2)!}$.

{\it Case 4.4.6.} $c_1=a_1\neq b_1$, $c_2=d_2$ or $c_1=b_1\neq a_1$, $c_2=d_2$.

W.l.o.g, assume that the former applies. There are $n$ ways to choose $a_2(=b_2=d_2=c_2)$ from $N_n$, $n-1$ ways to choose $a_1(=c_1)$ from $N_n\setminus\{a_2\}$, $n-2$ ways to choose $b_1$ from $N_n\setminus\{b_2, a_1\}$, $n-3$ ways to choose $d_1$ from $N_n\setminus\{d_2, a_1, b_1\}$. 
Therefore, there are $2n(n-1)(n-2)(n-3)=2(n^4-6n^3+11n^2-6n)$ ways to choose four $B_{n-2}$'s as required.
Clearly, in this case $c_2=a_2$ and $c_1\neq d_1$.
The probability $P_{4.4.6}$ that there are four fault-free $B_{n-2}$'s chosen as in Case 4.4.6 is $\frac{1}{2}\times 2(n^4-6n^3+11n^2-6n)p^{4(n-2)!-(n-3)!}$.

Thus, the probability $P_{4.4}$ that there are four fault-free $B_{n-2}$'s chosen as in Case 4.4 is $\sum_{i=1}^{6}P_{4.4.i}=\frac{1}{2}(n^6-10n^5+41n^4-86n^3+90n^2-36n)p^{4(n-2)!}+\frac{3}{2}(n^5-8n^4+23n^3-28n^2+12n)p^{4(n-2)!-(n-3)!}$.

{\it Case 4.5.} $a_1=b_2=d_1$, $a_2\notin\{d_1, d_2\}$, $b_1\notin\{d_1, d_2\}$ or $a_2=b_1=d_1$, $a_1\notin\{d_1, d_2\}$, $b_2\notin\{d_1, d_2\}$.

W.l.o.g, assume that the former applies.

{\it Case 4.5.1.} $c_1\neq a_1$, $c_1\neq b_1$, $c_2\neq d_2$ and $c_1\neq d_2$.

There are $n$ ways to choose $a_1(=b_2=d_1)$ from $N_n$, $n-1$ ways to choose $d_2$ from $N_n\setminus\{d_1\}$, $n-2$ ways to choose $a_2$ from $N_n\setminus\{d_1, d_2\}$, $n-2$ ways to choose $b_1$ from $N_n\setminus\{d_1, d_2\}$, $n-3$ ways to choose $c_1$ from $N_n\setminus\{a_1, b_1, d_2\}$, $n-2$ ways to choose $c_2$ from $N_n\setminus\{c_1, d_2\}$.
Therefore, there are $n(n-1)(n-2)^3(n-3)=n^6-10n^5+39n^4-74n^3+68n^2-24n$ ways to choose four $B_{n-2}$'s as required.
The probability $P_{4.5.1}$ that there are four fault-free $B_{n-2}$'s chosen as in Case 4.5.1 is $\frac{1}{2}(n^6-10n^5+39n^4-74n^3+68n^2-24n)p^{4(n-2)!}$.

{\it Case 4.5.2.} $c_1\neq a_1$, $c_1\neq b_1$, $c_2\neq d_2$ and $c_1=d_2$.

There are $n$ ways to choose $a_1(=b_2=d_1)$ from $N_n$, $n-1$ ways to choose $d_2(=c_1)$ from $N_n\setminus\{d_1\}$, $n-2$ ways to choose $a_2$ from $N_n\setminus\{d_1, d_2\}$, $n-2$ ways to choose $b_1$ from $N_n\setminus\{d_1, d_2\}$, $n-1$ ways to choose $c_2$ from $N_n\setminus\{d_2\}$.
Therefore, there are $n(n-1)^2(n-2)^2=n^5-6n^4+13n^3-12n^2+4n$ ways to choose four $B_{n-2}$'s as required.
The probability $P_{4.5.2}$ that there are four fault-free $B_{n-2}$'s chosen as in Case 4.5.2 is $\frac{1}{2}(n^5-6n^4+13n^3-12n^2+4n)p^{4(n-2)!}$.

{\it Case 4.5.3.} $c_1=a_1\neq b_1$, $c_2\neq d_2$, $c_2\neq a_2$ or $c_1=b_1\neq a_1$, $c_2\neq d_2$, $c_2\neq b_2$.

W.l.o.g, assume that the former applies.
There are $n$ ways to choose $a_1(=b_2=d_1=c_1)$ from $N_n$, $n-1$ ways to choose $d_2$ from $N_n\setminus\{d_1\}$, $n-2$ ways to choose $a_2$ from $N_n\setminus\{d_1, d_2\}$, $n-2$ ways to choose $b_1$ from $N_n\setminus\{d_1, d_2\}$, $n-3$ ways to choose $c_2$ from $N_n\setminus\{c_1, d_2, a_2\}$.
Therefore, there are $2n(n-1)(n-2)^2(n-3)=2(n^5-8n^4+23n^3-28n^2+12n)$ ways to choose four $B_{n-2}$'s as required.
The probability $P_{4.5.3}$ that there are four fault-free $B_{n-2}$'s chosen as in Case 4.5.3 is $\frac{1}{2}\times 2(n^5-8n^4+23n^3-28n^2+12n)p^{4(n-2)!-(n-3)!}$.

{\it Case 4.5.4.} $c_1=a_1\neq b_1$, $c_2\neq d_2$, $c_2=a_2$ or $c_1=b_1\neq a_1$, $c_2\neq d_2$, $c_2=b_2$.

W.l.o.g, assume that the former applies.
There are $n$ ways to choose $a_1(=b_2=d_1=c_1)$ from $N_n$, $n-1$ ways to choose $d_2$ from $N_n\setminus\{d_1\}$, $n-2$ ways to choose $a_2(=c_2)$ from $N_n\setminus\{d_1, d_2\}$, $n-2$ ways to choose $b_1$ from $N_n\setminus\{d_1, d_2\}$.
Therefore, there are $2n(n-1)(n-2)^2=2(n^4-5n^3+8n^2-4n)$ ways to choose four $B_{n-2}$'s as required.
The probability $P_{4.5.4}$ that there are four fault-free $B_{n-2}$'s chosen as in Case 4.5.4 is $\frac{1}{2}\times 2(n^4-5n^3+8n^2-4n)p^{4(n-2)!}$.

{\it Case 4.5.5.} $c_1\neq a_1$, $c_1\neq b_1$, $c_2=d_2$.

There are $n$ ways to choose $a_1(=b_2=d_1)$ from $N_n$, $n-1$ ways to choose $d_2(=c_2)$ from $N_n\setminus\{d_1\}$, $n-2$ ways to choose $a_2$ from $N_n\setminus\{d_1, d_2\}$, $n-2$ ways to choose $b_1$ from $N_n\setminus\{d_1, d_2\}$, $n-3$ ways to choose $c_1$ from $N_n\setminus\{a_1, b_1, c_2\}$.
Therefore, there are $n(n-1)(n-2)^2(n-3)=n^5-8n^4+23n^3-28n^2+12n$ ways to choose four $B_{n-2}$'s as required.
Clearly, in this case $c_1\neq d_1$.
The probability $P_{4.5.5}$ that there are four fault-free $B_{n-2}$'s chosen as in Case 4.5.5 is $\frac{1}{2}(n^5-8n^4+23n^3-28n^2+12n)p^{4(n-2)!-(n-3)!}$.

{\it Case 4.5.6.} $c_1=a_1\neq b_1$ and $c_2=d_2$.

There are $n$ ways to choose $a_1(=b_2=d_1=c_1)$ from $N_n$, $n-1$ ways to choose $d_2(=c_2)$ from $N_n\setminus\{d_1\}$, $n-2$ ways to choose $a_2$ from $N_n\setminus\{d_1, d_2\}$, $n-2$ ways to choose $b_1$ from $N_n\setminus\{d_1, d_2\}$.
Therefore, there are $n(n-1)(n-2)^2=n^4-5n^3+8n^2-4n$ ways to choose four $B_{n-2}$'s as required.
Clearly, in this case $c_2\neq a_2$ and $c_1=d_1$.
The probability $P_{4.5.6}$ that there are four fault-free $B_{n-2}$'s chosen as in Case 4.5.6 is $\frac{1}{2}(n^4-5n^3+8n^2-4n)p^{4(n-2)!-(n-3)!}$.

{\it Case 4.5.7.} $c_1=b_1\neq a_1$ and $c_2=d_2$.

There are $n$ ways to choose $a_1(=b_2=d_1)$ from $N_n$, $n-1$ ways to choose $d_2(=c_2)$ from $N_n\setminus\{a_1\}$, $n-2$ ways to choose $a_2$ from $N_n\setminus\{d_1, d_2\}$, $n-2$ ways to choose $b_1(=c_1)$ from $N_n\setminus\{d_1, d_2\}$.
Therefore, there are $n(n-1)(n-2)^2=n^4-5n^3+8n^2-4n$ ways to choose four $B_{n-2}$'s as required.
Clearly, in this case $c_2\neq b_2$ and $c_1\neq d_1$.
The probability $P_{4.5.7}$ that there are four fault-free $B_{n-2}$'s chosen as in Case 4.5.7 is $\frac{1}{2}(n^4-5n^3+8n^2-4n)p^{4(n-2)!-2(n-3)!}$.

Thus, the probability $P_{4.5}$ that there are four fault-free $B_{n-2}$'s chosen as in Case 4.5 is $2\sum_{i=1}^{7}P_{4.5.i}=(n^6-9n^5+35n^4-71n^3+72n^2-28n)p^{4(n-2)!}+(3n^5-23n^4+64n^3-76n^2+32n)p^{4(n-2)!-(n-3)!}+(n^4-5n^3+8n^2-4n)p^{4(n-2)!-2(n-3)!}$.

{\it Case 4.6.} $a_1=b_2=d_2$, $a_2\notin\{d_1, d_2\}$, $b_1\notin\{d_1, d_2\}$ or $a_2=b_1=d_2$, $a_1\notin\{d_1, d_2\}$, $b_2\notin\{d_1, d_2\}$.

W.l.o.g, assume that the former applies.

{\it Case 4.6.1.} $c_1\neq a_1$, $c_1\neq b_1$ and $c_2\neq d_2$.

There are $n$ ways to choose $a_1(=b_2=d_2)$ from $N_n$, $n-1$ ways to choose $d_1$ from $N_n\setminus\{d_2\}$, $n-2$ ways to choose $a_2$ from $N_n\setminus\{d_1, d_2\}$, $n-2$ ways to choose $b_1$ from $N_n\setminus\{d_1, d_2\}$, $n-2$ ways to choose $c_1$ from $N_n\setminus\{a_1, b_1\}$, $n-2$ ways to choose $c_2$ from $N_n\setminus\{c_1, d_2\}$.
Therefore, there are $n(n-1)(n-2)^4=n^6-9n^5+32n^4-56n^3+48n^2-16n$ ways to choose four $B_{n-2}$'s as required.
The probability $P_{4.6.1}$ that there are four fault-free $B_{n-2}$'s chosen as in Case 4.6.1 is $\frac{1}{2}(n^6-9n^5+32n^4-56n^3+48n^2-16n)p^{4(n-2)!}$.

{\it Case 4.6.2.} $c_1=a_1\neq b_1$, $c_2\neq d_2$ and $c_2\neq a_2$.

There are $n$ ways to choose $a_1(=b_2=d_2=c_1)$ from $N_n$, $n-1$ ways to choose $d_1$ from $N_n\setminus\{d_2\}$, $n-2$ ways to choose $a_2$ from $N_n\setminus\{d_1, d_2\}$, $n-2$ ways to choose $b_1$ from $N_n\setminus\{d_1, d_2\}$, $n-2$ ways to choose $c_2$ from $N_n\setminus\{d_2, a_2\}$.
Therefore, there are $n(n-1)(n-2)^3=n^5-7n^4+18n^3-20n^2+8n$ ways to choose four $B_{n-2}$'s as required.
The probability $P_{4.6.2}$ that there are four fault-free $B_{n-2}$'s chosen as in Case 4.6.2 is $\frac{1}{2}(n^5-7n^4+18n^3-20n^2+8n)p^{4(n-2)!-(n-3)!}$.

{\it Case 4.6.3.} $c_1=a_1\neq b_1$, $c_2\neq d_2$ and $c_2=a_2$.

There are $n$ ways to choose $a_1(=b_2=d_2=c_1)$ from $N_n$, $n-1$ ways to choose $d_1$ from $N_n\setminus\{d_2\}$, $n-2$ ways to choose $a_2(=c_2)$ from $N_n\setminus\{d_1, d_2\}$, $n-2$ ways to choose $b_1$ from $N_n\setminus\{d_1, d_2\}$.
Therefore, there are $n(n-1)(n-2)^2=n^4-5n^3+8n^2-4n$ ways to choose four $B_{n-2}$'s as required.
The probability $P_{4.6.3}$ that there are four fault-free $B_{n-2}$'s chosen as in Case 4.6.3 is $\frac{1}{2}(n^4-5n^3+8n^2-4n)p^{4(n-2)!}$.

{\it Case 4.6.4.} $c_1=b_1\neq a_1$ and $c_2\neq d_2$.

There are $n$ ways to choose $a_1(=b_2=d_2)$ from $N_n$, $n-1$ ways to choose $d_1$ from $N_n\setminus\{d_2\}$, $n-2$ ways to choose $a_2$ from $N_n\setminus\{d_1, d_2\}$, $n-2$ ways to choose $b_1(=c_1)$ from $N_n\setminus\{d_1, d_2\}$, $n-2$ ways to choose $c_2$ from $N_n\setminus\{d_2, c_1\}$.
Therefore, there are $n(n-1)(n-2)^3=n^5-7n^4+18n^3-20n^2+8n$ ways to choose four $B_{n-2}$'s as required.
Clearly, in this case $c_2\neq b_2$.
The probability $P_{4.6.4}$ that there are four fault-free $B_{n-2}$'s chosen as in Case 4.6.4 is $\frac{1}{2}(n^5-7n^4+18n^3-20n^2+8n)p^{4(n-2)!-(n-3)!}$.

{\it Case 4.6.5.} $c_1\neq a_1$, $c_1\neq b_1$, $c_2=d_2$ and $c_1\neq d_1$.

There are $n$ ways to choose $a_1(=b_2=d_2=c_2)$ from $N_n$, $n-1$ ways to choose $d_1$ from $N_n\setminus\{d_2\}$, $n-2$ ways to choose $a_2$ from $N_n\setminus\{d_1, d_2\}$, $n-2$ ways to choose $b_1$ from $N_n\setminus\{d_1, d_2\}$, $n-3$ ways to choose $c_1$ from $N_n\setminus\{a_1, b_1, d_1\}$.
Therefore, there are $n(n-1)(n-2)^2(n-3)=n^5-8n^4+23n^3-28n^2+12n$ ways to choose four $B_{n-2}$'s as required.
The probability $P_{4.6.5}$ that there are four fault-free $B_{n-2}$'s chosen as in Case 4.6.5 is $\frac{1}{2}(n^5-8n^4+23n^3-28n^2+12n)p^{4(n-2)!-(n-3)!}$.

{\it Case 4.6.6.} $c_1\neq a_1$, $c_1\neq b_1$, $c_2=d_2$ and $c_1=d_1$.

There are $n$ ways to choose $a_1(=b_2=d_2=c_2)$ from $N_n$, $n-1$ ways to choose $d_1(=c_1)$ from $N_n\setminus\{d_2\}$, $n-2$ ways to choose $a_2$ from $N_n\setminus\{d_1, d_2\}$, $n-2$ ways to choose $b_1$ from $N_n\setminus\{d_1, d_2\}$.
Therefore, there are $n(n-1)(n-2)^2=n^4-5n^3+8n^2-4n$ ways to choose four $B_{n-2}$'s as required.
The probability $P_{4.6.6}$ that there are four fault-free $B_{n-2}$'s chosen as in Case 4.6.6 is $\frac{1}{2}(n^4-5n^3+8n^2-4n)p^{4(n-2)!}$.

{\it Case 4.6.7.} $c_1=b_1\neq a_1$, $c_2=d_2$.

There are $n$ ways to choose $a_1(=b_2=d_2=c_2)$ from $N_n$, $n-1$ ways to choose $d_1$ from $N_n\setminus\{d_2\}$, $n-2$ ways to choose $a_2$ from $N_n\setminus\{d_1, d_2\}$, $n-2$ ways to choose $b_1(=c_1)$ from $N_n\setminus\{d_1, d_2\}$.
Therefore, there are $n(n-1)(n-2)^2=n^4-5n^3+8n^2-4n$ ways to choose four $B_{n-2}$'s as required.
The probability $P_{4.6.7}$ that there are four fault-free $B_{n-2}$'s chosen as in Case 4.6.7 is $\frac{1}{2}(n^4-5n^3+8n^2-4n)p^{4(n-2)!-(n-3)!}$.

Thus, the probability $P_{4.6}$ that there are four fault-free $B_{n-2}$'s chosen as in Case 4.6 is $2\sum_{i=1}^{7}P_{4.6.i}=(n^6-9n^5+34n^4-66n^3+64n^2-24n)p^{4(n-2)!}+3(n^5-7n^4+18n^3-20n^2+8n)p^{4(n-2)!-(n-3)!}$.

{\it Case 4.7.} $a_1=d_1$, $b_1=d_2$, $a_2\notin\{d_1, d_2\}$, $b_2\notin\{d_1, d_2\}$ or $a_1=d_2$, $b_1=d_1$, $a_2\notin\{d_1, d_2\}$, $b_2\notin\{d_1, d_2\}$.

W.l.o.g, assume that the former applies.

{\it Case 4.7.1.} $c_1\neq a_1$, $c_1\neq b_1$ and $c_2\neq d_2$.

There are $n$ ways to choose $a_1(=d_1)$ from $N_n$, $n-1$ ways to choose $b_1(=d_2)$ from $N_n\setminus\{d_1\}$, $n-2$ ways to choose $a_2$ from $N_n\setminus\{d_1, d_2\}$, $n-2$ ways to choose $b_2$ from $N_n\setminus\{d_1, d_2\}$, $n-2$ ways to choose $c_1$ from $N_n\setminus\{a_1, b_1\}$, $n-2$ ways to choose $c_2$ from $N_n\setminus\{c_1, d_2\}$.
Therefore, there are $n(n-1)(n-2)^4=n^6-9n^5+32n^4-56n^3+48n^2-16n$ ways to choose four $B_{n-2}$'s as required.
The probability $P_{4.7.1}$ that there are four fault-free $B_{n-2}$'s chosen as in Case 4.7.1 is $\frac{1}{2}(n^6-9n^5+32n^4-56n^3+48n^2-16n)p^{4(n-2)!}$.

{\it Case 4.7.2.} $c_1=a_1\neq b_1$, $c_2\neq d_2$ and $c_2\neq a_2$.

There are $n$ ways to choose $a_1(=d_1=c_1)$ from $N_n$, $n-1$ ways to choose $b_1(=d_2)$ from $N_n\setminus\{d_1\}$, $n-2$ ways to choose $a_2$ from $N_n\setminus\{d_1, d_2\}$, $n-2$ ways to choose $b_2$ from $N_n\setminus\{d_1, d_2\}$, $n-3$ ways to choose $c_2$ from $N_n\setminus\{c_1, d_2, a_2\}$.
Therefore, there are $n(n-1)(n-2)^2(n-3)=n^5-8n^4+23n^3-28n^2+12n$ ways to choose four $B_{n-2}$'s as required.
The probability $P_{4.7.2}$ that there are four fault-free $B_{n-2}$'s chosen as in Case 4.7.2 is $\frac{1}{2}(n^5-8n^4+23n^3-28n^2+12n)p^{4(n-2)!-(n-3)!}$.

{\it Case 4.7.3.} $c_1=a_1\neq b_1$, $c_2\neq d_2$ and $c_2=a_2$.

There are $n$ ways to choose $a_1(=d_1=c_1)$ from $N_n$, $n-1$ ways to choose $b_1(=d_2)$ from $N_n\setminus\{d_1\}$, $n-2$ ways to choose $a_2(=c_2)$ from $N_n\setminus\{d_1, d_2\}$, $n-2$ ways to choose $b_2$ from $N_n\setminus\{d_1, d_2\}$.
Therefore, there are $n(n-1)(n-2)^2=n^4-5n^3+8n^2-4n$ ways to choose four $B_{n-2}$'s as required.
The probability $P_{4.7.3}$ that there are four fault-free $B_{n-2}$'s chosen as in Case 4.7.3 is $\frac{1}{2}(n^4-5n^3+8n^2-4n)p^{4(n-2)!}$.

{\it Case 4.7.4.} $c_1=b_1\neq a_1$, $c_2\neq d_2$ and $c_2\neq b_2$.

There are $n$ ways to choose $a_1(=d_1)$ from $N_n$, $n-1$ ways to choose $b_1(=d_2=c_1)$ from $N_n\setminus\{d_1\}$, $n-2$ ways to choose $a_2$ from $N_n\setminus\{d_1, d_2\}$, $n-2$ ways to choose $b_2$ from $N_n\setminus\{d_1, d_2\}$, $n-2$ ways to choose $c_2$ from $N_n\setminus\{d_2, b_2\}$.
Therefore, there are $n(n-1)(n-2)^3=n^5-7n^4+18n^3-20n^2+8n$ ways to choose four $B_{n-2}$'s as required.
The probability $P_{4.7.4}$ that there are four fault-free $B_{n-2}$'s chosen as in Case 4.7.4 is $\frac{1}{2}(n^5-7n^4+18n^3-20n^2+8n)p^{4(n-2)!-(n-3)!}$.

{\it Case 4.7.5.} $c_1=b_1\neq a_1$, $c_2\neq d_2$ and $c_2=b_2$.

There are $n$ ways to choose $a_1(=d_1)$ from $N_n$, $n-1$ ways to choose $b_1(=d_2=c_1)$ from $N_n\setminus\{d_1\}$, $n-2$ ways to choose $a_2$ from $N_n\setminus\{d_1, d_2\}$, $n-2$ ways to choose $b_2(=c_2)$ from $N_n\setminus\{d_1, d_2\}$.
Therefore, there are $n(n-1)(n-2)^2=n^4-5n^3+8n^2-4n$ ways to choose four $B_{n-2}$'s as required.
The probability $P_{4.7.5}$ that there are four fault-free $B_{n-2}$'s chosen as in Case 4.7.5 is $\frac{1}{2}(n^4-5n^3+8n^2-4n)p^{4(n-2)!}$.

{\it Case 4.7.6.} $c_1\neq a_1$, $c_1\neq b_1$ and $c_2=d_2$.

There are $n$ ways to choose $a_1(=d_1)$ from $N_n$, $n-1$ ways to choose $b_1(=d_2=c_2)$ from $N_n\setminus\{d_1\}$, $n-2$ ways to choose $a_2$ from $N_n\setminus\{d_1, d_2\}$, $n-2$ ways to choose $b_2$ from $N_n\setminus\{d_1, d_2\}$, $n-2$ ways to choose $c_1$ from $N_n\setminus\{a_1, b_1\}$.
Therefore, there are $n(n-1)(n-2)^3=n^5-7n^4+18n^3-20n^2+8n$ ways to choose four $B_{n-2}$'s as required.
Clearly, in this case $c_1\neq d_1$.
The probability $P_{4.7.6}$ that there are four fault-free $B_{n-2}$'s chosen as in Case 4.7.6 is $\frac{1}{2}(n^5-7n^4+18n^3-20n^2+8n)p^{4(n-2)!-(n-3)!}$.

{\it Case 4.7.7.} $c_1=a_1\neq b_1$ and $c_2=d_2$.

There are $n$ ways to choose $a_1(=d_1=c_1)$ from $N_n$, $n-1$ ways to choose $b_1(=d_2=c_2)$ from $N_n\setminus\{d_1\}$, $n-2$ ways to choose $a_2$ from $N_n\setminus\{d_1, d_2\}$, $n-2$ ways to choose $b_2$ from $N_n\setminus\{d_1, d_2\}$.
Therefore, there are $n(n-1)(n-2)^2=n^4-5n^3+8n^2-4n$ ways to choose four $B_{n-2}$'s as required.
Clearly, in this case $c_2\neq a_2$ and $c_1=d_1$.
The probability $P_{4.7.7}$ that there are four fault-free $B_{n-2}$'s chosen as in Case 4.7.7 is $\frac{1}{2}(n^4-5n^3+8n^2-4n)p^{4(n-2)!-(n-3)!}$.

Thus, the probability $P_{4.7}$ that there are four fault-free $B_{n-2}$'s chosen as in Case 4.7 is $2\sum_{i=1}^{7}P_{4.7.i}=(n^6-9n^5+34n^4-66n^3+64n^2-24n)p^{4(n-2)!}+3(n^5-7n^4+18n^3-20n^2+8n)p^{4(n-2)!-(n-3)!}$.

{\it Case 4.8.} $a_1=d_2$, $b_2=d_1$, $a_2\notin\{d_1, d_2\}$, $b_1\notin\{d_1, d_2\}$ or $a_2=d_1$, $b_1=d_2$, $a_1\notin\{d_1, d_2\}$, $b_2\notin\{d_1, d_2\}$.

W.l.o.g, assume that the former applies.

{\it Case 4.8.1.} $c_1\neq a_1$, $c_1\neq b_1$ and $c_2\neq d_2$.

There are $n$ ways to choose $a_1(=d_2)$ from $N_n$, $n-1$ ways to choose $b_2(=d_1)$ from $N_n\setminus\{d_2\}$, $n-2$ ways to choose $a_2$ from $N_n\setminus\{d_1, d_2\}$, $n-2$ ways to choose $b_1$ from $N_n\setminus\{d_1, d_2\}$, $n-2$ ways to choose $c_1$ from $N_n\setminus\{a_1, b_1\}$, $n-2$ ways to choose $c_2$ from $N_n\setminus\{c_1, d_2\}$.
Therefore, there are $n(n-1)(n-2)^4=n^6-9n^5+32n^4-56n^3+48n^2-16n$ ways to choose four $B_{n-2}$'s as required.
The probability $P_{4.8.1}$ that there are four fault-free $B_{n-2}$'s chosen as in Case 4.8.1 is $\frac{1}{2}(n^6-9n^5+32n^4-56n^3+48n^2-16n)p^{4(n-2)!}$.

{\it Case 4.8.2.} $c_1=a_1\neq b_1$, $c_2\neq d_2$ and $c_2\neq a_2$.

There are $n$ ways to choose $a_1(=d_2=c_1)$ from $N_n$, $n-1$ ways to choose $b_2(=d_1)$ from $N_n\setminus\{d_2\}$, $n-2$ ways to choose $a_2$ from $N_n\setminus\{d_1, d_2\}$, $n-2$ ways to choose $b_1$ from $N_n\setminus\{d_1, d_2\}$, $n-2$ ways to choose $c_2$ from $N_n\setminus\{d_2, a_2\}$.
Therefore, there are $n(n-1)(n-2)^3=n^5-7n^4+18n^3-20n^2+8n$ ways to choose four $B_{n-2}$'s as required.
The probability $P_{4.8.2}$ that there are four fault-free $B_{n-2}$'s chosen as in Case 4.8.2 is $\frac{1}{2}(n^5-7n^4+18n^3-20n^2+8n)p^{4(n-2)!-(n-3)!}$.

{\it Case 4.8.3.} $c_1=a_1\neq b_1$, $c_2\neq d_2$ and $c_2=a_2$.

There are $n$ ways to choose $a_1(=d_2=c_1)$ from $N_n$, $n-1$ ways to choose $b_2(=d_1)$ from $N_n\setminus\{d_2\}$, $n-2$ ways to choose $a_2(=c_2)$ from $N_n\setminus\{d_1, d_2\}$, $n-2$ ways to choose $b_1$ from $N_n\setminus\{d_1, d_2\}$.
Therefore, there are $n(n-1)(n-2)^2=n^4-5n^3+8n^2-4n$ ways to choose four $B_{n-2}$'s as required.
The probability $P_{4.8.3}$ that there are four fault-free $B_{n-2}$'s chosen as in Case 4.8.3 is $\frac{1}{2}(n^4-5n^3+8n^2-4n)p^{4(n-2)!}$.

{\it Case 4.8.4.} $c_1=b_1\neq a_1$, $c_2\neq d_2$ and $c_2\neq b_2$.

There are $n$ ways to choose $a_1(=d_2)$ from $N_n$, $n-1$ ways to choose $b_2(=d_1)$ from $N_n\setminus\{d_2\}$, $n-2$ ways to choose $a_2$ from $N_n\setminus\{d_1, d_2\}$, $n-2$ ways to choose $b_1(=c_1)$ from $N_n\setminus\{d_1, d_2\}$, $n-3$ ways to choose $c_2$ from $N_n\setminus\{c_1, d_2, b_2\}$.
Therefore, there are $n(n-1)(n-2)^2(n-3)=n^5-8n^4+23n^3-28n^2+12n$ ways to choose four $B_{n-2}$'s as required.
The probability $P_{4.8.4}$ that there are four fault-free $B_{n-2}$'s chosen as in Case 4.8.4 is $\frac{1}{2}(n^5-8n^4+23n^3-28n^2+12n)p^{4(n-2)!-(n-3)!}$.

{\it Case 4.8.5.} $c_1=b_1\neq a_1$, $c_2\neq d_2$ and $c_2=b_2$.

There are $n$ ways to choose $a_1(=d_2)$ from $N_n$, $n-1$ ways to choose $b_2(=d_1=c_2)$ from $N_n\setminus\{d_2\}$, $n-2$ ways to choose $a_2$ from $N_n\setminus\{d_1, d_2\}$, $n-2$ ways to choose $b_1(=c_1)$ from $N_n\setminus\{d_1, d_2\}$.
Therefore, there are $n(n-1)(n-2)^2=n^4-5n^3+8n^2-4n$ ways to choose four $B_{n-2}$'s as required.
The probability $P_{4.8.5}$ that there are four fault-free $B_{n-2}$'s chosen as in Case 4.8.5 is $\frac{1}{2}(n^4-5n^3+8n^2-4n)p^{4(n-2)!}$.

{\it Case 4.8.6.} $c_1\neq a_1$, $c_1\neq b_1$, $c_2=d_2$ and $c_1\neq d_1$.

There are $n$ ways to choose $a_1(=d_2=c_2)$ from $N_n$, $n-1$ ways to choose $b_2(=d_1)$ from $N_n\setminus\{d_2\}$, $n-2$ ways to choose $a_2$ from $N_n\setminus\{d_1, d_2\}$, $n-2$ ways to choose $b_1$ from $N_n\setminus\{d_1, d_2\}$, $n-3$ ways to choose $c_1$ from $N_n\setminus\{a_1, b_1, d_1\}$.
Therefore, there are $n(n-1)(n-2)^2(n-3)=n^5-8n^4+23n^3-28n^2+12n$ ways to choose four $B_{n-2}$'s as required.
The probability $P_{4.8.6}$ that there are four fault-free $B_{n-2}$'s chosen as in Case 4.8.6 is $\frac{1}{2}(n^5-8n^4+23n^3-28n^2+12n)p^{4(n-2)!-(n-3)!}$.

{\it Case 4.8.7.} $c_1\neq a_1$, $c_1\neq b_1$, $c_2=d_2$ and $c_1=d_1$.

There are $n$ ways to choose $a_1(=d_2=c_2)$ from $N_n$, $n-1$ ways to choose $b_2(=d_1=c_1)$ from $N_n\setminus\{d_2\}$, $n-2$ ways to choose $a_2$ from $N_n\setminus\{d_1, d_2\}$, $n-2$ ways to choose $b_1$ from $N_n\setminus\{d_1, d_2\}$.
Therefore, there are $n(n-1)(n-2)^2=n^4-5n^3+8n^2-4n$ ways to choose four $B_{n-2}$'s as required.
The probability $P_{4.8.7}$ that there are four fault-free $B_{n-2}$'s chosen as in Case 4.8.7 is $\frac{1}{2}(n^4-5n^3+8n^2-4n)p^{4(n-2)!}$.

{\it Case 4.8.8.} $c_1=b_1\neq a_1$ and $c_2=d_2$.

There are $n$ ways to choose $a_1(=d_2=c_2)$ from $N_n$, $n-1$ ways to choose $b_2(=d_1)$ from $N_n\setminus\{d_2\}$, $n-2$ ways to choose $a_2$ from $N_n\setminus\{d_1, d_2\}$, $n-2$ ways to choose $b_1(=c_1)$ from $N_n\setminus\{d_1, d_2\}$.
Therefore, there are $n(n-1)(n-2)^2=n^4-5n^3+8n^2-4n$ ways to choose four $B_{n-2}$'s as required.
Clearly, in this case $c_2\neq b_2$ and $c_1\neq d_1$.
The probability $P_{4.8.8}$ that there are four fault-free $B_{n-2}$'s chosen as in Case 4.8.8 is $\frac{1}{2}(n^4-5n^3+8n^2-4n)p^{4(n-2)!-2(n-3)!}$.

Thus, the probability $P_{4.8}$ that there are four fault-free $B_{n-2}$'s chosen as in Case 4.8 is $2\sum_{i=1}^{8}P_{4.8.i}=(n^6-9n^5+35n^4-71n^3+72n^2-28n)p^{4(n-2)!}+(3n^5-23n^4+64n^3-76n^2+32n)p^{4(n-2)!-(n-3)!}+(n^4-5n^3+8n^2-4n)p^{4(n-2)!-2(n-3)!}$.

{\it Case 4.9.} $a_1=d_1$, $b_2=d_2$, $a_2\notin\{d_1, d_2\}$, $b_1\notin\{d_1, d_2\}$ or $a_2=d_2$, $b_1=d_1$, $a_1\notin\{d_1, d_2\}$, $b_2\notin\{d_1, d_2\}$.

W.l.o.g, assume that the former applies.

{\it Case 4.9.1.} $c_1\neq a_1$, $c_1\neq b_1$, $c_2\neq d_2$ and $c_1\neq d_2$.

There are $n$ ways to choose $a_1(=d_1)$ from $N_n$, $n-1$ ways to choose $b_2(=d_2)$ from $N_n\setminus\{d_1\}$, $n-2$ ways to choose $a_2$ from $N_n\setminus\{d_1, d_2\}$, $n-2$ ways to choose $b_1$ from $N_n\setminus\{d_1, d_2\}$, $n-3$ ways to choose $c_1$ from $N_n\setminus\{a_1, b_1, d_2\}$, $n-2$ ways to choose $c_2$ from $N_n\setminus\{c_1, d_2\}$.
Therefore, there are $n(n-1)(n-2)^3(n-3)=n^6-10n^5+39n^4-74n^3+68n^2-24n$ ways to choose four $B_{n-2}$'s as required.
The probability $P_{4.9.1}$ that there are four fault-free $B_{n-2}$'s chosen as in Case 4.9.1 is $\frac{1}{2}(n^6-10n^5+39n^4-74n^3+68n^2-24n)p^{4(n-2)!}$.

{\it Case 4.9.2.} $c_1\neq a_1$, $c_1\neq b_1$, $c_2\neq d_2$ and $c_1=d_2$.

There are $n$ ways to choose $a_1(=d_1)$ from $N_n$, $n-1$ ways to choose $b_2(=d_2=c_1)$ from $N_n\setminus\{d_1\}$, $n-2$ ways to choose $a_2$ from $N_n\setminus\{d_1, d_2\}$, $n-2$ ways to choose $b_1$ from $N_n\setminus\{d_1, d_2\}$, $n-1$ ways to choose $c_2$ from $N_n\setminus\{d_2\}$.
Therefore, there are $n(n-1)^2(n-2)^2=n^5-6n^4+13n^3-12n^2+4n$ ways to choose four $B_{n-2}$'s as required.
The probability $P_{4.9.2}$ that there are four fault-free $B_{n-2}$'s chosen as in Case 4.9.2 is $\frac{1}{2}(n^5-6n^4+13n^3-12n^2+4n)p^{4(n-2)!}$.

{\it Case 4.9.3.} $c_1=a_1\neq b_1$, $c_2\neq d_2$ and $c_2\neq a_2$.

There are $n$ ways to choose $a_1(=d_1=c_1)$ from $N_n$, $n-1$ ways to choose $b_2(=d_2)$ from $N_n\setminus\{d_1\}$, $n-2$ ways to choose $a_2$ from $N_n\setminus\{d_1, d_2\}$, $n-2$ ways to choose $b_1$ from $N_n\setminus\{d_1, d_2\}$, $n-3$ ways to choose $c_2$ from $N_n\setminus\{c_1, d_2, a_2\}$.
Therefore, there are $n(n-1)(n-2)^2(n-3)=n^5-8n^4+23n^3-28n^2+12n$ ways to choose four $B_{n-2}$'s as required.
The probability $P_{4.9.3}$ that there are four fault-free $B_{n-2}$'s chosen as in Case 4.9.3 is $\frac{1}{2}(n^5-8n^4+23n^3-28n^2+12n)p^{4(n-2)!-(n-3)!}$.

{\it Case 4.9.4.} $c_1=a_1\neq b_1$, $c_2\neq d_2$ and $c_2=a_2$.

There are $n$ ways to choose $a_1(=d_1=c_1)$ from $N_n$, $n-1$ ways to choose $b_2(=d_2)$ from $N_n\setminus\{d_1\}$, $n-2$ ways to choose $a_2(=c_2)$ from $N_n\setminus\{d_1, d_2\}$, $n-2$ ways to choose $b_1$ from $N_n\setminus\{d_1, d_2\}$.
Therefore, there are $n(n-1)(n-2)^2=n^4-5n^3+8n^2-4n$ ways to choose four $B_{n-2}$'s as required.
The probability $P_{4.9.4}$ that there are four fault-free $B_{n-2}$'s chosen as in Case 4.9.4 is $\frac{1}{2}(n^4-5n^3+8n^2-4n)p^{4(n-2)!}$.

{\it Case 4.9.5.} $c_1=b_1\neq a_1$ and $c_2\neq d_2$.

There are $n$ ways to choose $a_1(=d_1)$ from $N_n$, $n-1$ ways to choose $b_2(=d_2)$ from $N_n\setminus\{d_1\}$, $n-2$ ways to choose $a_2$ from $N_n\setminus\{d_1, d_2\}$, $n-2$ ways to choose $b_1(=c_1)$ from $N_n\setminus\{d_1, d_2\}$, $n-2$ ways to choose $c_2$ from $N_n\setminus\{c_1, d_2\}$.
Therefore, there are $n(n-1)(n-2)^3=n^5-7n^4+18n^3-20n^2+8n$ ways to choose four $B_{n-2}$'s as required.
Clearly, in this case $c_2\neq b_2$.
The probability $P_{4.9.5}$ that there are four fault-free $B_{n-2}$'s chosen as in Case 4.9.5 is $\frac{1}{2}(n^5-7n^4+18n^3-20n^2+8n)p^{4(n-2)!-(n-3)!}$.

{\it Case 4.9.6.} $c_1\neq a_1$, $c_1\neq b_1$ and $c_2=d_2$.

There are $n$ ways to choose $a_1(=d_1)$ from $N_n$, $n-1$ ways to choose $b_2(=d_2=c_2)$ from $N_n\setminus\{d_1\}$, $n-2$ ways to choose $a_2$ from $N_n\setminus\{d_1, d_2\}$, $n-2$ ways to choose $b_1$ from $N_n\setminus\{d_1, d_2\}$, $n-3$ ways to choose $c_1$ from $N_n\setminus\{a_1, b_1, c_2\}$.
Therefore, there are $n(n-1)(n-2)^2(n-3)=n^5-8n^4+23n^3-28n^2+12n$ ways to choose four $B_{n-2}$'s as required.
Clearly, in this case $c_1\neq d_1$.
The probability $P_{4.9.6}$ that there are four fault-free $B_{n-2}$'s chosen as in Case 4.9.6 is $\frac{1}{2}(n^5-8n^4+23n^3-28n^2+12n)p^{4(n-2)!-(n-3)!}$.

{\it Case 4.9.7.} $c_1=a_1\neq b_1$, $c_2=d_2$ or $c_1=b_1\neq a_1$, $c_2=d_2$.

W.l.o.g, assume that the former applies. There are $n$ ways to choose $a_1(=d_1=c_1)$ from $N_n$, $n-1$ ways to choose $b_2(=d_2=c_2)$ from $N_n\setminus\{d_1\}$, $n-2$ ways to choose $a_2$ from $N_n\setminus\{d_1, d_2\}$, $n-2$ ways to choose $b_1$ from $N_n\setminus\{d_1, d_2\}$.
Therefore, there are $2n(n-1)(n-2)^2=2(n^4-5n^3+8n^2-4n)$ ways to choose four $B_{n-2}$'s as required.
Clearly, in this case $c_2\neq a_2$ and $c_1=d_1$.
The probability $P_{4.9.7}$ that there are four fault-free $B_{n-2}$'s chosen as in Case 4.9.7 is $\frac{1}{2}\times 2(n^4-5n^3+8n^2-4n)p^{4(n-2)!-(n-3)!}$.

Thus, the probability $P_{4.9}$ that there are four fault-free $B_{n-2}$'s chosen as in Case 4.9 is $2\sum_{i=1}^{7}P_{4.9.i}=(n^6-9n^5+34n^4-66n^3+64n^2-24n)p^{4(n-2)!}+3(n^5-7n^4+18n^3-20n^2+8n)p^{4(n-2)!-(n-3)!}$.

{\it Case 4.10.} $a_2=d_1$, $b_2=d_2$, $a_1\notin\{d_1, d_2\}$, $b_1\notin\{d_1, d_2\}$ or $a_2=d_2$, $b_2=d_1$, $a_1\notin\{d_1, d_2\}$, $b_1\notin\{d_1, d_2\}$.

W.l.o.g, assume that the former applies.

{\it Case 4.10.1.} $c_1\neq a_1$, $c_1\neq b_1$, $c_2\neq d_2$, $a_1\neq b_1$ and $c_1\neq d_2$.

There are $n$ ways to choose $a_2(=d_1)$ from $N_n$, $n-1$ ways to choose $b_2(=d_2)$ from $N_n\setminus\{d_1\}$, $n-2$ ways to choose $a_1$ from $N_n\setminus\{d_1, d_2\}$, $n-3$ ways to choose $b_1$ from $N_n\setminus\{d_1, d_2, a_1\}$, $n-3$ ways to choose $c_1$ from $N_n\setminus\{a_1, b_1, d_2\}$, $n-2$ ways to choose $c_2$ from $N_n\setminus\{c_1, d_2\}$.
Therefore, there are $n(n-1)(n-2)^2(n-3)^2=n^6-11n^5+47n^4-97n^3+96n^2-36n$ ways to choose four $B_{n-2}$'s as required.
The probability $P_{4.10.1}$ that there are four fault-free $B_{n-2}$'s chosen as in Case 4.10.1 is $\frac{1}{2}(n^6-11n^5+47n^4-97n^3+96n^2-36n)p^{4(n-2)!}$.

{\it Case 4.10.2.} $c_1\neq a_1$, $c_1\neq b_1$, $c_2\neq d_2$, $a_1\neq b_1$ and $c_1=d_2$.

There are $n$ ways to choose $a_2(=d_1)$ from $N_n$, $n-1$ ways to choose $b_2(=d_2=c_1)$ from $N_n\setminus\{d_1\}$, $n-2$ ways to choose $a_1$ from $N_n\setminus\{d_1, d_2\}$, $n-3$ ways to choose $b_1$ from $N_n\setminus\{d_1, d_2, a_1\}$, $n-1$ ways to choose $c_2$ from $N_n\setminus\{d_2\}$.
Therefore, there are $n(n-1)^2(n-2)(n-3)=n^5-7n^4+17n^3-17n^2+6n$ ways to choose four $B_{n-2}$'s as required.
The probability $P_{4.10.2}$ that there are four fault-free $B_{n-2}$'s chosen as in Case 4.10.2 is $\frac{1}{2}(n^5-7n^4+17n^3-17n^2+6n)p^{4(n-2)!}$.

{\it Case 4.10.3.} $c_1\neq a_1=b_1$, $c_2\neq d_2$ and $c_1\neq d_2$.

There are $n$ ways to choose $a_2(=d_1)$ from $N_n$, $n-1$ ways to choose $b_2(=d_2)$ from $N_n\setminus\{d_1\}$, $n-2$ ways to choose $a_1(=b_1)$ from $N_n\setminus\{d_1, d_2\}$, $n-2$ ways to choose $c_1$ from $N_n\setminus\{a_1, d_2\}$, $n-2$ ways to choose $c_2$ from $N_n\setminus\{c_1, d_2\}$.
Therefore, there are $n(n-1)(n-2)^3=n^5-7n^4+18n^3-20n^2+8n$ ways to choose four $B_{n-2}$'s as required.
The probability $P_{4.10.3}$ that there are four fault-free $B_{n-2}$'s chosen as in Case 4.10.3 is $\frac{1}{2}(n^5-7n^4+18n^3-20n^2+8n)p^{4(n-2)!}$.

{\it Case 4.10.4.} $c_1\neq a_1=b_1$, $c_2\neq d_2$ and $c_1=d_2$.

There are $n$ ways to choose $a_2(=d_1)$ from $N_n$, $n-1$ ways to choose $b_2(=d_2=c_1)$ from $N_n\setminus\{d_1\}$, $n-2$ ways to choose $a_1(=b_1)$ from $N_n\setminus\{d_1, d_2\}$,  $n-1$ ways to choose $c_2$ from $N_n\setminus\{d_2\}$.
Therefore, there are $n(n-1)^2(n-2)=n^4-4n^3+5n^2-2n$ ways to choose four $B_{n-2}$'s as required.
The probability $P_{4.10.4}$ that there are four fault-free $B_{n-2}$'s chosen as in Case 4.10.4 is $\frac{1}{2}(n^4-4n^3+5n^2-2n)p^{4(n-2)!}$.

{\it Case 4.10.5.} $c_1=a_1\neq b_1$, $c_2\neq d_2$ and $c_2\neq a_2$.

There are $n$ ways to choose $a_2(=d_1)$ from $N_n$, $n-1$ ways to choose $b_2(=d_2)$ from $N_n\setminus\{d_1\}$, $n-2$ ways to choose $a_1(=c_1)$ from $N_n\setminus\{d_1, d_2\}$, $n-3$ ways to choose $b_1$ from $N_n\setminus\{d_1, d_2, a_1\}$, $n-3$ ways to choose $c_2$ from $N_n\setminus\{c_1, d_2, a_2\}$.
Therefore, there are $n(n-1)(n-2)(n-3)^2=n^5-9n^4+29n^3-39n^2+18n$ ways to choose four $B_{n-2}$'s as required.
The probability $P_{4.10.5}$ that there are four fault-free $B_{n-2}$'s chosen as in Case 4.10.5 is $\frac{1}{2}(n^5-9n^4+29n^3-39n^2+18n)p^{4(n-2)!-(n-3)!}$.

{\it Case 4.10.6.} $c_1=a_1\neq b_1$, $c_2\neq d_2$ and $c_2=a_2$.

There are $n$ ways to choose $a_2(=d_1=c_2)$ from $N_n$, $n-1$ ways to choose $b_2(=d_2)$ from $N_n\setminus\{d_1\}$, $n-2$ ways to choose $a_1(=c_1)$ from $N_n\setminus\{d_1, d_2\}$, $n-3$ ways to choose $b_1$ from $N_n\setminus\{d_1, d_2, a_1\}$.
Therefore, there are $n(n-1)(n-2)(n-3)=n^4-6n^3+11n^2-6n$ ways to choose four $B_{n-2}$'s as required.
The probability $P_{4.10.6}$ that there are four fault-free $B_{n-2}$'s chosen as in Case 4.10.6 is $\frac{1}{2}(n^4-6n^3+11n^2-6n)p^{4(n-2)!}$.

{\it Case 4.10.7.} $c_1=b_1\neq a_1$, $c_2\neq d_2$.

There are $n$ ways to choose $a_2(=d_1)$ from $N_n$, $n-1$ ways to choose $b_2(=d_2)$ from $N_n\setminus\{d_1\}$, $n-2$ ways to choose $a_1$ from $N_n\setminus\{d_1, d_2\}$, $n-3$ ways to choose $b_1(=c_1)$ from $N_n\setminus\{d_1, d_2, a_1\}$, $n-2$ ways to choose $c_2$ from $N_n\setminus\{c_1, d_2\}$.
Therefore, there are $n(n-1)(n-2)^2(n-3)=n^5-8n^4+23n^3-28n^2+12n$ ways to choose four $B_{n-2}$'s as required.
Clearly, in this case $c_2\neq b_2$.
The probability $P_{4.10.7}$ that there are four fault-free $B_{n-2}$'s chosen as in Case 4.10.7 is $\frac{1}{2}(n^5-8n^4+23n^3-28n^2+12n)p^{4(n-2)!-(n-3)!}$.

{\it Case 4.10.8.} $c_1=a_1=b_1$, $c_2\neq d_2$ and $c_2\neq a_2$.

There are $n$ ways to choose $a_2(=d_1)$ from $N_n$, $n-1$ ways to choose $b_2(=d_2)$ from $N_n\setminus\{d_1\}$, $n-2$ ways to choose $a_1(=b_1=c_1)$ from $N_n\setminus\{d_1, d_2\}$, $n-3$ ways to choose $c_2$ from $N_n\setminus\{c_1, d_2, a_2\}$.
Therefore, there are $n(n-1)(n-2)(n-3)=n^4-6n^3+11n^2-6n$ ways to choose four $B_{n-2}$'s as required.
Clearly, in this case $c_2\neq b_2$.
The probability $P_{4.10.8}$ that there are four fault-free $B_{n-2}$'s chosen as in Case 4.10.8 is $\frac{1}{2}(n^4-6n^3+11n^2-6n)p^{4(n-2)!-2(n-3)!}$.

{\it Case 4.10.9.} $c_1=a_1=b_1$, $c_2\neq d_2$ and $c_2=a_2$.

There are $n$ ways to choose $a_2(=d_1=c_2)$ from $N_n$, $n-1$ ways to choose $b_2(=d_2)$ from $N_n\setminus\{d_1\}$, $n-2$ ways to choose $a_1(=b_1=c_1)$ from $N_n\setminus\{d_1, d_2\}$.
Therefore, there are $n(n-1)(n-2)=n^3-3n^2+2n$ ways to choose four $B_{n-2}$'s as required.
Clearly, in this case $c_2\neq b_2$.
The probability $P_{4.10.9}$ that there are four fault-free $B_{n-2}$'s chosen as in Case 4.10.9 is $\frac{1}{2}(n^3-3n^2+2n)p^{4(n-2)!-(n-3)!}$.

{\it Case 4.10.10.} $c_1\neq a_1$, $c_1\neq b_1$, $c_2=d_2$, $a_1\neq b_1$ and $c_1\neq d_1$.

There are $n$ ways to choose $a_2(=d_1)$ from $N_n$, $n-1$ ways to choose $b_2(=d_2=c_2)$ from $N_n\setminus\{d_1\}$, $n-2$ ways to choose $a_1$ from $N_n\setminus\{d_1, d_2\}$, $n-3$ ways to choose $b_1$ from $N_n\setminus\{d_1, d_2, a_1\}$, $n-4$ ways to choose $c_1$ from $N_n\setminus\{a_1, b_1, c_2, d_1\}$.
Therefore, there are $n(n-1)(n-2)(n-3)(n-4)=n^5-10n^4+35n^3-50n^2+24n$ ways to choose four $B_{n-2}$'s as required.
The probability $P_{4.10.10}$ that there are four fault-free $B_{n-2}$'s chosen as in Case 4.10.10 is $\frac{1}{2}(n^5-10n^4+35n^3-50n^2+24n)p^{4(n-2)!-(n-3)!}$.

{\it Case 4.10.11.} $c_1\neq a_1$, $c_1\neq b_1$, $c_2=d_2$, $a_1\neq b_1$ and $c_1=d_1$.

There are $n$ ways to choose $a_2(=d_1=c_1)$ from $N_n$, $n-1$ ways to choose $b_2(=d_2=c_2)$ from $N_n\setminus\{d_1\}$, $n-2$ ways to choose $a_1$ from $N_n\setminus\{d_1, d_2\}$, $n-3$ ways to choose $b_1$ from $N_n\setminus\{d_1, d_2, a_1\}$.
Therefore, there are $n(n-1)(n-2)(n-3)=n^4-6n^3+11n^2-6n$ ways to choose four $B_{n-2}$'s as required.
The probability $P_{4.10.11}$ that there are four fault-free $B_{n-2}$'s chosen as in Case 4.10.11 is $\frac{1}{2}(n^4-6n^3+11n^2-6n)p^{4(n-2)!}$.

{\it Case 4.10.12} $c_1\neq a_1=b_1$ $c_2=d_2$ and $c_1\neq d_1$.

There are $n$ ways to choose $a_2(=d_1)$ from $N_n$, $n-1$ ways to choose $b_2(=d_2=c_2)$ from $N_n\setminus\{d_1\}$, $n-2$ ways to choose $a_1(=b_1)$ from $N_n\setminus\{d_1, d_2\}$, $n-3$ ways to choose $c_1$ from $N_n\setminus\{a_1, c_2, d_1\}$.
Therefore, there are $n(n-1)(n-2)(n-3)=n^4-6n^3+11n^2-6n$ ways to choose four $B_{n-2}$'s as required.
The probability $P_{4.10.12}$ that there are four fault-free $B_{n-2}$'s chosen as in Case 4.10.12 is $\frac{1}{2}(n^4-6n^3+11n^2-6n)p^{4(n-2)!-(n-3)!}$.

{\it Case 4.10.13} $c_1\neq a_1=b_1$ $c_2=d_2$ and $c_1=d_1$.

There are $n$ ways to choose $a_2(=d_1=c_1)$ from $N_n$, $n-1$ ways to choose $b_2(=d_2=c_2)$ from $N_n\setminus\{d_1\}$, $n-2$ ways to choose $a_1(=b_1)$ from $N_n\setminus\{d_1, d_2\}$.
Therefore, there are $n(n-1)(n-2)=n^3-3n^2+2n$ ways to choose four $B_{n-2}$'s as required.
The probability $P_{4.10.13}$ that there are four fault-free $B_{n-2}$'s chosen as in Case 4.10.13 is $\frac{1}{2}(n^3-3n^2+2n)p^{4(n-2)!}$.

{\it Case 4.10.14.} $c_1=a_1\neq b_1$ and $c_2=d_2$.

There are $n$ ways to choose $a_2(=d_1)$ from $N_n$, $n-1$ ways to choose $b_2(=d_2=c_2)$ from $N_n\setminus\{d_1\}$, $n-2$ ways to choose $a_1(=c_1)$ from $N_n\setminus\{d_1, d_2\}$, $n-3$ ways to choose $b_1$ from $N_n\setminus\{d_1, d_2, a_1\}$.
Therefore, there are $n(n-1)(n-2)(n-3)=n^4-6n^3+11n^2-6n$ ways to choose four $B_{n-2}$'s as required.
Clearly, in this case $c_2\neq a_2$ and $c_1\neq d_1$.
The probability $P_{4.10.14}$ that there are four fault-free $B_{n-2}$'s chosen as in Case 4.10.14 is $\frac{1}{2}(n^4-6n^3+11n^2-6n)p^{4(n-2)!-2(n-3)!}$.

{\it Case 4.10.15.} $c_1=b_1\neq a_1$ and $c_2=d_2$.

There are $n$ ways to choose $a_2(=d_1)$ from $N_n$, $n-1$ ways to choose $b_2(=d_2=c_2)$ from $N_n\setminus\{d_1\}$, $n-2$ ways to choose $a_1$ from $N_n\setminus\{d_1, d_2\}$, $n-3$ ways to choose $b_1(=c_1)$ from $N_n\setminus\{d_1, d_2, a_1\}$.
Therefore, there are $n(n-1)(n-2)(n-3)=n^4-6n^3+11n^2-6n$ ways to choose four $B_{n-2}$'s as required.
Clearly, in this case $c_2=b_2$ and $c_1\neq d_1$.
The probability $P_{4.10.15}$ that there are four fault-free $B_{n-2}$'s chosen as in Case 4.10.15 is $\frac{1}{2}(n^4-6n^3+11n^2-6n)p^{4(n-2)!-(n-3)!}$.

{\it Case 4.10.16.} $c_1=a_1=b_1$ and $c_2\neq d_2$.

There are $n$ ways to choose $a_2(=d_1)$ from $N_n$, $n-1$ ways to choose $b_2(=d_2=c_2)$ from $N_n\setminus\{d_1\}$, $n-2$ ways to choose $a_1(=b_1=c_1)$ from $N_n\setminus\{d_1, d_2\}$.
Therefore, there are $n(n-1)(n-2)=n^3-3n^2+2n$ ways to choose four $B_{n-2}$'s as required.
Clearly, in this case $c_2\neq a_2$, $c_2=b_2$ and $c_1\neq d_1$.
The probability $P_{4.10.16}$ that there are four fault-free $B_{n-2}$'s chosen as in Case 4.10.16 is $\frac{1}{2}(n^3-3n^2+2n)p^{4(n-2)!-2(n-3)!}$.

Thus, the probability $P_{4.10}$ that there are four fault-free $B_{n-2}$'s chosen as in Case 4.10 is $2\sum_{i=1}^{16}P_{4.10.i}=(n^6-9n^5+36n^4-77n^3+83n^2-34n)p^{4(n-2)!}+(3n^5-25n^4+76n^3-98n^2+44n)p^{4(n-2)!-(n-3)!}+(2n^4-11n^3+19n^2-10n)p^{4(n-2)!-2(n-3)!}$.

In summary, the probability $P_4$ that there are four fault-free $B_{n-2}$'s chosen as in Case 4 is $\sum_{i=1}^{10}P_{4.i}=(8n^6-73n^5+282n^4-566n^3+571n^2-222n)p^{4(n-2)!}+\frac{1}{2}(44n^5-331n^4+910n^3-1073n^2+450n)p^{4(n-2)!-(n-3)!}+\frac{1}{2}(2n^5-9n^4+10n^3+3n^2-6n)p^{4(n-2)!-2(n-3)!}$.

{\it Case 5.} $|\{a_1, a_2\}\cap\{d_1, d_2\}|=1$, $\{b_1, b_2\}\cap\{d_1, d_2\}=\emptyset$ or $\{a_1, a_2\}\cap\{d_1, d_2\}=\emptyset$, $|\{b_1, b_2\}\cap\{d_1, d_2\}|=1$.

W.l.o.g, assume that the former applies.

Observation \ref{ob:4} implies that $a_1 a_2 X^{n-2}$ and $X^{n-2} d_1 d_2$ are disjoint, and $b_1 b_2 X^{n-2}$ and $X^{n-2} d_1 d_2$ are not disjoint and $V(b_1 b_2 X^{n-2})\cap V(X^{n-2} d_1 d_2)=V(b_1 b_2 X^{n-4} d_1 d_2)$ in this case.

{\it Case 5.1.} $a_1=d_1$, $a_2\notin\{d_1, d_2\}$ and $\{b_1, b_2\}\cap\{d_1, d_2\}=\emptyset$.

{\it Case 5.1.1.} $c_1\neq a_1$, $c_1\neq b_1$, $c_2\neq d_2$ and $c_1\neq d_2$.

There are $n$ ways to choose $a_1(=d_1)$ from $N_n$, $n-1$ ways to choose $d_2$ from $N_n\setminus\{d_1\}$, $n-2$ ways to choose $a_2$ from $N_n\setminus\{d_1, d_2\}$, $n-2$ ways to choose $b_1$ from $N_n\setminus\{d_1, d_2\}$, $n-3$ ways to choose $b_2$ from $N_n\setminus\{d_1, d_2, b_1\}$, $n-3$ ways to choose $c_1$ from $N_n\setminus\{a_1, b_1, d_2\}$, $n-2$ ways to choose $c_2$ from $N_n\setminus\{c_1, d_2\}$.
Therefore, there are $n(n-1)(n-2)^3(n-3)^2=n^7-13n^6+69n^5-191n^4+290n^3-228n^2+72n$ ways to choose four $B_{n-2}$'s as required.
The probability $P_{5.1.1}$ that there are four fault-free $B_{n-2}$'s chosen as in Case 5.1.1 is $\frac{1}{2}(n^7-13n^6+69n^5-191n^4+290n^3-228n^2+72n)p^{4(n-2)!-(n-4)!}$.

{\it Case 5.1.2.} $c_1\neq a_1$, $c_1\neq b_1$, $c_2\neq d_2$ and $c_1=d_2$.

There are $n$ ways to choose $a_1(=d_1)$ from $N_n$, $n-1$ ways to choose $d_2(=c_1)$ from $N_n\setminus\{d_1\}$, $n-2$ ways to choose $a_2$ from $N_n\setminus\{d_1, d_2\}$, $n-2$ ways to choose $b_1$ from $N_n\setminus\{d_1, d_2\}$, $n-3$ ways to choose $b_2$ from $N_n\setminus\{d_1, d_2, b_1\}$, $n-1$ ways to choose $c_2$ from $N_n\setminus\{d_2\}$.
Therefore, there are $n(n-1)^2(n-2)^2(n-3)=n^6-9n^5+31n^4-51n^3+40n^2-12n$ ways to choose four $B_{n-2}$'s as required.
The probability $P_{5.1.2}$ that there are four fault-free $B_{n-2}$'s chosen as in Case 5.1.2 is $\frac{1}{2}(n^6-9n^5+31n^4-51n^3+40n^2-12n)p^{4(n-2)!-(n-4)!}$.

{\it Case 5.1.3.} $c_1=a_1\neq b_1$, $c_2\neq d_2$, $c_2\neq a_2$ or $c_1=b_1\neq a_1$, $c_2\neq d_2$, $c_2\neq b_2$.

W.l.o.g, assume that the former applies. There are $n$ ways to choose $a_1(=d_1=c_1)$ from $N_n$, $n-1$ ways to choose $d_2$ from $N_n\setminus\{d_1\}$, $n-2$ ways to choose $a_2$ from $N_n\setminus\{d_1, d_2\}$, $n-2$ ways to choose $b_1$ from $N_n\setminus\{d_1, d_2\}$, $n-3$ ways to choose $b_2$ from $N_n\setminus\{d_1, d_2, b_1\}$, $n-3$ ways to choose $c_2$ from $N_n\setminus\{d_2, c_1, a_2\}$.
Therefore, there are $2n(n-1)(n-2)^2(n-3)^2=2(n^6-11n^5+47n^4-97n^3+96n^2-36n)$ ways to choose four $B_{n-2}$'s as required.
The probability $P_{5.1.3}$ that there are four fault-free $B_{n-2}$'s chosen as in Case 5.1.3 is $\frac{1}{2}\times 2(n^6-11n^5+47n^4-97n^3+96n^2-36n)p^{4(n-2)!-(n-3)!-(n-4)!}$.

{\it Case 5.1.4.} $c_1=a_1\neq b_1$, $c_2\neq d_2$, $c_2=a_2$ or $c_1=b_1\neq a_1$, $c_2\neq d_2$, $c_2=b_2$.

W.l.o.g, assume that the former applies. There are $n$ ways to choose $a_1(=d_1=c_1)$ from $N_n$, $n-1$ ways to choose $d_2$ from $N_n\setminus\{d_1\}$, $n-2$ ways to choose $a_2(=c_2)$ from $N_n\setminus\{d_1, d_2\}$, $n-2$ ways to choose $b_1$ from $N_n\setminus\{d_1, d_2\}$, $n-3$ ways to choose $b_2$ from $N_n\setminus\{d_1, d_2, b_1\}$.
Therefore, there are $2n(n-1)(n-2)^2(n-3)=2(n^5-8n^4+23n^3-28n^2+12n)$ ways to choose four $B_{n-2}$'s as required.
The probability $P_{5.1.4}$ that there are four fault-free $B_{n-2}$'s chosen as in Case 5.1.4 is $\frac{1}{2}\times 2(n^5-8n^4+23n^3-28n^2+12n)p^{4(n-2)!-(n-4)!}$.

{\it Case 5.1.5.} $c_1\neq a_1$, $c_1\neq b_1$ and $c_2=d_2$.

There are $n$ ways to choose $a_1(=d_1)$ from $N_n$, $n-1$ ways to choose $d_2(=c_2)$ from $N_n\setminus\{d_1\}$, $n-2$ ways to choose $a_2$ from $N_n\setminus\{d_1, d_2\}$, $n-2$ ways to choose $b_1$ from $N_n\setminus\{d_1, d_2\}$, $n-3$ ways to choose $b_2$ from $N_n\setminus\{d_1, d_2, b_1\}$, $n-3$ ways to choose $c_1$ from $N_n\setminus\{a_1, b_1, c_2\}$.
Therefore, there are $n(n-1)(n-2)^2(n-3)^2=n^6-11n^5+47n^4-97n^3+96n^2-36n$ ways to choose four $B_{n-2}$'s as required.
Clearly, in this case $c_1\neq d_1$.
The probability $P_{5.1.5}$ that there are four fault-free $B_{n-2}$'s chosen as in Case 5.1.5 is $\frac{1}{2}(n^6-11n^5+47n^4-97n^3+96n^2-36n)p^{4(n-2)!-(n-3)!-(n-4)!}$.

{\it Case 5.1.6.} $c_1=a_1\neq b_1$ and $c_2=d_2$.

There are $n$ ways to choose $a_1(=d_1=c_1)$ from $N_n$, $n-1$ ways to choose $d_2(=c_2)$ from $N_n\setminus\{d_1\}$, $n-2$ ways to choose $a_2$ from $N_n\setminus\{d_1, d_2\}$, $n-2$ ways to choose $b_1$ from $N_n\setminus\{d_1, d_2\}$, $n-3$ ways to choose $b_2$ from $N_n\setminus\{d_1, d_2, b_1\}$.
Therefore, there are $n(n-1)(n-2)^2(n-3)=n^5-8n^4+23n^3-28n^2+12n$ ways to choose four $B_{n-2}$'s as required.
Clearly, in this case $c_2\neq a_2$ and $c_1=d_1$.
The probability $P_{5.1.6}$ that there are four fault-free $B_{n-2}$'s chosen as in Case 5.1.6 is $\frac{1}{2}(n^5-8n^4+23n^3-28n^2+12n)p^{4(n-2)!-(n-3)!-(n-4)!}$.

{\it Case 5.1.7.} $c_1=b_1\neq a_1$ and $c_2=d_2$.

There are $n$ ways to choose $a_1(=d_1=c_1)$ from $N_n$, $n-1$ ways to choose $d_2(=c_2)$ from $N_n\setminus\{d_1\}$, $n-2$ ways to choose $a_2$ from $N_n\setminus\{d_1, d_2\}$, $n-2$ ways to choose $b_1$ from $N_n\setminus\{d_1, d_2\}$, $n-3$ ways to choose $b_2$ from $N_n\setminus\{d_1, d_2, b_1\}$.
Therefore, there are $n(n-1)(n-2)^2(n-3)=n^5-8n^4+23n^3-28n^2+12n$ ways to choose four $B_{n-2}$'s as required.
Clearly, in this case $c_2\neq b_2$ and $c_1\neq d_1$.
The probability $P_{5.1.7}$ that there are four fault-free $B_{n-2}$'s chosen as in Case 5.1.7 is $\frac{1}{2}(n^5-8n^4+23n^3-28n^2+12n)p^{4(n-2)!-2(n-3)!-(n-4)!}$.

Thus, the probability $P_{5.1}$ that there are four fault-free $B_{n-2}$'s chosen as in Case 5.1 is $\sum_{i=1}^{7}P_{5.1.i}=\frac{1}{2}(n^7-12n^6+62n^5-176n^4+285n^3-244n^2+84n)p^{4(n-2)!-(n-4)!}+\frac{1}{2}(3n^6-32n^5+133n^4-268n^3+260n^2-96n)p^{4(n-2)!-(n-3)!-(n-4)!}+\frac{1}{2}(n^5-8n^4+23n^3-28n^2+12n)p^{4(n-2)!-2(n-3)!-(n-4)!}$.

{\it Case 5.2.} $a_1=d_2$, $a_2=\notin\{d_1, d_2\}$ and $\{b_1, b_2\}\cap\{d_1, d_2\}=\emptyset$.

{\it Case 5.2.1.} $c_1\neq a_1$, $c_1\neq b_1$ and $c_2\neq d_2$.

There are $n$ ways to choose $a_1(=d_2)$ from $N_n$, $n-1$ ways to choose $d_1$ from $N_n\setminus\{d_2\}$, $n-2$ ways to choose $a_2$ from $N_n\setminus\{d_1, d_2\}$, $n-2$ ways to choose $b_1$ from $N_n\setminus\{d_1, d_2\}$, $n-3$ ways to choose $b_2$ from $N_n\setminus\{d_1, d_2, b_1\}$, $n-2$ ways to choose $c_1$ from $N_n\setminus\{a_1, b_1\}$, $n-2$ ways to choose $c_2$ from $N_n\setminus\{c_1, d_2\}$.
Therefore, there are $n(n-1)(n-2)^4(n-3)=n^7-12n^6+59n^5-152n^4+216n^3-160n^2+48n$ ways to choose four $B_{n-2}$'s as required.
The probability $P_{5.2.1}$ that there are four fault-free $B_{n-2}$'s chosen as in Case 5.2.1 is $\frac{1}{2}(n^7-12n^6+59n^5-152n^4+216n^3-160n^2+48n)p^{4(n-2)!-(n-4)!}$.

{\it Case 5.2.2.} $c_1=a_1\neq b_1$, $c_2\neq d_2$ and $c_2\neq a_2$.

There are $n$ ways to choose $a_1(=d_2=c_1)$ from $N_n$, $n-1$ ways to choose $d_1$ from $N_n\setminus\{d_2\}$, $n-2$ ways to choose $a_2$ from $N_n\setminus\{d_1, d_2\}$, $n-2$ ways to choose $b_1$ from $N_n\setminus\{d_1, d_2\}$, $n-3$ ways to choose $b_2$ from $N_n\setminus\{d_1, d_2, b_1\}$, $n-2$ ways to choose $c_2$ from $N_n\setminus\{d_2, a_2\}$.
Therefore, there are $n(n-1)(n-2)^3(n-3)=n^6-10n^5+39n^4-74n^3+68n^2-24n$ ways to choose four $B_{n-2}$'s as required.
The probability $P_{5.2.2}$ that there are four fault-free $B_{n-2}$'s chosen as in Case 5.2.2 is $\frac{1}{2}(n^6-10n^5+39n^4-74n^3+68n^2-24n)p^{4(n-2)!-(n-3)!-(n-4)!}$.

{\it Case 5.2.3.} $c_1=a_1\neq b_1$, $c_2\neq d_2$ and $c_2=a_2$.

There are $n$ ways to choose $a_1(=d_2=c_1)$ from $N_n$, $n-1$ ways to choose $d_1$ from $N_n\setminus\{d_2\}$, $n-2$ ways to choose $a_2(=c_2)$ from $N_n\setminus\{d_1, d_2\}$, $n-2$ ways to choose $b_1$ from $N_n\setminus\{d_1, d_2\}$, $n-3$ ways to choose $b_2$ from $N_n\setminus\{d_1, d_2, b_1\}$.
Therefore, there are $n(n-1)(n-2)^2(n-3)=n^5-8n^4+23n^3-28n^2+12n$ ways to choose four $B_{n-2}$'s as required.
The probability $P_{5.2.3}$ that there are four fault-free $B_{n-2}$'s chosen as in Case 5.2.3 is $\frac{1}{2}(n^5-8n^4+23n^3-28n^2+12n)p^{4(n-2)!-(n-4)!}$.

{\it Case 5.2.4.} $c_1=b_1\neq a_1$, $c_2\neq d_2$ and $c_2\neq b_2$.

There are $n$ ways to choose $a_1(=d_2)$ from $N_n$, $n-1$ ways to choose $d_1$ from $N_n\setminus\{d_2\}$, $n-2$ ways to choose $a_2$ from $N_n\setminus\{d_1, d_2\}$, $n-2$ ways to choose $b_1(=c_1)$ from $N_n\setminus\{d_1, d_2\}$, $n-3$ ways to choose $b_2$ from $N_n\setminus\{d_1, d_2, b_1\}$, $n-3$ ways to choose $c_2$ from $N_n\setminus\{d_2, c_1, b_2\}$.
Therefore, there are $n(n-1)(n-2)^2(n-3)^2=n^6-11n^5+47n^4-97n^3+96n^2-36n$ ways to choose four $B_{n-2}$'s as required.
The probability $P_{5.2.4}$ that there are four fault-free $B_{n-2}$'s chosen as in Case 5.2.4 is $\frac{1}{2}(n^6-11n^5+47n^4-97n^3+96n^2-36n)p^{4(n-2)!-(n-3)!-(n-4)!}$.

{\it Case 5.2.5.} $c_1=b_1\neq a_1$, $c_2\neq d_2$ and $c_2=b_2$.

There are $n$ ways to choose $a_1(=d_2)$ from $N_n$, $n-1$ ways to choose $d_1$ from $N_n\setminus\{d_2\}$, $n-2$ ways to choose $a_2$ from $N_n\setminus\{d_1, d_2\}$, $n-2$ ways to choose $b_1(=c_1)$ from $N_n\setminus\{d_1, d_2\}$, $n-3$ ways to choose $b_2(=c_2)$ from $N_n\setminus\{d_1, d_2, b_1\}$.
Therefore, there are $n(n-1)(n-2)^2(n-3)=n^5-8n^4+23n^3-28n^2+12n$ ways to choose four $B_{n-2}$'s as required.
The probability $P_{5.2.5}$ that there are four fault-free $B_{n-2}$'s chosen as in Case 5.2.5 is $\frac{1}{2}(n^5-8n^4+23n^3-28n^2+12n)p^{4(n-2)!-(n-4)!}$.

{\it Case 5.2.6.} $c_1\neq a_1$, $c_1\neq b_1$, $c_2=d_2$ and $c_1\neq d_1$.

There are $n$ ways to choose $a_1(=d_2=c_2)$ from $N_n$, $n-1$ ways to choose $d_1$ from $N_n\setminus\{d_2\}$, $n-2$ ways to choose $a_2$ from $N_n\setminus\{d_1, d_2\}$, $n-2$ ways to choose $b_1$ from $N_n\setminus\{d_1, d_2\}$, $n-3$ ways to choose $b_2$ from $N_n\setminus\{d_1, d_2, b_1\}$, $n-3$ ways to choose $c_1$ from $N_n\setminus\{a_1, b_1, d_1\}$.
Therefore, there are $n(n-1)(n-2)^2(n-3)^2=n^6-11n^5+47n^4-97n^3+96n^2-36n$ ways to choose four $B_{n-2}$'s as required.
The probability $P_{5.2.6}$ that there are four fault-free $B_{n-2}$'s chosen as in Case 5.2.6 is $\frac{1}{2}(n^6-11n^5+47n^4-97n^3+96n^2-36n)p^{4(n-2)!-(n-3)!-(n-4)!}$.

{\it Case 5.2.7.} $c_1\neq a_1$, $c_1\neq b_1$, $c_2=d_2$ and $c_1=d_1$.

There are $n$ ways to choose $a_1(=d_2=c_2)$ from $N_n$, $n-1$ ways to choose $d_1(=c_1)$ from $N_n\setminus\{d_2\}$, $n-2$ ways to choose $a_2$ from $N_n\setminus\{d_1, d_2\}$, $n-2$ ways to choose $b_1$ from $N_n\setminus\{d_1, d_2\}$, $n-3$ ways to choose $b_2$ from $N_n\setminus\{d_1, d_2, b_1\}$.
Therefore, there are $n(n-1)(n-2)^2(n-3)=n^5-8n^4+23n^3-28n^2+12n$ ways to choose four $B_{n-2}$'s as required.
The probability $P_{5.2.7}$ that there are four fault-free $B_{n-2}$'s chosen as in Case 5.2.7 is $\frac{1}{2}(n^5-8n^4+23n^3-28n^2+12n)p^{4(n-2)!-(n-4)!}$.

{\it Case 5.2.8.} $c_1=b_1\neq a_1$ and $c_2=d_2$.

There are $n$ ways to choose $a_1(=d_2=c_2)$ from $N_n$, $n-1$ ways to choose $d_1$ from $N_n\setminus\{d_2\}$, $n-2$ ways to choose $a_2$ from $N_n\setminus\{d_1, d_2\}$, $n-2$ ways to choose $b_1(=c_1)$ from $N_n\setminus\{d_1, d_2\}$, $n-3$ ways to choose $b_2$ from $N_n\setminus\{d_1, d_2, b_1\}$.
Therefore, there are $n(n-1)(n-2)^2(n-3)=n^5-8n^4+23n^3-28n^2+12n$ ways to choose four $B_{n-2}$'s as required.
Clearly, in this case $c_2\neq b_2$ and $c_1\neq d_1$.
The probability $P_{5.2.8}$ that there are four fault-free $B_{n-2}$'s chosen as in Case 5.2.8 is $\frac{1}{2}(n^5-8n^4+23n^3-28n^2+12n)p^{4(n-2)!-2(n-3)!-(n-4)!}$.

Thus, the probability $P_{5.2}$ that there are four fault-free $B_{n-2}$'s chosen as in Case 5.2 is $\sum_{i=1}^{8}P_{5.2.i}=\frac{1}{2}(n^7-12n^6+62n^5-176n^4+285n^3-244n^2+84n)p^{4(n-2)!-(n-4)!}+\frac{1}{2}(3n^6-32n^5+133n^4-268n^3+260n^2-96n)p^{4(n-2)!-(n-3)!-(n-4)!}+\frac{1}{2}(n^5-8n^4+23n^3-28n^2+12n)p^{4(n-2)!-2(n-3)!-(n-4)!}$.

{\it Case 5.3.} $a_2=d_1$, $a_1\notin\{d_1, d_2\}$, $\{b_1, b_2\}\cap\{d_1, d_2\}=\emptyset$.

{\it Case 5.3.1.} $c_1\neq a_1$, $c_1\neq b_1$, $c_2\neq d_2$, $a_1\neq b_1$ and $c_1\neq d_2$.

There are $n$ ways to choose $a_2(=d_1)$ from $N_n$, $n-1$ ways to choose $d_2$ from $N_n\setminus\{d_1\}$, $n-2$ ways to choose $a_1$ from $N_n\setminus\{d_1, d_2\}$, $n-3$ ways to choose $b_1$ from $N_n\setminus\{d_1, d_2, a_1\}$, $n-3$ ways to choose $b_2$ from $N_n\setminus\{d_1, d_2, b_1\}$, $n-3$ ways to choose $c_1$ from $N_n\setminus\{a_1, b_1, d_2\}$, $n-2$ ways to choose $c_2$ from $N_n\setminus\{c_1, d_2\}$.
Therefore, there are $n(n-1)(n-2)^2(n-3)^3=n^7-14n^6+80n^5-238n^4+387n^3-324n^2+108n$ ways to choose four $B_{n-2}$'s as required.
The probability $P_{5.3.1}$ that there are four fault-free $B_{n-2}$'s chosen as in Case 5.3.1 is $\frac{1}{2}(n^7-14n^6+80n^5-238n^4+387n^3-324n^2+108n)p^{4(n-2)!-(n-4)!}$.

{\it Case 5.3.2.} $c_1\neq a_1$, $c_1\neq b_1$, $c_2\neq d_2$, $a_1\neq b_1$ and $c_1=d_2$.

There are $n$ ways to choose $a_2(=d_1)$ from $N_n$, $n-1$ ways to choose $d_2(=c_1)$ from $N_n\setminus\{d_1\}$, $n-2$ ways to choose $a_1$ from $N_n\setminus\{d_1, d_2\}$, $n-3$ ways to choose $b_1$ from $N_n\setminus\{d_1, d_2, a_1\}$, $n-3$ ways to choose $b_2$ from $N_n\setminus\{d_1, d_2, b_1\}$, $n-1$ ways to choose $c_2$ from $N_n\setminus\{d_2\}$.
Therefore, there are $n(n-1)^2(n-2)(n-3)^2=n^6-10n^5+38n^4-68n^3+57n^2-18n$ ways to choose four $B_{n-2}$'s as required.
The probability $P_{5.3.2}$ that there are four fault-free $B_{n-2}$'s chosen as in Case 5.3.2 is $\frac{1}{2}(n^6-10n^5+38n^4-68n^3+57n^2-18n)p^{4(n-2)!-(n-4)!}$.

{\it Case 5.3.3.} $c_1\neq a_1=b_1$, $c_2\neq d_2$ and $c_1\neq d_2$.

There are $n$ ways to choose $a_2(=d_1)$ from $N_n$, $n-1$ ways to choose $d_2$ from $N_n\setminus\{d_1\}$, $n-2$ ways to choose $a_1(=b_1)$ from $N_n\setminus\{d_1, d_2\}$, $n-3$ ways to choose $b_2$ from $N_n\setminus\{d_1, d_2, b_1\}$, $n-2$ ways to choose $c_1$ from $N_n\setminus\{a_1, d_2\}$, $n-2$ ways to choose $c_2$ from $N_n\setminus\{c_1, d_2\}$.
Therefore, there are $n(n-1)(n-2)^3(n-3)=n^6-10n^5+39n^4-74n^3+68n^2-24n$ ways to choose four $B_{n-2}$'s as required.
The probability $P_{5.3.3}$ that there are four fault-free $B_{n-2}$'s chosen as in Case 5.3.3 is $\frac{1}{2}(n^6-10n^5+39n^4-74n^3+68n^2-24n)p^{4(n-2)!-(n-4)!}$.

{\it Case 5.3.4.} $c_1\neq a_1=b_1$, $c_2\neq d_2$ and $c_1=d_2$.

There are $n$ ways to choose $a_2(=d_1)$ from $N_n$, $n-1$ ways to choose $d_2(=c_1)$ from $N_n\setminus\{d_1\}$, $n-2$ ways to choose $a_1(=b_1)$ from $N_n\setminus\{d_1, d_2\}$, $n-3$ ways to choose $b_2$ from $N_n\setminus\{d_1, d_2, b_1\}$, $n-1$ ways to choose $c_2$ from $N_n\setminus\{d_2\}$.
Therefore, there are $n(n-1)^2(n-2)(n-3)=n^5-7n^4+17n^3-17n^2+6n$ ways to choose four $B_{n-2}$'s as required.
The probability $P_{5.3.4}$ that there are four fault-free $B_{n-2}$'s chosen as in Case 5.3.4 is $\frac{1}{2}(n^5-7n^4+17n^3-17n^2+6n)p^{4(n-2)!-(n-4)!}$.

{\it Case 5.3.5.} $c_1=a_1\neq b_1$, $c_2\neq d_2$, $c_2\neq a_2$ or $c_1=b_1\neq a_1$, $c_2\neq d_2$, $c_2\neq b_2$.

W.l.o.g, assume that the former applies. There are $n$ ways to choose $a_2(=d_1)$ from $N_n$, $n-1$ ways to choose $d_2$ from $N_n\setminus\{d_1\}$, $n-2$ ways to choose $a_1(=c_1)$ from $N_n\setminus\{d_1, d_2\}$, $n-3$ ways to choose $b_1$ from $N_n\setminus\{d_1, d_2, a_1\}$, $n-3$ ways to choose $b_2$ from $N_n\setminus\{d_1, d_2, b_1\}$, $n-3$ ways to choose $c_2$ from $N_n\setminus\{c_1, d_2, a_2\}$.
Therefore, there are $n(n-1)(n-2)(n-3)^3=n^6-12n^5+56n^4-126n^3+135n^2-54n$ ways to choose four $B_{n-2}$'s as required.
The probability $P_{5.3.5}$ that there are four fault-free $B_{n-2}$'s chosen as in Case 5.3.5 is $\frac{1}{2}\times 2(n^6-12n^5+56n^4-126n^3+135n^2-54n)p^{4(n-2)!-(n-3)!-(n-4)!}$.

{\it Case 5.3.6.} $c_1=a_1\neq b_1$, $c_2\neq d_2$, $c_2=a_2$ or $c_1=b_1\neq a_1$, $c_2\neq d_2$, $c_2=b_2$.

W.l.o.g, assume that the former applies. There are $n$ ways to choose $a_2(=d_1=c_2)$ from $N_n$, $n-1$ ways to choose $d_2$ from $N_n\setminus\{d_1\}$, $n-2$ ways to choose $a_1(=c_1)$ from $N_n\setminus\{d_1, d_2\}$, $n-3$ ways to choose $b_1$ from $N_n\setminus\{d_1, d_2, a_1\}$, $n-3$ ways to choose $b_2$ from $N_n\setminus\{d_1, d_2, b_1\}$.
Therefore, there are $n(n-1)(n-2)(n-3)^2=n^5-9n^4+29n^3-39n^2+18n$ ways to choose four $B_{n-2}$'s as required.
The probability $P_{5.3.6}$ that there are four fault-free $B_{n-2}$'s chosen as in Case 5.3.6 is $\frac{1}{2}\times 2(n^5-9n^4+29n^3-39n^2+18n)p^{4(n-2)!-(n-4)!}$.

{\it Case 5.3.7.} $c_1=a_1=b_1$, $c_2\neq d_2$ and $c_2\notin\{a_2, b_2\}$.

There are $n$ ways to choose $a_2(=d_1)$ from $N_n$, $n-1$ ways to choose $d_2$ from $N_n\setminus\{d_1\}$, $n-2$ ways to choose $a_1(=b_1=c_1)$ from $N_n\setminus\{d_1, d_2\}$, $n-3$ ways to choose $b_2$ from $N_n\setminus\{d_1, d_2, b_1\}$, $n-4$ ways to choose $c_2$ from $N_n\setminus\{c_1, d_2, a_2, b_2\}$.
Therefore, there are $n(n-1)(n-2)(n-3)(n-4)=n^5-10n^4+35n^3-50n^2+24n$ ways to choose four $B_{n-2}$'s as required.
The probability $P_{5.3.7}$ that there are four fault-free $B_{n-2}$'s chosen as in Case 5.3.7 is $\frac{1}{2}(n^5-10n^4+35n^3-50n^2+24n)p^{4(n-2)!-2(n-3)!-(n-4)!}$.

{\it Case 5.3.8.} $c_1=a_1=b_1$, $c_2\neq d_2$ and $c_2\in\{a_2, b_2\}$.

Clearly, in this case $a_2\neq b_2$. There are 2 possible cases such taht $c_2\in\{a_2, b_2\}$, w.l.o.g, assume that $c_2=a_2\neq b_2$.

There are $n$ ways to choose $a_2(=d_1=c_2)$ from $N_n$, $n-1$ ways to choose $d_2$ from $N_n\setminus\{d_1\}$, $n-2$ ways to choose $a_1(=b_1=c_1)$ from $N_n\setminus\{d_1, d_2\}$, $n-3$ ways to choose $b_2$ from $N_n\setminus\{d_1, d_2, b_1\}$.
Therefore, there are $2n(n-1)(n-2)(n-3)=2(n^4-6n^3+11n^2-6n)$ ways to choose four $B_{n-2}$'s as required.
The probability $P_{5.3.8}$ that there are four fault-free $B_{n-2}$'s chosen as in Case 5.3.8 is $\frac{1}{2}\times 2(n^4-6n^3+11n^2-6n)p^{4(n-2)!-(n-3)!-(n-4)!}$.

{\it Case 5.3.9.} $c_1\neq a_1$, $c_1\neq b_1$, $c_2=d_2$, $a_1\neq b_1$ and $c_1\neq d_1$.

There are $n$ ways to choose $a_2(=d_1)$ from $N_n$, $n-1$ ways to choose $d_2(=c_2)$ from $N_n\setminus\{d_1\}$, $n-2$ ways to choose $a_1$ from $N_n\setminus\{d_1, d_2\}$, $n-3$ ways to choose $b_1$ from $N_n\setminus\{d_1, d_2, a_1\}$, $n-3$ ways to choose $b_2$ from $N_n\setminus\{d_1, d_2, b_1\}$, $n-3$ ways to choose $c_1$ from $N_n\setminus\{a_1, b_1, c_2, d_1\}$.
Therefore, there are $n(n-1)(n-2)(n-3)^2(n-4)=n^6-13n^5+65n^4-155n^3+174n^2-72n$ ways to choose four $B_{n-2}$'s as required.
The probability $P_{5.3.9}$ that there are four fault-free $B_{n-2}$'s chosen as in Case 5.3.9 is $\frac{1}{2}(n^6-13n^5+65n^4-155n^3+174n^2-72n)p^{4(n-2)!-(n-3)!-(n-4)!}$.

{\it Case 5.3.10.} $c_1\neq a_1$, $c_1\neq b_1$, $c_2=d_2$, $a_1\neq b_1$ and $c_1=d_1$.

There are $n$ ways to choose $a_2(=d_1=c_1)$ from $N_n$, $n-1$ ways to choose $d_2(=c_2)$ from $N_n\setminus\{d_1\}$, $n-2$ ways to choose $a_1$ from $N_n\setminus\{d_1, d_2\}$, $n-3$ ways to choose $b_1$ from $N_n\setminus\{d_1, d_2, a_1\}$, $n-3$ ways to choose $b_2$ from $N_n\setminus\{d_1, d_2, b_1\}$.
Therefore, there are $n(n-1)(n-2)(n-3)^2=n^5-9n^4+29n^3-39n^2+18n$ ways to choose four $B_{n-2}$'s as required.
The probability $P_{5.3.10}$ that there are four fault-free $B_{n-2}$'s chosen as in Case 5.3.10 is $\frac{1}{2}(n^5-9n^4+29n^3-39n^2+18n)p^{4(n-2)!-(n-4)!}$.

{\it Case 5.3.11.} $c_1\neq a_1=b_1$, $c_2=d_2$ and $c_1\neq d_1$.

There are $n$ ways to choose $a_2(=d_1)$ from $N_n$, $n-1$ ways to choose $d_2(=c_2)$ from $N_n\setminus\{d_1\}$, $n-2$ ways to choose $a_1(=b_1)$ from $N_n\setminus\{d_1, d_2\}$, $n-3$ ways to choose $b_2$ from $N_n\setminus\{d_1, d_2, b_1\}$, $n-2$ ways to choose $c_1$ from $N_n\setminus\{a_1, c_2, d_1\}$.
Therefore, there are $n(n-1)(n-2)(n-3)^2=n^5-9n^4+29n^3-39n^2+18n$ ways to choose four $B_{n-2}$'s as required.
The probability $P_{5.3.11}$ that there are four fault-free $B_{n-2}$'s chosen as in Case 5.3.11 is $\frac{1}{2}(n^5-9n^4+29n^3-39n^2+18n)p^{4(n-2)!-(n-3)!-(n-4)!}$.

{\it Case 5.3.12.} $c_1\neq a_1=b_1$, $c_2=d_2$ and $c_1=d_1$.

There are $n$ ways to choose $a_2(=d_1=c_1)$ from $N_n$, $n-1$ ways to choose $d_2(=c_2)$ from $N_n\setminus\{d_1\}$, $n-2$ ways to choose $a_1(=b_1)$ from $N_n\setminus\{d_1, d_2\}$, $n-3$ ways to choose $b_2$ from $N_n\setminus\{d_1, d_2, b_1\}$.
Therefore, there are $n(n-1)(n-2)(n-3)=n^4-6n^3+11n^2-6n$ ways to choose four $B_{n-2}$'s as required.
The probability $P_{5.3.12}$ that there are four fault-free $B_{n-2}$'s chosen as in Case 5.3.12 is $\frac{1}{2}(n^4-6n^3+11n^2-6n)p^{4(n-2)!-(n-4)!}$.

{\it Case 5.3.13.} $c_1=a_1\neq b_1$, $c_2=d_2$ or $c_1=b_1\neq a_1$, $c_2=d_2$.

W.l.o.g, assume that the former applies. There are $n$ ways to choose $a_2(=d_1)$ from $N_n$, $n-1$ ways to choose $d_2(=c_2)$ from $N_n\setminus\{d_1\}$, $n-2$ ways to choose $a_1(=c_1)$ from $N_n\setminus\{d_1, d_2\}$, $n-3$ ways to choose $b_1$ from $N_n\setminus\{d_1, d_2, a_1\}$, $n-3$ ways to choose $b_2$ from $N_n\setminus\{d_1, d_2, b_1\}$.
Therefore, there are $n(n-1)(n-2)(n-3)^2=n^5-9n^4+29n^3-39n^2+18n$ ways to choose four $B_{n-2}$'s as required.
Clearly, in this case $c_2\neq a_2$ and $c_1\neq d_1$.
The probability $P_{5.3.13}$ that there are four fault-free $B_{n-2}$'s chosen as in Case 5.3.13 is $\frac{1}{2}\times 2(n^5-9n^4+29n^3-39n^2+18n)p^{4(n-2)!-2(n-3)!-(n-4)!}$.

{\it Case 5.3.14.} $c_1=a_1=b_1$ and $c_2\neq d_2$.

There are $n$ ways to choose $a_2(=d_1)$ from $N_n$, $n-1$ ways to choose $d_2(=c_2)$ from $N_n\setminus\{d_1\}$, $n-2$ ways to choose $a_1(=b_1=c_1)$ from $N_n\setminus\{d_1, d_2\}$, $n-3$ ways to choose $b_2$ from $N_n\setminus\{d_1, d_2, b_1\}$.
Therefore, there are $n(n-1)(n-2)(n-3)=n^4-6n^3+11n^2-6n$ ways to choose four $B_{n-2}$'s as required.
Clearly, in this case $c_2\neq a_2$, $c_2\neq b_2$ and $c_1\neq d_1$.
The probability $P_{5.3.14}$ that there are four fault-free $B_{n-2}$'s chosen as in Case 5.3.14 is $\frac{1}{2}(n^4-6n^3+11n^2-6n)p^{4(n-2)!-3(n-3)!-(n-4)!}$.

Thus, the probability $P_{5.3}$ that there are four fault-free $B_{n-2}$'s chosen as in Case 5.3 is $\sum_{i=1}^{14}P_{5.3.i}=\frac{1}{2}(n^7-12n^6+64n^5-194n^4+343n^3-322n^2+120n)p^{4(n-2)!-(n-4)!}+\frac{1}{2}(3n^6-36n^5+170n^4-390n^3+427n^2-174n)p^{4(n-2)!-(n-3)!-(n-4)!}+\frac{1}{2}(3n^5-28n^4+93n^3-128n^2+60n)p^{4(n-2)!-2(n-3)!-(n-4)!}+\frac{1}{2}(n^4-6n^3+11n^2-6n)p^{4(n-2)!-3(n-3)!-(n-4)!}$.

{\it Case 5.4.} $a_2=d_2$, $a_1\notin\{d_1, d_2\}$, $\{b_1, b_2\}\cap\{d_1, d_2\}=\emptyset$.

{\it Case 5.4.1.} $c_1\neq a_1$, $c_1\neq b_1$, $c_2\neq d_2$, $a_1\neq b_1$ and $c_1\neq d_2$.

There are $n$ ways to choose $a_2(=d_2)$ from $N_n$, $n-1$ ways to choose $d_1$ from $N_n\setminus\{d_2\}$, $n-2$ ways to choose $a_1$ from $N_n\setminus\{d_1, d_2\}$, $n-3$ ways to choose $b_1$ from $N_n\setminus\{d_1, d_2, a_1\}$, $n-3$ ways to choose $b_2$ from $N_n\setminus\{d_1, d_2, b_1\}$, $n-3$ ways to choose $c_1$ from $N_n\setminus\{a_1, b_1, d_2\}$, $n-2$ ways to choose $c_2$ from $N_n\setminus\{c_1, d_2\}$.
Therefore, there are $n(n-1)(n-2)^2(n-3)^3=n^7-14n^6+80n^5-238n^4+387n^3-324n^2+108n$ ways to choose four $B_{n-2}$'s as required.
The probability $P_{5.4.1}$ that there are four fault-free $B_{n-2}$'s chosen as in Case 5.4.1 is $\frac{1}{2}(n^7-14n^6+80n^5-238n^4+387n^3-324n^2+108n)p^{4(n-2)!-(n-4)!}$.

{\it Case 5.4.2.} $c_1\neq a_1$, $c_1\neq b_1$, $c_2\neq d_2$, $a_1\neq b_1$ and $c_1=d_2$.

There are $n$ ways to choose $a_2(=d_2=c_1)$ from $N_n$, $n-1$ ways to choose $d_1$ from $N_n\setminus\{d_2\}$, $n-2$ ways to choose $a_1$ from $N_n\setminus\{d_1, d_2\}$, $n-3$ ways to choose $b_1$ from $N_n\setminus\{d_1, d_2, a_1\}$, $n-3$ ways to choose $b_2$ from $N_n\setminus\{d_1, d_2, b_1\}$, $n-1$ ways to choose $c_2$ from $N_n\setminus\{d_2\}$.
Therefore, there are $n(n-1)^2(n-2)(n-3)^2=n^6-10n^5+38n^4-68n^3+57n^2-18n$ ways to choose four $B_{n-2}$'s as required.
The probability $P_{5.4.2}$ that there are four fault-free $B_{n-2}$'s chosen as in Case 5.4.2 is $\frac{1}{2}(n^6-10n^5+38n^4-68n^3+57n^2-18n)p^{4(n-2)!-(n-4)!}$.

{\it Case 5.4.3.} $c_1\neq a_1=b_1$, $c_2\neq d_2$ and $c_1\neq d_2$.

There are $n$ ways to choose $a_2(=d_2)$ from $N_n$, $n-1$ ways to choose $d_1$ from $N_n\setminus\{d_2\}$, $n-2$ ways to choose $a_1(=b_1)$ from $N_n\setminus\{d_1, d_2\}$, $n-3$ ways to choose $b_2$ from $N_n\setminus\{d_1, d_2, b_1\}$, $n-2$ ways to choose $c_1$ from $N_n\setminus\{a_1, d_2\}$, $n-2$ ways to choose $c_2$ from $N_n\setminus\{c_1, d_2\}$.
Therefore, there are $n(n-1)(n-2)^3(n-3)=n^6-10n^5+39n^4-74n^3+68n^2-24n$ ways to choose four $B_{n-2}$'s as required.
The probability $P_{5.4.3}$ that there are four fault-free $B_{n-2}$'s chosen as in Case 5.4.3 is $\frac{1}{2}(n^6-10n^5+39n^4-74n^3+68n^2-24n)p^{4(n-2)!-(n-4)!}$.

{\it Case 5.4.4.} $c_1\neq a_1=b_1$, $c_2\neq d_2$ and $c_1=d_2$.

There are $n$ ways to choose $a_2(=d_2=c_1)$ from $N_n$, $n-1$ ways to choose $d_1$ from $N_n\setminus\{d_1\}$, $n-2$ ways to choose $a_1(=b_1)$ from $N_n\setminus\{d_1, d_2\}$, $n-3$ ways to choose $b_2$ from $N_n\setminus\{d_1, d_2, b_1\}$, $n-1$ ways to choose $c_2$ from $N_n\setminus\{d_2\}$.
Therefore, there are $n(n-1)^2(n-2)(n-3)=n^5-7n^4+17n^3-17n^2+6n$ ways to choose four $B_{n-2}$'s as required.
The probability $P_{5.4.4}$ that there are four fault-free $B_{n-2}$'s chosen as in Case 5.4.4 is $\frac{1}{2}(n^5-7n^4+17n^3-17n^2+6n)p^{4(n-2)!-(n-4)!}$.

{\it Case 5.4.5.} $c_1=a_1\neq b_1$ and $c_2\neq d_2$.

There are $n$ ways to choose $a_2(=d_2)$ from $N_n$, $n-1$ ways to choose $d_1$ from $N_n\setminus\{d_2\}$, $n-2$ ways to choose $a_1(=c_1)$ from $N_n\setminus\{d_1, d_2\}$, $n-3$ ways to choose $b_1$ from $N_n\setminus\{d_1, d_2, a_1\}$, $n-3$ ways to choose $b_2$ from $N_n\setminus\{d_1, d_2, b_1\}$, $n-2$ ways to choose $c_2$ from $N_n\setminus\{c_1, d_2\}$.
Therefore, there are $n(n-1)(n-2)^2(n-3)^2=n^6-11n^5+47n^4-97n^3+96n^2-36n$ ways to choose four $B_{n-2}$'s as required.
Clearly, in this case $c_2\neq a_2$.
The probability $P_{5.4.5}$ that there are four fault-free $B_{n-2}$'s chosen as in Case 5.4.5 is $\frac{1}{2}(n^6-11n^5+47n^4-97n^3+96n^2-36n)p^{4(n-2)!-(n-3)!-(n-4)!}$.

{\it Case 5.4.6.} $c_1=b_1\neq a_1$, $c_2\neq d_2$ and $c_2\neq b_2$.

There are $n$ ways to choose $a_2(=d_2)$ from $N_n$, $n-1$ ways to choose $d_1$ from $N_n\setminus\{d_2\}$, $n-2$ ways to choose $a_1$ from $N_n\setminus\{d_1, d_2\}$, $n-3$ ways to choose $b_1(=c_1)$ from $N_n\setminus\{d_1, d_2, a_1\}$, $n-3$ ways to choose $b_2$ from $N_n\setminus\{d_1, d_2, b_1\}$, $n-2$ ways to choose $c_2$ from $N_n\setminus\{c_1, d_2, b_2\}$.
Therefore, there are $n(n-1)(n-2)(n-3)^3=n^6-12n^5+56n^4-126n^3+135n^2-54n$ ways to choose four $B_{n-2}$'s as required.
The probability $P_{5.4.6}$ that there are four fault-free $B_{n-2}$'s chosen as in Case 5.4.6 is $\frac{1}{2}(n^6-12n^5+56n^4-126n^3+135n^2-54n)p^{4(n-2)!-(n-3)!-(n-4)!}$.

{\it Case 5.4.7.} $c_1=b_1\neq a_1$, $c_2\neq d_2$ and $c_2=b_2$.

There are $n$ ways to choose $a_2(=d_2)$ from $N_n$, $n-1$ ways to choose $d_1$ from $N_n\setminus\{d_2\}$, $n-2$ ways to choose $a_1$ from $N_n\setminus\{d_1, d_2\}$, $n-3$ ways to choose $b_1(=c_1)$ from $N_n\setminus\{d_1, d_2, a_1\}$, $n-3$ ways to choose $b_2(=c_2)$ from $N_n\setminus\{d_1, d_2, b_1\}$.
Therefore, there are $n(n-1)(n-2)(n-3)^2=n^5-9n^4+29n^3-39n^2+18n$ ways to choose four $B_{n-2}$'s as required.
The probability $P_{5.4.7}$ that there are four fault-free $B_{n-2}$'s chosen as in Case 5.4.7 is $\frac{1}{2}(n^5-9n^4+29n^3-39n^2+18n)p^{4(n-2)!-(n-4)!}$.

{\it Case 5.4.8.} $c_1=a_1=b_1$, $c_2\neq d_2$ and $c_2\neq b_2$.

There are $n$ ways to choose $a_2(=d_2)$ from $N_n$, $n-1$ ways to choose $d_1$ from $N_n\setminus\{d_2\}$, $n-2$ ways to choose $a_1(=b_1=c_1)$ from $N_n\setminus\{d_1, d_2\}$, $n-3$ ways to choose $b_2$ from $N_n\setminus\{d_1, d_2, b_1\}$, $n-3$ ways to choose $c_2$ from $N_n\setminus\{c_1, d_2, b_2\}$.
Therefore, there are $n(n-1)(n-2)(n-3)^2=n^5-9n^4+29n^3-39n^2+18n$ ways to choose four $B_{n-2}$'s as required.
Clearly, in this case $c_2\neq a_2$.
The probability $P_{5.4.8}$ that there are four fault-free $B_{n-2}$'s chosen as in Case 5.4.8 is $\frac{1}{2}(n^5-9n^4+29n^3-39n^2+18n)p^{4(n-2)!-2(n-3)!-(n-4)!}$.

{\it Case 5.4.9.} $c_1=a_1=b_1$, $c_2\neq d_2$ and $c_2=b_2$.

There are $n$ ways to choose $a_2(=d_2)$ from $N_n$, $n-1$ ways to choose $d_1$ from $N_n\setminus\{d_2\}$, $n-2$ ways to choose $a_1(=b_1=c_1)$ from $N_n\setminus\{d_1, d_2\}$, $n-3$ ways to choose $b_2(=c_2)$ from $N_n\setminus\{d_1, d_2, b_1\}$.
Therefore, there are $n(n-1)(n-2)(n-3)=n^4-6n^3+11n^2-6n$ ways to choose four $B_{n-2}$'s as required.
Clearly, in this case $c_2\neq a_2$.
The probability $P_{5.4.9}$ that there are four fault-free $B_{n-2}$'s chosen as in Case 5.4.9 is $\frac{1}{2}(n^4-6n^3+11n^2-6n)p^{4(n-2)!-(n-3)!-(n-4)!}$.

{\it Case 5.4.10.} $c_1\neq a_1$, $c_1\neq b_1$, $c_2=d_2$, $a_1\neq b_1$ and $c_1\neq d_1$.

There are $n$ ways to choose $a_2(=d_2=c_2)$ from $N_n$, $n-1$ ways to choose $d_1$ from $N_n\setminus\{d_2\}$, $n-2$ ways to choose $a_1$ from $N_n\setminus\{d_1, d_2\}$, $n-3$ ways to choose $b_1$ from $N_n\setminus\{d_1, d_2, a_1\}$, $n-3$ ways to choose $b_2$ from $N_n\setminus\{d_1, d_2, b_1\}$, $n-4$ ways to choose $c_1$ from $N_n\setminus\{a_1, b_1, c_2, d_1\}$.
Therefore, there are $n(n-1)(n-2)(n-3)^2(n-4)=n^6-13n^5+65n^4-155n^3+174n^2-72n$ ways to choose four $B_{n-2}$'s as required.
The probability $P_{5.4.10}$ that there are four fault-free $B_{n-2}$'s chosen as in Case 5.4.10 is $\frac{1}{2}(n^6-13n^5+65n^4-155n^3+174n^2-72n)p^{4(n-2)!-(n-3)!-(n-4)!}$.

{\it Case 5.4.11.} $c_1\neq a_1$, $c_1\neq b_1$, $c_2=d_2$, $a_1\neq b_1$ and $c_1=d_1$.

There are $n$ ways to choose $a_2(=d_2=c_2)$ from $N_n$, $n-1$ ways to choose $d_1(=c_1)$ from $N_n\setminus\{d_2\}$, $n-2$ ways to choose $a_1$ from $N_n\setminus\{d_1, d_2\}$, $n-3$ ways to choose $b_1$ from $N_n\setminus\{d_1, d_2, a_1\}$, $n-3$ ways to choose $b_2$ from $N_n\setminus\{d_1, d_2, b_1\}$.
Therefore, there are $n(n-1)(n-2)(n-3)^2=n^5-9n^4+29n^3-39n^2+18n$ ways to choose four $B_{n-2}$'s as required.
The probability $P_{5.4.11}$ that there are four fault-free $B_{n-2}$'s chosen as in Case 5.4.11 is $\frac{1}{2}(n^5-9n^4+29n^3-39n^2+18n)p^{4(n-2)!-(n-4)!}$.

{\it Case 5.4.12.} $c_1\neq a_1=b_1$, $c_2=d_2$ and $c_1\neq d_1$.

There are $n$ ways to choose $a_2(=d_2=c_2)$ from $N_n$, $n-1$ ways to choose $d_1$ from $N_n\setminus\{d_2\}$, $n-2$ ways to choose $a_1(=b_1)$ from $N_n\setminus\{d_1, d_2\}$, $n-3$ ways to choose $b_2$ from $N_n\setminus\{d_1, d_2, b_1\}$, $n-3$ ways to choose $c_1$ from $N_n\setminus\{a_1, d_2, d_1\}$.
Therefore, there are $n(n-1)(n-2)(n-3)^2=n^5-9n^4+29n^3-39n^2+18n$ ways to choose four $B_{n-2}$'s as required.
The probability $P_{5.4.12}$ that there are four fault-free $B_{n-2}$'s chosen as in Case 5.4.12 is $\frac{1}{2}(n^5-9n^4+29n^3-39n^2+18n)p^{4(n-2)!-(n-3)!-(n-4)!}$.

{\it Case 5.4.13.} $c_1\neq a_1=b_1$, $c_2=d_2$ and $c_1=d_1$.

There are $n$ ways to choose $a_2(=d_2=c_2)$ from $N_n$, $n-1$ ways to choose $d_1(=c_1)$ from $N_n\setminus\{d_2\}$, $n-2$ ways to choose $a_1(=b_1)$ from $N_n\setminus\{d_1, d_2\}$, $n-3$ ways to choose $b_2$ from $N_n\setminus\{d_1, d_2, b_1\}$.
Therefore, there are $n(n-1)(n-2)(n-3)=n^4-6n^3+11n^2-6n$ ways to choose four $B_{n-2}$'s as required.
The probability $P_{5.4.13}$ that there are four fault-free $B_{n-2}$'s chosen as in Case 5.4.13 is $\frac{1}{2}(n^4-6n^3+11n^2-6n)p^{4(n-2)!-(n-4)!}$.

{\it Case 5.4.14.} $c_1=a_1\neq b_1$, $c_2=d_2$ or $c_1=b_1\neq a_1$, $c_2=d_2$.

There are $n$ ways to choose $a_2(=d_2=c_2)$ from $N_n$, $n-1$ ways to choose $d_1$ from $N_n\setminus\{d_2\}$, $n-2$ ways to choose $a_1(=c_1)$ from $N_n\setminus\{d_1, d_2\}$, $n-3$ ways to choose $b_1$ from $N_n\setminus\{d_1, d_2, a_1\}$, $n-3$ ways to choose $b_2$ from $N_n\setminus\{d_1, d_2, b_1\}$.
Therefore, there are $n(n-1)(n-2)(n-3)^2=n^5-9n^4+29n^3-39n^2+18n$ ways to choose four $B_{n-2}$'s as required.
Clearly, in this case $c_2=a_2$ and $c_1\neq d_1$.
The probability $P_{5.4.14}$ that there are four fault-free $B_{n-2}$'s chosen as in Case 5.4.14 is $\frac{1}{2}(n^5-9n^4+29n^3-39n^2+18n)p^{4(n-2)!-(n-3)!-(n-4)!}$.

{\it Case 5.4.15.} $c_1=a_1\neq b_1$, $c_2=d_2$ or $c_1=b_1\neq a_1$, $c_2=d_2$.

There are $n$ ways to choose $a_2(=d_2=c_2)$ from $N_n$, $n-1$ ways to choose $d_1$ from $N_n\setminus\{d_2\}$, $n-2$ ways to choose $a_1(=c_1)$ from $N_n\setminus\{d_1, d_2\}$, $n-3$ ways to choose $b_1$ from $N_n\setminus\{d_1, d_2, a_1\}$, $n-3$ ways to choose $b_2$ from $N_n\setminus\{d_1, d_2, b_1\}$.
Therefore, there are $n(n-1)(n-2)(n-3)^2=n^5-9n^4+29n^3-39n^2+18n$ ways to choose four $B_{n-2}$'s as required.
Clearly, in this case $c_2\neq b_2$ and $c_1\neq d_1$.
The probability $P_{5.4.15}$ that there are four fault-free $B_{n-2}$'s chosen as in Case 5.4.15 is $\frac{1}{2}(n^5-9n^4+29n^3-39n^2+18n)p^{4(n-2)!-2(n-3)!-(n-4)!}$.

{\it Case 5.4.16.} $c_1=a_1=b_1$ and $c_2\neq d_2$.

There are $n$ ways to choose $a_2(=d_1)$ from $N_n$, $n-1$ ways to choose $d_2(=c_2)$ from $N_n\setminus\{d_1\}$, $n-2$ ways to choose $a_1(=b_1=c_1)$ from $N_n\setminus\{d_1, d_2\}$, $n-3$ ways to choose $b_2$ from $N_n\setminus\{d_1, d_2, b_1\}$.
Therefore, there are $n(n-1)(n-2)(n-3)=n^4-6n^3+11n^2-6n$ ways to choose four $B_{n-2}$'s as required.
Clearly, in this case $c_2=a_2$, $c_2\neq b_2$ and $c_1\neq d_1$.
The probability $P_{5.4.16}$ that there are four fault-free $B_{n-2}$'s chosen as in Case 5.4.16 is $\frac{1}{2}(n^4-6n^3+11n^2-6n)p^{4(n-2)!-2(n-3)!-(n-4)!}$.

Thus, the probability $P_{5.4}$ that there are four fault-free $B_{n-2}$'s chosen as in Case 5.4 is $\sum_{i=1}^{16}P_{5.4.i}=\frac{1}{2}(n^7-12n^6+63n^5-185n^4+314n^3-283n^2+102n)p^{4(n-2)!-(n-4)!}+\frac{1}{2}(3n^6-34n^5+151n^4-326n^3+338n^2-132n)p^{4(n-2)!-(n-3)!-(n-4)!}+\frac{1}{2}(2n^5-17n^4+52n^3-67n^2+30n)p^{4(n-2)!-2(n-3)!-(n-4)!}$.

In summary, the probability $P_5$ that there are four fault-free $B_{n-2}$'s chosen as in Case 5 is $2(\sum_{i=1}^{4}P_{5.i})=(4n^7-48n^6+251n^5-731n^4+1227n^3-1093n^2+390n)p^{4(n-2)!-(n-4)!}+(12n^6-134n^5+587n^4-1252n^3+1285n^2-498n)p^{4(n-2)!-(n-3)!-(n-4)!}+(7n^5-61n^4+191n^3-251n^2+114n)p^{4(n-2)!-2(n-3)!-(n-4)!}+(n^4-6n^3+11n^2-6n)p^{4(n-2)!-3(n-3)!-(n-4)!}$.

{\it Case 6.} $\{a_1, a_2\}\cap\{d_1, d_2\}=\emptyset$ and $\{b_1, b_2\}\cap\{d_1, d_2\}=\emptyset$.

Observation \ref{ob:2} implies that for any $i\in\{a, b\}$, $i_1 i_2 X^{n-2}$ and $X^{n-2} d_1 d_2$ are not disjoint and $V(i_1 i_2 X^{n-2})\cap V(X^{n-2} d_1 d_2)=V(i_1 i_2 X^{n-4} d_1 d_2)$.

{\it Case 6.1.} $c_1\neq a_1$, $c_1\neq b_1$, $c_2\neq d_2$, $a_1\neq b_1$ and $c_1\neq d_2$.

There are $n$ ways to choose $d_1$ from $N_n$, $n-1$ ways to choose $d_2$ from $N_n\setminus\{d_1\}$, $n-2$ ways to choose $a_1$ from $N_n\setminus\{d_1, d_2\}$, $n-3$ ways to choose $a_2$ from $N_n\setminus\{d_1, d_2, a_1\}$, $n-3$ ways to choose $b_1$ from $N_n\setminus\{d_1, d_2, a_1\}$, $n-3$ ways to choose $b_2$ from $N_n\setminus\{d_1, d_2, b_1\}$, $n-3$ ways to choose $c_1$ from $N_n\setminus\{a_1, b_1, d_2\}$, $n-2$ ways to choose $c_2$ from $N_n\setminus\{c_1, d_2\}$.
Therefore, there are $n(n-1)(n-2)^2(n-3)^4=n^8-17n^7+122n^6-478n^5+1101n^4-1485n^3+1080n^2-324n$ ways to choose four $B_{n-2}$'s as required.
The probability $P_{6.1}$ that there are four fault-free $B_{n-2}$'s chosen as in Case 6.1 is $\frac{1}{2}(n^8-17n^7+122n^6-478n^5+1101n^4-1485n^3+1080n^2-324n)p^{4(n-2)!-2(n-4)!}$.

{\it Case 6.2.} $c_1\neq a_1$, $c_1\neq b_1$, $c_2\neq d_2$, $a_1\neq b_1$ and $c_1=d_2$.

There are $n$ ways to choose $d_1$ from $N_n$, $n-1$ ways to choose $d_2(=c_1)$ from $N_n\setminus\{d_1\}$, $n-2$ ways to choose $a_1$ from $N_n\setminus\{d_1, d_2\}$, $n-3$ ways to choose $a_2$ from $N_n\setminus\{d_1, d_2, a_1\}$, $n-3$ ways to choose $b_1$ from $N_n\setminus\{d_1, d_2, a_1\}$, $n-3$ ways to choose $b_2$ from $N_n\setminus\{d_1, d_2, b_1\}$, $n-1$ ways to choose $c_2$ from $N_n\setminus\{d_2\}$.
Therefore, there are $n(n-1)^2(n-2)(n-3)^3=n^7-13n^6+68n^5-182n^4+261n^3-189n^2+54n$ ways to choose four $B_{n-2}$'s as required.
The probability $P_{6.2}$ that there are four fault-free $B_{n-2}$'s chosen as in Case 6.2 is $\frac{1}{2}(n^7-13n^6+68n^5-182n^4+261n^3-189n^2+54n)p^{4(n-2)!-2(n-4)!}$.

{\it Case 6.3.} $c_1\neq a_1=b_1$, $c_2\neq d_2$ and $c_1\neq d_2$.

There are $n$ ways to choose $d_1$ from $N_n$, $n-1$ ways to choose $d_2$ from $N_n\setminus\{d_1\}$, $n-2$ ways to choose $a_1(=b_1)$ from $N_n\setminus\{d_1, d_2\}$, $n-3$ ways to choose $a_2$ from $N_n\setminus\{d_1, d_2, a_1\}$, $n-4$ ways to choose $b_2$ from $N_n\setminus\{d_1, d_2, b_1, a_2\}$, $n-2$ ways to choose $c_1$ from $N_n\setminus\{a_1, d_2\}$, $n-2$ ways to choose $c_2$ from $N_n\setminus\{c_1, d_2\}$.
Therefore, there are $n(n-1)(n-2)^3(n-3)(n-4)=n^7-14n^6+79n^5-230n^4+364n^3-296n^2+96n$ ways to choose four $B_{n-2}$'s as required.
The probability $P_{6.3}$ that there are four fault-free $B_{n-2}$'s chosen as in Case 6.3 is $\frac{1}{2}(n^7-14n^6+79n^5-230n^4+364n^3-296n^2+96n)p^{4(n-2)!-2(n-4)!}$.

{\it Case 6.4.} $c_1\neq a_1=b_1$, $c_2\neq d_2$ and $c_1=d_2$.

There are $n$ ways to choose $d_1$ from $N_n$, $n-1$ ways to choose $d_2(=c_1)$ from $N_n\setminus\{d_1\}$, $n-2$ ways to choose $a_1(=b_1)$ from $N_n\setminus\{d_1, d_2\}$, $n-3$ ways to choose $a_2$ from $N_n\setminus\{d_1, d_2, a_1\}$, $n-4$ ways to choose $b_2$ from $N_n\setminus\{d_1, d_2, b_1, a_2\}$, $n-1$ ways to choose $c_2$ from $N_n\setminus\{d_2\}$.
Therefore, there are $n(n-1)^2(n-2)(n-3)(n-4)=n^6-11n^5+45n^4-85n^3+74n^2-24n$ ways to choose four $B_{n-2}$'s as required.
The probability $P_{6.4}$ that there are four fault-free $B_{n-2}$'s chosen as in Case 6.4 is $\frac{1}{2}(n^6-11n^5+45n^4-85n^3+74n^2-24n)p^{4(n-2)!-2(n-4)!}$.

{\it Case 6.5.} $c_1=a_1\neq b_1$, $c_2\neq d_2$, $c_2\neq a_2$ or $c_1=b_1\neq a_1$, $c_2\neq d_2$, $c_2\neq b_2$.

W.l.o.g, assume that the former applies.
There are $n$ ways to choose $d_1$ from $N_n$, $n-1$ ways to choose $d_2$ from $N_n\setminus\{d_1\}$, $n-2$ ways to choose $a_1(=c_1)$ from $N_n\setminus\{d_1, d_2\}$, $n-3$ ways to choose $a_2$ from $N_n\setminus\{d_1, d_2, a_1\}$, $n-3$ ways to choose $b_1$ from $N_n\setminus\{d_1, d_2, a_1\}$, $n-3$ ways to choose $b_2$ from $N_n\setminus\{d_1, d_2, b_1\}$, $n-3$ ways to choose $c_2$ from $N_n\setminus\{c_1, d_2, a_2\}$.
Therefore, there are $2n(n-1)(n-2)(n-3)^4=2(n^7-15n^6+92n^5-294n^4+513n^3-459n^2+162n)$ ways to choose four $B_{n-2}$'s as required.
The probability $P_{6.5}$ that there are four fault-free $B_{n-2}$'s chosen as in Case 6.5 is $\frac{1}{2}\times 2(n^7-15n^6+92n^5-294n^4+513n^3-459n^2+162n)p^{4(n-2)!-(n-3)!-2(n-4)!}$.

{\it Case 6.6.} $c_1=a_1\neq b_1$, $c_2\neq d_2$, $c_2=a_2$ or $c_1=b_1\neq a_1$, $c_2\neq d_2$, $c_2=b_2$.

W.l.o.g, assume that the former applies.
There are $n$ ways to choose $d_1$ from $N_n$, $n-1$ ways to choose $d_2$ from $N_n\setminus\{d_1\}$, $n-2$ ways to choose $a_1(=c_1)$ from $N_n\setminus\{d_1, d_2\}$, $n-3$ ways to choose $a_2(=c_2)$ from $N_n\setminus\{d_1, d_2, a_1\}$, $n-3$ ways to choose $b_1$ from $N_n\setminus\{d_1, d_2, a_1\}$, $n-3$ ways to choose $b_2$ from $N_n\setminus\{d_1, d_2, b_1\}$.
Therefore, there are $2n(n-1)(n-2)(n-3)^3=2(n^6-12n^5+56n^4-126n^3+135n^2-54n)$ ways to choose four $B_{n-2}$'s as required.
The probability $P_{6.6}$ that there are four fault-free $B_{n-2}$'s chosen as in Case 6.6 is $\frac{1}{2}\times 2(n^6-12n^5+56n^4-126n^3+135n^2-54n)p^{4(n-2)!-2(n-4)!}$.

{\it Case 6.7.} $c_1=a_1=b_1$, $c_2\neq d_2$ and $c_2\notin\{a_2, b_2\}$.

There are $n$ ways to choose $d_1$ from $N_n$, $n-1$ ways to choose $d_2$ from $N_n\setminus\{d_1\}$, $n-2$ ways to choose $a_1(=b_1=c_1)$ from $N_n\setminus\{d_1, d_2\}$, $n-3$ ways to choose $a_2$ from $N_n\setminus\{d_1, d_2, a_1\}$, $n-4$ ways to choose $b_2$ from $N_n\setminus\{d_1, d_2, b_1, a_2\}$, $n-4$ ways to choose $c_2$ from $N_n\setminus\{c_1, d_2, a_2, b_2\}$.
Therefore, there are $n(n-1)(n-2)(n-3)(n-4)^2=n^6-14n^5+75n^4-190n^3+224n^2-96n$ ways to choose four $B_{n-2}$'s as required.
The probability $P_{6.7}$ that there are four fault-free $B_{n-2}$'s chosen as in Case 6.7 is $\frac{1}{2}(n^6-14n^5+75n^4-190n^3+224n^2-96n)p^{4(n-2)!-2(n-3)!-2(n-4)!}$.

{\it Case 6.8.} $c_1=a_1=b_1$, $c_2\neq d_2$ and $c_2\in\{a_2, b_2\}$.

Clearly, in this case $a_2\neq b_2$. There are 2 possible cases such taht $c_2\in\{a_2, b_2\}$, w.l.o.g, assume that $c_2=a_2\neq b_2$.

There are $n$ ways to choose $d_1$ from $N_n$, $n-1$ ways to choose $d_2$ from $N_n\setminus\{d_1\}$, $n-2$ ways to choose $a_1(=b_1=c_1)$ from $N_n\setminus\{d_1, d_2\}$, $n-3$ ways to choose $a_2(=c_2)$ from $N_n\setminus\{d_1, d_2, a_1\}$, $n-4$ ways to choose $b_2$ from $N_n\setminus\{d_1, d_2, b_1, a_2\}$.
Therefore, there are $2n(n-1)(n-2)(n-3)(n-4)=2(n^5-10n^4+35n^3-50n^2+24n)$ ways to choose four $B_{n-2}$'s as required.
The probability $P_{6.8}$ that there are four fault-free $B_{n-2}$'s chosen as in Case 6.8 is $\frac{1}{2}\times 2(n^5-10n^4+35n^3-50n^2+24n)p^{4(n-2)!-(n-3)!-2(n-4)!}$.

{\it Case 6.9.} $c_1\neq a_1$, $c_1\neq b_1$, $c_2=d_2$, $a_1\neq b_1$ and $c_1\neq d_1$.

There are $n$ ways to choose $d_1$ from $N_n$, $n-1$ ways to choose $d_2(=c_2)$ from $N_n\setminus\{d_1\}$, $n-2$ ways to choose $a_1$ from $N_n\setminus\{d_1, d_2\}$, $n-3$ ways to choose $a_2$ from $N_n\setminus\{d_1, d_2, a_1\}$, $n-3$ ways to choose $b_1$ from $N_n\setminus\{d_1, d_2, a_1\}$, $n-3$ ways to choose $b_2$ from $N_n\setminus\{d_1, d_2, b_1\}$, $n-4$ ways to choose $c_1$ from $N_n\setminus\{a_1, b_1, c_2, d_1\}$.
Therefore, there are $n(n-1)(n-2)(n-3)^3(n-4)=n^7-16n^6+104n^5-350n^4+639n^3-594n^2+216n$ ways to choose four $B_{n-2}$'s as required.
The probability $P_{6.9}$ that there are four fault-free $B_{n-2}$'s chosen as in Case 6.9 is $\frac{1}{2}(n^7-16n^6+104n^5-350n^4+639n^3-594n^2+216n)p^{4(n-2)!-(n-3)!-2(n-4)!}$.

{\it Case 6.10.} $c_1\neq a_1$, $c_1\neq b_1$, $c_2=d_2$, $a_1\neq b_1$ and $c_1=d_1$.

There are $n$ ways to choose $d_1(=c_1)$ from $N_n$, $n-1$ ways to choose $d_2(=c_2)$ from $N_n\setminus\{d_1\}$, $n-2$ ways to choose $a_1$ from $N_n\setminus\{d_1, d_2\}$, $n-3$ ways to choose $a_2$ from $N_n\setminus\{d_1, d_2, a_1\}$, $n-3$ ways to choose $b_1$ from $N_n\setminus\{d_1, d_2, a_1\}$, $n-3$ ways to choose $b_2$ from $N_n\setminus\{d_1, d_2, b_1\}$.
Therefore, there are $n(n-1)(n-2)(n-3)^3=n^6-12n^5+56n^4-126n^3+135n^2-54n$ ways to choose four $B_{n-2}$'s as required.
The probability $P_{6.10}$ that there are four fault-free $B_{n-2}$'s chosen as in Case 6.10 is $\frac{1}{2}(n^6-12n^5+56n^4-126n^3+135n^2-54n)p^{4(n-2)!-2(n-4)!}$.

{\it Case 6.11.} $c_1\neq a_1=b_1$, $c_2=d_2$ and $c_1\neq d_1$.

There are $n$ ways to choose $d_1$ from $N_n$, $n-1$ ways to choose $d_2(=c_2)$ from $N_n\setminus\{d_1\}$, $n-2$ ways to choose $a_1(=b_1)$ from $N_n\setminus\{d_1, d_2\}$, $n-3$ ways to choose $a_2$ from $N_n\setminus\{d_1, d_2, a_1\}$, $n-4$ ways to choose $b_2$ from $N_n\setminus\{d_1, d_2, b_1, a_2\}$, $n-3$ ways to choose $c_1$ from $N_n\setminus\{a_1, c_2, d_1\}$.
Therefore, there are $n(n-1)(n-2)(n-3)^2(n-4)=n^6-13n^5+65n^4-155n^3+174n^2-72n$ ways to choose four $B_{n-2}$'s as required.
The probability $P_{6.11}$ that there are four fault-free $B_{n-2}$'s chosen as in Case 6.11 is $\frac{1}{2}(n^6-13n^5+65n^4-155n^3+174n^2-72n)p^{4(n-2)!-(n-3)!-2(n-4)!}$.

{\it Case 6.12.} $c_1\neq a_1=b_1$, $c_2=d_2$ and $c_1=d_1$.

There are $n$ ways to choose $d_1(=c_1)$ from $N_n$, $n-1$ ways to choose $d_2(=c_2)$ from $N_n\setminus\{d_1\}$, $n-2$ ways to choose $a_1(=b_1)$ from $N_n\setminus\{d_1, d_2\}$, $n-3$ ways to choose $a_2$ from $N_n\setminus\{d_1, d_2, a_1\}$, $n-4$ ways to choose $b_2$ from $N_n\setminus\{d_1, d_2, b_1, a_2\}$.
Therefore, there are $n(n-1)(n-2)(n-3)(n-4)=n^5-10n^4+35n^3-50n^2+24n$ ways to choose four $B_{n-2}$'s as required.
The probability $P_{6.12}$ that there are four fault-free $B_{n-2}$'s chosen as in Case 6.12 is $\frac{1}{2}(n^5-10n^4+35n^3-50n^2+24n)p^{4(n-2)!-2(n-4)!}$.

{\it Case 6.13.} $c_1=a_1\neq b_1$, $c_2=d_2$ or $c_1=b_1\neq a_1$, $c_2=d_2$.

W.l.o.g, assume that the former applies. There are $n$ ways to choose $d_1$ from $N_n$, $n-1$ ways to choose $d_2(=c_2)$ from $N_n\setminus\{d_1\}$, $n-2$ ways to choose $a_1(=c_1)$ from $N_n\setminus\{d_1, d_2\}$, $n-3$ ways to choose $a_2$ from $N_n\setminus\{d_1, d_2, a_1\}$, $n-3$ ways to choose $b_1$ from $N_n\setminus\{d_1, d_2, a_1\}$, $n-3$ ways to choose $b_2$ from $N_n\setminus\{d_1, d_2, b_1\}$.
Therefore, there are $2n(n-1)(n-2)(n-3)^3=2(n^6-12n^5+56n^4-126n^3+135n^2-54n)$ ways to choose four $B_{n-2}$'s as required.
Clearly, in this case $c_2\neq a_2$ and $c_1\neq d_1$.
The probability $P_{6.13}$ that there are four fault-free $B_{n-2}$'s chosen as in Case 6.13 is $\frac{1}{2}\times 2(n^6-12n^5+56n^4-126n^3+135n^2-54n)p^{4(n-2)!-2(n-3)!-2(n-4)!}$.

{\it Case 6.14.} $c_1=a_1=b_1$ and $c_2\neq d_2$.

There are $n$ ways to choose $d_1$ from $N_n$, $n-1$ ways to choose $d_2(=c_2)$ from $N_n\setminus\{d_1\}$, $n-2$ ways to choose $a_1(=b_1=c_1)$ from $N_n\setminus\{d_1, d_2\}$, $n-3$ ways to choose $a_2$ from $N_n\setminus\{d_1, d_2, a_1\}$, $n-4$ ways to choose $b_2$ from $N_n\setminus\{d_1, d_2, b_1, a_2\}$.
Therefore, there are $n(n-1)(n-2)(n-3)(n-4)=n^5-10n^4+35n^3-50n^2+24n$ ways to choose four $B_{n-2}$'s as required.
Clearly, in this case $c_2\neq a_2$, $c_2\neq b_2$ and $c_1\neq d_1$.
The probability $P_{6.14}$ that there are four fault-free $B_{n-2}$'s chosen as in Case 6.14 is $\frac{1}{2}(n^5-10n^4+35n^3-50n^2+24n)p^{4(n-2)!-3(n-3)!-2(n-4)!}$.

In summary, the probability $P_6$ that there are four fault-free $B_{n-2}$'s chosen as in Case 6 is $\sum_{i=1}^{14}P_{6.i}=\frac{1}{2}(n^8-15n^7+99n^6-377n^5+892n^4-1288n^3+1024n^2-336n)p^{4(n-2)!-2(n-4)!}+\frac{1}{2}(3n^7-45n^6+277n^5-893n^4+1580n^3-1438n^2+516n)p^{4(n-2)!-(n-3)!-2(n-4)!}+\frac{1}{2}(3n^6-38n^5+187n^4-442n^3+494n^2-204n)p^{4(n-2)!-2(n-3)!-}$ $^{2(n-4)!}+\frac{1}{2}(n^5-10n^4+35n^3-50n^2+24n)p^{4(n-2)!-3(n-3)!-2(n-4)!}$.

By the above computations, $P(2,1,1)=\sum_{i=1}^{6}P_i=(8n^6-65n^5+229n^4-421n^3+389n^2-140n)p^{4(n-2)!}+\frac{1}{2}(44n^5-291n^4+716n^3-771n^2+302n)p^{4(n-2)!-(n-3)!}+\frac{1}{2}(2n^5-5n^4-10n^3+35n^2-22n)p^{4(n-2)!-2(n-3)!}+(4n^7-46n^6+231n^5-647n^4+1043n^3-891n^2+306n)p^{4(n-2)!-(n-4)!}+(12n^6-128n^5+535n^4-1090n^3+1073n^2-402n)p^{4(n-2)!-(n-3)!-(n-4)!}+(7n^5-59n^4+179n^3-229n^2+102n)p^{4(n-2)!-2(n-3)!-(n-4)!}+(n^4-6n^3+11n^2-6n)p^{4(n-2)!-3(n-3)!-(n-4)!}+\frac{1}{2}(n^8-15n^7+99n^6-377n^5+892n^4-1288n^3+1024n^2-336n)p^{4(n-2)!-2(n-4)!}+\frac{1}{2}(3n^7-45n^6+277n^5-893n^4+1580n^3-1438n^2+516n)p^{4(n-2)!-(n-3)!-2(n-4)!}+\frac{1}{2}(3n^6-38n^5+187n^4-442n^3+494n^2-204n)p^{4(n-2)!-2(n-3)!-2(n-4)!}+\frac{1}{2}(n^5-10n^4+35n^3-50n^2+24n)p^{4(n-2)!-3(n-3)!-2(n-4)!}$.
\end{proof}

\begin{theorem}\label{th10}
$P(1,1,2)=(8n^6-65n^5+229n^4-421n^3+389n^2-140n)p^{4(n-2)!}+\frac{1}{2}(44n^5-291n^4+716n^3-771n^2+302n)p^{4(n-2)!-(n-3)!}+\frac{1}{2}(2n^5-5n^4-10n^3+35n^2-22n)p^{4(n-2)!-2(n-3)!}+(4n^7-46n^6+231n^5-647n^4+1043n^3-891n^2+306n)p^{4(n-2)!-(n-4)!}+(12n^6-128n^5+535n^4-1090n^3+1073n^2-402n)p^{4(n-2)!-(n-3)!-(n-4)!}+(7n^5-59n^4+179n^3-229n^2+102n)p^{4(n-2)!-2(n-3)!-(n-4)!}+(n^4-6n^3+11n^2-6n)p^{4(n-2)!-3(n-3)!-(n-4)!}+\frac{1}{2}(n^8-15n^7+99n^6-377n^5+892n^4-1288n^3+1024n^2-336n)p^{4(n-2)!-2(n-4)!}+\frac{1}{2}(3n^7-45n^6+277n^5-893n^4+1580n^3-1438n^2+516n)p^{4(n-2)!-(n-3)!-2(n-4)!}+\frac{1}{2}(3n^6-38n^5+187n^4-442n^3+494n^2-204n)p^{4(n-2)!-2(n-3)!-2(n-4)!}+\frac{1}{2}(n^5-10n^4+35n^3-50n^2+24n)p^{4(n-2)!-3(n-3)!-2(n-4)!}$.
\end{theorem}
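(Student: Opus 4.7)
The plan is to establish $P(1,1,2) = P(2,1,1)$ by a direct symmetry argument, avoiding any need to redo the extensive case analysis of Theorem \ref{th9}. The key observation is that the position-reversal map on permutations, sending $\pi_1 \pi_2 \cdots \pi_n$ to $\pi_n \pi_{n-1} \cdots \pi_1$, induces an automorphism of $B_n$: the edges of $B_n$ correspond to swaps of adjacent positions, and reversing the coordinate order takes the swap of positions $(i,i{+}1)$ to the swap of positions $(n{-}i, n{-}i{+}1)$, which is again adjacent. Under this automorphism a subnetwork of form $a_1 a_2 X^{n-2}$ becomes one of form $X^{n-2} a_2 a_1$, so $H_{1,2}$ and $H_{n-1,n-2}$ are swapped, while $H_{1,n}$ is preserved (with $a_1 X^{n-2} a_2 \mapsto a_2 X^{n-2} a_1$).

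First I would state the automorphism and verify it induces a bijection between the collections of quadruples counted by $P(2,1,1)$ and those counted by $P(1,1,2)$: under the map, a choice of two subnetworks in $H_{1,2}$, one in $H_{1,n}$, and one in $H_{n-1,n-2}$ is sent to two in $H_{n-1,n-2}$, one in $H_{1,n}$, and one in $H_{1,2}$. Next I would check that all relevant disjointness data is preserved. Observations \ref{ob:2} and \ref{ob:3} are interchanged by the map (equality of first coordinates becomes equality of last coordinates), while Observations \ref{ob:1} and \ref{ob:4} are preserved by symmetry; hence, for any quadruple, the cardinality $|V(a_1 a_2 X^{n-2}) \cup V(b_1 b_2 X^{n-2}) \cup V(c_1 X^{n-2} c_2) \cup V(X^{n-2} d_1 d_2)|$ equals the cardinality of the corresponding union for the image quadruple.

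Since the probability that a given collection of subnetworks is entirely fault-free depends only on the size of the union of their vertex sets (and every node independently has fault-free probability $p$), the bijection preserves the probability contribution of each quadruple. Summing over all quadruples yields $P(1,1,2) = P(2,1,1)$. The formula then follows by quoting Theorem \ref{th9} verbatim, following the same template used in Theorems \ref{th3}, \ref{th5}, and \ref{th7}.

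The only genuine obstacle is making sure that the position-reversal really is a graph automorphism of $B_n$ and that the induced bijection respects the three sets $H_{1,2}$, $H_{1,n}$, $H_{n-1,n-2}$ in the way claimed; once that is nailed down, the rest is immediate because the probability model is invariant under automorphisms of $B_n$. I expect no computational difficulty, and the proof itself will be one short paragraph asserting the symmetry and restating the formula from Theorem \ref{th9}.
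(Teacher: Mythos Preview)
Your proposal is correct and takes essentially the same approach as the paper: the paper's proof is a one-line assertion that ``this scenario is similar to Theorem~\ref{th9}'' and concludes $P(1,1,2)=P(2,1,1)$, exactly as in the analogous Theorems~\ref{th3}, \ref{th5}, \ref{th7}. Your version is simply a more explicit justification of the same symmetry, supplying the position-reversal automorphism that the paper leaves implicit.
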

\begin{proof}
This scenario is similar to Theorem \ref{th9}, it is easy to get that $P(1,1,2)=P(2,1,1)=(8n^6-65n^5+229n^4-421n^3+389n^2-140n)p^{4(n-2)!}+\frac{1}{2}(44n^5-291n^4+716n^3-771n^2+302n)p^{4(n-2)!-(n-3)!}+\frac{1}{2}(2n^5-5n^4-10n^3+35n^2-22n)p^{4(n-2)!-2(n-3)!}+(4n^7-46n^6+231n^5-647n^4+1043n^3-891n^2+306n)p^{4(n-2)!-(n-4)!}+(12n^6-128n^5+535n^4-1090n^3+1073n^2-402n)p^{4(n-2)!-(n-3)!-(n-4)!}+(7n^5-59n^4+179n^3-229n^2+102n)p^{4(n-2)!-2(n-3)!-(n-4)!}+(n^4-6n^3+11n^2-6n)p^{4(n-2)!-3(n-3)!-(n-4)!}+\frac{1}{2}(n^8-15n^7+99n^6-377n^5+892n^4-1288n^3+1024n^2-336n)p^{4(n-2)!-2(n-4)!}+\frac{1}{2}(3n^7-45n^6+277n^5-893n^4+1580n^3-1438n^2+516n)p^{4(n-2)!-(n-3)!-2(n-4)!}+\frac{1}{2}(3n^6-38n^5+187n^4-442n^3+494n^2-204n)p^{4(n-2)!-2(n-3)!-2(n-4)!}+\frac{1}{2}(n^5-10n^4+35n^3-50n^2+24n)p^{4(n-2)!-3(n-3)!-2(n-4)!}$.
\end{proof}

\begin{theorem}\label{th11}
$P(1,2,1)=(2n^7-17n^6+65n^5-139n^4+173n^3-118n^2+34n)p^{4(n-2)!}+(8n^6-61n^5+199n^4-347n^3+315n^2-114n)p^{4(n-2)!-(n-3)!}+(n^6-32n^4+112n^3-143n^2+62n)p^{4(n-2)!-2(n-3)!}+2(n^5-8n^4+23n^3-28n^2+12n)p^{4(n-2)!-3(n-3)!}+\frac{1}{2}(n^8-12n^7+65n^6-207n^5+416n^4-525n^3+382n^2-120n)p^{4(n-2)!-(n-4)!}+2(n^7-12n^6+61n^5-171n^4+280n^3-249n^2+90n)p^{4(n-2)!-(n-3)!-(n-4)!}+(2n^6-25n^5+121n^4-281n^3+309n^2-126n)p^{4(n-2)!-2(n-3)!-(n-4)!}$.
\end{theorem}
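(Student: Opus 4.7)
The plan is to mimic the case-analysis strategy used for Theorems \ref{th9} and \ref{th10}, adapted to the one-from-$H_{1,2}$, two-from-$H_{1,n}$, one-from-$H_{n-1,n}$ distribution. Denote the four subnetworks by $a_1 a_2 X^{n-2}\in H_{1,2}$, $b_1 X^{n-2} b_2, c_1 X^{n-2} c_2\in H_{1,n}$ and $X^{n-2} d_1 d_2\in H_{n-1,n}$. Observation \ref{ob:1} immediately yields that $b_1 X^{n-2} b_2$ and $c_1 X^{n-2} c_2$ are vertex-disjoint, so we have implicitly sorted the unordered pair $\{b,c\}$ and must divide every count by $2$ at the end (exactly as in Theorem \ref{th6}).

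The cardinality of $V(a_1a_2X^{n-2})\cup V(b_1X^{n-2}b_2)\cup V(c_1X^{n-2}c_2)\cup V(X^{n-2}d_1d_2)$ is determined by five pairwise intersections, each of which is governed by Observations \ref{ob:2}--\ref{ob:4}:
\begin{itemize}
\item $a$ versus $b$ and $a$ versus $c$: each contributes $-(n-3)!$ exactly when $a_1=b_1$ (resp.\ $a_1=c_1$) and the corresponding second coordinates differ, by Observation \ref{ob:2};
\item $b$ versus $d$ and $c$ versus $d$: each contributes $-(n-3)!$ exactly when $b_2=d_2$, $b_1\neq d_1$ (resp.\ $c_2=d_2$, $c_1\neq d_1$), by Observation \ref{ob:3};
\item $a$ versus $d$: contributes $-(n-4)!$ exactly when $\{a_1,a_2\}\cap\{d_1,d_2\}=\emptyset$, by Observation \ref{ob:4}.
\end{itemize}
The seven exponents appearing in the stated formula (namely $4(n-2)!$, $4(n-2)!-k(n-3)!$ for $k=1,2,3$, and $4(n-2)!-k(n-3)!-(n-4)!$ for $k=0,1,2$) are precisely the values produced by these rules, so the skeleton of the computation is already forced.

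The natural organising principle is to first split according to $|\{a_1,a_2\}\cap\{d_1,d_2\}|\in\{0,1,2\}$ (the $a$-vs-$d$ relation), and then within each split to further partition on whether $a_1\in\{b_1,c_1\}$ and on whether $\{b_2,c_2\}$ meets $\{d_2\}$, exactly as is done in the proof of Theorem \ref{th9}. For each atomic configuration I would (i) count the permutations of the free coordinates by the obvious product formula $\prod(n-k_i)$, (ii) read off $|V(\cdot)|$ from the five intersection indicators above, and (iii) multiply by $\tfrac12\,p^{|V(\cdot)|}$ and collect terms by exponent. To check the final collation it is useful to verify that the coefficient of $p^{4(n-2)!-3(n-3)!}$ equals the contribution from configurations in which $a$ overlaps one of $\{b,c\}$ and $d$ overlaps both of $\{b,c\}$ (or symmetrically), since that is the only way to obtain three $(n-3)!$ reductions without invoking the $(n-4)!$ reduction.

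The main obstacle, as in Theorems \ref{th9} and \ref{th10}, is bookkeeping: the three-way interplay among $a$, the unordered pair $\{b,c\}$ and $d$ yields on the order of twenty to thirty atomic sub-cases, and within each I must carefully track the induced equalities among the second coordinates (e.g.\ a forced $c_2=a_2$ may tacitly resolve the $a$-vs-$c$ indicator to ``overlap''). To stay disciplined I will follow the template of Theorem \ref{th9}: list the required intersection pattern for each sub-case as a preamble, then derive the count and the exponent mechanically. Once every sub-case is tabulated, summing the polynomial coefficients for each of the seven fixed exponents and simplifying yields the stated expression for $P(1,2,1)$.
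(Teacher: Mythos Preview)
Your plan mirrors the paper's approach almost exactly: same notation, same division by $2$ for the $\{b,c\}$ symmetry, and the same hierarchy of splits (first on the $a$--$d$ relation, then on how $\{b_1,c_1\}$ meets $a_1$ and how $\{b_2,c_2\}$ meets $d_2$). The paper organises this into nine top-level cases, and the mechanical counting in each sub-case is precisely the product $\prod(n-k_i)$ that you describe.

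There is, however, one genuine gap. Your claim that the union size ``is determined by five pairwise intersections'' is not quite correct: two triple-intersection terms are also needed. Whenever $a_1=c_1$, $c_2=d_2$ and $\{a_1,a_2\}\cap\{d_1,d_2\}=\emptyset$ hold simultaneously, one has
\[
V(a_1a_2X^{n-2})\cap V(c_1X^{n-2}c_2)\cap V(X^{n-2}d_1d_2)=V(a_1a_2X^{n-4}d_1d_2)=V(a_1a_2X^{n-2})\cap V(X^{n-2}d_1d_2),
\]
which is nonempty of size $(n-4)!$ and must be added back in inclusion--exclusion (and symmetrically with $b$ in place of $c$; the remaining triples lie inside $V(b)\cap V(c)=\emptyset$). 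The net effect is that the $-(n-4)!$ coming from the $a$--$d$ pair is cancelled. The paper carries this out explicitly in its Cases~1.4 and~1.7, obtaining exponents $4(n-2)!-2(n-3)!$ and $4(n-2)!-3(n-3)!$ there; your five-pair rule would instead produce $4(n-2)!-2(n-3)!-(n-4)!$ and $4(n-2)!-3(n-3)!-(n-4)!$, the latter not even among the seven exponents in the statement. In particular your proposed sanity check for the $-3(n-3)!$ coefficient (``$a$ overlaps one of $\{b,c\}$ and $d$ overlaps both, with $a\cap d$ trivial'') captures only part of that coefficient; the paper's Case~1.7.1 supplies the rest via exactly this triple-intersection cancellation. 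Once you append the two triple terms to your vertex-count rule, the remainder of your outline goes through verbatim.
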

\begin{proof}
Denote by $a_1 a_2 X^{n-2}\in H_{1, 2}$, $b_1 X^{n-2} b_2\in H_{1, n}$, $c_1 X^{n-2} c_2\in H_{1, n}$ and $X^{n-2} d_1 d_2\in H_{n-1, n}$ these four $B_{n-2}$'s. Observation \ref{ob:1} implies that $b_1 X^{n-2} b_2$ and $c_1 X^{n-2} c_2$ are disjoint.

Since we have implicitly sorted the values of $b_1 b_2$ and $c_1 c_2$. The number of ways to choose four $B_{n-2}$'s as required needs to be divided by 2. It will be applied directly in following analyses.

We will analyze this problem in 9 cases as follows.

{\it Case 1.} $\{a_1, b_1\}\cap\{d_1, d_2\}=\emptyset$.

Observation \ref{ob:4} implies that $a_1 a_2 X^{n-2}$ and $X^{n-2} d_1 d_2$ are not disjoint, and $V(a_1 a_2 X^{n-2})\cap V(X^{n-2} d_1 d_2)=V(a_1 a_2 X^{n-4} d_1 d_2)$.

{\it Case 1.1.} $b_1\neq a_1$, $c_1\neq a_1$, $b_2\neq d_2$ and $c_2\neq d_2$.

{\it Case 1.1.1.} $b_1\neq c_1$, $b_1\neq d_2$ and $c_1\neq d_2$.

There are $n$ ways to choose $d_1$ from $N_n$, $n-1$ ways to choose $d_2$ from $N_n\setminus\{d_1\}$, $n-2$ ways to choose $a_1$ from $N_n\setminus\{d_1, d_2\}$, $n-3$ ways to choose $a_2$ from $N_n\setminus\{d_1, d_2, a_1\}$, $n-2$ ways to choose $b_1$ from $N_n\setminus\{a_1, d_2\}$, $n-3$ ways to choose $c_1$ from $N_n\setminus\{a_1, b_1, d_2\}$, $n-2$ ways to choose $b_2$ from $N_n\setminus\{b_1, d_2\}$, $n-2$ ways to choose $c_2$ from $N_n\setminus\{c_1, d_2\}$.
Therefore, there are $n(n-1)(n-2)^4(n-3)^2=n^8-15n^7+95n^6-329n^5+672n^4-808n^3+528n^2-144n$ ways to choose four $B_{n-2}$'s as required.

{\it Case 1.1.2.} $b_1\neq c_1$, $b_1\neq d_2=c_1$ or $b_1\neq c_1$, $c_1\neq d_2=b_1$.

W.l.o.g, assume that $b_1\neq d_2=c_1$. There are $n$ ways to choose $d_1$ from $N_n$, $n-1$ ways to choose $d_2(=c_1)$ from $N_n\setminus\{d_1\}$, $n-2$ ways to choose $a_1$ from $N_n\setminus\{d_1, d_2\}$, $n-3$ ways to choose $a_2$ from $N_n\setminus\{d_1, d_2, a_1\}$, $n-2$ ways to choose $b_1$ from $N_n\setminus\{a_1, d_2\}$, $n-2$ ways to choose $b_2$ from $N_n\setminus\{b_1, d_2\}$, $n-1$ ways to choose $c_2$ from $N_n\setminus\{d_2\}$.
Therefore, there are $2n(n-1)^2(n-2)^3(n-3)=2(n^7-11n^6+49n^5-113n^4+142n^3-92n^2+24n)$ ways to choose four $B_{n-2}$'s as required.

{\it Case 1.1.3.} $b_1=c_1$ and $b_1\neq d_2$.

There are $n$ ways to choose $d_1$ from $N_n$, $n-1$ ways to choose $d_2$ from $N_n\setminus\{d_1\}$, $n-2$ ways to choose $a_1$ from $N_n\setminus\{d_1, d_2\}$, $n-3$ ways to choose $a_2$ from $N_n\setminus\{d_1, d_2, a_1\}$, $n-2$ ways to choose $b_1(=c_1)$ from $N_n\setminus\{b_1, d_2\}$, $n-2$ ways to choose $b_2$ from $N_n\setminus\{b_1, d_2\}$, $n-3$ ways to choose $c_2$ from $N_n\setminus\{c_1, b_2, d_2\}$.
Therefore, there are $n(n-1)(n-2)^3(n-3)^2=n^7-13n^6+69n^5-191n^4+290n^3-228n^2+72n$ ways to choose four $B_{n-2}$'s as required.

{\it Case 1.1.4.} $b_1=c_1$ and $b_1=d_2$.

There are $n$ ways to choose $d_1$ from $N_n$, $n-1$ ways to choose $d_2(=b_1=c_1)$ from $N_n\setminus\{d_1\}$, $n-2$ ways to choose $a_1$ from $N_n\setminus\{d_1, d_2\}$, $n-3$ ways to choose $a_2$ from $N_n\setminus\{d_1, d_2, a_1\}$, $n-1$ ways to choose $b_2$ from $N_n\setminus\{d_2\}$, $n-2$ ways to choose $c_2$ from $N_n\setminus\{b_2, d_2\}$.
Therefore, there are $n(n-1)^2(n-2)^2(n-3)=n^6-9n^5+31n^4-51n^3+40n^2-12n$ ways to choose four $B_{n-2}$'s as required.

In summary, there are $n^8-12n^7+61n^6-171n^5+286n^4-285n^3+156n^2-36n$ ways to choose four $B_{n-2}$'s as required in Case 1.1.
Observations \ref{ob:2} and \ref{ob:3} imply that for any $i\in\{b, c\}$, $a_1 a_2 X^{n-2}$ and $i_1 X^{n-2} i_2$ are disjoint, and $X^{n-2} d_1 d_2$ and $i_1 X^{n-2} i_2$ are disjoint. Thus, $|V(a_1 a_2 X^{n-2})\cup V(b_1 X^{n-2} b_2)\cup V(c_1 X^{n-2} c_2)\cup V(X^{n-2} d_1 d_2)|=|V(a_1 a_2 X^{n-2})|+|V(b_1 X^{n-2} b_2)|+|V(c_1 X^{n-2} c_2)|+|V(X^{n-2} d_1 d_2)|-|V(a_1 a_2 X^{n-2})\cap V(X^{n-2} d_1 d_2)|=4(n-2)!-(n-4)!$.
The probability $P_{1.1}$ that there are four fault-free $B_{n-2}$'s chosen as in Case 1.1 is $\frac{1}{2}(n^8-12n^7+61n^6-171n^5+286n^4-285n^3+156n^2-36n)p^{4(n-2)!-(n-4)!}$.

{\it Case 1.2.} $b_1\neq c_1$, $b_1\neq a_1$, $c_1\neq a_1$, $i_2\neq j_2=d_2$ or $b_2\neq c_2$, $b_2\neq d_2$, $c_2\neq d_2$, $i_1\neq j_1=a_1$, where $\{i, j\}=\{b, c\}$.

There are 4 possible cases, w.l.o.g, assume that $b_1\neq c_1$, $b_1\neq a_1$, $c_1\neq a_1$, $b_2\neq c_2=d_2$.

{\it Case 1.2.1.} $c_1\neq d_1$.

If $b_1\neq d_2$, there are $n$ ways to choose $d_1$ from $N_n$, $n-1$ ways to choose $d_2(=c_2)$ from $N_n\setminus\{d_1\}$, $n-2$ ways to choose $a_1$ from $N_n\setminus\{d_1, d_2\}$, $n-3$ ways to choose $a_2$ from $N_n\setminus\{d_1, d_2, a_1\}$, $n-3$ ways to choose $c_1$ from $N_n\setminus\{a_1, c_2, d_1\}$, $n-3$ ways to choose $b_1$ from $N_n\setminus\{a_1, c_1, d_2\}$, $n-2$ ways to choose $b_2$ from $N_n\setminus\{b_1, d_2\}$.
Therefore, there are $n(n-1)(n-2)^2(n-3)^3=n^7-14n^6+80n^5-238n^4+387n^3-324n^2+108n$ ways to choose four $B_{n-2}$'s as required.

If $b_1=d_2$, there are $n$ ways to choose $d_1$ from $N_n$, $n-1$ ways to choose $d_2(=c_2=b_1)$ from $N_n\setminus\{d_1\}$, $n-2$ ways to choose $a_1$ from $N_n\setminus\{d_1, d_2\}$, $n-3$ ways to choose $a_2$ from $N_n\setminus\{d_1, d_2, a_1\}$, $n-3$ ways to choose $c_1$ from $N_n\setminus\{a_1, c_2, d_1\}$, $n-1$ ways to choose $b_1$ from $N_n\setminus\{b_1\}$.
Therefore, there are $n(n-1)^2(n-2)(n-3)^2=n^6-10n^5+38n^4-68n^3+57n^2-18n$ ways to choose four $B_{n-2}$'s as required.

In summary, there are $n^7-13n^6+70n^5-200n^4+319n^3-267n^2+90n$ ways to choose four $B_{n-2}$'s as required in Case 1.2.1.
Observations \ref{ob:2} and \ref{ob:3} imply that for any $i\in\{b, c\}$, $a_1 a_2 X^{n-2}$ and $i_1 X^{n-2} i_2$ are disjoint, and $X^{n-2} d_1 d_2$ and $b_1 X^{n-2} b_2$ are disjoint, and $X^{n-2} d_1 d_2$ and $c_1 X^{n-2} c_2$ are not disjoint and $V(X^{n-2} d_1 d_2)\cap V(c_1 X^{n-2} c_2)=V(c_1 X^{n-3} d_1 d_2)$. Thus, $|V(a_1 a_2 X^{n-2})\cup V(b_1 X^{n-2} b_2)\cup V(c_1 X^{n-2} c_2)\cup V(X^{n-2} d_1 d_2)|=|V(a_1 a_2 X^{n-2})|+|V(b_1 X^{n-2} b_2)|+|V(c_1 X^{n-2} c_2)|+|V(X^{n-2} d_1 d_2)|-|V(X^{n-2} d_1 d_2)\cap V(c_1 X^{n-2} c_2)|-|V(a_1 a_2$ $X^{n-2})\cap V(X^{n-2} d_1 d_2)|=4(n-2)!-(n-3)!-(n-4)!$. 
The probability $P_{1.2.1}$ that there are four fault-free $B_{n-2}$'s chosen as in Case 1.2.1 is $\frac{1}{2}(n^7-13n^6+70n^5-200n^4+319n^3-267n^2+90n)p^{4(n-2)!-(n-3)!-(n-4)!}$.

{\it Case 1.2.2.} $c_1=d_1$.

If $b_1\neq d_2$, there are $n$ ways to choose $c_1(=d_1)$ from $N_n$, $n-1$ ways to choose $d_2(=c_2)$ from $N_n\setminus\{d_1\}$, $n-2$ ways to choose $a_1$ from $N_n\setminus\{d_1, d_2\}$, $n-3$ ways to choose $a_2$ from $N_n\setminus\{d_1, d_2, a_1\}$, $n-3$ ways to choose $b_1$ from $N_n\setminus\{a_1, c_1, d_2\}$, $n-2$ ways to choose $b_2$ from $N_n\setminus\{b_1, d_2\}$.
Therefore, there are $n(n-1)(n-2)^2(n-3)^2=n^6-11n^5+47n^4-97n^3+96n^2-36n$ ways to choose four $B_{n-2}$'s as required.

If $b_1=d_2$, there are $n$ ways to choose $c_1(=d_1)$ from $N_n$, $n-1$ ways to choose $d_2(=c_2=b_1)$ from $N_n\setminus\{d_1\}$, $n-2$ ways to choose $a_1$ from $N_n\setminus\{d_1, d_2\}$, $n-3$ ways to choose $a_2$ from $N_n\setminus\{d_1, d_2, a_1\}$, $n-1$ ways to choose $b_2$ from $N_n\setminus\{b_1\}$.
Therefore, there are $n(n-1)^2(n-2)(n-3)=n^5-7n^4+17n^3-17n^2+6n$ ways to choose four $B_{n-2}$'s as required.

In summary, there are $n^6-10n^5+40n^4-80n^3+79n^2-30n$ ways to choose four $B_{n-2}$'s as required in Case 1.2.2.
Observations \ref{ob:2} and \ref{ob:3} imply that for any $i\in\{b, c\}$, $a_1 a_2 X^{n-2}$ and $i_1 X^{n-2} i_2$ are disjoint, and $X^{n-2} d_1 d_2$ and $i_1 X^{n-2} i_2$ are disjoint. Thus, $|V(a_1 a_2 X^{n-2})\cup V(b_1 X^{n-2} b_2)\cup V(c_1 X^{n-2} c_2)\cup V(X^{n-2} d_1 d_2)|=|V(a_1 a_2 X^{n-2})|+|V(b_1 X^{n-2} b_2)|+|V(c_1 X^{n-2} c_2)|+|V(X^{n-2} d_1 d_2)|-|V(a_1 a_2 X^{n-2})\cap V(X^{n-2} d_1 d_2)|=4(n-2)!-(n-4)!$.
The probability $P_{1.2.2}$ that there are four fault-free $B_{n-2}$'s chosen as in Case 1.2.2 is $\frac{1}{2}(n^6-10n^5+40n^4-80n^3+79n^2-30n)p^{4(n-2)!-(n-4)!}$.

Thus, the probability $P_{1.2}$ that there are four fault-free $B_{n-2}$'s chosen as in Case 1.2 is $4(P_{1.2.1}+P_{1.2.2})=2(n^6-10n^5+40n^4-80n^3+79n^2-30n)p^{4(n-2)!-(n-4)!}+2(n^7-13n^6+70n^5-200n^4+319n^3-267n^2+90n)p^{4(n-2)!-(n-3)!-(n-4)!}$.

{\it Case 1.3.} $b_1\neq c_1$, $b_1\neq a_1$, $c_1\neq a_1$, $b_2=c_2=d_2$ or $b_2\neq c_2$, $b_2\neq d_2$, $c_2\neq d_2$, $b_1=c_1=a_1$.

W.l.o.g, assume that the former applies.

{\it Case 1.3.1.} $b_1\neq d_1$ and $c_1\neq d_1$.

There are $n$ ways to choose $d_1$ from $N_n$, $n-1$ ways to choose $d_2(=b_2=c_2)$ from $N_n\setminus\{d_1\}$, $n-2$ ways to choose $a_1$ from $N_n\setminus\{d_1, d_2\}$, $n-3$ ways to choose $a_2$ from $N_n\setminus\{d_1, d_2, a_1\}$, $n-3$ ways to choose $b_1$ from $N_n\setminus\{a_1, b_2, d_1\}$, $n-4$ ways to choose $c_1$ from $N_n\setminus\{a_1, b_1, c_2, d_1\}$.
Therefore, there are $n(n-1)(n-2)(n-3)^2(n-4)=n^6-13n^5+65n^4-155n^3+174n^2-72n$ ways to choose four $B_{n-2}$'s as required.
Observations \ref{ob:2} and \ref{ob:3} imply that for any $i\in\{b, c\}$, $a_1 a_2 X^{n-2}$ and $i_1 X^{n-2} i_2$ are disjoint, and $X^{n-2} d_1 d_2$ and $i_1 X^{n-2} i_2$ are not disjoint and $V(X^{n-2} d_1 d_2)\cap V(i_1 X^{n-2} i_2)=V(i_1 X^{n-3} d_1 d_2)$. Thus, $|V(a_1 a_2 X^{n-2})\cup V(b_1 X^{n-2} b_2)\cup V(c_1 X^{n-2} c_2)\cup V(X^{n-2} d_1 d_2)|=|V(a_1 a_2 X^{n-2})|+|V(b_1 X^{n-2} b_2)|+|V(c_1 X^{n-2} c_2)|+|V(X^{n-2} d_1 d_2)|-|V(X^{n-2} d_1 d_2)\cap V(b_1 X^{n-2}$ $b_2)|-|V(X^{n-2} d_1 d_2)\cap V(c_1 X^{n-2} c_2)|-|V(a_1 a_2 X^{n-2})\cap V(X^{n-2} d_1 d_2)|=4(n-2)!-2(n-3)!-(n-4)!$. 
The probability $P_{1.3.1}$ that there are four fault-free $B_{n-2}$'s chosen as in Case 1.3.1 is $\frac{1}{2}(n^6-13n^5+65n^4-155n^3+174n^2-72n)p^{4(n-2)!-2(n-3)!-(n-4)!}$.

{\it Case 1.3.2.} $b_1=d_1\neq c_1$ or $c_1=d_1\neq b_1$.

W.l.o.g, assume that $b_1=d_1\neq c_1$.

There are $n$ ways to choose $d_1(=b_1)$ from $N_n$, $n-1$ ways to choose $d_2(=b_2=c_2)$ from $N_n\setminus\{d_1\}$, $n-2$ ways to choose $a_1$ from $N_n\setminus\{d_1, d_2\}$, $n-3$ ways to choose $a_2$ from $N_n\setminus\{d_1, d_2, a_1\}$, $n-3$ ways to choose $c_1$ from $N_n\setminus\{a_1, b_1, c_2\}$.
Therefore, there are $2n(n-1)(n-2)(n-3)^2=2(n^5-9n^4+29n^3-39n^2+18n)$ ways to choose four $B_{n-2}$'s as required.
Observations \ref{ob:2} and \ref{ob:3} imply that for any $i\in\{b, c\}$, $a_1 a_2 X^{n-2}$ and $i_1 X^{n-2} i_2$ are disjoint, and $X^{n-2} d_1 d_2$ and $b_1 X^{n-2} b_2$ are disjoint, and $X^{n-2} d_1 d_2$ and $c_1 X^{n-2} c_2$ are not disjoint and $V(X^{n-2} d_1 d_2)\cap V(c_1 X^{n-2} c_2)=V(c_1 X^{n-3} d_1 d_2)$. Thus, $|V(a_1 a_2 X^{n-2})\cup V(b_1 X^{n-2} b_2)\cup V(c_1 X^{n-2} c_2)\cup V(X^{n-2} d_1 d_2)|=|V(a_1 a_2 X^{n-2})|+|V(b_1 X^{n-2} b_2)|+|V(c_1 X^{n-2} c_2)|+|V(X^{n-2} d_1 d_2)|-|V(X^{n-2} d_1 d_2)\cap V(c_1 X^{n-2} c_2)|-|V(a_1 a_2$ $X^{n-2})\cap V(X^{n-2} d_1 d_2)|=4(n-2)!-(n-3)!-(n-4)!$. 
The probability $P_{1.3.2}$ that there are four fault-free $B_{n-2}$'s chosen as in Case 1.3.2 is $\frac{1}{2}\times 2(n^5-9n^4+29n^3-39n^2+18n)p^{4(n-2)!-(n-3)!-(n-4)!}$.

Thus, the probability $P_{1.3}$ that there are four fault-free $B_{n-2}$'s chosen as in Case 1.3 is $2(P_{1.3.1}+P_{1.3.2})=2(n^5-9n^4+29n^3-39n^2+18n)p^{4(n-2)!-(n-3)!-(n-4)!}+(n^6-13n^5+65n^4-155n^3+174n^2-72n)p^{4(n-2)!-2(n-3)!-(n-4)!}+$.

{\it Case 1.4.} $b_1\neq c_1=a_1$, $b_2\neq c_2=d_2$ or $c_1\neq b_1=a_1$, $c_2\neq b_2=d_2$.

W.l.o.g, assume that the former applies.

{\it Case 1.4.1.} $b_1\neq d_2$.

There are $n$ ways to choose $d_1$ from $N_n$, $n-1$ ways to choose $d_2(=c_2)$ from $N_n\setminus\{d_1\}$, $n-2$ ways to choose $a_1(=c_1)$ from $N_n\setminus\{d_1, d_2\}$, $n-3$ ways to choose $a_2$ from $N_n\setminus\{d_1, d_2, a_1\}$, $n-2$ ways to choose $b_1$ from $N_n\setminus\{a_1, d_2\}$, $n-2$ ways to choose $b_2$ from $N_n\setminus\{d_2, b_1\}$.
Therefore, there are $n(n-1)(n-2)^3(n-3)=n^6-10n^5+39n^4-74n^3+68n^2-24n$ ways to choose four $B_{n-2}$'s as required.

{\it Case 1.4.2.} $b_1=d_2$.

There are $n$ ways to choose $d_1$ from $N_n$, $n-1$ ways to choose $d_2(=c_2=b_1)$ from $N_n\setminus\{d_1\}$, $n-2$ ways to choose $a_1(=c_1)$ from $N_n\setminus\{d_1, d_2\}$, $n-3$ ways to choose $a_2$ from $N_n\setminus\{d_1, d_2, a_1\}$, $n-1$ ways to choose $b_2$ from $N_n\setminus\{d_2\}$.
Therefore, there are $n(n-1)^2(n-2)(n-3)=n^5-7n^4+17n^3-17n^2+6n$ ways to choose four $B_{n-2}$'s as required.

In summary, there are $n^6-9n^5+32n^4-57n^3+51n^2-18n$ ways to choose four $B_{n-2}$'s as required in Case 1.4.
Observations \ref{ob:2} and \ref{ob:3} imply that $a_1 a_2 X^{n-2}$ and $b_1 X^{n-2} b_2$ are disjoint, and $a_1 a_2 X^{n-2}$ and $c_1 X^{n-2} c_2$ are not disjoint and $V(a_1 a_2 X^{n-2})\cap V(c_1 X^{n-2} c_2)=V(a_1 a_2 X^{n-3} c_2)$, and $X^{n-2} d_1 d_2$ and $b_1 X^{n-2} b_2$ are disjoint, and $X^{n-2} d_1 d_2$ and $c_1 X^{n-2} c_2$ are not disjoint and $V(X^{n-2} d_1 d_2)\cap V(c_1 X^{n-2} c_2)=V(c_1 X^{n-3} d_1 d_2)$. And $V(a_1 a_2 X^{n-2})\cap V(c_1 X^{n-2} c_2)\cap V(X^{n-2} d_1 d_2)=V(a_1 a_2 X^{n-2} d_1 d_2)=V(a_1 a_2 X^{n-2})\cap V(X^{n-2} d_1 d_2)$.
Thus, $|V(a_1 a_2 X^{n-2})\cup V(b_1 X^{n-2} b_2)\cup V(c_1 X^{n-2} c_2)\cup V(X^{n-2} d_1 d_2)|=|V(a_1 a_2 X^{n-2})|+|V(b_1 X^{n-2} b_2)|+|V(c_1 X^{n-2} c_2)|+|V(X^{n-2} d_1 d_2)|-|V(a_1 a_2 X^{n-2})\cap V(c_1 X^{n-2} c_2)|-|V(X^{n-2} d_1 d_2)\cap V(c_1 X^{n-2} c_2)|-|V(a_1 a_2$ $X^{n-2})\cap V(X^{n-2} d_1 d_2)|+|V(a_1 a_2 X^{n-2})\cap V(c_1 X^{n-2} c_2)\cap V(X^{n-2} d_1 d_2)|=4(n-2)!-2(n-3)!$. 
The probability $P_{1.4}$ that there are four fault-free $B_{n-2}$'s chosen as in Case 1.4 is $\frac{1}{2}\times 2(n^6-9n^5+32n^4-57n^3+51n^2-18n)p^{4(n-2)!-2(n-3)!}$.

{\it Case 1.5.} $b_1\neq c_1=a_1$, $c_2\neq b_2=d_2$ or $c_1\neq b_1=a_1$, $b_2\neq c_2=d_2$.

W.l.o.g, assume that the former applies.

{\it Case 1.5.1.} $a_2\neq c_2$ and $b_1\neq d_1$.

There are $n$ ways to choose $d_1$ from $N_n$, $n-1$ ways to choose $d_2(=b_2)$ from $N_n\setminus\{d_1\}$, $n-2$ ways to choose $a_1(=c_1)$ from $N_n\setminus\{d_1, d_2\}$, $n-3$ ways to choose $a_2$ from $N_n\setminus\{d_1, d_2, a_1\}$, $n-3$ ways to choose $b_1$ from $N_n\setminus\{a_1, b_2, d_1\}$, $n-3$ ways to choose $c_2$ from $N_n\setminus\{d_2, c_1, a_2\}$.
Therefore, there are $n(n-1)(n-2)(n-3)^3=n^6-12n^5+56n^4-126n^3+135n^2-54n$ ways to choose four $B_{n-2}$'s as required.
Observations \ref{ob:2} and \ref{ob:3} imply that $a_1 a_2 X^{n-2}$ and $b_1 X^{n-2} b_2$ are disjoint, and $a_1 a_2 X^{n-2}$ and $c_1 X^{n-2} c_2$ are not disjoint and $V(a_1 a_2 X^{n-2})\cap V(c_1 X^{n-2} c_2)=V(a_1 a_2 X^{n-3} c_2)$, and $X^{n-2} d_1 d_2$ and $b_1 X^{n-2} b_2$ are not disjoint and $V(X^{n-2} d_1 d_2)\cap V(b_1 X^{n-2} b_2)=V(b_1 X^{n-3} d_1 d_2)$, and $X^{n-2} d_1 d_2$ and $c_1 X^{n-2} c_2$ are disjoint. Thus, $|V(a_1 a_2 X^{n-2})\cup V(b_1 X^{n-2} b_2)\cup V(c_1 X^{n-2} c_2)\cup V(X^{n-2} d_1 d_2)|=|V(a_1 a_2 X^{n-2})|+|V(b_1 X^{n-2} b_2)|+|V(c_1 X^{n-2} c_2)|+|V(X^{n-2} d_1 d_2)|-|V(a_1 a_2 X^{n-2})\cap V(c_1 X^{n-2} c_2)|-|V(X^{n-2} d_1 d_2)\cap V(b_1 X^{n-2} b_2)|-|V(a_1 a_2$ $X^{n-2})\cap V(X^{n-2} d_1 d_2)|=4(n-2)!-2(n-3)!-(n-4)!$. 
The probability $P_{1.5.1}$ that there are four fault-free $B_{n-2}$'s chosen as in Case 1.5.1 is $\frac{1}{2}(n^6-12n^5+56n^4-126n^3+135n^2-54n)p^{4(n-2)!-2(n-3)!-(n-4)!}$.

{\it Case 1.5.2.} $a_2\neq c_2$, $b_1=d_1$ or $a_2=c_2$, $b_1\neq d_1$.

W.l.o.g, assume that $a_2\neq c_2$ and $b_1=d_1$.

There are $n$ ways to choose $d_1(=b_1)$ from $N_n$, $n-1$ ways to choose $d_2(=b_2)$ from $N_n\setminus\{d_1\}$, $n-2$ ways to choose $a_1(=c_1)$ from $N_n\setminus\{d_1, d_2\}$, $n-3$ ways to choose $a_2$ from $N_n\setminus\{d_1, d_2, a_1\}$, $n-3$ ways to choose $c_2$ from $N_n\setminus\{d_2, c_1, a_2\}$.
Therefore, there are $2n(n-1)(n-2)(n-3)^2=2(n^5-9n^4+29n^3-39n^2+18n)$ ways to choose four $B_{n-2}$'s as required.
Observations \ref{ob:2} and \ref{ob:3} imply that $a_1 a_2 X^{n-2}$ and $b_1 X^{n-2} b_2$ are disjoint, and $a_1 a_2 X^{n-2}$ and $c_1 X^{n-2} c_2$ are not disjoint and $V(a_1 a_2 X^{n-2})\cap V(c_1 X^{n-2} c_2)=V(a_1 a_2 X^{n-3} c_2)$, and for any $i\in\{b, c\}$, $X^{n-2} d_1 d_2$ and $i_1 X^{n-2} i_2$ are disjoint. Thus, $|V(a_1 a_2 X^{n-2})\cup V(b_1 X^{n-2} b_2)\cup V(c_1 X^{n-2} c_2)\cup V(X^{n-2} d_1 d_2)|=|V(a_1 a_2 X^{n-2})|+|V(b_1 X^{n-2} b_2)|+|V(c_1 X^{n-2} c_2)|+|V(X^{n-2} d_1 d_2)|-|V(a_1 a_2 X^{n-2})\cap V(c_1 X^{n-2} c_2)|-|V(a_1 a_2$ $X^{n-2})\cap V(X^{n-2} d_1 d_2)|=4(n-2)!-(n-3)!-(n-4)!$. 
The probability $P_{1.5.2}$ that there are four fault-free $B_{n-2}$'s chosen as in Case 1.5.2 is $\frac{1}{2}\times 2(n^5-9n^4+29n^3-39n^2+18n)p^{4(n-2)!-(n-3)!-(n-4)!}$.

{\it Case 1.5.3.} $a_2=c_2$ and $b_1=d_1$.

There are $n$ ways to choose $d_1(=b_1)$ from $N_n$, $n-1$ ways to choose $d_2(=b_2)$ from $N_n\setminus\{d_1\}$, $n-2$ ways to choose $a_1(=c_1)$ from $N_n\setminus\{d_1, d_2\}$, $n-3$ ways to choose $a_2(=c_2)$ from $N_n\setminus\{d_1, d_2, a_1\}$.
Therefore, there are $n(n-1)(n-2)(n-3)=n^4-6n^3+11n^2-6n$ ways to choose four $B_{n-2}$'s as required.
Observations \ref{ob:2} and \ref{ob:3} imply that for any $i\in\{b, c\}$, $a_1 a_2 X^{n-2}$ and $i_1 X^{n-2} i_2$ are disjoint, and $X^{n-2} d_1 d_2$ and $i_1 X^{n-2} i_2$ are disjoint. Thus, $|V(a_1 a_2 X^{n-2})\cup V(b_1 X^{n-2} b_2)\cup V(c_1 X^{n-2} c_2)\cup V(X^{n-2} d_1 d_2)|=|V(a_1 a_2 X^{n-2})|+|V(b_1 X^{n-2} b_2)|+|V(c_1 X^{n-2} c_2)|+|V(X^{n-2} d_1 d_2)|-|V(a_1 a_2 X^{n-2})\cap V(X^{n-2} d_1 d_2)|=4(n-2)!-(n-4)!$. 
The probability $P_{1.5.3}$ that there are four fault-free $B_{n-2}$'s chosen as in Case 1.5.3 is $\frac{1}{2}(n^4-6n^3+11n^2-6n)p^{4(n-2)!-(n-4)!}$.

Thus, the probability $P_{1.5}$ that there are four fault-free $B_{n-2}$'s chosen as in Case 1.5 is $2(P_{1.5.1}+P_{1.5.2}+P_{1.5.3})=(n^4-6n^3+11n^2-6n)p^{4(n-2)!-(n-4)!}+2(n^5-9n^4+29n^3-39n^2+18n)p^{4(n-2)!-(n-3)!-(n-4)!}+(n^6-12n^5+56n^4-126n^3+135n^2-54n)p^{4(n-2)!-2(n-3)!-(n-4)!}$.

{\it Case 1.6.} For $\{i, j\}=\{b, c\}$, $i_1\neq j_1=a_1$, $b_2=c_2\neq d_2$ or $i_2\neq j_2=d_2$, $b_1=c_1\neq a_1$.

There are 4 possible cases, w.l.o.g, assume that $b_1\neq c_1=a_1$ and $b_2=c_2\neq d_2$.

{\it Case 1.6.1.} $a_2\neq c_2$.

There are $n$ ways to choose $d_1$ from $N_n$, $n-1$ ways to choose $d_2$ from $N_n\setminus\{d_1\}$, $n-2$ ways to choose $a_1(=c_1)$ from $N_n\setminus\{d_1, d_2\}$, $n-3$ ways to choose $a_2$ from $N_n\setminus\{d_1, d_2, a_1\}$, $n-3$ ways to choose $b_2(=c_2)$ from $N_n\setminus\{c_1, d_2, a_2\}$, $n-2$ ways to choose $b_1$ from $N_n\setminus\{a_1, b_2\}$.
Therefore, there are $n(n-1)(n-2)^2(n-3)^2=n^6-11n^5+47n^4-97n^3+96n^2-36n$ ways to choose four $B_{n-2}$'s as required.
Observations \ref{ob:2} and \ref{ob:3} imply that $a_1 a_2 X^{n-2}$ and $b_1 X^{n-2} b_2$ are disjoint, and $a_1 a_2 X^{n-2}$ and $c_1 X^{n-2} c_2$ are not disjoint and $V(a_1 a_2 X^{n-2})\cap V(c_1 X^{n-2} c_2)=V(a_1 a_2 X^{n-3} c_2)$, and for any $i\in\{b, c\}$, $X^{n-2} d_1 d_2$ and $i_1 X^{n-2} i_2$ are disjoint. Thus, $|V(a_1 a_2 X^{n-2})\cup V(b_1 X^{n-2} b_2)\cup V(c_1 X^{n-2} c_2)\cup V(X^{n-2} d_1 d_2)|=|V(a_1 a_2 X^{n-2})|+|V(b_1 X^{n-2} b_2)|+|V(c_1 X^{n-2} c_2)|+|V(X^{n-2} d_1 d_2)|-|V(a_1 a_2 X^{n-2})\cap V(c_1 X^{n-2} c_2)|-|V(a_1 a_2 X^{n-2})\cap V(X^{n-2} d_1 d_2)|=4(n-2)!-(n-3)!-(n-4)!$. 
The probability $P_{1.6.1}$ that there are four fault-free $B_{n-2}$'s chosen as in Case 1.6.1 is $\frac{1}{2}(n^6-11n^5+47n^4-97n^3+96n^2-36n)p^{4(n-2)!-(n-3)!-(n-4)!}$.

{\it Case 1.6.2.} $a_2=c_2$.

There are $n$ ways to choose $d_1$ from $N_n$, $n-1$ ways to choose $d_2$ from $N_n\setminus\{d_1\}$, $n-2$ ways to choose $a_1(=c_1)$ from $N_n\setminus\{d_1, d_2\}$, $n-3$ ways to choose $a_2(=b_2=c_2)$ from $N_n\setminus\{d_1, d_2, a_1\}$, $n-2$ ways to choose $b_1$ from $N_n\setminus\{a_1, b_2\}$.
Therefore, there are $n(n-1)(n-2)^2(n-3)=n^5-8n^4+23n^3-28n^2+12n$ ways to choose four $B_{n-2}$'s as required.
Observations \ref{ob:2} and \ref{ob:3} imply that for any $i\in\{b, c\}$, $a_1 a_2 X^{n-2}$ and $i_1 X^{n-2} i_2$ are disjoint, and $X^{n-2} d_1 d_2$ and $i_1 X^{n-2} i_2$ are disjoint. Thus, $|V(a_1 a_2 X^{n-2})\cup V(b_1 X^{n-2} b_2)\cup V(c_1 X^{n-2} c_2)\cup V(X^{n-2} d_1 d_2)|=|V(a_1 a_2 X^{n-2})|+|V(b_1 X^{n-2} b_2)|+|V(c_1 X^{n-2} c_2)|+|V(X^{n-2} d_1 d_2)|-|V(a_1 a_2 X^{n-2})\cap V(X^{n-2} d_1 d_2)|=4(n-2)!-(n-4)!$. 
The probability $P_{1.6.2}$ that there are four fault-free $B_{n-2}$'s chosen as in Case 1.6.2 is $\frac{1}{2}(n^5-8n^4+23n^3-28n^2+12n)p^{4(n-2)!-(n-4)!}$.

Thus, the probability $P_1.6$ that there are four fault-free $B_{n-2}$'s chosen as in Case 1.6 is $4(P_{1.6.1}+P_{1.6.2})=2(n^5-8n^4+23n^3-28n^2+12n)p^{4(n-2)!-(n-4)!}+2(n^6-11n^5+47n^4-97n^3+96n^2-36n)p^{4(n-2)!-(n-3)!-(n-4)!}$.

{\it Case 1.7.} For $\{i, j\}=\{b, c\}$, $i_1\neq j_1=a_1$, $b_2=c_2=d_2$ or $i_2\neq j_2=d_2$, $b_1=c_1=a_1$.

There are 4 possible cases, w.l.o.g, assume that $b_1\neq c_1=a_1$, $b_2=c_2=d_2$.

{\it Case 1.7.1.} $b_1\neq d_1$.

There are $n$ ways to choose $d_1$ from $N_n$, $n-1$ ways to choose $b_2(=c_2=d_2)$ from $N_n\setminus\{d_1\}$, $n-2$ ways to choose $a_1(=c_1)$ from $N_n\setminus\{d_1, d_2\}$, $n-3$ ways to choose $a_2$ from $N_n\setminus\{d_1, d_2, a_1\}$, $n-3$ ways to choose $b_1$ from $N_n\setminus\{a_1, b_2, d_1\}$.
Therefore, there are $n(n-1)(n-2)(n-3)^2=n^5-9n^4+29n^3-39n^2+18n$ ways to choose four $B_{n-2}$'s as required.
Observations \ref{ob:2} and \ref{ob:3} imply that $a_1 a_2 X^{n-2}$ and $b_1 X^{n-2} b_2$ are disjoint, and $a_1 a_2 X^{n-2}$ and $c_1 X^{n-2} c_2$ are not disjoint and $V(a_1 a_2 X^{n-2})\cap V(c_1 X^{n-2} c_2)=V(a_1 a_2 X^{n-3} c_2)$, and for any $i\in\{b, c\}$, $X^{n-2} d_1 d_2$ and $i_1 X^{n-2} i_2$ are not disjoint and $V(X^{n-2} d_1 d_2)\cap V(i_1 X^{n-2} i_2)=V(i_1 X^{n-3} d_1 d_2)$. And $V(a_1 a_2 X^{n-2})\cap V(c_1 X^{n-2} c_2)\cap V(X^{n-2} d_1 d_2)=V(a_1 a_2 X^{n-2} d_1 d_2)=V(a_1 a_2 X^{n-2})\cap V(X^{n-2} d_1 d_2)$.
Thus, $|V(a_1 a_2 X^{n-2})\cup V(b_1 X^{n-2} b_2)\cup V(c_1 X^{n-2} c_2)\cup V(X^{n-2} d_1 d_2)|=|V(a_1 a_2 X^{n-2})|+|V(b_1 X^{n-2} b_2)|+|V(c_1 X^{n-2} c_2)|+|V(X^{n-2} d_1 d_2)|-|V(a_1 a_2 X^{n-2})\cap V(c_1 X^{n-2} c_2)|-|V(X^{n-2} d_1 d_2)\cap V(b_1 X^{n-2} b_2)|-|V(X^{n-2} d_1$ $d_2)\cap V(c_1 X^{n-2} c_2)|-|V(a_1 a_2 X^{n-2})\cap V(X^{n-2} d_1 d_2)|+|V(a_1 a_2 X^{n-2})\cap V(c_1 X^{n-2} c_2)\cap V(X^{n-2} d_1 d_2)|=4(n-2)!-3(n-3)!$. 
The probability $P_{1.7.1}$ that there are four fault-free $B_{n-2}$'s chosen as in Case 1.7.1 is $\frac{1}{2}(n^5-9n^4+29n^3-39n^2+18n)p^{4(n-2)!-3(n-3)!}$.

{\it Case 1.7.2.} $b_1=d_1$.

There are $n$ ways to choose $d_1(=b_1)$ from $N_n$, $n-1$ ways to choose $b_2(=c_2=d_2)$ from $N_n\setminus\{d_1\}$, $n-2$ ways to choose $a_1(=c_1)$ from $N_n\setminus\{d_1, d_2\}$, $n-3$ ways to choose $a_2$ from $N_n\setminus\{d_1, d_2, a_1\}$.
Therefore, there are $n(n-1)(n-2)(n-3)=n^4-6n^3+11n^2-6n$ ways to choose four $B_{n-2}$'s as required.
Observations \ref{ob:2} and \ref{ob:3} imply that $a_1 a_2 X^{n-2}$ and $b_1 X^{n-2} b_2$ are disjoint, and $a_1 a_2 X^{n-2}$ and $c_1 X^{n-2} c_2$ are not disjoint and $V(a_1 a_2 X^{n-2})\cap V(c_1 X^{n-2} c_2)=V(a_1 a_2 X^{n-3} c_2)$, and $X^{n-2} d_1 d_2$ and $b_1 X^{n-2} b_2$ are disjoint, and $X^{n-2} d_1 d_2$ and $c_1 X^{n-2} c_2$ are not disjoint and $V(X^{n-2} d_1 d_2)\cap V(c_1 X^{n-2} c_2)=V(c_1 X^{n-3} d_1 d_2)$. And $V(a_1 a_2 X^{n-2})\cap V(c_1 X^{n-2} c_2)\cap V(X^{n-2} d_1 d_2)=V(a_1 a_2 X^{n-2} d_1 d_2)=V(a_1 a_2 X^{n-2})\cap V(X^{n-2} d_1 d_2)$.
Thus, $|V(a_1 a_2 X^{n-2})\cup V(b_1 X^{n-2} b_2)\cup V(c_1 X^{n-2} c_2)\cup V(X^{n-2} d_1 d_2)|=|V(a_1 a_2 X^{n-2})|+|V(b_1 X^{n-2} b_2)|+|V(c_1 X^{n-2} c_2)|+|V(X^{n-2} d_1 d_2)|-|V(a_1 a_2 X^{n-2})\cap V(c_1 X^{n-2} c_2)|-|V(X^{n-2} d_1 d_2)\cap V(c_1 X^{n-2} c_2)|-|V(a_1 a_2$ $X^{n-2})\cap V(X^{n-2} d_1 d_2)|+|V(a_1 a_2 X^{n-2})\cap V(c_1 X^{n-2} c_2)\cap V(X^{n-2} d_1 d_2)|=4(n-2)!-2(n-3)!$. 
The probability $P_{1.7.2}$ that there are four fault-free $B_{n-2}$'s chosen as in Case 1.7.2 is $\frac{1}{2}(n^4-6n^3+11n^2-6n)p^{4(n-2)!-2(n-3)!}$.

Thus, the probability $P_{1.7}$ that there are four fault-free $B_{n-2}$'s chosen as in Case 1.7 is $4(P_{1.7.1}+P_{1.7.2})=2(n^4-6n^3+11n^2-6n)p^{4(n-2)!-2(n-3)!}+2(n^5-9n^4+29n^3-39n^2+18n)p^{4(n-2)!-3(n-3)!}$.

Therefore, the probability $P_1$ that there are four fault-free $B_{n-2}$'s chosen as in Case 1 is $\sum_{i=1}^{7}P_{1.i}=(n^6-9n^5+34n^4-69n^3+73n^2-30n)p^{4(n-2)!-2(n-3)!}+2(n^5-9n^4+29n^3-39n^2+18n)p^{4(n-2)!-3(n-3)!}+\frac{1}{2}(n^8-12n^7+65n^6-207n^5+416n^4-525n^3+382n^2-120n)p^{4(n-2)!-(n-4)!}+2(n^7-12n^6+61n^5-171n^4+280n^3-249n^2+90n)p^{4(n-2)!-(n-3)!-(n-4)!}+(2n^6-25n^5+121n^4-281n^3+309n^2-126n)p^{4(n-2)!-2(n-3)!-(n-4)!}$.

{\it Case 2.} $\{a_1, a_2\}\cap\{d_1, d_2\}\neq\emptyset$, $b_1\neq a_1$, $c_1\neq a_1$, $b_2\neq d_2$ and $c_2\neq d_2$.

{\it Case 2.1.} $a_1=d_1$ and $a_2=d_2$.

{\it Case 2.1.1.} $b_1\neq c_1$, $b_1\neq d_2$ and $c_1\neq d_2$.

There are $n$ ways to choose $a_1(=d_1)$ from $N_n$, $n-1$ ways to choose $a_2(=d_2)$ from $N_n\setminus\{a_1\}$, $n-2$ ways to choose $b_1$ from $N_n\setminus\{a_1, d_2\}$, $n-3$ ways to choose $c_1$ from $N_n\setminus\{a_1, d_2, b_1\}$, $n-2$ ways to choose $b_2$ from $N_n\setminus\{b_1, d_2\}$, $n-2$ ways to choose $c_2$ from $N_n\setminus\{c_1, d_2\}$.
Therefore, there are $n(n-1)(n-2)^3(n-3)=n^6-10n^5+39n^4-74n^3+68n^2-24n$ ways to choose four $B_{n-2}$'s as required. 

{\it Case 2.1.2.} $b_1\neq c_1$, $b_1\neq d_2=c_1$ or $b_1\neq c_1$, $c_1\neq d_2=b_1$.

W.l.o.g, assume that $b_1\neq c_1$ and $b_1\neq d_2=c_1$.
There are $n$ ways to choose $a_1(=d_1)$ from $N_n$, $n-1$ ways to choose $a_2(=d_2=c_1)$ from $N_n\setminus\{a_1\}$, $n-2$ ways to choose $b_1$ from $N_n\setminus\{a_1, d_2\}$, $n-2$ ways to choose $b_2$ from $N_n\setminus\{b_1, d_2\}$, $n-1$ ways to choose $c_2$ from $N_n\setminus\{d_2\}$.
Therefore, there are $2n(n-1)^2(n-2)^2=2(n^5-6n^4+13n^3-12n^2+4n)$ ways to choose four $B_{n-2}$'s as required.

{\it Case 2.1.3.} $b_1=c_1$ and $b_1\neq d_2$.

There are $n$ ways to choose $a_1(=d_1)$ from $N_n$, $n-1$ ways to choose $a_2(=d_2)$ from $N_n\setminus\{a_1\}$, $n-2$ ways to choose $b_1(=c_1)$ from $N_n\setminus\{a_1, d_2\}$, $n-2$ ways to choose $b_2$ from $N_n\setminus\{b_1, d_2\}$, $n-3$ ways to choose $c_2$ from $N_n\setminus\{c_1, b_2, d_2\}$.
Therefore, there are $n(n-1)(n-2)^2(n-3)=n^5-8n^4+23n^3-28n^2+12n$ ways to choose four $B_{n-2}$'s as required.

{\it Case 2.1.4.} $b_1=c_1$ and $b_1=d_2$.

There are $n$ ways to choose $a_1(=d_1)$ from $N_n$, $n-1$ ways to choose $a_2(=d_2=b_1=c_1)$ from $N_n\setminus\{a_1\}$, $n-1$ ways to choose $b_2$ from $N_n\setminus\{d_2\}$, $n-2$ ways to choose $c_2$ from $N_n\setminus\{b_2, d_2\}$.
Therefore, there are $n(n-1)^2(n-2)=n^4-4n^3+5n^2-2n$ ways to choose four $B_{n-2}$'s as required.

{\it Case 2.2.} $a_1=d_2$ and $a_2=d_1$.

{\it Case 2.2.1.} $b_1\neq c_1$.

There are $n$ ways to choose $a_1(=d_2)$ from $N_n$, $n-1$ ways to choose $a_2(=d_1)$ from $N_n\setminus\{a_1\}$, $n-1$ ways to choose $b_1$ from $N_n\setminus\{a_1\}$, $n-2$ ways to choose $c_1$ from $N_n\setminus\{a_1, b_1\}$, $n-2$ ways to choose $b_2$ from $N_n\setminus\{b_1, d_2\}$, $n-2$ ways to choose $c_2$ from $N_n\setminus\{c_1, d_2\}$.
Therefore, there are $n(n-1)^2(n-2)^3=n^6-8n^5+25n^4-38n^3+28n^2-8n$ ways to choose four $B_{n-2}$'s as required. 

{\it Case 2.2.2.} $b_1=c_1$.

There are $n$ ways to choose $a_1(=d_2)$ from $N_n$, $n-1$ ways to choose $a_2(=d_1)$ from $N_n\setminus\{a_1\}$, $n-1$ ways to choose $b_1(=c_1)$ from $N_n\setminus\{a_1\}$, $n-2$ ways to choose $b_2$ from $N_n\setminus\{b_1, d_2\}$, $n-3$ ways to choose $c_2$ from $N_n\setminus\{c_1, d_2, b_2\}$.
Therefore, there are $n(n-1)^2(n-2)(n-3)=n^5-7n^4+17n^3-17n^2+6n$ ways to choose four $B_{n-2}$'s as required. 

{\it Case 2.3.} $a_1=d_1$, $a_2\notin\{d_1, d_2\}$ or $a_2=d_1$, $a_1\notin\{d_1, d_2\}$ or $a_2=d_2$, $a_1\notin\{d_1, d_2\}$.

W.l.o.g, assume that $a_1=d_1$ and $a_2\notin\{d_1, d_2\}$.

{\it Case 2.3.1.} $b_1\neq c_1$, $b_1\neq d_2$ and $c_1\neq d_2$.

There are $n$ ways to choose $a_1(=d_1)$ from $N_n$, $n-1$ ways to choose $d_2$ from $N_n\setminus\{d_1\}$, $n-2$ ways to choose $a_2$ from $N_n\setminus\{d_1, d_2\}$, $n-2$ ways to choose $b_1$ from $N_n\setminus\{a_1, d_2\}$, $n-3$ ways to choose $c_1$ from $N_n\setminus\{a_1, b_1, d_2\}$, $n-2$ ways to choose $b_2$ from $N_n\setminus\{b_1, d_2\}$, $n-2$ ways to choose $c_2$ from $N_n\setminus\{c_1, d_2\}$.
Therefore, there are $3n(n-1)(n-2)^4(n-3)=3(n^7-12n^6+59n^5-152n^4+216n^3-160n^2+48n)$ ways to choose four $B_{n-2}$'s as required. 

{\it Case 2.3.2.} $b_1\neq c_1$, $b_1\neq d_2=c_1$ or $b_1\neq c_1$, $c_1\neq d_2=b_1$.

W.l.o.g, assume that $b_1\neq d_2=c_1$. There are $n$ ways to choose $a_1(=d_1)$ from $N_n$, $n-1$ ways to choose $d_2(=c_1)$ from $N_n\setminus\{d_1\}$, $n-2$ ways to choose $a_2$ from $N_n\setminus\{d_1, d_2\}$, $n-2$ ways to choose $b_1$ from $N_n\setminus\{a_1, d_2\}$, $n-2$ ways to choose $b_2$ from $N_n\setminus\{b_1, d_2\}$, $n-1$ ways to choose $c_2$ from $N_n\setminus\{d_2\}$.
Therefore, there are $3\times 2n(n-1)^2(n-2)^3=6(n^6-8n^5+25n^4-38n^3+28n^2-8n)$ ways to choose four $B_{n-2}$'s as required. 

{\it Case 2.3.3.} $b_1=c_1$ and $b_1\neq d_2$.

There are $n$ ways to choose $a_1(=d_1)$ from $N_n$, $n-1$ ways to choose $d_2$ from $N_n\setminus\{d_1\}$, $n-2$ ways to choose $a_2$ from $N_n\setminus\{d_1, d_2\}$, $n-2$ ways to choose $b_1(=c_1)$ from $N_n\setminus\{a_1, d_2\}$, $n-2$ ways to choose $b_2$ from $N_n\setminus\{b_1, d_2\}$, $n-3$ ways to choose $c_2$ from $N_n\setminus\{c_1, b_2, d_2\}$.
Therefore, there are $3n(n-1)(n-2)^3(n-3)=3(n^6-10n^5+39n^4-74n^3+68n^2-24n)$ ways to choose four $B_{n-2}$'s as required.

{\it Case 2.3.4.} $b_1=c_1$ and $b_1=d_2$.

There are $n$ ways to choose $a_1(=d_1)$ from $N_n$, $n-1$ ways to choose $d_2(=b_1=c_1)$ from $N_n\setminus\{d_1\}$, $n-2$ ways to choose $a_2$ from $N_n\setminus\{d_1, d_2\}$, $n-1$ ways to choose $b_2$ from $N_n\setminus\{d_2\}$, $n-2$ ways to choose $c_2$ from $N_n\setminus\{b_2, d_2\}$.
Therefore, there are $3n(n-1)^2(n-2)^2=3(n^5-6n^4+13n^3-12n^2+4n)$ ways to choose four $B_{n-2}$'s as required.

{\it Case 2.4.} $a_1=d_2$, $a_2\notin\{d_1, d_2\}$.

{\it Case 2.4.1.} $b_1\neq c_1$.

There are $n$ ways to choose $a_1(=d_2)$ from $N_n$, $n-1$ ways to choose $d_1$ from $N_n\setminus\{d_2\}$, $n-2$ ways to choose $a_2$ from $N_n\setminus\{d_1, d_2\}$, $n-1$ ways to choose $b_1$ from $N_n\setminus\{a_1\}$, $n-2$ ways to choose $c_1$ from $N_n\setminus\{a_1, b_1\}$, $n-2$ ways to choose $b_2$ from $N_n\setminus\{b_1, d_2\}$, $n-2$ ways to choose $c_2$ from $N_n\setminus\{c_1, d_2\}$.
Therefore, there are $n(n-1)^2(n-2)^4=n^7-10n^6+41n^5-88n^4+104n^3-64n^2+16n$ ways to choose four $B_{n-2}$'s as required. 

{\it Case 2.4.2.} $b_1=c_1$.

There are $n$ ways to choose $a_1(=d_2)$ from $N_n$, $n-1$ ways to choose $d_1$ from $N_n\setminus\{d_2\}$, $n-2$ ways to choose $a_2$ from $N_n\setminus\{d_1, d_2\}$, $n-1$ ways to choose $b_1(=c_1)$ from $N_n\setminus\{a_1\}$, $n-2$ ways to choose $b_2$ from $N_n\setminus\{b_1, d_2\}$, $n-3$ ways to choose $c_2$ from $N_n\setminus\{c_1, d_2, b_2\}$.
Therefore, there are $n(n-1)^2(n-2)^2(n-3)=n^6-9n^5+31n^4-51n^3+40n^2-12n$ ways to choose four $B_{n-2}$'s as required. 

In summary, there are $4n^7-34n^6+120n^5-226n^4+240n^3-136n^2+32n$ ways to choose four $B_{n-2}$'s as required in Case 2.
Observations \ref{ob:2}, \ref{ob:3} and \ref{ob:4} imply that $a_1 a_2 X^{n-2}$, $b_1 X^{n-2} b_2$, $c_1 X^{n-2} c_2$ and $X^{n-2} d_1 d_2$ are pairwise disjoint.
Thus, $|V(a_1 a_2 X^{n-2})\cup V(b_1 X^{n-2} b_2)\cup V(c_1 X^{n-2} c_2)\cup V(X^{n-2} d_1 d_2)|=|V(a_1 a_2 X^{n-2})|+|V(b_1 X^{n-2} b_2)|+|V(c_1 X^{n-2} c_2)|+|V(X^{n-2} d_1 d_2)|=4(n-2)!$. 
The probability $P_2$ that there are four fault-free $B_{n-2}$'s chosen as in Case 2 is $\frac{1}{2}(4n^7-34n^6+120n^5-226n^4+240n^3-136n^2+32n)p^{4(n-2)!}$.

{\it Case 3.} $\{a_1, b_1\}\cap\{d_1, d_2\}\neq\emptyset$, $b_1\neq c_1$, $b_1\neq a_1$, $c_1\neq a_1$, $i_2\neq j_2=d_2$ for any $\{i, j\}=\{b, c\}$.

There are 2 possible cases, w.l.o.g, assume that $i=b$ and $j=c$. 
The following facts will be applied directly in this case.

($\romannumeral1$) Observation \ref{ob:4} implies that $a_1 a_2 X^{n-2}$ and $X^{n-2} d_1 d_2$ are disjoint. 

($\romannumeral2$) If $c_1\neq d_1$, Observations \ref{ob:2} and \ref{ob:3} imply that for any $i\in\{b, c\}$, $a_1 a_2 X^{n-2}$ and $i_1 X^{n-2} i_2$ are disjoint, and $X^{n-2} d_1 d_2$ and $b_1 X^{n-2} b_2$ are disjoint, and $X^{n-2} d_1 d_2$ and $c_1 X^{n-2} c_2$ are not disjoint and $V(X^{n-2} d_1 d_2)\cap V(c_1 X^{n-2} c_2)=V(c_1 X^{n-3} d_1 d_2)$. Thus, $|V(a_1 a_2 X^{n-2})\cup V(b_1 X^{n-2} b_2)\cup V(c_1 X^{n-2} c_2)\cup V(X^{n-2} d_1 d_2)|=|V(a_1 a_2 X^{n-2})|+|V(b_1 X^{n-2} b_2)|+|V(c_1 X^{n-2} c_2)|+|V(X^{n-2} d_1 d_2)|-|V(X^{n-2} d_1 d_2)\cap V(c_1 X^{n-2} c_2)|=4(n-2)!-(n-3)!$ in this case. 

($\romannumeral3$) If $c_1=d_1$, Observations \ref{ob:2} and \ref{ob:3} imply that for any $i\in\{b, c\}$, $a_1 a_2 X^{n-2}$ and $i_1 X^{n-2} i_2$ are disjoint, and $X^{n-2} d_1 d_2$ and $i_1 X^{n-2} i_2$ are disjoint. Thus, $|V(a_1 a_2 X^{n-2})\cup V(b_1 X^{n-2} b_2)\cup V(c_1 X^{n-2} c_2)\cup V(X^{n-2} d_1 d_2)|=|V(a_1 a_2 X^{n-2})|+|V(b_1 X^{n-2} b_2)|+|V(c_1 X^{n-2} c_2)|+|V(X^{n-2} d_1 d_2)|=4(n-2)!$ in this case. 

{\it Case 3.1.} $a_1=d_1$ and $a_2=d_2$.

{\it Case 3.1.1.} $b_1\neq d_2$.

There are $n$ ways to choose $a_1(=d_1)$ from $N_n$, $n-1$ ways to choose $a_2(=d_2=c_2)$ from $N_n\setminus\{a_1\}$, $n-2$ ways to choose $b_1$ from $N_n\setminus\{a_1, d_2\}$, $n-3$ ways to choose $c_1$ from $N_n\setminus\{a_1, b_1, c_2\}$, $n-2$ ways to choose $b_2$ from $N_n\setminus\{b_1, d_2\}$.
Therefore, there are $n(n-1)(n-2)^2(n-3)=n^5-8n^4+23n^3-28n^2+12n$ ways to choose four $B_{n-2}$'s as required.

{\it Case 3.1.2.} $b_1=d_2$.

There are $n$ ways to choose $a_1(=d_1)$ from $N_n$, $n-1$ ways to choose $a_2(=d_2=c_2=b_1)$ from $N_n\setminus\{a_1\}$, $n-2$ ways to choose $c_1$ from $N_n\setminus\{a_1, b_1\}$, $n-1$ ways to choose $b_2$ from $N_n\setminus\{d_2\}$.
Therefore, there are $n(n-1)^2(n-2)=n^4-4n^3+5n^2-2n$ ways to choose four $B_{n-2}$'s as required.

In summary, there are $n^5-7n^4+19n^3-23n^2+10n$ ways to choose four $B_{n-2}$'s as required in Case 3.1.
Clearly, in this case $c_1\neq d_1$. By ($\romannumeral1$) and ($\romannumeral2$), the probability $P_{3.1}$ that there are four fault-free $B_{n-2}$'s chosen as in Case 3.1 is $\frac{1}{2}(n^5-7n^4+19n^3-23n^2+10n)p^{4(n-2)!-(n-3)!}$.

{\it Case 3.2.} $a_1=d_2$ and $a_2=d_1$.

{\it Case 3.2.1.} $c_1\neq d_1$.

There are $n$ ways to choose $a_1(=d_2=c_2)$ from $N_n$, $n-1$ ways to choose $a_2(=d_1)$ from $N_n\setminus\{a_1\}$, $n-2$ ways to choose $c_1$ from $N_n\setminus\{a_1, d_1\}$, $n-2$ ways to choose $b_1$ from $N_n\setminus\{a_1, c_1\}$, $n-2$ ways to choose $b_2$ from $N_n\setminus\{b_1, d_2\}$.
Therefore, there are $n(n-1)(n-2)^3=n^5-7n^4+18n^3-20n^2+8n$ ways to choose four $B_{n-2}$'s as required.
By ($\romannumeral1$) and ($\romannumeral2$), the probability $P_{3.2.1}$ that there are four fault-free $B_{n-2}$'s chosen as in Case 3.2.1 is $\frac{1}{2}(n^5-7n^4+18n^3-20n^2+8n)p^{4(n-2)!-(n-3)!}$.

{\it Case 3.2.2.} $c_1=d_1$.

There are $n$ ways to choose $a_1(=d_2=c_2)$ from $N_n$, $n-1$ ways to choose $a_2(=d_1=c_1)$ from $N_n\setminus\{a_1\}$, $n-2$ ways to choose $b_1$ from $N_n\setminus\{a_1, c_1\}$, $n-2$ ways to choose $b_2$ from $N_n\setminus\{b_1, d_2\}$.
Therefore, there are $n(n-1)(n-2)^2=n^4-5n^3+8n^2-4n$ ways to choose four $B_{n-2}$'s as required.
By ($\romannumeral1$) and ($\romannumeral3$), the probability $P_{3.2.2}$ that there are four fault-free $B_{n-2}$'s chosen as in Case 3.2.2 is $\frac{1}{2}(n^4-5n^3+8n^2-4n)p^{4(n-2)!}$.

Thus, the probability $P_{3.2}$ that there are four fault-free $B_{n-2}$'s chosen as in Case 3.2 is $P_{3.2.1}+P_{3.2.2}=\frac{1}{2}(n^4-5n^3+8n^2-4n)p^{4(n-2)!}+\frac{1}{2}(n^5-7n^4+18n^3-20n^2+8n)p^{4(n-2)!-(n-3)!}$.

{\it Case 3.3.} $a_1=d_1$ and $a_2\notin\{d_1, d_2\}$.

{\it Case 3.3.1.} $b_1\neq d_2$.

There are $n$ ways to choose $a_1(=d_1)$ from $N_n$, $n-1$ ways to choose $d_2(=c_2)$ from $N_n\setminus\{d_1\}$, $n-2$ ways to choose $a_2$ from $N_n\setminus\{d_1, d_2\}$, $n-2$ ways to choose $b_1$ from $N_n\setminus\{a_1, d_2\}$, $n-3$ ways to choose $c_1$ from $N_n\setminus\{a_1, b_1, c_2\}$, $n-2$ ways to choose $b_2$ from $N_n\setminus\{b_1, d_2\}$.
Therefore, there are $n(n-1)(n-2)^3(n-3)=(n^6-10n^5+39n^4-74n^3+68n^2-24n)$ ways to choose four $B_{n-2}$'s as required. 

{\it Case 3.3.2.}  $b_1=d_2$.

There are $n$ ways to choose $a_1(=d_1)$ from $N_n$, $n-1$ ways to choose $d_2(=c_2=b_1)$ from $N_n\setminus\{d_1\}$, $n-2$ ways to choose $a_2$ from $N_n\setminus\{d_1, d_2\}$, $n-2$ ways to choose $c_1$ from $N_n\setminus\{a_1, b_1\}$, $n-1$ ways to choose $b_2$ from $N_n\setminus\{d_2\}$.
Therefore, there are $n(n-1)^2(n-2)^2=(n^5-6n^4+13n^3-12n^2+4n)$ ways to choose four $B_{n-2}$'s as required. 

In summary, there are $n^6-9n^5+33n^4-61n^3+56n^2-20n$ ways to choose four $B_{n-2}$'s as required in Case 3.1.
Clearly, in this case $c_1\neq d_1$. By ($\romannumeral1$) and ($\romannumeral2$), the probability $P_{3.3}$ that there are four fault-free $B_{n-2}$'s chosen as in Case 3.3 is $\frac{1}{2}(n^6-9n^5+33n^4-61n^3+56n^2-20n)p^{4(n-2)!-(n-3)!}$.

{\it Case 3.4.} $a_1=d_2$ and $a_2\notin\{d_1, d_2\}$.

{\it Case 3.4.1.} $c_1\neq d_1$.

There are $n$ ways to choose $a_1(=d_2=c_2)$ from $N_n$, $n-1$ ways to choose $d_1$ from $N_n\setminus\{d_2\}$, $n-2$ ways to choose $a_2$ from $N_n\setminus\{d_1, d_2\}$, $n-2$ ways to choose $c_1$ from $N_n\setminus\{a_1, d_1\}$, $n-2$ ways to choose $b_1$ from $N_n\setminus\{a_1, c_1\}$, $n-2$ ways to choose $b_2$ from $N_n\setminus\{b_1, d_2\}$.
Therefore, there are $n(n-1)(n-2)^4=n^6-9n^5+32n^4-56n^3+48n^2-16n$ ways to choose four $B_{n-2}$'s as required. 
By ($\romannumeral1$) and ($\romannumeral2$), the probability $P_{3.4.1}$ that there are four fault-free $B_{n-2}$'s chosen as in Case 3.4.1 is $\frac{1}{2}(n^6-9n^5+32n^4-56n^3+48n^2-16n)p^{4(n-2)!-(n-3)!}$.

{\it Case 3.4.2.} $c_1=d_1$.

There are $n$ ways to choose $a_1(=d_2=c_2)$ from $N_n$, $n-1$ ways to choose $d_1(=c_1)$ from $N_n\setminus\{d_2\}$, $n-2$ ways to choose $a_2$ from $N_n\setminus\{d_1, d_2\}$, $n-2$ ways to choose $b_1$ from $N_n\setminus\{a_1, c_1\}$, $n-2$ ways to choose $b_2$ from $N_n\setminus\{b_1, d_2\}$.
Therefore, there are $n(n-1)(n-2)^3=n^5-7n^4+18n^3-20n^2+8n$ ways to choose four $B_{n-2}$'s as required. 
By ($\romannumeral1$) and ($\romannumeral3$), the probability $P_{3.4.2}$ that there are four fault-free $B_{n-2}$'s chosen as in Case 3.4.2 is $\frac{1}{2}(n^5-7n^4+18n^3-20n^2+8n)p^{4(n-2)!}$.

Thus, the probability $P_{3.4}$ that there are four fault-free $B_{n-2}$'s chosen as in Case 3.4 is $P_{3.4.1}+P_{3.4.2}=\frac{1}{2}(n^5-7n^4+18n^3-20n^2+8n)p^{4(n-2)!}+\frac{1}{2}(n^6-9n^5+32n^4-56n^3+48n^2-16n)p^{4(n-2)!-(n-3)!}$.

{\it Case 3.5.} $a_2=d_1$ and $a_1\notin\{d_1, d_2\}$.

{\it Case 3.5.1.} $c_1\neq d_1$.

If $b_1\neq d_2$, there are $n$ ways to choose $a_2(=d_1)$ from $N_n$, $n-1$ ways to choose $d_2(=c_2)$ from $N_n\setminus\{d_1\}$, $n-2$ ways to choose $a_1$ from $N_n\setminus\{d_1, d_2\}$, $n-3$ ways to choose $c_1$ from $N_n\setminus\{a_1, c_2, d_1\}$, $n-3$ ways to choose $b_1$ from $N_n\setminus\{a_1, c_1, d_2\}$, $n-2$ ways to choose $b_2$ from $N_n\setminus\{b_1, d_2\}$.
Therefore, there are $n(n-1)(n-2)^2(n-3)^2=n^6-11n^5+47n^4-97n^3+96n^2-36n$ ways to choose four $B_{n-2}$'s as required. 

If $b_1=d_2$, there are $n$ ways to choose $a_2(=d_1)$ from $N_n$, $n-1$ ways to choose $d_2(=c_2=b_1)$ from $N_n\setminus\{d_1\}$, $n-2$ ways to choose $a_1$ from $N_n\setminus\{d_1, d_2\}$, $n-3$ ways to choose $c_1$ from $N_n\setminus\{a_1, c_2, d_1\}$, $n-1$ ways to choose $b_2$ from $N_n\setminus\{d_2\}$.
Therefore, there are $n(n-1)^2(n-2)(n-3)=n^5-7n^4+17n^3-17n^2+6n$ ways to choose four $B_{n-2}$'s as required. 

In summary, there are $n^6-10n^5+40n^4-80n^3+79n^2-30n$ ways to choose four $B_{n-2}$'s as required in Case 3.5.1.
By ($\romannumeral1$) and ($\romannumeral2$), the probability $P_{3.5.1}$ that there are four fault-free $B_{n-2}$'s chosen as in Case 3.5.1 is $\frac{1}{2}(n^6-10n^5+40n^4-80n^3+79n^2-30n)p^{4(n-2)!-(n-3)!}$.

{\it Case 3.5.2.} $c_1=d_1$.

If $b_1\neq d_2$, there are $n$ ways to choose $a_2(=d_1=c_1)$ from $N_n$, $n-1$ ways to choose $d_2(=c_2)$ from $N_n\setminus\{d_1\}$, $n-2$ ways to choose $a_1$ from $N_n\setminus\{d_1, d_2\}$, $n-3$ ways to choose $b_1$ from $N_n\setminus\{a_1, c_1, d_2\}$, $n-2$ ways to choose $b_2$ from $N_n\setminus\{b_1, d_2\}$.
Therefore, there are $n(n-1)(n-2)^2(n-3)=n^5-8n^4+23n^3-28n^2+12n$ ways to choose four $B_{n-2}$'s as required. 

If $b_1=d_2$, there are $n$ ways to choose $a_2(=d_1=c_1)$ from $N_n$, $n-1$ ways to choose $d_2(=c_2=b_1)$ from $N_n\setminus\{d_1\}$, $n-2$ ways to choose $a_1$ from $N_n\setminus\{d_1, d_2\}$, $n-1$ ways to choose $b_2$ from $N_n\setminus\{d_2\}$.
Therefore, there are $n(n-1)^2(n-2)=n^4-4n^3+5n^2-2n$ ways to choose four $B_{n-2}$'s as required. 

In summary, there are $n^5-7n^4+19n^3-23n^2+10n$ ways to choose four $B_{n-2}$'s as required in Case 3.5.2.
By ($\romannumeral1$) and ($\romannumeral3$), the probability $P_{3.5.2}$ that there are four fault-free $B_{n-2}$'s chosen as in Case 3.5.2 is $\frac{1}{2}(n^5-7n^4+19n^3-23n^2+10n)p^{4(n-2)!}$.

Thus, the probability $P_{3.5}$ that there are four fault-free $B_{n-2}$'s chosen as in Case 3.5 is $P_{3.5.1}+P_{3.5.2}=\frac{1}{2}(n^5-7n^4+19n^3-23n^2+10n)p^{4(n-2)!}+\frac{1}{2}(n^6-10n^5+40n^4-80n^3+79n^2-30n)p^{4(n-2)!-(n-3)!}$.

{\it Case 3.6.} $a_2=d_2$ and $a_1\notin\{d_1, d_2\}$.

This scenario is similar to Case 3.3, the probability $P_{3.6}$ that there are four fault-free $B_{n-2}$'s chosen as in Case 3.5 is $\frac{1}{2}(n^6-9n^5+33n^4-61n^3+56n^2-20n)p^{4(n-2)!-(n-3)!}$.

In summary, the probability $P_3$ that there are four fault-free $B_{n-2}$'s chosen as in Case 3 is $2\sum_{i=1}^{6}P_{3.i}=(2n^5-13n^4+32n^3-35n^2+14n)p^{4(n-2)!}+(4n^6-35n^5+124n^4-221n^3+196n^2-68n)p^{4(n-2)!-(n-3)!}$.

{\it Case 4.} $b_2\neq c_2$, $b_2\neq d_2$, $c_2\neq d_2$, $i_1\neq j_1=a_1$ for any $\{i, j\}=\{b, c\}$.

This scenario is similar to Case 3, except that $a_1=d_1$ and $a_2\notin\{d_1, d_2\}$ in this case is similar to the case of $a_2=d_1$ and $a_1\notin\{d_1, d_2\}$ in Case 3.5.

In summary, the probability $P_4$ that there are four fault-free $B_{n-2}$'s chosen as in Case 4 is $2(\sum_{i=1}^{6}P_{3.i}-P_{3.3}+P_{3.5})=(3n^5-20n^4+51n^3-58n^2+24n)p^{4(n-2)!}+(4n^6-36n^5+131n^4-240n^3+219n^2-78n)p^{4(n-2)!-(n-3)!}$.

{\it Case 5.} $\{a_1, a_2\}\cap\{d_1, d_2\}\neq\emptyset$, $b_1\neq c_1$, $b_1\neq a_1$, $c_1\neq a_1$, $b_2=c_2=d_2$ or $\{a_1, a_2\}\cap\{d_1, d_2\}\neq\emptyset$, $b_2\neq c_2$, $b_2\neq d_2$, $c_2\neq d_2$, $b_1=c_1=a_1$.

W.l.o.g, assume that the former applies. And the following facts will be applied directly in this case.

($\romannumeral1$) Observation \ref{ob:4} implies that $a_1 a_2 X^{n-2}$ and $X^{n-2} d_1 d_2$ are disjoint. 

($\romannumeral2$) If $b_1\neq d_1$ and $c_1\neq d_1$, Observations \ref{ob:2} and \ref{ob:3} imply that for any $i\in\{b, c\}$, $a_1 a_2 X^{n-2}$ and $i_1 X^{n-2} i_2$ are disjoint, and $X^{n-2} d_1 d_2$ and $i_1 X^{n-2} i_2$ are not disjoint and $V(X^{n-2} d_1 d_2)\cap V(i_1 X^{n-2} i_2)=V(i_1 X^{n-3} d_1 d_2)$. Thus, $|V(a_1 a_2 X^{n-2})\cup V(b_1 X^{n-2} b_2)\cup V(c_1 X^{n-2} c_2)\cup V(X^{n-2} d_1 d_2)|=|V(a_1 a_2 X^{n-2})|+|V(b_1 X^{n-2} b_2)|+|V(c_1 X^{n-2} c_2)|+|V(X^{n-2} d_1 d_2)|-|V(X^{n-2} d_1 d_2)\cap V(b_1 X^{n-2} b_2)|-|V(X^{n-2} d_1 d_2)\cap V(c_1 X^{n-2} c_2)|=4(n-2)!-2(n-3)!$ in this case. 

($\romannumeral3$) If $b_1=d_1\neq c_1$ or $c_1=d_1\neq b_1$, w.l.o.g, assume that the former applies. Observations \ref{ob:2} and \ref{ob:3} imply that for any $i\in\{b, c\}$, $a_1 a_2 X^{n-2}$ and $i_1 X^{n-2} i_2$ are disjoint, and $X^{n-2} d_1 d_2$ and $b_1 X^{n-2} b_2$ are disjoint, and $X^{n-2} d_1 d_2$ and $c_1 X^{n-2} c_2$ are not disjoint and $V(X^{n-2} d_1 d_2)\cap V(c_1 X^{n-2} c_2)=V(c_1 X^{n-3} d_1 d_2)$. Thus, $|V(a_1 a_2 X^{n-2})\cup V(b_1 X^{n-2} b_2)\cup V(c_1 X^{n-2} c_2)\cup V(X^{n-2} d_1 d_2)|=|V(a_1 a_2 X^{n-2})|+|V(b_1 X^{n-2} b_2)|+|V(c_1 X^{n-2} c_2)|+|V(X^{n-2} d_1 d_2)|-|V(X^{n-2} d_1 d_2)\cap V(c_1 X^{n-2} c_2)|=4(n-2)!-(n-3)!$ in this case. 

{\it Case 5.1.} $a_1=d_1$ and $a_2=d_2$.

There are $n$ ways to choose $a_1(=d_1)$ from $N_n$, $n-1$ ways to choose $a_2(=d_2=b_2=c_2)$ from $N_n\setminus\{a_1\}$, $n-2$ ways to choose $b_1$ from $N_n\setminus\{a_1, b_2\}$, $n-3$ ways to choose $c_1$ from $N_n\setminus\{a_1, b_1, c_2\}$.
Therefore, there are $n(n-1)(n-2)(n-3)=n^4-6n^3+11n^2-6n$ ways to choose four $B_{n-2}$'s as required. 
Clearly, in this case $b_1\neq d_1$ and $c_1\neq d_1$. By ($\romannumeral1$) and ($\romannumeral2$), the probability $P_{5.1}$ that there are four fault-free $B_{n-2}$'s chosen as in Case 5.1 is $\frac{1}{2}(n^4-6n^3+11n^2-6n)p^{4(n-2)!-2(n-3)!}$.

{\it Case 5.2.} $a_1=d_2$ and $a_2=d_1$.

{\it Case 5.2.1.} $b_1\neq d_1$ and $c_1\neq d_1$.

There are $n$ ways to choose $a_1(=d_2=b_2=c_2)$ from $N_n$, $n-1$ ways to choose $a_2(=d_1)$ from $N_n\setminus\{a_1\}$, $n-2$ ways to choose $b_1$ from $N_n\setminus\{a_1, d_1\}$, $n-3$ ways to choose $c_1$ from $N_n\setminus\{a_1, b_1, d_1\}$.
Therefore, there are $n(n-1)(n-2)(n-3)=n^4-6n^3+11n^2-6n$ ways to choose four $B_{n-2}$'s as required. 
By ($\romannumeral1$) and ($\romannumeral2$), the probability $P_{5.2.1}$ that there are four fault-free $B_{n-2}$'s chosen as in Case 5.2.1 is $\frac{1}{2}(n^4-6n^3+11n^2-6n)p^{4(n-2)!-2(n-3)!}$.

{\it Case 5.2.2.} $b_1=d_1\neq c_1$ or $c_1=d_1\neq b_1$.

W.l.o.g, assume that $b_1=d_1\neq c_1$.

There are $n$ ways to choose $a_1(=d_2=b_2=c_2)$ from $N_n$, $n-1$ ways to choose $a_2(=d_1=b_1)$ from $N_n\setminus\{a_1\}$, $n-2$ ways to choose $c_1$ from $N_n\setminus\{a_1, b_1\}$.
Therefore, there are $2n(n-1)(n-2)=2(n^3-3n^2+2n)$ ways to choose four $B_{n-2}$'s as required. 
By ($\romannumeral1$) and ($\romannumeral3$), the probability $P_{5.2.2}$ that there are four fault-free $B_{n-2}$'s chosen as in Case 5.2.2 is $\frac{1}{2}\times 2(n^3-3n^2+2n)p^{4(n-2)!-(n-3)!}$.

Thus, the probability $P_{5.2}$ that there are four fault-free $B_{n-2}$'s chosen as in Case 5.2 is $P_{5.2.1}+P_{5.2.2}=(n^3-3n^2+2n)p^{4(n-2)!-(n-3)!}+\frac{1}{2}(n^4-6n^3+11n^2-6n)p^{4(n-2)!-2(n-3)!}$.

{\it Case 5.3.} $a_1=d_1$ and $a_2\notin\{d_1, d_2\}$.

There are $n$ ways to choose $a_1(=d_1)$ from $N_n$, $n-1$ ways to choose $b_2(=c_2=d_2)$ from $N_n\setminus\{d_1\}$, $n-2$ ways to choose $a_2$ from $N_n\setminus\{d_1, d_2\}$, $n-2$ ways to choose $b_1$ from $N_n\setminus\{a_1, b_2\}$, $n-3$ ways to choose $c_1$ from $N_n\setminus\{a_1, b_1, c_2\}$.
Therefore, there are $n(n-1)(n-2)^2(n-3)=n^5-8n^4+23n^3-28n^2+12n$ ways to choose four $B_{n-2}$'s as required. 
Clearly, in this case $b_1\neq d_1$ and $c_1\neq d_1$. By ($\romannumeral1$) and ($\romannumeral2$), the probability $P_{5.3}$ that there are four fault-free $B_{n-2}$'s chosen as in Case 5.3 is $\frac{1}{2}(n^5-8n^4+23n^3-28n^2+12n)p^{4(n-2)!-2(n-3)!}$.

{\it Case 5.4.} $a_1=d_2$ and $a_2\notin\{d_1, d_2\}$.

{\it Case 5.4.1.} $b_1\neq d_1$ and $c_1\neq d_1$.

There are $n$ ways to choose $a_1(=d_2=b_2=c_2)$ from $N_n$, $n-1$ ways to choose $d_1$ from $N_n\setminus\{d_2\}$, $n-2$ ways to choose $a_2$ from $N_n\setminus\{d_1, d_2\}$, $n-2$ ways to choose $b_1$ from $N_n\setminus\{a_1, d_1\}$, $n-3$ ways to choose $c_1$ from $N_n\setminus\{a_1, b_1, d_1\}$.
Therefore, there are $n(n-1)(n-2)^2(n-3)=n^5-8n^4+23n^3-28n^2+12n$ ways to choose four $B_{n-2}$'s as required. 
By ($\romannumeral1$) and ($\romannumeral2$), the probability $P_{5.4.1}$ that there are four fault-free $B_{n-2}$'s chosen as in Case 5.4.1 is $\frac{1}{2}(n^5-8n^4+23n^3-28n^2+12n)p^{4(n-2)!-2(n-3)!}$.

{\it Case 5.4.2.} $b_1=d_1\neq c_1$ or $c_1=d_1\neq b_1$.

W.l.o.g, assume that $b_1=d_1\neq c_1$.

There are $n$ ways to choose $a_1(=d_2=b_2=c_2)$ from $N_n$, $n-1$ ways to choose $d_1(=b_1)$ from $N_n\setminus\{d_2\}$, $n-2$ ways to choose $a_2$ from $N_n\setminus\{d_1, d_2\}$, $n-2$ ways to choose $c_1$ from $N_n\setminus\{a_1, b_1\}$.
Therefore, there are $2n(n-1)(n-2)^2=2(n^4-5n^3+8n^2-4n)$ ways to choose four $B_{n-2}$'s as required. 
By ($\romannumeral1$) and ($\romannumeral3$), the probability $P_{5.4.2}$ that there are four fault-free $B_{n-2}$'s chosen as in Case 5.4.2 is $\frac{1}{2}\times 2(n^4-5n^3+8n^2-4n)p^{4(n-2)!-(n-3)!}$.

Thus, the probability $P_{5.4}$ that there are four fault-free $B_{n-2}$'s chosen as in Case 5.4 is $P_{5.4.2}+P_{5.4.2}=(n^4-5n^3+8n^2-4n)p^{4(n-2)!-(n-3)!}+\frac{1}{2}(n^5-8n^4+23n^3-28n^2+12n)p^{4(n-2)!-2(n-3)!}$.

{\it Case 5.5.} $a_2=d_1$ and $a_1\notin\{d_1, d_2\}$ or $a_2=d_2$ and $a_1\notin\{d_1, d_2\}$.

W.l.o.g, assume that the former applies.

{\it Case 5.5.1.} $b_1\neq d_1$ and $c_1\neq d_1$.

There are $n$ ways to choose $a_2(=d_1)$ from $N_n$, $n-1$ ways to choose $d_2(=b_2=c_2)$ from $N_n\setminus\{d_1\}$, $n-2$ ways to choose $a_1$ from $N_n\setminus\{d_1, d_2\}$, $n-3$ ways to choose $b_1$ from $N_n\setminus\{a_1, d_1, b_2\}$, $n-4$ ways to choose $c_1$ from $N_n\setminus\{a_1, b_1, d_1, c_2\}$.
Therefore, there are $n(n-1)(n-2)(n-3)(n-4)=n^5-10n^4+35n^3-50n^2+24n$ ways to choose four $B_{n-2}$'s as required. 
By ($\romannumeral1$) and ($\romannumeral2$), the probability $P_{5.5.1}$ that there are four fault-free $B_{n-2}$'s chosen as in Case 5.5.1 is $\frac{1}{2}(n^5-10n^4+35n^3-50n^2+24n)p^{4(n-2)!-2(n-3)!}$.

{\it Case 5.5.2.} $b_1=d_1\neq c_1$ or $c_1=d_1\neq b_1$.

W.l.o.g, assume that $b_1=d_1\neq c_1$.

There are $n$ ways to choose $a_2(=d_1=b_1)$ from $N_n$, $n-1$ ways to choose $d_2(=b_2=c_2)$ from $N_n\setminus\{d_1\}$, $n-2$ ways to choose $a_1$ from $N_n\setminus\{d_1, d_2\}$, $n-3$ ways to choose $c_1$ from $N_n\setminus\{a_1, b_1, c_2\}$.
Therefore, there are $2n(n-1)(n-2)(n-3)=2(n^4-6n^3+11n^2-6n)$ ways to choose four $B_{n-2}$'s as required. 
By ($\romannumeral1$) and ($\romannumeral3$), the probability $P_{5.5.2}$ that there are four fault-free $B_{n-2}$'s chosen as in Case 5.5.2 is $\frac{1}{2}\times 2(n^4-6n^3+11n^2-6n)p^{4(n-2)!-(n-3)!}$.

Thus, the probability $P_{5.5}$ that there are four fault-free $B_{n-2}$'s chosen as in Case 5.5 is $2(P_{5.5.1}+P_{5.5.2})=2(n^4-6n^3+11n^2-6n)p^{4(n-2)!-(n-3)!}+(n^5-10n^4+35n^3-50n^2+24n)p^{4(n-2)!-2(n-3)!}$.

In summary, the probability $P_5$ that there are four fault-free $B_{n-2}$'s chosen as in Case 5 is $2\sum_{i=1}^{5}P_{5.i}=2(3n^4-16n^3+27n^2-14n)p^{4(n-2)!-(n-3)!}+2(2n^5-17n^4+52n^3-67n^2+30n)p^{4(n-2)!-2(n-3)!}$.

{\it Case 6.} $\{a_1, a_2\}\cap\{d_1, d_2\}\neq\emptyset$, $b_1\neq c_1=a_1$, $b_2\neq c_2=d_2$ or $\{a_1, a_2\}\cap\{d_1, d_2\}\neq\emptyset$, $c_1\neq b_1=a_1$, $c_2\neq b_2=d_2$.

W.l.o.g, assume that the former applies. And the following facts will be applied directly in this case.

($\romannumeral1$) Observation \ref{ob:4} implies that $a_1 a_2 X^{n-2}$ and $X^{n-2} d_1 d_2$ are disjoint. 

($\romannumeral2$) If $c_2\neq a_2$ and $c_1\neq d_1$, Observations \ref{ob:2} and \ref{ob:3} imply that $a_1 a_2 X^{n-2}$ and $b_1 X^{n-2} b_2$ are disjoint, and $a_1 a_2 X^{n-2}$ and $c_1 X^{n-2} c_2$ are not disjoint and $V(a_1 a_2 X^{n-2})\cap V(c_1 X^{n-2} c_2)=V(a_1 a_2 X^{n-3} c_2)$, and $X^{n-2} d_1 d_2$ and $b_1 X^{n-2} b_2$ are disjoint, and $X^{n-2} d_1 d_2$ and $c_1 X^{n-2} c_2$ are not disjoint and $V(X^{n-2} d_1 d_2)\cap V(c_1 X^{n-2} c_2)=V(c_1 X^{n-3} d_1 d_2)$. Thus, $|V(a_1 a_2 X^{n-2})\cup V(b_1 X^{n-2} b_2)\cup V(c_1 X^{n-2} c_2)\cup V(X^{n-2} d_1 d_2)|=|V(a_1 a_2 X^{n-2})|+|V(b_1 X^{n-2} b_2)|+|V(c_1 X^{n-2} c_2)|+|V(X^{n-2} d_1 d_2)|-|V(a_1 a_2 X^{n-2})\cap V(c_1 X^{n-2} c_2)|-|V(X^{n-2} d_1 d_2)\cap V(c_1 X^{n-2}$ $c_2)|=4(n-2)!-2(n-3)!$ in this case. 

($\romannumeral3$) If $c_2\neq a_2$, $c_1=d_1$ or $c_2=a_2$, $c_1\neq d_1$, w.l.o.g, assume that the former applies. Observations \ref{ob:2} and \ref{ob:3} imply that $a_1 a_2 X^{n-2}$ and $b_1 X^{n-2} b_2$ are disjoint, and $a_1 a_2 X^{n-2}$ and $c_1 X^{n-2} c_2$ are not disjoint and $V(a_1 a_2 X^{n-2})\cap V(c_1 X^{n-2} c_2)=V(a_1 a_2 X^{n-3} c_2)$, and for any $i\in\{b, c\}$, $X^{n-2} d_1 d_2$ and $i_1 X^{n-2} i_2$ are disjoint. Thus, $|V(a_1 a_2 X^{n-2})\cup V(b_1 X^{n-2} b_2)\cup V(c_1 X^{n-2} c_2)\cup V(X^{n-2} d_1 d_2)|=|V(a_1 a_2 X^{n-2})|+|V(b_1 X^{n-2} b_2)|+|V(c_1 X^{n-2} c_2)|+|V(X^{n-2} d_1 d_2)|-|V(a_1 a_2 X^{n-2})\cap V(c_1 X^{n-2} c_2)|=4(n-2)!-(n-3)!$ in this case. 

($\romannumeral4$) If $c_2=a_2$ and $c_1=d_1$. Observations \ref{ob:2} and \ref{ob:3} imply that for any $i\in\{b, c\}$, $a_1 a_2 X^{n-2}$ and $i_1 X^{n-2} i_2$ are disjoint, and $X^{n-2} d_1 d_2$ and $i_1 X^{n-2} i_2$ are disjoint. Thus, $|V(a_1 a_2 X^{n-2})\cup V(b_1 X^{n-2} b_2)\cup V(c_1 X^{n-2} c_2)\cup V(X^{n-2} d_1 d_2)|=|V(a_1 a_2 X^{n-2})|+|V(b_1 X^{n-2} b_2)|+|V(c_1 X^{n-2} c_2)|+|V(X^{n-2} d_1 d_2)|=4(n-2)!$ in this case. 

{\it Case 6.1.} $a_1=d_1$ and $a_2=d_2$.

{\it Case 6.1.1.} $b_1\neq d_2$.

There are $n$ ways to choose $c_1(=a_1=d_1)$ from $N_n$, $n-1$ ways to choose $a_2(=d_2=c_2)$ from $N_n\setminus\{a_1\}$, $n-2$ ways to choose $b_1$ from $N_n\setminus\{a_1, d_2\}$, $n-2$ ways to choose $b_2$ from $N_n\setminus\{b_1, d_2\}$.
Therefore, there are $n(n-1)(n-2)^2=n^4-5n^3+8n^2-4n$ distinct $B_{n-2}$'s in this case.

{\it Case 6.1.2.} $b_1=d_2$.

There are $n$ ways to choose $c_1(=a_1=d_1)$ from $N_n$, $n-1$ ways to choose $a_2(=d_2=c_2=b_1)$ from $N_n\setminus\{a_1\}$, $n-1$ ways to choose $b_2$ from $N_n\setminus\{d_2\}$.
Therefore, there are $n(n-1)^2=n^3-2n^2+n$ distinct $B_{n-2}$'s in this case.

In summary, there are $n^4-4n^3+6n^2-3n$ ways to choose four $B_{n-2}$'s as required in Case 6.1.
Clearly, in this case $c_2=a_2$ and $c_1=d_1$. By ($\romannumeral1$) and ($\romannumeral4$), the probability $P_{6.1}$ that there are four fault-free $B_{n-2}$'s chosen as in Case 6.1 is $\frac{1}{2}(n^4-4n^3+6n^2-3n)p^{4(n-2)!}$.

{\it Case 6.2.} $a_1=d_1$, $a_2\notin\{d_1, d_2\}$ or $a_2=d_2$, $a_1\notin\{d_1, d_2\}$.

W.l.o.g, assume that $a_1=d_1$ and $a_2\notin\{d_1, d_2\}$.

{\it Case 6.2.1.} $b_1\neq d_2$.

There are $n$ ways to choose $c_1(=a_1=d_1)$ from $N_n$, $n-1$ ways to choose $d_2(=c_2)$ from $N_n\setminus\{d_1\}$, $n-2$ ways to choose $a_2$ from $N_n\setminus\{d_1, d_2\}$, $n-2$ ways to choose $b_1$ from $N_n\setminus\{a_1, d_2\}$, $n-2$ ways to choose $b_2$ from $N_n\setminus\{b_1, d_2\}$.
Therefore, there are $n(n-1)(n-2)^3=n^5-7n^4+18n^3-20n^2+8n$ distinct $B_{n-2}$'s in this case.

{\it Case 6.2.2.} $b_1=d_2$.

There are $n$ ways to choose $c_1(=a_1=d_1)$ from $N_n$, $n-1$ ways to choose $d_2(=c_2=b_1)$ from $N_n\setminus\{d_1\}$, $n-2$ ways to choose $a_2$ from $N_n\setminus\{d_1, d_2\}$, $n-1$ ways to choose $b_2$ from $N_n\setminus\{d_2\}$.
Therefore, there are $n(n-1)^2(n-2)=n^4-4n^3+5n^2-2n$ distinct $B_{n-2}$'s in this case.

In summary, there are $2(n^5-6n^4+14n^3-15n^2+6n)$ ways to choose four $B_{n-2}$'s as required in Case 6.2.
Clearly, in this case $c_2\neq a_2$ and $c_1=d_1$. By ($\romannumeral1$) and ($\romannumeral3$), the probability $P_{6.2}$ that there are four fault-free $B_{n-2}$'s chosen as in Case 6.2 is $\frac{1}{2}\times 2(n^5-6n^4+14n^3-15n^2+6n)p^{4(n-2)!-(n-3)!}$.

{\it Case 6.3.} $a_2=d_1$ and $a_1\notin\{d_1, d_2\}$.

{\it Case 6.3.1.} $b_1\neq d_2$.

There are $n$ ways to choose $a_2(=d_1)$ from $N_n$, $n-1$ ways to choose $d_2(=c_2)$ from $N_n\setminus\{d_1\}$, $n-2$ ways to choose $a_1(=c_1)$ from $N_n\setminus\{d_1, d_2\}$, $n-2$ ways to choose $b_1$ from $N_n\setminus\{a_1, d_2\}$, $n-2$ ways to choose $b_2$ from $N_n\setminus\{b_1, d_2\}$.
Therefore, there are $n(n-1)(n-2)^3=n^5-7n^4+18n^3-20n^2+8n$ distinct $B_{n-2}$'s in this case.

{\it Case 6.3.2.} $b_1=d_2$.

There are $n$ ways to choose $a_2(=d_1)$ from $N_n$, $n-1$ ways to choose $d_2(=c_2=b_1)$ from $N_n\setminus\{d_1\}$, $n-2$ ways to choose $a_1(=c_1)$ from $N_n\setminus\{d_1, d_2\}$, $n-1$ ways to choose $b_2$ from $N_n\setminus\{d_2\}$.
Therefore, there are $n(n-1)^2(n-2)=n^4-4n^3+5n^2-2n$ distinct $B_{n-2}$'s in this case.

In summary, there are $n^5-6n^4+14n^3-15n^2+6n$ ways to choose four $B_{n-2}$'s as required in Case 6.3.
Clearly, in this case $c_2\neq a_2$ and $c_1\neq d_1$. By ($\romannumeral1$) and ($\romannumeral2$), the probability $P_{6.3}$ that there are four fault-free $B_{n-2}$'s chosen as in Case 6.3 is $\frac{1}{2}(n^5-6n^4+14n^3-15n^2+6n)p^{4(n-2)!-2(n-3)!}$.

In summary, the probability $P_6$ that there are four fault-free $B_{n-2}$'s chosen as in Case 6 is $2\sum_{i=1}^{3}P_{6.i}=(n^4-4n^3+6n^2-3n)p^{4(n-2)!}+2(n^5-6n^4+14n^3-15n^2+6n)p^{4(n-2)!-(n-3)!}+(n^5-6n^4+14n^3-15n^2+6n)p^{4(n-2)!-2(n-3)!}$.

{\it Case 7.} $\{a_1, a_2\}\cap\{d_1, d_2\}\neq\emptyset$, $b_1\neq c_1=a_1$, $c_2\neq b_2=d_2$ or $\{a_1, a_2\}\cap\{d_1, d_2\}\neq\emptyset$, $c_1\neq b_1=a_1$, $b_2\neq c_2=d_2$.

W.l.o.g, assume that the former applies. And the following facts will be applied directly in this case.

($\romannumeral1$) Observation \ref{ob:4} implies that $a_1 a_2 X^{n-2}$ and $X^{n-2} d_1 d_2$ are disjoint. 

($\romannumeral2$) If $c_2\neq a_2$ and $b_1\neq d_1$, Observations \ref{ob:2} and \ref{ob:3} imply that $a_1 a_2 X^{n-2}$ and $b_1 X^{n-2} b_2$ are disjoint, and $a_1 a_2 X^{n-2}$ and $c_1 X^{n-2} c_2$ are not disjoint and $V(a_1 a_2 X^{n-2})\cap V(c_1 X^{n-2} c_2)=V(a_1 a_2 X^{n-3} c_2)$, and $X^{n-2} d_1 d_2$ and $b_1 X^{n-2} b_2$ are not disjoint and $V(X^{n-2} d_1 d_2)\cap V(b_1 X^{n-2} b_2)=V(b_1 X^{n-3} d_1 d_2)$, and $X^{n-2} d_1 d_2$ and $c_1 X^{n-2} c_2$ are disjoint. Thus, $|V(a_1 a_2 X^{n-2})\cup V(b_1 X^{n-2} b_2)\cup V(c_1 X^{n-2} c_2)\cup V(X^{n-2} d_1 d_2)|=|V(a_1 a_2 X^{n-2})|+|V(b_1 X^{n-2} b_2)|+|V(c_1 X^{n-2} c_2)|+|V(X^{n-2} d_1 d_2)|-|V(a_1 a_2 X^{n-2})\cap V(c_1 X^{n-2} c_2)|-|V(X^{n-2} d_1 d_2)\cap V(b_1 X^{n-2}$ $b_2)|=4(n-2)!-2(n-3)!$ in this case. 

($\romannumeral3$) If $c_2\neq a_2$, $b_1=d_1$ or $c_2=a_2$, $b_1\neq d_1$, w.l.o.g, assume that the former applies. Observations \ref{ob:2} and \ref{ob:3} imply that $a_1 a_2 X^{n-2}$ and $b_1 X^{n-2} b_2$ are disjoint, and $a_1 a_2 X^{n-2}$ and $c_1 X^{n-2} c_2$ are not disjoint and $V(a_1 a_2 X^{n-2})\cap V(c_1 X^{n-2} c_2)=V(a_1 a_2 X^{n-3} c_2)$, and for any $i\in\{b, c\}$, $X^{n-2} d_1 d_2$ and $i_1 X^{n-2} i_2$ are disjoint. Thus, $|V(a_1 a_2 X^{n-2})\cup V(b_1 X^{n-2} b_2)\cup V(c_1 X^{n-2} c_2)\cup V(X^{n-2} d_1 d_2)|=|V(a_1 a_2 X^{n-2})|+|V(b_1 X^{n-2} b_2)|+|V(c_1 X^{n-2} c_2)|+|V(X^{n-2} d_1 d_2)|-|V(a_1 a_2 X^{n-2})\cap V(c_1 X^{n-2} c_2)|=4(n-2)!-(n-3)!$ in this case. 

($\romannumeral4$) If $c_2=a_2$ and $b_1=d_1$. Observations \ref{ob:2} and \ref{ob:3} imply that for any $i\in\{b, c\}$, $a_1 a_2 X^{n-2}$ and $i_1 X^{n-2} i_2$ are disjoint, and $X^{n-2} d_1 d_2$ and $i_1 X^{n-2} i_2$ are disjoint. Thus, $|V(a_1 a_2 X^{n-2})\cup V(b_1 X^{n-2} b_2)\cup V(c_1 X^{n-2} c_2)\cup V(X^{n-2} d_1 d_2)|=|V(a_1 a_2 X^{n-2})|+|V(b_1 X^{n-2} b_2)|+|V(c_1 X^{n-2} c_2)|+|V(X^{n-2} d_1 d_2)|=4(n-2)!$ in this case. 

{\it Case 7.1.} $a_1=d_1$ and $a_2=d_2$.

There are $n$ ways to choose $c_1(=a_1=d_1)$ from $N_n$, $n-1$ ways to choose $a_2(=d_2=b_2)$ from $N_n\setminus\{a_1\}$, $n-2$ ways to choose $b_1$ from $N_n\setminus\{a_1, b_2\}$, $n-2$ ways to choose $c_2$ from $N_n\setminus\{c_1, d_2\}$.
Therefore, there are $n(n-1)(n-2)^2=n^4-5n^3+8n^2-4n$ ways to choose four $B_{n-2}$'s as required.
Clearly, in this case $c_2\neq a_2$ and $b_1\neq d_1$. By ($\romannumeral1$) and ($\romannumeral2$), the probability $P_{7.1}$ that there are four fault-free $B_{n-2}$'s chosen as in Case 7.1 is $\frac{1}{2}(n^4-5n^3+8n^2-4n)p^{4(n-2)!-2(n-3)!}$.

{\it Case 7.2.} $a_1=d_2$ and $a_2=d_1$.

{\it Case 7.2.1.} $c_2\neq a_2$ and $b_1\neq d_1$.

There are $n$ ways to choose $a_1(=d_2=c_1=b_2)$ from $N_n$, $n-1$ ways to choose $a_2(=d_1)$ from $N_n\setminus\{a_1\}$, $n-2$ ways to choose $c_2$ from $N_n\setminus\{c_1, a_2\}$, $n-2$ ways to choose $b_1$ from $N_n\setminus\{b_2, d_1\}$.
Therefore, there are $n(n-1)(n-2)^2=n^4-5n^3+8n^2-4n$ ways to choose four $B_{n-2}$'s as required.
By ($\romannumeral1$) and ($\romannumeral2$), the probability $P_{7.2.1}$ that there are four fault-free $B_{n-2}$'s chosen as in Case 7.2.1 is $\frac{1}{2}(n^4-5n^3+8n^2-4n)p^{4(n-2)!-2(n-3)!}$.

{\it Case 7.2.2.} $c_2\neq a_2$, $b_1=d_1$ or $c_2=a_2$, $b_1\neq d_1$.

W.l.o.g, assume that the former applies.

There are $n$ ways to choose $a_1(=d_2=c_1=b_2)$ from $N_n$, $n-1$ ways to choose $a_2(=d_1=b_2)$ from $N_n\setminus\{a_1\}$, $n-2$ ways to choose $c_2$ from $N_n\setminus\{c_1, a_2\}$.
Therefore, there are $2n(n-1)(n-2)=2(n^3-3n^2+2n)$ ways to choose four $B_{n-2}$'s as required.
By ($\romannumeral1$) and ($\romannumeral3$), the probability $P_{7.2.2}$ that there are four fault-free $B_{n-2}$'s chosen as in Case 7.2.2 is $\frac{1}{2}\times 2(n^3-3n^2+2n)p^{4(n-2)!-(n-3)!}$.

{\it Case 7.2.3.} $c_2=a_2$ and $b_1=d_1$.

There are $n$ ways to choose $a_1(=d_2=c_1=b_2)$ from $N_n$, $n-1$ ways to choose $a_2(=d_1=b_1=c_2)$ from $N_n\setminus\{a_1\}$.
Therefore, there are $n(n-1)=n^2-n$ ways to choose four $B_{n-2}$'s as required.
By ($\romannumeral1$) and ($\romannumeral4$), the probability $P_{7.2.3}$ that there are four fault-free $B_{n-2}$'s chosen as in Case 7.2.3 is $\frac{1}{2}(n^2-n)p^{4(n-2)!}$.

Thus, the probability $P_{7.2}$ that there are four fault-free $B_{n-2}$'s chosen as in Case 7.2 is $P_{7.2.1}+P_{7.2.2}+P_{7.2.3}=\frac{1}{2}(n^2-n)p^{4(n-2)!}+(n^3-3n^2+2n)p^{4(n-2)!-(n-3)!}+\frac{1}{2}(n^4-5n^3+8n^2-4n)p^{4(n-2)!-2(n-3)!}$.

{\it Case 7.3.} $a_1=d_1$ and $a_2\notin\{d_1, d_2\}$ or $a_2=d_2$ and $a_1\notin\{d_1, d_2\}$.

W.l.o.g, assume that the former applies. 

{\it Case 7.3.1.} $c_2\neq a_2$.

There are $n$ ways to choose $c_1(=a_1=d_1)$ from $N_n$, $n-1$ ways to choose $d_2(=b_2)$ from $N_n\setminus\{d_1\}$, $n-2$ ways to choose $a_2$ from $N_n\setminus\{d_1, d_2\}$, $n-2$ ways to choose $b_1$ from $N_n\setminus\{a_1, b_2\}$, $n-3$ ways to choose $c_2$ from $N_n\setminus\{c_1, d_2, a_2\}$.
Therefore, there are $n(n-1)(n-2)^2(n-3)=n^5-8n^4+23n^3-28n^2+12n
$ ways to choose four $B_{n-2}$'s as required.
Clearly, in this case $b_1\neq d_1$. By ($\romannumeral1$) and ($\romannumeral2$), the probability $P_{7.3.1}$ that there are four fault-free $B_{n-2}$'s chosen as in Case 7.3.1 is $\frac{1}{2}(n^5-8n^4+23n^3-28n^2+12n)p^{4(n-2)!-2(n-3)!}$.

{\it Case 7.3.2.} $c_2=a_2$.

There are $n$ ways to choose $c_1(=a_1=d_1)$ from $N_n$, $n-1$ ways to choose $d_2(=b_2)$ from $N_n\setminus\{d_1\}$, $n-2$ ways to choose $a_2(=c_2)$ from $N_n\setminus\{d_1, d_2\}$, $n-2$ ways to choose $b_1$ from $N_n\setminus\{a_1, b_2\}$.
Therefore, there are $n(n-1)(n-2)^2=n^4-5n^3+8n^2-4n
$ ways to choose four $B_{n-2}$'s as required.
Clearly, in this case $b_1\neq d_1$. By ($\romannumeral1$) and ($\romannumeral3$), the probability $P_{7.3.2}$ that there are four fault-free $B_{n-2}$'s chosen as in Case 7.3.2 is $\frac{1}{2}(n^4-5n^3+8n^2-4n)p^{4(n-2)!-(n-3)!}$.

Thus, the probability $P_{7.3}$ that there are four fault-free $B_{n-2}$'s chosen as in Case 7.3 is $2(P_{7.3.1}+P_{7.3.2})=(n^4-5n^3+8n^2-4n)p^{4(n-2)!-(n-3)!}+(n^5-8n^4+23n^3-28n^2+12n)p^{4(n-2)!-2(n-3)!}$.

{\it Case 7.4.} $a_1=d_2$ and $a_2\notin\{d_1, d_2\}$.

{\it Case 7.4.1.} $c_2\neq a_2$ and $b_1\neq d_1$.

There are $n$ ways to choose $a_1(=d_2=c_1=b_2)$ from $N_n$, $n-1$ ways to choose $d_1$ from $N_n\setminus\{d_2\}$, $n-2$ ways to choose $a_2$ from $N_n\setminus\{d_1, d_2\}$, $n-2$ ways to choose $c_2$ from $N_n\setminus\{c_1, a_2\}$, $n-2$ ways to choose $b_1$ from $N_n\setminus\{b_2, d_1\}$.
Therefore, there are $n(n-1)(n-2)^3=n^5-7n^4+18n^3-20n^2+8n$ ways to choose four $B_{n-2}$'s as required.
By ($\romannumeral1$) and ($\romannumeral2$), the probability $P_{7.4.1}$ that there are four fault-free $B_{n-2}$'s chosen as in Case 7.4.1 is $\frac{1}{2}(n^5-7n^4+18n^3-20n^2+8n)p^{4(n-2)!-2(n-3)!}$.

{\it Case 7.4.2.} $c_2\neq a_2$, $b_1=d_1$ or $c_2=a_2$, $b_1\neq d_1$.

W.l.o.g, assume that the former applies.

There are $n$ ways to choose $a_1(=d_2=c_1=b_2)$ from $N_n$, $n-1$ ways to choose $d_1(=b_1)$ from $N_n\setminus\{d_2\}$, $n-2$ ways to choose $a_2$ from $N_n\setminus\{d_1, d_2\}$, $n-2$ ways to choose $c_2$ from $N_n\setminus\{c_1, a_2\}$.
Therefore, there are $2n(n-1)(n-2)^2=2(n^4-5n^3+8n^2-4n)$ ways to choose four $B_{n-2}$'s as required.
By ($\romannumeral1$) and ($\romannumeral3$), the probability $P_{7.4.2}$ that there are four fault-free $B_{n-2}$'s chosen as in Case 7.4.2 is $\frac{1}{2}\times 2(n^4-5n^3+8n^2-4n)p^{4(n-2)!-(n-3)!}$.

{\it Case 7.4.3.} $c_2=a_2$ and $b_1=d_1$.

There are $n$ ways to choose $a_1(=d_2=c_1=b_2)$ from $N_n$, $n-1$ ways to choose $d_1(=b_1)$ from $N_n\setminus\{d_2\}$, $n-2$ ways to choose $a_2(=c_2)$ from $N_n\setminus\{d_1, d_2\}$.
Therefore, there are $n(n-1)(n-2)=n^3-3n^2+2n$ ways to choose four $B_{n-2}$'s as required.
By ($\romannumeral1$) and ($\romannumeral4$), the probability $P_{7.4.3}$ that there are four fault-free $B_{n-2}$'s chosen as in Case 7.4.3 is $\frac{1}{2}(n^3-3n^2+2n)p^{4(n-2)!}$.

Thus, the probability $P_{7.4}$ that there are four fault-free $B_{n-2}$'s chosen as in Case 7.4 is $P_{7.4.1}+P_{7.4.2}+P_{7.4.3}=\frac{1}{2}(n^3-3n^2+2n)p^{4(n-2)!}+(n^4-5n^3+8n^2-4n)p^{4(n-2)!-(n-3)!}+\frac{1}{2}(n^5-7n^4+18n^3-20n^2+8n)p^{4(n-2)!-2(n-3)!}$.

{\it Case 7.5.} $a_2=d_1$ and $a_1\notin\{d_1, d_2\}$.

{\it Case 7.5.1.} $c_2\neq a_2$ and $b_1\neq d_1$.

There are $n$ ways to choose $a_2(=d_1)$ from $N_n$, $n-1$ ways to choose $d_2(=b_2)$ from $N_n\setminus\{d_1\}$, $n-2$ ways to choose $a_1(=c_1)$ from $N_n\setminus\{d_1, d_2\}$, $n-3$ ways to choose $b_1$ from $N_n\setminus\{b_2, a_1, d_1\}$, $n-3$ ways to choose $c_2$ from $N_n\setminus\{c_1, d_2, a_2\}$.
Therefore, there are $n(n-1)(n-2)(n-3)^2=n^5-9n^4+29n^3-39n^2+18n$ ways to choose four $B_{n-2}$'s as required.
By ($\romannumeral1$) and ($\romannumeral2$), the probability $P_{7.5.1}$ that there are four fault-free $B_{n-2}$'s chosen as in Case 7.5.1 is $\frac{1}{2}(n^5-9n^4+29n^3-39n^2+18n)p^{4(n-2)!-2(n-3)!}$.

{\it Case 7.5.2.} $c_2\neq a_2$, $b_1=d_1$ or $c_2=a_2$, $b_1\neq d_1$.

W.l.o.g, assume that the former applies.

There are $n$ ways to choose $a_2(=d_1=b_1)$ from $N_n$, $n-1$ ways to choose $d_2(=b_2)$ from $N_n\setminus\{d_1\}$, $n-2$ ways to choose $a_1(=c_1)$ from $N_n\setminus\{d_1, d_2\}$, $n-3$ ways to choose $c_2$ from $N_n\setminus\{c_1, d_2, a_2\}$.
Therefore, there are $2n(n-1)(n-2)(n-3)=2(n^4-6n^3+11n^2-6n)$ ways to choose four $B_{n-2}$'s as required.
By ($\romannumeral1$) and ($\romannumeral3$), the probability $P_{7.5.2}$ that there are four fault-free $B_{n-2}$'s chosen as in Case 7.5.2 is $\frac{1}{2}\times 2(n^4-6n^3+11n^2-6n)p^{4(n-2)!-(n-3)!}$.

{\it Case 7.5.3.} $c_2=a_2$ and $b_1=d_1$.

There are $n$ ways to choose $a_2(=d_1=b_1=c_2)$ from $N_n$, $n-1$ ways to choose $d_2(=b_2)$ from $N_n\setminus\{d_1\}$, $n-2$ ways to choose $a_1(=c_1)$ from $N_n\setminus\{d_1, d_2\}$.
Therefore, there are $n(n-1)(n-2)=n^3-3n^2+2n$ ways to choose four $B_{n-2}$'s as required.
By ($\romannumeral1$) and ($\romannumeral4$), the probability $P_{7.5.3}$ that there are four fault-free $B_{n-2}$'s chosen as in Case 7.5.3 is $\frac{1}{2}(n^3-3n^2+2n)p^{4(n-2)!}$.

Thus, the probability $P_{7.5}$ that there are four fault-free $B_{n-2}$'s chosen as in Case 7.5 is $P_{7.5.1}+P_{7.5.2}+P_{7.5.3}=\frac{1}{2}(n^3-3n^2+2n)p^{4(n-2)!}+(n^4-6n^3+11n^2-6n)p^{4(n-2)!-(n-3)!}+\frac{1}{2}(n^5-9n^4+29n^3-39n^2+18n)p^{4(n-2)!-2(n-3)!}$.

In summary, the probability $P_7$ that there are four fault-free $B_{n-2}$'s chosen as in Case 7 is $2\sum_{i=1}^{5}P_{7.i}=(2n^3-5n^2+3n)p^{4(n-2)!}+6(n^4-5n^3+8n^2-4n)p^{4(n-2)!-(n-3)!}+(4n^5-30n^4+83n^3-99n^2+42n)p^{4(n-2)!-2(n-3)!}$.

{\it Case 8.} For $\{a_1, a_2\}\cap\{d_1, d_2\}\neq\emptyset$ and arbitrary $\{i, j\}=\{b, c\}$, $i_1\neq j_1=a_1$, $b_2=c_2\neq d_2$ or $i_2\neq j_2=d_2$, $b_1=c_1\neq a_1$.

There are 4 possible cases, w.l.o.g, assume that $\{a_1, a_2\}\cap\{d_1, d_2\}\neq\emptyset$, $b_1\neq c_1=a_1$ and $b_2=c_2\neq d_2$.

The following facts will be applied directly in this case.

($\romannumeral1$) Observation \ref{ob:4} implies that $a_1 a_2 X^{n-2}$ and $X^{n-2} d_1 d_2$ are disjoint. 

($\romannumeral2$) If $a_2\neq c_2$, Observations \ref{ob:2} and \ref{ob:3} imply that $a_1 a_2 X^{n-2}$ and $b_1 X^{n-2} b_2$ are disjoint, and $a_1 a_2 X^{n-2}$ and $c_1 X^{n-2} c_2$ are not disjoint and $V(a_1 a_2 X^{n-2})\cap V(c_1 X^{n-2} c_2)=V(a_1 a_2 X^{n-3} c_2)$, and for any $i\in\{b, c\}$, $X^{n-2} d_1 d_2$ and $i_1 X^{n-2} i_2$ are disjoint. Thus, $|V(a_1 a_2 X^{n-2})\cup V(b_1 X^{n-2} b_2)\cup V(c_1 X^{n-2} c_2)\cup V(X^{n-2} d_1 d_2)|=|V(a_1 a_2 X^{n-2})|+|V(b_1 X^{n-2} b_2)|+|V(c_1 X^{n-2} c_2)|+|V(X^{n-2} d_1 d_2)|-|V(a_1 a_2 X^{n-2})\cap V(c_1 X^{n-2} c_2)|=4(n-2)!-(n-3)!$ in this case. 

($\romannumeral3$) If $a_2=c_2$, Observations \ref{ob:2} and \ref{ob:3} imply that for any $i\in\{b, c\}$, $a_1 a_2 X^{n-2}$ and $i_1 X^{n-2} i_2$ are disjoint, and $X^{n-2} d_1 d_2$ and $i_1 X^{n-2} i_2$ are disjoint. Thus, $|V(a_1 a_2 X^{n-2})\cup V(b_1 X^{n-2} b_2)\cup V(c_1 X^{n-2} c_2)\cup V(X^{n-2} d_1 d_2)|=|V(a_1 a_2 X^{n-2})|+|V(b_1 X^{n-2} b_2)|+|V(c_1 X^{n-2} c_2)|+|V(X^{n-2} d_1 d_2)|=4(n-2)!$ in this case.

{\it Case 8.1.} $a_1=d_1$ and $a_2=d_2$.

There are $n$ ways to choose $c_1(=a_1=d_1)$ from $N_n$, $n-1$ ways to choose $a_2(=d_2)$ from $N_n\setminus\{a_1\}$, $n-2$ ways to choose $b_2(=c_2)$ from $N_n\setminus\{c_1, d_2\}$, $n-3$ ways to choose $b_1$ from $N_n\setminus\{a_1, b_2\}$.
Therefore, there are $n(n-1)(n-2)^2=n^4-5n^3+8n^2-4n$ ways to choose four $B_{n-2}$'s as required. 
Clearly, in this case $a_2\neq c_2$. By ($\romannumeral1$) and ($\romannumeral2$), the probability $P_{8.1}$ that there are four fault-free $B_{n-2}$'s chosen as in Case 8.1 is $\frac{1}{2}(n^4-5n^3+8n^2-4n)p^{4(n-2)!-(n-3)!}$.

{\it Case 8.2.} $a_1=d_2$ and $a_2=d_1$.

{\it Case 8.2.1.} $a_2\neq c_2$.

There are $n$ ways to choose $a_1(=d_2=c_1)$ from $N_n$, $n-1$ ways to choose $a_2(=d_1)$ from $N_n\setminus\{a_1\}$, $n-2$ ways to choose $b_2(=c_2)$ from $N_n\setminus\{c_1, a_2\}$, $n-2$ ways to choose $b_1$ from $N_n\setminus\{a_1, b_2\}$.
Therefore, there are $n(n-1)(n-2)^2=n^4-5n^3+8n^2-4n$ ways to choose four $B_{n-2}$'s as required. 
By ($\romannumeral1$) and ($\romannumeral2$), the probability $P_{8.2.1}$ that there are four fault-free $B_{n-2}$'s chosen as in Case 8.2.1 is $\frac{1}{2}(n^4-5n^3+8n^2-4n)p^{4(n-2)!-(n-3)!}$.

{\it Case 8.2.2.} $a_2=c_2$.

There are $n$ ways to choose $a_1(=d_2=c_1)$ from $N_n$, $n-1$ ways to choose $a_2(=d_1=b_2=c_2)$ from $N_n\setminus\{a_1\}$, $n-2$ ways to choose $b_1$ from $N_n\setminus\{a_1, b_2\}$.
Therefore, there are $n(n-1)(n-2)=n^3-3n^2+2n$ ways to choose four $B_{n-2}$'s as required. 
By ($\romannumeral1$) and ($\romannumeral3$), the probability $P_{8.2.2}$ that there are four fault-free $B_{n-2}$'s chosen as in Case 8.2.2 is $\frac{1}{2}(n^3-3n^2+2n)p^{4(n-2)!}$.

Thus, the probability $P_{8.2}$ that there are four fault-free $B_{n-2}$'s chosen as in Case 8.2 is $P_{8.2.1}+P_{8.2.2}=\frac{1}{2}(n^3-3n^2+2n)p^{4(n-2)!}+\frac{1}{2}(n^4-5n^3+8n^2-4n)p^{4(n-2)!-(n-3)!}$.

{\it Case 8.3.} $a_1=d_1$, $a_2\notin\{d_1, d_2\}$ or $a_2=d_1$, $a_1\notin\{d_1, d_2\}$.

W.l.o.g, assume that the former applies.

{\it Case 8.3.1.} $a_2\neq c_2$.

There are $n$ ways to choose $a_1(=d_1=c_1)$ from $N_n$, $n-1$ ways to choose $d_2$ from $N_n\setminus\{d_1\}$, $n-2$ ways to choose $a_2$ from $N_n\setminus\{d_1, d_2\}$, $n-3$ ways to choose $b_2(=c_2)$ from $N_n\setminus\{d_2, c_1, a_2\}$, $n-2$ ways to choose $b_1$ from $N_n\setminus\{a_1, b_2\}$.
Therefore, there are $n(n-1)(n-2)^2(n-3)=n^5-8n^4+23n^3-28n^2+12n$ ways to choose four $B_{n-2}$'s as required. 
By ($\romannumeral1$) and ($\romannumeral2$), the probability $P_{8.3.1}$ that there are four fault-free $B_{n-2}$'s chosen as in Case 8.3.1 is $\frac{1}{2}(n^5-8n^4+23n^3-28n^2+12n)p^{4(n-2)!-(n-3)!}$.

{\it Case 8.3.2.} $a_2=c_2$.

There are $n$ ways to choose $a_1(=d_1=c_1)$ from $N_n$, $n-1$ ways to choose $d_2$ from $N_n\setminus\{d_1\}$, $n-2$ ways to choose $a_2(=b_2=c_2)$ from $N_n\setminus\{d_1, d_2\}$, $n-2$ ways to choose $b_1$ from $N_n\setminus\{a_1, b_2\}$.
Therefore, there are $n(n-1)(n-2)^2=n^4-5n^3+8n^2-4n$ ways to choose four $B_{n-2}$'s as required. 
By ($\romannumeral1$) and ($\romannumeral3$), the probability $P_{8.3.2}$ that there are four fault-free $B_{n-2}$'s chosen as in Case 8.3.2 is $\frac{1}{2}(n^4-5n^3+8n^2-4n)p^{4(n-2)!}$.

Thus, the probability $P_{8.3}$ that there are four fault-free $B_{n-2}$'s chosen as in Case 8.3 is $2(P_{8.3.1}+P_{8.3.2})=(n^4-5n^3+8n^2-4n)p^{4(n-2)!}+(n^5-8n^4+23n^3-28n^2+12n)p^{4(n-2)!-(n-3)!}$.

{\it Case 8.4.} $a_1=d_2$, $a_2\notin\{d_1, d_2\}$.

{\it Case 8.4.1.} $a_2\neq c_2$.

There are $n$ ways to choose $a_1(=d_2=c_1)$ from $N_n$, $n-1$ ways to choose $d_1$ from $N_n\setminus\{d_2\}$, $n-2$ ways to choose $a_2$ from $N_n\setminus\{d_1, d_2\}$, $n-2$ ways to choose $b_2(=c_2)$ from $N_n\setminus\{d_2, a_2\}$, $n-2$ ways to choose $b_1$ from $N_n\setminus\{a_1, b_2\}$.
Therefore, there are $n(n-1)(n-2)^3=n^5-7n^4+18n^3-20n^2+8n$ ways to choose four $B_{n-2}$'s as required. 
By ($\romannumeral1$) and ($\romannumeral2$), the probability $P_{8.4.1}$ that there are four fault-free $B_{n-2}$'s chosen as in Case 8.4.1 is $\frac{1}{2}(n^5-7n^4+18n^3-20n^2+8n)p^{4(n-2)!-(n-3)!}$.

{\it Case 8.4.2.} $a_2=c_2$.

There are $n$ ways to choose $a_1(=d_2=c_1)$ from $N_n$, $n-1$ ways to choose $d_1$ from $N_n\setminus\{d_2\}$, $n-2$ ways to choose $a_2(=b_2=c_2)$ from $N_n\setminus\{d_1, d_2\}$, $n-2$ ways to choose $b_1$ from $N_n\setminus\{a_1, b_2\}$.
Therefore, there are $n(n-1)(n-2)^2=n^4-5n^3+8n^2-4n$ ways to choose four $B_{n-2}$'s as required. 
By ($\romannumeral1$) and ($\romannumeral3$), the probability $P_{8.4.2}$ that there are four fault-free $B_{n-2}$'s chosen as in Case 8.4.2 is $\frac{1}{2}(n^4-5n^3+8n^2-4n)p^{4(n-2)!}$.

Thus, the probability $P_{8.4}$ that there are four fault-free $B_{n-2}$'s chosen as in Case 8.4 is $P_{8.4.1}+P_{8.4.2}=\frac{1}{2}(n^4-5n^3+8n^2-4n)p^{4(n-2)!}+\frac{1}{2}(n^5-7n^4+18n^3-20n^2+8n)p^{4(n-2)!-(n-3)!}$.

{\it Case 8.5.} $a_2=d_2$, $a_1\notin\{d_1, d_2\}$.

There are $n$ ways to choose $a_2(=d_2)$ from $N_n$, $n-1$ ways to choose $d_1$ from $N_n\setminus\{d_2\}$, $n-2$ ways to choose $a_1(=c_1)$ from $N_n\setminus\{d_1, d_2\}$, $n-2$ ways to choose $b_2(=c_2)$ from $N_n\setminus\{d_2, c_1\}$, $n-2$ ways to choose $b_1$ from $N_n\setminus\{a_1, b_2\}$.
Therefore, there are $n(n-1)(n-2)^3=n^5-7n^4+18n^3-20n^2+8n$ ways to choose four $B_{n-2}$'s as required. 
Clearly, in this case $a_2\neq c_2$. By ($\romannumeral1$) and ($\romannumeral2$), the probability $P_{8.5}$ that there are four fault-free $B_{n-2}$'s chosen as in Case 8.5 is $\frac{1}{2}(n^5-7n^4+18n^3-20n^2+8n)p^{4(n-2)!-(n-3)!}$.

In summary, the probability $P_8$ that there are four fault-free $B_{n-2}$'s chosen as in Case 8 is $4\sum_{i=1}^{5}P_{8.i}=2(3n^4-14n^3+21n^2-10n)p^{4(n-2)!}+8(n^5-7n^4+18n^3-20n^2+8n)p^{4(n-2)!-(n-3)!}$.

{\it Case 9.} For $\{a_1, a_2\}\cap\{d_1, d_2\}\neq\emptyset$ and arbitrary $\{i, j\}=\{b, c\}$, $i_1\neq j_1=a_1$, $b_2=c_2=d_2$ or $b_1=c_1=a_1$, $i_2\neq j_2=d_2$.

There are 4 possible cases, w.l.o.g, assume that $\{a_1, a_2\}\cap\{d_1, d_2\}\neq\emptyset$, $b_1\neq c_1=a_1$ and $b_2=c_2=d_2$.

The following facts will be applied directly in this case.

($\romannumeral1$) Observation \ref{ob:4} implies that $a_1 a_2 X^{n-2}$ and $X^{n-2} d_1 d_2$ are disjoint. 

($\romannumeral2$) If $a_2\neq c_2$ and $b_1\neq d_1$ and $c_1\neq d_1$, Observations \ref{ob:2} and \ref{ob:3} imply that $a_1 a_2 X^{n-2}$ and $b_1 X^{n-2} b_2$ are disjoint, and $a_1 a_2 X^{n-2}$ and $c_1 X^{n-2} c_2$ are not disjoint and $V(a_1 a_2 X^{n-2})\cap V(c_1 X^{n-2} c_2)=V(a_1 a_2 X^{n-3} c_2)$, and for any $i\in\{b, c\}$, $X^{n-2} d_1 d_2$ and $i_1 X^{n-2} i_2$ are not disjoint and $V(X^{n-2} d_1 d_2)\cap V(i_1 X^{n-2} i_2)=V(i_1 X^{n-3} d_1 d_2)$. Thus, $|V(a_1 a_2 X^{n-2})\cup V(b_1 X^{n-2} b_2)\cup V(c_1 X^{n-2} c_2)\cup V(X^{n-2} d_1 d_2)|=|V(a_1 a_2 X^{n-2})|+|V(b_1 X^{n-2} b_2)|+|V(c_1 X^{n-2} c_2)|+|V(X^{n-2} d_1 d_2)|-|V(a_1 a_2 X^{n-2})\cap V(c_1 X^{n-2} c_2)|-|V(X^{n-2} d_1 d_2)\cap V(b_1 X^{n-2} b_2)|-|V(X^{n-2} d_1$ $d_2)\cap V(c_1 X^{n-2} c_2)|=4(n-2)!-3(n-3)!$.

($\romannumeral3$) If $a_2\neq c_2$, $b_1\neq d_1$, $c_1=d_1$ or $a_2\neq c_2$, $b_1=d_1$, $c_1\neq d_1$ or $a_2=c_2$, $b_1\neq d_1$, $c_1\neq d_1$, w.l.o.g, assume that the first case applies. Observations \ref{ob:2} and \ref{ob:3} imply that $a_1 a_2 X^{n-2}$ and $b_1 X^{n-2} b_2$ are disjoint, and $a_1 a_2 X^{n-2}$ and $c_1 X^{n-2} c_2$ are not disjoint and $V(a_1 a_2 X^{n-2})\cap V(c_1 X^{n-2} c_2)=V(a_1 a_2 X^{n-3} c_2)$, and $X^{n-2} d_1 d_2$ and $b_1 X^{n-2} b_2$ are not disjoint and $V(X^{n-2} d_1 d_2)\cap V(b_1 X^{n-2} b_2)=V(b_1 X^{n-3} d_1 d_2)$, and $X^{n-2} d_1 d_2$ and $c_1 X^{n-2} c_2$ are disjoint. Thus, $|V(a_1 a_2 X^{n-2})\cup V(b_1 X^{n-2} b_2)\cup V(c_1 X^{n-2} c_2)\cup V(X^{n-2} d_1 d_2)|=|V(a_1 a_2 X^{n-2})|+|V(b_1 X^{n-2} b_2)|+|V(c_1 X^{n-2} c_2)|+|V(X^{n-2} d_1 d_2)|-|V(a_1 a_2 X^{n-2})\cap V(c_1 X^{n-2} c_2)|-|V(X^{n-2} d_1 d_2)\cap V(b_1 X^{n-2}$ $b_2)|=4(n-2)!-2(n-3)!$.

($\romannumeral4$) If $a_2=c_2$, $b_1\neq d_1$, $c_1=d_1$ or $a_2=c_2$, $b_1=d_1$, $c_1\neq d_1$, w.l.o.g, assume that the former applies. Observations \ref{ob:2} and \ref{ob:3} imply that for any $i\in\{b, c\}$, $a_1 a_2 X^{n-2}$ and $i_1 X^{n-2} i_2$ are disjoint, and $X^{n-2} d_1 d_2$ and $b_1 X^{n-2} b_2$ are disjoint, and $X^{n-2} d_1 d_2$ and $c_1 X^{n-2} c_2$ are not disjoint and $V(X^{n-2} d_1 d_2)\cap V(c_1 X^{n-2} c_2)=V(c_1 X^{n-3} d_1 d_2)$. Thus, $|V(a_1 a_2 X^{n-2})\cup V(b_1 X^{n-2} b_2)\cup V(c_1 X^{n-2} c_2)\cup V(X^{n-2} d_1 d_2)|=|V(a_1 a_2 X^{n-2})|+|V(b_1 X^{n-2} b_2)|+|V(c_1 X^{n-2} c_2)|+|V(X^{n-2} d_1 d_2)|-|V(X^{n-2} d_1 d_2)\cap V(c_1 X^{n-2} c_2)|=4(n-2)!-(n-3)!$.

{\it Case 9.1.} $a_1=d_1$ and $a_2=d_2$.

There are $n$ ways to choose $c_1(=a_1=d_1)$ from $N_n$, $n-1$ ways to choose $a_2(=d_2=b_2=c_2)$ from $N_n\setminus\{a_1\}$, $n-2$ ways to choose $b_1$ from $N_n\setminus\{a_1, b_2\}$.
Therefore, there are $n(n-1)(n-2)=n^3-3n^2+2n$ ways to choose four $B_{n-2}$'s as required.
Clearly, in this case $a_2=c_2$, $b_1\neq d_1$ and $c_1=d_1$. By ($\romannumeral1$) and ($\romannumeral4$), the probability $P_{9.1}$ that there are four fault-free $B_{n-2}$'s chosen as in Case 9.1 is $\frac{1}{2}(n^3-3n^2+2n)p^{4(n-2)!-(n-3)!}$.

{\it Case 9.2.} $a_1=d_1$ and $a_2\notin\{d_1, d_2\}$.

There are $n$ ways to choose $c_1(=a_1=d_1)$ from $N_n$, $n-1$ ways to choose $b_2(=c_2=d_2)$ from $N_n\setminus\{d_1\}$, $n-2$ ways to choose $a_2$ from $N_n\setminus\{d_1, d_2\}$, $n-2$ ways to choose $b_1$ from $N_n\setminus\{a_1, b_2\}$.
Therefore, there are $n(n-1)(n-2)^2=n^4-5n^3+8n^2-4n$ ways to choose four $B_{n-2}$'s as required.
Clearly, in this case $a_2\neq c_2$, $b_1\neq d_1$ and $c_1=d_1$. By ($\romannumeral1$) and ($\romannumeral3$), the probability $P_{9.2}$ that there are four fault-free $B_{n-2}$'s chosen as in Case 9.2 is $\frac{1}{2}(n^4-5n^3+8n^2-4n)p^{4(n-2)!-2(n-3)!}$.

{\it Case 9.3.} $a_2=d_1$ and $a_1\notin\{d_1, d_2\}$.

Clearly, in this case $a_2\neq c_2$ and $c_1\neq d_1$.

{\it Case 9.3.1.} $b_1\neq d_1$.

There are $n$ ways to choose $a_2(=d_1)$ from $N_n$, $n-1$ ways to choose $b_2(=c_2=d_2)$ from $N_n\setminus\{d_1\}$, $n-2$ ways to choose $a_1(=c_1)$ from $N_n\setminus\{d_1, d_2\}$, $n-3$ ways to choose $b_1$ from $N_n\setminus\{a_1, b_2, d_1\}$.
Therefore, there are $n(n-1)(n-2)(n-3)=n^4-6n^3+11n^2-6n$ ways to choose four $B_{n-2}$'s as required.
By ($\romannumeral1$) and ($\romannumeral2$), the probability $P_{9.3.1}$ that there are four fault-free $B_{n-2}$'s chosen as in Case 9.3.1 is $\frac{1}{2}(n^4-6n^3+11n^2-6n)p^{4(n-2)!-3(n-3)!}$.

{\it Case 9.3.2.} $b_1=d_1$.

There are $n$ ways to choose $a_2(=d_1=b_1)$ from $N_n$, $n-1$ ways to choose $b_2(=c_2=d_2)$ from $N_n\setminus\{d_1\}$, $n-2$ ways to choose $a_1(=c_1)$ from $N_n\setminus\{d_1, d_2\}$.
Therefore, there are $n(n-1)(n-2)=n^3-3n^2+2n$ ways to choose four $B_{n-2}$'s as required.
By ($\romannumeral1$) and ($\romannumeral2$), the probability $P_{9.3.2}$ that there are four fault-free $B_{n-2}$'s chosen as in Case 9.3.2 is $\frac{1}{2}(n^3-3n^2+2n)p^{4(n-2)!-2(n-3)!}$.

{\it Case 9.4.} $a_2=d_2$ and $a_1\notin\{d_1, d_2\}$.

Clearly, in this case $a_2=c_2$ and $c_1\neq d_1$.

{\it Case 9.4.1.} $b_1\neq d_1$.

There are $n$ ways to choose $a_2(=d_2=b_2=c_2)$ from $N_n$, $n-1$ ways to choose $d_1$ from $N_n\setminus\{d_2\}$, $n-2$ ways to choose $a_1(=c_1)$ from $N_n\setminus\{d_1, d_2\}$, $n-3$ ways to choose $b_1$ from $N_n\setminus\{a_1, b_2, d_1\}$.
Therefore, there are $n(n-1)(n-2)(n-3)=n^4-6n^3+11n^2-6n$ ways to choose four $B_{n-2}$'s as required.
By ($\romannumeral1$) and ($\romannumeral3$), the probability $P_{9.4.1}$ that there are four fault-free $B_{n-2}$'s chosen as in Case 9.4.1 is $\frac{1}{2}(n^4-6n^3+11n^2-6n)p^{4(n-2)!-2(n-3)!}$.

{\it Case 9.4.2.} $b_1=d_1$.

There are $n$ ways to choose $a_2(=d_2=b_2=c_2)$ from $N_n$, $n-1$ ways to choose $d_1(=b_1)$ from $N_n\setminus\{d_2\}$, $n-2$ ways to choose $a_1(=c_1)$ from $N_n\setminus\{d_1, d_2\}$.
Therefore, there are $n(n-1)(n-2)=n^3-3n^2+2n$ ways to choose four $B_{n-2}$'s as required.
By ($\romannumeral1$) and ($\romannumeral4$), the probability $P_{9.4.2}$ that there are four fault-free $B_{n-2}$'s chosen as in Case 9.4.2 is $\frac{1}{2}(n^3-3n^2+2n)p^{4(n-2)!-(n-3)!}$.

In summary, the probability $P_9$ that there are four fault-free $B_{n-2}$'s chosen as in Case 9 is $4(P_{9.1}+P_{9.2}+P_{9.3.1}+P_{9.3.2}+P_{9.4.1}+P_{9.4.2})=4(n^3-3n^2+2n)p^{4(n-2)!-(n-3)!}+4(n^4-5n^3+8n^2-4n)p^{4(n-2)!-2(n-3)!}+2(n^4-6n^3+11n^2-6n)p^{4(n-2)!-3(n-3)!}$.

By the above computations, $P(1,2,1)=\sum_{i=1}^{9}P_i=(2n^7-17n^6+65n^5-139n^4+173n^3-118n^2+34n)p^{4(n-2)!}+(8n^6-61n^5+199n^4-347n^3+315n^2-114n)p^{4(n-2)!-(n-3)!}+(n^6-32n^4+112n^3-143n^2+62n)p^{4(n-2)!-2(n-3)!}+2(n^5-8n^4+23n^3-28n^2+12n)p^{4(n-2)!-3(n-3)!}+\frac{1}{2}(n^8-12n^7+65n^6-207n^5+416n^4-525n^3+382n^2-120n)p^{4(n-2)!-(n-4)!}+2(n^7-12n^6+61n^5-171n^4+280n^3-249n^2+90n)p^{4(n-2)!-(n-3)!-(n-4)!}+(2n^6-25n^5+121n^4-281n^3+309n^2-126n)p^{4(n-2)!-2(n-3)!-(n-4)!}$.
\end{proof}

\bibliographystyle{model1-num-names}

\end{document}